\theoremstyle:=definition,remark,plain\do{%
        \expandafter\g@addto@macro\csname th@\theoremstyle\endcsname{%
            \addtolength\thm@preskip\parskip
            }%
        }
\newtheorem{thm}{Theorem}[section]
\newtheorem{prop}[thm]{Proposition}
\newtheorem{lem}[thm]{Lemma}
\newtheorem{conj}[thm]{Conjecture}
\theoremstyle{definition}
\newtheorem{defn}[thm]{Definition}
\newtheorem{eg}[thm]{Example}
\newtheorem{rmk}[thm]{Remark}
\newcommand{\Aa}{\mathbb{A}}
\newcommand{\CC}{\mathbb{C}}
\newcommand{\PP}{\mathbb{P}}
\newcommand{\QQ}{\mathbb{Q}}
\newcommand{\RR}{\mathbb{R}}
\newcommand{\TT}{\mathbb{T}}
\newcommand{\VV}{\mathbb{V}}
\newcommand{\ZZ}{\mathbb{Z}}
\newcommand{\pB}{\mathsf{B}}
\newcommand{\pD}{\mathsf{D}}
\newcommand{\sD}{\mathcal{D}}
\newcommand{\sE}{\mathcal{E}}
\newcommand{\sO}{\mathcal{O}}
\newcommand{\sU}{\mathcal{U}}
\DeclareMathOperator{\Proj}{Proj}
\DeclareMathOperator{\Spec}{Spec}
\DeclareMathOperator{\Hom}{Hom}
\DeclareMathOperator{\Gr}{Gr}
\DeclareMathOperator{\OGr}{OGr}
\DeclareMathOperator{\SO}{SO}
\DeclareMathOperator{\Dih}{Dih}
\DeclareMathOperator{\Newt}{Newt}
\DeclareMathOperator{\SL}{SL}
\DeclareMathOperator{\Pf}{Pf}
\DeclareMathOperator{\Cl}{Cl}
\DeclareMathOperator{\Bir}{Bir}
\DeclareMathOperator{\MM}{MM}
\DeclareMathOperator{\NEbar}{\overline{NE}}
\DeclareMathOperator{\ord}{ord}
\DeclareMathOperator{\dP}{dP}
\DeclareMathOperator{\vol}{vol}
\DeclareMathOperator{\divv}{div}
\DeclareMathOperator{\Int}{int}
\DeclareMathOperator{\trop}{trop}
\DeclareMathOperator{\conv}{conv}
\title{The 3-dimensional Lyness map and a \\ self-mirror log Calabi--Yau 3-fold}
\author{Tom Ducat}
\date{May 17, 2021}                                           
\begin{document}

\maketitle

\begin{abstract}
The 2-dimensional Lyness map is a 5-periodic birational map of the plane which may famously be resolved to give an automorphism of a log Calabi--Yau surface, given by the complement of an anticanonical pentagon of $(-1)$-curves in a del Pezzo surface of degree 5. This surface has many remarkable properties and, in particular, it is mirror to itself. We construct the 3-dimensional big brother of this surface by considering the 3-dimensional Lyness map, which is an 8-periodic birational map. The variety we obtain is a special (non-$\mathbb Q$-factorial) affine Fano 3-fold of type $V_{12}$, and we show that it is a self-mirror log Calabi--Yau 3-fold.
\end{abstract}

\setcounter{tocdepth}{1}
\tableofcontents

\section{Introduction}

\subsection{The Lyness map}

\begin{defn}
The \emph{$d$-dimensional Lyness map} $\sigma_d\in\Bir(\CC^d)$ is the birational map
\[ \sigma_d\left(x_1,x_2,\ldots,x_{d-1},x_d\right) = \left(x_2,x_3,\ldots,x_d,\frac{1+x_2+\cdots+x_d}{x_1}\right). \]
By iterating $\sigma_d^{\pm1}$, we can consider an associated sequence of rational functions $(x_i : i \in \ZZ)$, where $x_i\in\CC(x_1,\ldots,x_d)$ is defined inductively for $i>d$ and $i<1$ by using the \emph{$d$-dimensional Lyness recurrence relation}
\begin{equation}\tag{LR$_d$}\label{eqn!rr-d}
x_ix_{d+i} = 1 + x_{i+1} + \cdots + x_{d+i-1}, \qquad \forall i\in\ZZ.
\end{equation}
\end{defn}

\paragraph{Low dimensional behaviour.}
In dimensions $d\leq3$ the recurrence relation (LR$_d$) is very well behaved, with two very nice and surprising properties: it is \emph{periodic}, and the sequence of rational functions $(x_i:i\in\ZZ)$ possesses a \emph{Laurent phenomenon}. In other words, this sequence is actually a sequence of Laurent polynomials $x_i\in\CC[x_1^{\pm1},\ldots,x_d^{\pm1}]$. 

\paragraph{Higher dimensional behaviour.}
Unfortunately, for all $d\geq 4$ the recurrence relation (LR$_d$) is much more badly behaved, something that is also reflected in the dynamical behaviour of~$\sigma_d$. It is no longer periodic; nor does it have a Laurent phenomenon. Because of this there seems to be no straightforward way of generalising the very attractive log Calabi--Yau varieties (or their scattering diagrams) described in this paper when $d=2$ or~$3$.

\subsection{A tale of two log Calabi--Yau varieties}

The two recurrences (LR$_2$) and (LR$_3$) can be used to construct two affine log Calabi--Yau varieties. The first is the famous affine del Pezzo surface of degree 5; the second is an affine Fano 3-fold of type $V_{12}$.\footnote{Recall that a Fano 3-fold of type $V_{12}$ is obtained by taking the intersection of the orthogonal Grassmannian $\OGr(5,10)\subset\PP^{15}$ in its spinor embedding with a linear subspace of codimension 7.} As we will recall in \S\ref{sect!mirror}, affine log Calabi--Yau varieties (satisfying some suitable technical hypotheses) are expected to enjoy some remarkable properties coming from mirror symmetry. In particular, there is a conjectural involution on the set of such varieties which, for a given log Calabi--Yau variety $U$, associates a \emph{mirror} $U^\star$. According to a conjecture of Gross, Hacking \& Keel (cf.\ Conjecture~\ref{conj!GHK}), one feature of the relationship between $U$ and $U^\star$ is that there is expected to be a special additive basis of the coordinate ring of $U^\star$, called the basis of \emph{theta functions}, whose elements correspond (roughly speaking) to boundary divisors in compactifications of $U$, and vice-versa.

\subsubsection{Dimension 2 and the del Pezzo surface $\dP_5$} \label{sect!LR2-terms}
In dimension 2 the recurrence relation (LR$_2$) generates a 5-periodic sequence $(x_i:i\in \ZZ/5\ZZ)$. In terms of $x_1$ and $x_2$, the solution is given by
\[ x_3 = \frac{x_2+1}{x_1}, \quad x_4 = \frac{x_1 + x_2 + 1}{x_1x_2} \quad \text{and} \quad x_5 = \frac{x_1+1}{x_2}. \]
These are well-known as the five cluster variables appearing in the simplest nontrivial cluster algebra: the cluster algebra of type $A_2$. As we recall in \S\ref{sect!dP5}, the associated \emph{cluster variety} is an affine surface $U\subset\Aa^5$ whose coordinate ring is generated by $x_1,\ldots,x_5$ and can be realised as the interior of the log Calabi-Yau pair $(X,D)$, where $X\subset\PP^5$ is a del Pezzo surface of degree 5 and $D=\sum_{i=1}^5D_i$ is an anticanonical pentagon of $(-1)$-curves. The variety $U$ is known to be self-mirror \cite[Example 5.9]{hk}, i.e.\ the mirror $U^\star$ is isomorphic to $U$. In terms of the conjecture mentioned above, this mirror correspondence places the five boundary components $D_i$ in a natural one-to-one correspondence with the five cluster variables $x_i$. 

The mirror of a complex projective Fano variety is expected to be a \emph{Landau--Ginzburg model} $(V,w)$, i.e.\ a quasiprojective algebraic variety $V$ over $\CC$ equipped with a holomorphic surjective function $w\colon V\to \CC$, called a \emph{Landau--Ginzburg potential}. Therefore, for our log Calabi--Yau variety $U$, adding the boundary divisor $D$ to obtain the del Pezzo surface $X$ corresponds to decorating the mirror $U^\star\cong U$ with a holomorphic function. In this case, summing the five cluster variables that correspond to the components of $D$ defines a potential $w=x_1+\cdots+x_5$ for a Landau--Ginzburg model $w\colon U\to\CC$ which is mirror to $X$.

\begin{figure}[htbp]
\begin{center}
\begin{tikzpicture}[scale=2]
   \draw[thick] ({sin(0)},{cos(0)}) -- ({sin(72)},{cos(72)}) -- ({sin(144)},{cos(144)}) -- ({sin(216)},{cos(216)}) -- ({sin(288)},{cos(288)}) -- cycle;
   \node at ({sin(0+36)},{cos(0+36)}) {$x_1$};
   \node at ({sin(72+36)},{cos(72+36)}) {$x_2$};
   \node at ({sin(144+36)},{cos(144+36)}) {$x_3$};
   \node at ({sin(216+36)},{cos(216+36)}) {$x_4$};
   \node at ({sin(288+36)},{cos(288+36)}) {$x_5$};
   
   \begin{scope}[xshift = 3cm]
   \draw[gray,dashed] ({cos(45)},{sin(45)}) -- ({cos(135)},{sin(135)}) -- ({cos(225)},{sin(225)}) -- ({cos(315)},{sin(315)}) -- cycle;
   \draw[thick] (1,0) -- (0,1) -- (-1,0) -- (0,-1) -- cycle;
   \draw[thick] ({cos(0)},{sin(0)}) -- ({cos(45)},{sin(45)}) -- ({cos(90)},{sin(90)}) -- ({cos(135)},{sin(135)}) -- ({cos(180)},{sin(180)}) -- ({cos(225)},{sin(225)}) -- ({cos(270)},{sin(270)}) -- ({cos(315)},{sin(315)}) -- cycle;
   \node at (-0.1,-0.1) {\small $q_1$};
   \node at (0.58,0.58) {\small $x_3$};
   \node at (0.58,-0.58) {\small $x_5$};
   \node at (-0.58,-0.58) {\small $x_7$};
   \node at (-0.58,0.58) {\small $x_1$};
   \node at (0.1,0.1) {\small \textcolor{gray}{$q_2$}};
   \node at (0,0.82) {\small \textcolor{gray}{$x_2$}};
   \node at (0.82,0) {\small \textcolor{gray}{$x_4$}};
   \node at (0,-0.82) {\small \textcolor{gray}{$x_6$}};
   \node at (-0.82,0) {\small \textcolor{gray}{$x_8$}};
   \node at (-1,1) {$P$};
   \end{scope}
   
   \begin{scope}[xshift = 6cm]
   \draw[gray,dashed] ({cos(45)},{sin(45)}) -- ({cos(225)},{sin(225)}) ({cos(135)},{sin(135)}) -- ({cos(315)},{sin(315)});
   \draw[thick] (1,0) -- (-1,0) (0,1) -- (0,-1);
   \draw[thick] ({cos(0)},{sin(0)}) -- ({cos(45)},{sin(45)}) -- ({cos(90)},{sin(90)}) -- ({cos(135)},{sin(135)}) -- ({cos(180)},{sin(180)}) -- ({cos(225)},{sin(225)}) -- ({cos(270)},{sin(270)}) -- ({cos(315)},{sin(315)}) -- cycle;
   \node at ({cos(45+22.5)*2/3},{sin(45+22.5)*2/3}) {\small $x_3$};
   \node at ({cos(45+90+22.5)*2/3},{sin(45+90+22.5)*2/3}) {\small $x_1$};
   \node at ({cos(45+180+22.5)*2/3},{sin(45+180+22.5)*2/3}) {\small $x_7$};
   \node at ({cos(45-90+22.5)*2/3},{sin(45-90+22.5)*2/3}) {\small $x_5$};
   \node at ({cos(0+22.5)*2/3},{sin(0+22.5)*2/3}) {\small \textcolor{gray}{$x_4$}};
   \node at ({cos(90+22.5)*2/3},{sin(90+22.5)*2/3}) {\small \textcolor{gray}{$x_2$}};
   \node at ({cos(180+22.5)*2/3},{sin(180+22.5)*2/3}) {\small \textcolor{gray}{$x_8$}};
   \node at ({cos(270+22.5)*2/3},{sin(270+22.5)*2/3}) {\small \textcolor{gray}{$x_6$}};
   \node at (-1,1) {$Q$};
   \end{scope}
\end{tikzpicture}
\caption{The correspondence between boundary divisors and theta functions on the mirror for (a) del Pezzo surface $X$ of degree~5, (b) the Fano 3-fold $X_P$ of type $V_{12}$ and (c) the Fano 3-fold $X_Q$ of type $V_{16}$.}
\label{fig!bdries}
\end{center}
\end{figure}
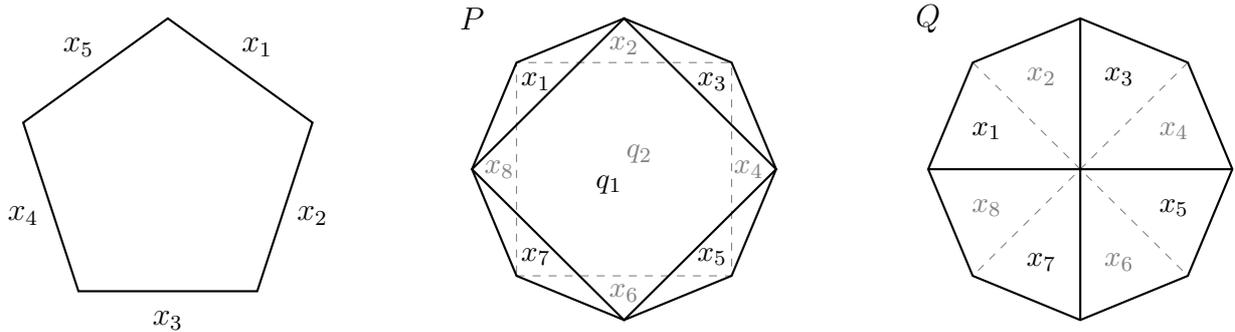

\subsubsection{Dimension 3 and the Fano 3-fold $V_{12}$} \label{sect!LR3-terms}
In dimension 3 the recurrence relation (LR$_3$) generates an 8-perodic sequence $\left(x_i :i\in \ZZ/8\ZZ \right)$. Expanding in terms of $x_1$, $x_2$, $x_3$ we have
\[ x_4 = \frac{x_2+x_3+1}{x_1} \qquad x_5 = \frac{(x_1 + 1)(x_3 + 1) + x_2}{x_1x_2} \qquad x_6 = \frac{(x_1 + x_2 + 1)(x_2 + x_3 + 1)}{x_1x_2x_3} \]
\[ x_7 = \frac{(x_1 + 1)(x_3 + 1) + x_2}{x_2x_3} \qquad \text{and} \qquad x_8 = \frac{x_1+x_2+1}{x_3}. \]
These rational functions satisfy eight relations of the form $x_{i-1}x_{i+2}=1+x_i+x_{i+1}$ and we observe that $x_1,\ldots,x_8$ satisfy two further relations: $x_1x_5=x_3x_7$ and $x_2x_6=x_4x_8$. In analogy to the dimension 2 case, we will see in \S\ref{sect!OGr} that the ring generated by the eight variables $x_1,\ldots,x_8$ defines an algebraic variety $U\subset\Aa^8$ which is also a kind of cluster variety (in the sense of Definition~\ref{defn!cluster-variety}(3)). This variety $U$ is the interior of a log Calabi--Yau pair $(X_P,D_P)$, where $X_P$ is a special (non-$\QQ$-factorial) Fano 3-fold of type $V_{12}$ and $D_P\in|{-K_X}|$ is a boundary divisor with ten components. Eight components are isomorphic to $\PP^2$, two components are isomorphic to $\PP^1\times\PP^1$ and they meet along their toric boundary strata according to the polytope $P$ shown in Figure~\ref{fig!bdries}(b). 

Like the affine del Pezzo surface before it, we will see that this 3-fold $U$ is also self-mirror. However, in contrast to the two dimensional case, there is now a problem: we have ten boundary divisors but only eight cluster variables. Moreover the Landau--Ginzburg model $w_P\colon U\to \CC$ defined by the potential $w_P:=x_1+\ldots+x_8$ does not have the right period sequence to be a mirror for a Fano 3-fold of type $V_{12}$. Instead the period sequence of $w_P$ suggests that it is a mirror to a Fano 3-fold of type $V_{16}$ (cf. \S\ref{sec!LG-V12-V16}).

The explanation for this difference is that the eight $\PP^2$ components of $D_P$ correspond to the eight cluster variables $x_i$, but the two $\PP^1\times\PP^1$ components correspond to two `missing' cluster variables 
\[ q_1 = x_1x_5 - 1 = \frac{(x_1 + 1)(x_3 + 1)}{x_2} \quad \text{and} \quad q_2 = x_2x_6 - 1 =\frac{(x_1 + x_2 + x_3 + 1)(x_2 + 1)}{x_1x_3}, \]
which are invariants for the squared Lyness map $\sigma_3^2$. The new variables $q_1$ and $q_2$ appear in the derivation of a cluster algebra-like exchange graph for $U$ (cf.\ \S\ref{sec!U-exchange-graph}). Adjoining $q_1$ and $q_2$ to the ring $\CC[X_P]$ corresponds to a birational map (an \emph{unprojection} in the languange of Miles Reid) mapping $X_P$ onto a 3-fold $X_Q$
\[ \psi\colon X_P\subset \PP^8_{(x_0:x_1:\cdots:x_8)} \dashrightarrow X_Q\subset \PP^{10}_{(x_0:x_1:\cdots:x_8:q_1:q_2)}  \]
which blows up and then contracts the two (non-Cartier) Weil divisors in the boundary of $X_P$ corresponding to $q_1$ and $q_2$. This extends to a birational map of pairs $\psi\colon (X_P,D_P)\dashrightarrow (X_Q,D_Q)$, where $X_Q$ is a degenerate Fano 3-fold of type $V_{16}$ and $D_Q$ is an anticanonical boundary divisor with eight $\PP^1\times\PP^1$ components meeting according to the polytope $Q$ displayed in Figure~\ref{fig!bdries}(c). 

Finally, summing the ten cluster variables that generate the coordinate ring of $X_Q$ gives a potential $w_Q:=x_1+\cdots+x_8+q_1+q_2$ for a Landau--Ginzburg model $w_Q\colon U\to \CC$ with the correct period sequence to be mirror to a Fano 3-fold of type $V_{12}$. Thus in this example the Landau--Ginzburg model $(U,w_P)$ is mirror to the Fano variety $X_Q$ and the Landau--Ginzburg model $(U,w_Q)$ is mirror to the Fano variety $X_P$.

\subsubsection{Generalising Borisov--Batyrev duality}

We note that the two polytopes $P$ and $Q$ appearing in Figure~\ref{fig!bdries} are combinatorially dual to one another. We will go one step further and interpret them as dual reflexive polytopes in the \emph{tropicalisation of $U$}, a self-dual integral affine manifold with singularities, which we denote by $N_U$. This duality can then be generalised for any pair of dual reflexive polytopes in $N_U$ and is the analogue of Borisov--Batyrev duality in the ordinary toric setting.

\subsection{Summary}

\subsubsection{Contents}

In \S\ref{sect!mirror} we give a brief overview of mirror symmetry for log Calabi--Yau varieties, which is intended as motivational context for the rest of the paper. In particular we explain how to construct the \emph{tropicalisation} $N_U$ of a log Calabi--Yau variety $U$ which is an integral affine manifold with singularities. We explain how to construct polytopes inside this space $N_U$ and how they can be dualised to give polytopes in $N_V$, a dual integral affine manifold with singularities corresponding to the mirror log Calabi--Yau variety $V=U^\star$. 

In \S\ref{sect!dP5} we discuss the well-known affine del Pezzo surface $U$ of degree 5 related to the 2-dimensional Lyness recurrence (LR$_2$). In particular we describe the cluster structure on $U$, the tropicalisation $N_U$ and we show that $N_U$ is self-dual as an integral affine manifold with singularities. We then give some examples of mirror symmetry for pairs of reflexive polygons in $N_U$. In \S\ref{sect!OGr} we repeat all of this to do a similar analysis for the affine 3-fold of type $V_{12}$ related to the 3-dimensional Lyness recurrence (LR$_3$).

\subsubsection{Acknowledgements} 
I would like to thank Alessio Corti, Alastair King, Andy Hone and Thomas Prince, amongst a number of others, for helpful discussions on the subject of this paper. In particular, Alessio Corti first proposed the geometric definition of a seed described in \S\ref{sec!mutation}, Alastair King suggested formulating the material in \S\ref{sect!OGr-rep-thry} using the spin representation of $\pB_4$ (rather than the half-spin representation of $\pD_5$) and Thomas Prince helped derive the scattering diagram described in \S\ref{sec!V12-scattering}. 

\section{Mirror symmetry and log Calabi--Yau varieties} \label{sect!mirror}

This section is intended as a motivational context for the rest of the paper and as such we refer the reader to \cite{bir-geom-cluster,hk} for more detailed accounts.

\subsection{Log Calabi--Yau varieties}

\begin{defn}\label{defn!log-CY}
A \emph{log Calabi--Yau pair} is a pair $(X,D)$ consisting of a smooth complex projective variety $X$ and a reduced effective integral anticanonical divisor $D\in |{-K_X}|$ with simple normal crossings.\footnote{The smoothness and simple normal crossings assumptions are made purely for simplicity. More generally one could consider $(X,D)$ to be a pair with $\QQ$-factorial divisorial log terminal singularities (although we note that the `natural' compactification of the 3-fold cluster variety appearing in \S\ref{sect!OGr} is not $\QQ$-factorial).} We call the interior of a log Calabi--Yau pair $U=X\setminus D$ a \emph{log Calabi--Yau variety}. 
\end{defn}

A log Calabi--Yau variety $U$ is naturally equipped with a nonvanishing holomorphic volume form $\Omega_U$ in the following way. The boundary divisor $D = \divv s_X$ is cut out by a section $s_X\in H^0(X,{-K_X})$ so, after restricting to $U$, we get the volume form $\Omega_U = \left({s_X}|_U\right)^{-1}\in H^0(U,K_U)$ which has simple poles along the components of $D$. Since $\Omega_U$ is only determined up to rescaling by a constant, we may assume that $\Omega_U$ is normalised so that $\int_U\Omega_U=1$. The natural framework within which we would now like to work is the category of log Calabi--Yau varieties and \emph{volume preserving birational maps}, i.e.\ birational maps $\phi\colon U \dashrightarrow V$ such that $\phi^*\Omega_V=\Omega_U$.

\begin{eg}
The prototypical example of a log Calabi--Yau variety is the algebraic torus $\TT_N = \CC^\times\otimes_\ZZ N$ associated to a lattice $N\cong \ZZ^d$. The volume form on $\TT_N$ is given by $\Omega_{\TT_N} = (2\pi i)^{-d}\frac{dz_1}{z_1}\wedge\cdots\wedge \frac{dz_d}{z_d}$. There are two dual lattices associated to $\TT_N$: the \emph{cocharacter lattice} $N$, whose points correspond to divisorial valuations along toric boundary divisors, and the \emph{character lattice} $M=\Hom(N,\ZZ)$, whose points $m\in M$ correspond to monomial functions $z^m=z_1^{m_1}\ldots z_d^{m_d}$ on $\TT_N=\Spec \CC[M]$. These monomials form a natural additive basis for the coordinate ring of $\TT_N$. 
\end{eg}

Note that the roles of the two lattices $N$ and $M$ are interchanged when we replace the torus $\TT_N$ by its dual (or `mirror') torus $\TT_M$. A key idea of Gross, Hacking \& Keel is that one can introduce an object that serves as a generalisation of the cocharacter lattice for an arbitrary log Calabi--Yau variety.

\begin{defn}
Given a log Calabi--Yau variety $U$ with volume form $\Omega_U$, the set of \emph{integral tropical points} of $U$ is given by
\[ N_U(\ZZ) = \left\{ \text{divisorial valuations } \nu\colon \CC(U)\setminus \{0\} \to \ZZ \text{ such that } \nu(\Omega_U) <0 \right\} \cup \{0\}. \]
\end{defn}

The set $N_U(\ZZ)$ is more commonly referred to as $U^{\trop}(\ZZ)$ in the literature, but our notation is chosen to emphasise the fact that $N_U(\ZZ)$ is supposed to be a generalisation of the cocharacter lattice $N$ when $U=\TT_N$. Indeed, as a set we have that $N_{\TT_N}(\ZZ)=N$. 

\subsubsection{The conjecture of Gross, Hacking \& Keel}

Gross, Hacking \& Keel have conjectured that an analogue for the character lattice also exists for $U$, which generalises the duality enjoyed by tori to pairs of mirror log Calabi--Yau varieties. Before stating their conjecture we need to introduce two technical hypotheses.

\begin{defn}
Let $(X,D)$ be a $d$-dimensional log Calabi--Yau pair with interior $U=X\setminus D$. We call $U$ \emph{positive} if $D$ supports an ample divisor and we say that $U$ has \emph{maximal boundary} if $D$ contains a 0-stratum (i.e.\ a point at which $d$ components of $D$ meet transversely).
\end{defn}

From now on we will assume that all log Calabi--Yau varieties are positive and have maximal boundary. The assumption that $U$ is positive is useful because it implies that $U$ is affine. The maximal boundary condition is introduced to ensure that the set of integral tropical points $N_U(\ZZ)$ is as big as possible. In \S\ref{sec!trop} we will realise $N_U(\ZZ)$ as the set of integral points in a real affine manifold with singularities $N_U$. In general, the dimension of $N_U$ will be equal to the codimension of the smallest stratum of $D$.

\begin{conj}[{\cite[Conjecture 0.6]{ghk}, cf.\ \cite{bir-geom-cluster,hk}}] \label{conj!GHK}
Given a positive log Calabi--Yau variety $U$ with maximal boundary, then there exists a mirror positive log Calabi--Yau variety $V$ with maximal boundary whose coordinate ring $\CC[V]$ has an additive basis 
\[ \mathcal B_V = \{ \vartheta_n \in \CC[V] : n \in N_U(\ZZ) \} \]
parameterised by the integral tropical points of $U$. The elements of this basis are called the \emph{theta functions} of $V$ and are canonically determined up to multiplication by scalars. The multiplication rule for theta functions $\vartheta_a\vartheta_b=\sum_{c\in N_U} \alpha_{abc}\vartheta_c$ has structure constants $\alpha_{abc}$ which can be obtained as certain counts of rational curves in $U$.
\end{conj}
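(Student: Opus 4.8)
This is the Gross--Hacking--Keel conjecture, and a complete proof amounts to a substantial program; here is the shape I would follow, along the lines of Gross--Siebert. The plan is to build the mirror $V$ out of purely combinatorial data on $N_U$, namely a \emph{scattering diagram} $\mathfrak D$ living in $N_U\otimes_\ZZ\RR$ with the integral affine structure to be constructed in \S\ref{sec!trop}. I would start from an \emph{initial} diagram whose walls are codimension-one rational polyhedral cones, each decorated with a wall-crossing automorphism of a suitably completed version of the candidate coordinate ring, read off from the boundary geometry of $U$ --- in a toric model these are the facets of the relevant polytope, in a cluster model the mutation data of the seeds. The first real task is then to run the Kontsevich--Soibelman / Gross--Pandharipande--Siebert algorithm, adding walls order by order, to obtain a \emph{consistent} diagram $\mathfrak D$: one for which the composition of wall-crossing automorphisms around any loop is the identity. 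Consistency is exactly what makes the next step well defined.

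Given a consistent $\mathfrak D$ and a generic basepoint, one defines for each $n\in N_U(\ZZ)$ a theta function $\vartheta_n$ as the sum over \emph{broken lines} with asymptotic direction $n$ ending at the basepoint, each weighted by the monomial it accumulates as it crosses walls; for $n$ lying in a single chamber $\vartheta_n$ is the naive monomial, and in general a Laurent polynomial in such monomials. I would then check, in order: that the $\vartheta_n$ are linearly independent (by a leading-term/valuation argument); that they span a subalgebra, with $\vartheta_a\vartheta_b=\sum_c\alpha_{abc}\vartheta_c$ where $\alpha_{abc}$ counts pairs of broken lines with prescribed ends $a,b$ meeting at a generic point in direction $c$; and that these $\alpha_{abc}$ are the promised counts of rational curves in $U$, via the tropical/log Gromov--Witten correspondence. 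Taking $\CC[V]$ to be this algebra, the remaining points are to show that $\CC[V]$ is finitely generated, so that $V:=\Spec\CC[V]$ is an affine variety of finite type; that $V$ admits a log Calabi--Yau compactification with maximal boundary whose boundary divisors are cut out by the theta functions attached to the primitive generators of the rays of $N_U(\ZZ)$ --- which is precisely the ``boundary divisors $\leftrightarrow$ integral tropical points'' correspondence asserted in the statement --- and that $V$ is positive.

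The hard part will be the interaction of \emph{consistency} and \emph{finiteness}: proving that the scattering algorithm terminates with a locally finite diagram, that the resulting $\CC[V]$ is finitely generated, and that the predicted compactification exists. In this generality the conjecture is open; it is a theorem for cluster varieties (Gross--Hacking--Keel--Kontsevich) and for log Calabi--Yau surfaces (Gross--Hacking--Keel). For the two varieties of interest in this paper I would bypass the general machinery: the type-$A_2$ cluster structure in dimension $2$, and the cluster-like structure with the extra invariants $q_1,q_2$ in dimension $3$, each give an explicit finite presentation of $\CC[U]$, and --- thanks to the periodicity and Laurent phenomenon recorded in \S\ref{sect!LR2-terms} and \S\ref{sect!LR3-terms} --- the associated scattering diagram can be worked out by hand (as in \S\ref{sec!V12-scattering}); one then reads off the theta functions directly, verifies the multiplication rule by inspection, and in fact checks the stronger statement that $V\cong U$, so that each of these $U$ is self-mirror.
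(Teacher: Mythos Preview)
The paper states this as a \emph{conjecture} and offers no proof; it is quoted from \cite{ghk} purely as motivational context for the constructions that follow. So there is nothing in the paper to compare your attempt against. You yourself recognise this midway through (``In this generality the conjecture is open''), which makes the framing of your write-up as a ``proof'' misleading: what you have actually written is a survey of the Gross--Siebert / Gross--Hacking--Keel--Kontsevich strategy, together with an honest admission that the hard steps (consistency, finiteness, finite generation of the theta algebra, existence of the compactification) are not settled in the stated generality.

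Your final paragraph is closer to what the paper actually does: for the two specific varieties $U$ it cares about, the paper writes down explicit finite scattering diagrams (\S\ref{sec!dp5-scattering} and \S\ref{sec!V12-scattering}), checks consistency by direct computation, reads off the theta functions, and observes that the resulting mirror ring is $\CC[U]$ again. That is not a proof of Conjecture~\ref{conj!GHK}; it is a verification of instances of it. If the intent was to explain how one would attack the conjecture, your sketch is accurate in outline, but you should present it as a programme rather than a proof, and be explicit that the general statement remains open.
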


The mirror variety $V$ is not expected to be unique; indeed $V$ is expected to appear as a general fibre in a whole family of mirrors to $U$. Mirror symmetry is then an involution in the sense that the mirror of $V$ is a family of log Calabi--Yau varieties deformation equivalent to $U$. If $U$ and $V$ are a pair of mirror log Calabi--Yau varieties as in the statement of the conjecture then let us write $M_U(\ZZ):=N_V(\ZZ)$ and $M_V(\ZZ):=N_U(\ZZ)$ (indicating that $N_V(\ZZ)$ is supposed to be a generalisation of the character lattice for $U$, and vice-versa). 

\subsection{Cluster varieties} \label{sect!cluster-varieties}

\subsubsection{Toric models}

We can create new examples of log Calabi--Yau varieties from a given log Calabi--Yau pair $(X,D)$ by considering volume preserving blowups of $X$. 

\begin{defn} \label{defn!cluster-variety}
Consider a log Calabi--Yau pair $(X,D)$ with boundary divisor $D=\sum_{i=1}^kD_i$ and interior $U=X\setminus D$.
\begin{enumerate}
\item A \emph{toric blowup} of $(X,D)$ is a blowup of $X$ along a stratum of $D$. A \emph{nontoric blowup} of $(X,D)$ is a blowup of $X$ along a smooth subvariety $Z\subset X$ of codimension 2, where $Z$ is contained in one of the boundary components $Z\subset D_i$ and meets the other components of $D$ transversely. 
\item We call a log Calabi--Yau pair $(X,D)$ a \emph{toric model} for $U$ if there exists a map $\pi\colon (X,D) \to (T,B)$ to a toric pair $(T,B)$ where $\pi$ is a composition of nontoric blowups.
\item We call $U$ a \emph{(generalised) cluster variety} if it has a toric model.
\end{enumerate}
\end{defn}

We use the terminology `\emph{generalised} cluster variety' since the more commonly accepted definition of a cluster variety (cf.\ \cite[Definition 3.1]{hk}) imposes a second condition: in addition to the toric model $\pi\colon (X,D)\to (T,B)$, the variety $U$ is required to have a nondegenerate holomorphic 2-form $\sigma\in H^0\left(X,\Omega^2_X(\log D)\right)$. The existence of this 2-form $\sigma$ imposes a very strong condition on types of centre $Z\subset T$ that can be blown up by $\pi$. In particular it forces $Z=\bigcup_{i=1}^k Z_i$ to be a union of \emph{hypertori}. If we let $N$ and $M$ be the (co)character lattices of the torus $\TT=T\setminus B$, then a hypertorus is a subvariety of $T$ of the form $Z_i = \VV(z^{m_i} + \lambda_i)\subset B_{n_i}$, where $\lambda_i\in\CC^\times$ is some coefficient (usually assumed to be chosen generically), $n_i\in N$ determines the boundary divisor $B_{n_i}\subset T$ containing $Z_i$ and $m_i\in n_i^\perp\subset M$. Thus the centres $Z_i$ are determined by pairs $(n_i,m_i)\in N\times M$. The advantage of this more restrictive situation is that there is a very simple candidate for the mirror to $U$, called the \emph{Fock--Goncharov dual} of $U$: for each $i=1,\ldots,k$ we simply swap the roles of $n_i\in N$ and $m_i\in M$ to obtain a `mirror' toric model.

Nevertheless, as we will soon see, the cluster algebra-like combinatorics of seeds and mutation can be used to describe any log Calabi--Yau variety with a toric model, and do not rely on the 2-form $\sigma$. This fact has led Corti to suggest that this should be the `true' definition of a cluster variety.

\begin{rmk}
The name `generalised cluster variety' is not ideal, since there are many other proposals for a `generalised cluster variety' appearing in the literature. Of these, it is perhaps closest in spirit to the definition of a \emph{Laurent phenomenon algebra} given by Lam \& Pylyavskyy \cite{lp}. Translated into our language, a Laurent phenomenon algebra is essentially the coordinate ring of a $d$-dimensional log Calabi--Yau variety with a toric model given by blowing up $d$ centres $Z_i\subset H_i \subset \Aa^d$, one in each of the coordinate hyperplanes $H_i$ of $\Aa^d$.  
\end{rmk}

\subsubsection{Mutations} \label{sec!mutation}

A toric model $\pi\colon (X,D)\to(T,B)$ for $U$ is determined by a set of centres 
\[ S=\{ Z_i \subset T : i = 1,\ldots,k \} \]
which comprise the locus $Z=\bigcup_{i=1}^kZ_i$ blown up by $\pi$. Note that the choice of toric model for $U$ is not uniquely determined by the set $S$, since we can modify our chosen pairs $(X,D)$ and $(T,B)$ by compatible choices of toric blowups or blowdowns. However the set $S$ does specify a unique torus $\TT_S:=T\setminus B$ and a volume preserving embedding $j_S:=(\pi^{-1})|_{\TT_S} \colon \TT_S \hookrightarrow U$. (This inclusion of the torus $\TT_S$ into $U$ is the geometric manifestation of the Laurent phenomenon for $U$.)

Similarly to the (strict) cluster case described above, we can represent each component of $Z$ in the form $Z_i = Z(n_i,f_i) := \VV(f_i) \subset B_{n_i}$, where $n_i\in N$ is a primitive vector\footnote{If we wanted to, we could extend the definition of $Z(n_i,f_i)$ to allow arbitrary $n_i\in N$ by considering nonreduced centres $Z(n_i,f_i)=\VV(f_i^r)\subset B_{n_i'}$, where $r\geq1$ and $n_i = rn_i'$ for a primitive vector $n_i'\in N$.} and $f_i\in \CC[n_i^\perp\cap M]$ is a Laurent polynomial. The difference is now that $f_i$ is not constrained to be simply binomial. 

\begin{defn} \leavevmode
\begin{enumerate}
\item We call the set $S$ a \emph{seed} for $U$ and the torus embedding $j_S \colon \TT_S \hookrightarrow U$ a \emph{cluster torus chart} of $U$. 

\item Given a pair $(n,f)$, which represents a centre $Z=Z(n,f)$ in the boundary of a toric compactification of a torus $\TT$, we define the \emph{mutation along $Z$} to be the birational map $\mu_{(n,f)}\colon \TT \dashrightarrow \TT$ of the form $\mu_{(n,f)}^*(z^m) = f^{-\langle n,m\rangle}z^m$.
\end{enumerate}
\end{defn}

\begin{eg}\label{eg!one-mutation}
The geometry of a mutation $\mu=\mu_{(n,f)}$ is described by Gross, Hacking \& Keel \cite[\S3.1]{bir-geom-cluster}. In particular, they show that $\mu$ can be viewed as a kind of \emph{elementary transformation of $\PP^1$-bundles}. Indeed, by blowing up if necessary, we can arrange for $T$ to contain the two boundary divisors $B_+:=B_n$ and $B_-:=B_{-n}$ and then consider the toric subvariety $T_0\cong \left(\PP^1\times (\CC^\times)^{d-1}\right)\subset T$ consisting of the big open torus $\TT$ and the relative interior of $B_+$ and $B_-$. Let $Z_+=Z(n,f)$ and let $E_-:=\VV(f)\subset T_0$. Then they show that the extension of $\mu$ to a birational map $\mu\colon T_0\dashrightarrow T_0$ is resolved by blowing up the locus $Z_+\subset B_+$ and contracting the strict transform of the divisor $E_-$ to the locus $Z_-=Z({-n},f)\subset B_-$.
\begin{center}
\resizebox{\textwidth}{!}{\begin{tikzpicture}[scale=0.6,font=\small]
   \draw (0,0) ellipse (1cm and 2cm);
   \draw (4,0) ellipse (1cm and 2cm);
   \draw[gray,fill=gray!10] (0,-1) -- (0,1) -- (4,1) -- (4,-1) -- cycle;
   \draw[thick] (0,1) -- (0,-1);
   \node at (-1.2,2) {$B_+$};
   \node at (5.2,2) {$B_-$};
   \node at (-0.5,0) {$Z_+$};
   \node at (2,0) {$E_-$};
   
   \begin{scope}[xshift=10cm,yshift=2cm]
   \draw (0,0) ellipse (1cm and 2cm);
   \draw (4,0) ellipse (1cm and 2cm);
   \draw[gray,fill=gray!20] (0,-1) -- (0,1) -- (2,1) -- (2,-1) -- cycle;
   \draw[gray,fill=gray!10] (4,-1) -- (4,1) -- (2,1) -- (2,-1) -- cycle;
   \draw[dashed] (2,1) -- (2,-1);
   \node at (-1.2,2) {$B_+$};
   \node at (5.2,2) {$B_-$};
   \node at (1,0) {$E_+$};
   \node at (3,0) {$E_-$};
   \end{scope}
   
   \begin{scope}[xshift=20cm]
   \draw (0,0) ellipse (1cm and 2cm);
   \draw (4,0) ellipse (1cm and 2cm);
   \draw[gray,fill=gray!20] (0,-1) -- (0,1) -- (4,1) -- (4,-1) -- cycle;
   \draw[thick] (4,1) -- (4,-1);
   \node at (-1.2,2) {$B_+$};
   \node at (5.2,2) {$B_-$};
   \node at (4.5,0) {$Z_-$};
   \node at (2,0) {$E_+$};
   \end{scope}
   
   \draw[->] (8,2) -- (6,1); \node at (7,2.3) {$Bl_{Z_+}$};
   \draw[->] (16,2) -- (18,1); \node at (17,2.3) {$Bl_{Z_-}$};
\end{tikzpicture}}\end{center}
Changing coordinates so that $n=(1,0,\ldots,0)$, then $\mu$ can be assumed to be of the form
\[ \mu^*(z_1,z_2,\ldots,z_d) = \left( f(z_2,\ldots,z_d)^{-1}z_1, z_2,\ldots,z_d \right). \]
It is convenient to introduce $z_1' = z_1^{-1}f(z_2,\ldots,z_d)$ and to distinguish the domain and codomain of $\mu\colon \TT\dashrightarrow \TT'$ so that $\TT=(\CC^\times)^d_{z_1,z_2,\ldots,z_d}$, $\TT'=(\CC^\times)^d_{z_1',z_2,\ldots,z_d}$ and $\mu^*(z_1',z_2,\ldots,z_d) = \left(z_1^{-1},z_2,\ldots,z_d\right)$. However note that this map is \emph{volume negating}, in the sense that $\mu^*\Omega_{\TT'}=-\Omega_{\TT}$, so the torus $\TT'$ should be considered to come equipped with the negative of its standard volume form. Now $U = Bl_{Z_+}T_0\setminus \left(B_+\cup B_-\right)$ is the affine variety
\[ U = \VV\big(z_1z_1' - f(z_2,\ldots,z_d)\big)\subset \Aa^2_{z_1,z_1'} \times (\CC^\times)^{d-1}_{z_2,\ldots,z_d} \]
and this is covered, up to the complement of a subset $\Sigma=\VV(z_1,z_2,f)\subset U$ of codimension two in $U$, by the two torus charts $\TT,\TT'\hookrightarrow U$ which are glued together by $\mu$. The locus $\Sigma \subset U$ not covered by the two torus charts is the intersection of the two divisors $\Sigma=E_+\cap E_-$, although since $U$ is normal and affine we have that $U=\Spec H^0(U\setminus \Sigma,\mathcal{O}_{U\setminus \Sigma})$, so these two charts provide enough information to recover $U$.
\end{eg}

\paragraph{Mutating a seed.}
Example~\ref{eg!one-mutation} gives a complete description of the cluster structure for a log Calabi--Yau variety $U$ which is determined by a seed $S=\{Z(n,f)\}$ with only one centre. In particular there are two cluster torus charts $\TT_S=\TT$ and $\TT_{S'}=\TT'$ where $S'$ is the seed $S=\{Z(-n,f)\}$, and the mutation $\mu\colon \TT_S\dashrightarrow \TT_{S'}$ provides the transition map between them.

In general, for a seed with several centres $S = \{ Z_i=Z(n_i,f_i) : i \in 1,\ldots,k \}$ we can consider applying a mutation $\mu_i=\mu_{(n_i,f_i)}$ for each $i\in\{1,\ldots,k\}$. For a given $i$, we can define a new seed $\mu_i S$, such that the mutation becomes a map of cluster torus charts $\mu_i\colon \TT_S \dashrightarrow \TT_{\mu_iS}$. To do this, extend $\mu$ to a birational map of projective toric varieties $\mu\colon (T,B)\dashrightarrow (T',B')$. The \emph{mutated seed} is the set of centres $\mu_iS=\{\mu_iZ_j : j \in 1,\ldots k\}$, such that
\[ \mu_iZ_j = \begin{cases}
Z(-n_i,f_i) & j=i \\
C_{Z_j}T' & j\neq i
\end{cases} \]
where $C_{Z_j}T' = \overline{\mu_i(\eta_{Z_j})}$ denotes the centre of $Z_j$ on $T'$ (the closure of the image of the generic point $\eta_{Z_j}$ under $\mu_i$). As $\mu_i$ is volume preserving, it follows that $C_{Z_j}T'$ is contained in the boundary of $T'$ and is guaranteed to have the form $\mu_iZ_j=Z(n_j',f_j')$ for some $n_j'\in N$ and $f_j'\in \CC[n_j'\cap M]$.  

\paragraph{The exchange graph of $U$.}
The \emph{exchange graph} for $U$ is the $k$-regular graph whose set of vertices is the set of seeds for $U$ and whose set of edges is given by mutations.

\paragraph{An atlas of torus charts for $U$.}
We can think of the set of seeds for $U$ as giving an atlas of cluster torus charts $j_S\colon \TT_S\hookrightarrow U$ which can be glued together by transition maps which are the mutations. By \cite[Proposition~2.4]{bir-geom-cluster}, from any initial seed $S$ we can consider the scheme 
\[ U^0 = \TT_S \cup \bigcup_{i=1}^k \TT_{\mu_i S}\]
obtained by gluing all the cluster tori which are one mutation away from $S$. In general there are some issues with this construction. In particular, $U^0$ may not be separated if two centres have nonempty intersection $Z_i\cap Z_j\neq\emptyset$. However, if all of the centres $Z_i$ are disjoint then the maps $j_{\mu_iS}\colon \TT_{\mu_iS}\hookrightarrow U$ glue to give an embedding $j\colon U^0\hookrightarrow U$, which covers $U$ up to a set $\Sigma=U\setminus U^0$ of codimension at least $2$ \cite[Lemma~3.5]{bir-geom-cluster}. If $U$ is positive (and hence affine) then we have $U=\Spec H^0(U^0,\sO_{U^0})$ as in Example~\ref{eg!one-mutation}.

\subsubsection{Examples} \label{sec!mutation-examples}

It is natural to wonder whether there is a more explicit combinatorial formula describing the effect of a mutation $\mu_i$ on the set of pairs $(n_j,f_j)$ that define the other centres $Z_j=Z(n_j,f_j)$ in the seed, as there is in the ordinary cluster case \cite[Equation 2.3]{bir-geom-cluster}. Unfortunately, at this level of generality the mutations are somewhat more complicated to keep track of, and the location of the centre $\mu_i Z_j$ depends crucially on the exact form of $f_i$ and $f_j$. It is natural to hope that if $Z_j\subset B_{n_j}$, then $\mu_i {Z_j}\subset \mu_i B_{n_j}$, i.e.\ that $Z_j$ remains in the same boundary component after the mutation. However this is not necessarily the case. We give a couple of examples to see the kind of difficulties that can occur.

\begin{defn}
Consider a seed $S=\{Z_1,\ldots,Z_k\}$, a choice of mutation $\mu_i$ and a centre $Z_j$ with $j\neq i$. We say that $Z_j$ makes an \emph{unexpected jump} if $\mu_i {Z_j}\not\subset \mu_i B_{n_j}$.
\end{defn}

\begin{eg} \label{eg!bad-mutation}
We consider a toric model $\pi\colon(X,D)\to (\Aa^3,B)$ obtained by blowing up three centres which are contained in the coordinate hyperplanes of $\Aa^3$: 
\[ Z_1 = Z\big( (1,0,0), \; 1+z_2 \big), \quad Z_2 = Z\big((0,0,1), \; 1+z_2\big), \quad Z_3 = Z\big((0,0,1), \; 1+z_1\big). \] 
Consider the mutation along $Z_1$. From the geometric description given above, $\mu_1$ blows up $Z_1$ and contracts the strict transform of the divisor $E_1=\VV(1+z_2)\subset \Aa^3$ to the centre $\mu_1Z_1 = Z\left((-1,0,0), \; 1+z_2\right)$ at infinity. Since $\mu_1^*(1+z_1^{-1}+z_2)=z_1^{-1}(1+z_2)$, it is also straightforward to see that the mutation sends $Z_3$ to $\mu_1 Z_3 = Z\left((0,0,1), \; 1+z_1^{-1}+z_2\right)$. However, since $Z_2\subset E_1$, the contraction of the strict transform of $E_1$ moves the centre of $Z_2$ out of the divisor $B_{(0,0,1)}$. We get $\mu_1 Z_2 = Z\left((-1,0,1), \; 1+z_2\right)$ and thus $Z_2$ makes an unexpected jump. We illustrate the effect of the mutation on the centres in the following diagram, where the centres $Z_i$ have been drawn `tropically'.
\begin{center}\begin{tikzpicture}[scale=1]
   \begin{scope}[xshift = 0cm]
   \draw[gray] (-1,-1) -- (0,0) -- (0,2) (0,0) -- (2,0);
   \draw[thick] (-1,0) -- (0,1) -- (2,1) (1,0) -- (1,2);   
   \node at (-1,1/2) {$Z_1$};
   \node at (1/2,4/3) {$Z_2$};
   \node at (4/3,5/3) {$Z_3$};
   \end{scope}
   
   \begin{scope}[xshift = 6cm]
   \draw[gray] (0,0) -- (2,0) -- (2,2) (2,0) -- (3,-1);
   \draw[thick] (3,0) -- (2,1) -- (1,1) -- (1,0) (1,1) -- (0,2);
   \node at (2,1) {$\bullet$};
   \node at (3+0.2,1/2) {$\mu_1Z_1$};
   \node at (7/3+0.2,4/3) {$\mu_1Z_2$};
   \node at (1+0.1,5/3) {$\mu_1Z_3$};
   \end{scope}
   
   \draw[dashed,->] (3.5,1) to node[above,midway] {$\mu_1$} (4.5,1);
\end{tikzpicture}\end{center}
\end{eg}

The bad behaviour demonstrated in Example~\ref{eg!bad-mutation} looks like it can be fixed by introducing a generic coefficient $\lambda\in\CC^\times$ and deforming the centre $Z_2$ to $Z\left((0,0,1),\lambda+z_2\right)$. Then we will no longer have the inclusion $Z_2\subset E_1$ and the mutation $\mu_1$ will keep $Z_2$ and $Z_3$ inside the same boundary component. Unfortunately, as we show in the next example, it is not always possible to avoid unexpected jumps, even if we start from a seed where all of the centres have generic coefficients.

\begin{eg}
Consider the toric model $\pi\colon(X,D)\to (\Aa^3,B)$ obtained by blowing up three centres 
\begin{align*} 
Z_1 &= Z\big((1,0,0),\: a_1 + a_2z_2 + a_3z_3\big), \\
Z_2 &= Z\big((0,1,0),\: b_1z_1 + b_2 + b_3z_3\big), \\
Z_3 &= Z\big((0,0,1),\: c_1z_1 + c_2z_2 + c_3\big)  
\end{align*}
where $a_i,b_i,c_i\in\CC^\times$ are generic coefficients. This seed consists of three general lines, one in each of the three coordinate hyperplanes of $\Aa^3$. We can apply the mutation $\mu_1$ along $Z_1$ to obtain a new seed:
\begin{align*} 
Z_1' := \mu_1Z_1 &= Z\big((-1,0,0),\: a_1 + a_2z_2 + a_3z_3 \big), \\
Z_2' := \mu_1Z_2 &= Z\big((0,1,0),\: b_1(a_1 + a_3z_3) + z_1^{-1}(b_2 + b_3z_3)\big), \\
Z_3' := \mu_1Z_3 &= Z\big((0,0,1),\: c_1(a_1 + a_2z_2) + z_1^{-1}(c_2z_2 + c_3)\big).
\end{align*}
We note that $Z_1'$ and $Z_2'$ intersect in a point $p=\VV(z_1^{-1},z_2,a_1+a_3z_3)$ belonging to the intersection of their boundary components. If we now apply the mutation $\mu_2$ along $Z_2'$ then we compute that the total transform of the centre $Z_1'$ is reducible and given by
\[ Z\big((-1,0,0),\: z_2^{-1} + a_2b_1 \big) \cup Z\big((-1,0,0),\: a_1+a_3z_3 \big) \]
where the first component is the centre $\mu_2Z_1'$ and the second component is the exceptional line $p'$ over the point $p$ (represented by the dashed vertical line in the third diagram below). Therefore mutating the seed at $Z_2'$ gives 
\begin{align*} 
Z_1'' := \mu_2Z_1' &= Z\big((-1,0,0),\: z_2^{-1} + a_2b_1 \big), \\
Z_2'' := \mu_2Z_2' &= Z\big((0,-1,0),\: b_1(a_1 + a_3z_3) + z_1^{-1}(b_2 + b_3z_3)\big), \\
Z_3'' := \mu_2Z_3' &= Z\big((0,0,1),\: (a_2c_1 + c_2z_1^{-1})(a_1b_1 + b_2z_1^{-1}) + z_2^{-1}(a_1c_1 + c_3z_1^{-1}) \big).
\end{align*}  
We draw the sequence of mutations with the centres represented tropically, as before.
\begin{center}\begin{tikzpicture}[scale=0.85]
   \begin{scope}[xshift = -1cm]
   \draw[gray] (-1,-1) -- (0,0) -- (2,0) -- (2,1) -- (1,2) -- (0,2) -- (0,0);
   \draw[gray] (2,0) -- (3,-1) (2,1) -- (3,1) (1,2) -- (1,3) (0,2) -- (-1,3);
   
   \draw[thick] (-1/2,-1/2) -- (-1/2,1) -- (0,1) (-1/2,1) -- (-1,3/2); 
   \draw[thick] (-1/3,-1/3) -- (1,-1/3) -- (1,0) (1,-1/3) -- (5/3,-1); 
   \draw[thick] (0,2/3) -- (2/3,2/3) -- (2/3,0) (2/3,2/3) -- (3/2,3/2); 
   
   \node at (-1,1) {$Z_1$};
   \node at (5/3,-1/2) {$Z_2$};
   \node at (1,3/2) {$Z_3$};
   \end{scope}
   
   \begin{scope}[xshift = 6cm]
   \draw[gray] (-1,-1) -- (0,0) -- (2,0) -- (2,2) -- (1,2) -- (0,1) -- (0,0);
   \draw[gray] (2,0) -- (3,-1) (2,2) -- (3,3) (1,2) -- (0,3) (0,1) -- (-1,1);
   
   \draw[thick] (5/2,-1/2) -- (5/2,1) -- (2,1) (5/2,1) -- (3,3/2); 
   \draw[thick] (-1/3,-1/3) -- (1,-1/3) -- (1,0) (1,-1/3) -- (7/6,-1/2) -- (5/2,-1/2) (7/6,-1/2) -- (7/6,-1); 
   \draw[thick] (0,1/3) -- (2/3,1/3) -- (2/3,0) (2/3,1/3) -- (4/3,1) -- (2,1) (4/3,1) -- (4/3,2); 
   
   \node at (3,1) {$Z_1'$};
   \node at (5/3,-7/8) {$Z_2'$};
   \node at (1,3/2) {$Z_3'$};
   
   \node at (5/2,-1/2) [label={0:$p$}] {$\bullet$};
   \end{scope}
   
   \begin{scope}[xshift = 13cm]
   \draw[gray] (-1,1) -- (0,1) -- (1,0) -- (2,0) -- (2,2) -- (0,2) -- (0,1);
   \draw[gray] (2,0) -- (3,-1) (2,2) -- (3,3) (0,2) -- (-1,3) (1,0) -- (1,-1);
   
   \draw[thick] (2,1) -- (3,1); 
   \draw[thick] (-1/3,2+1/3) -- (1,2+1/3) -- (1,2) (1,2+1/3) -- (7/6,2+1/2) -- (5/2,2+1/2) (7/6,2+1/2) -- (7/6,3); 
   \draw[thick] (2/3,1/3) -- (1,2/3) -- (4/3,2/3) -- (4/3,0) (1,2/3) -- (1,2) (4/3,2/3) -- (5/3,1) -- (2,1) (5/3,1) -- (5/3,2); 
   
   \draw[dashed] (5/2,5/2) -- (5/2,-1/2);
   
   \node at (3,4/3) {$Z_1''$};
   \node at (5/3,2+7/8) {$Z_2''$};
   \node at (2/3,3/2) {$Z_3''$};
   \node at (2.8,1/3) {$p'$};
   
   \end{scope}
   
   \draw[dashed,->] (3,1) to node[above, midway] {$\mu_1$} (4,1);
   \draw[dashed,->] (10,1) to node[above, midway] {$\mu_2$} (11,1);
	
\end{tikzpicture}\end{center}
We now see that $Z_1''$ is contained in the hypersurface determined by the equation of $Z_3''$ and so, in exactly the same style as Example~\ref{eg!bad-mutation}, if we mutate this last seed at $Z_3''$ then $Z_1''$ will make an unexpected jump.
\end{eg}

\subsection{The tropicalisation of a log Calabi--Yau variety}

\subsubsection{The tropicalisation of $U$} \label{sec!trop}

The cocharacter lattice of a torus comes with some extra structure that we would like to generalise to $N_U(\ZZ)$. There is no group structure on $N_U(\ZZ)$ in general. Instead (as hinted above) the right way to proceed is to try and realise $N_U(\ZZ)$ as the integral points of a real affine manifold $N_U$, in the same way that the lattice $N$ can be realised as the set of integral points of the real vector space $N_\RR := N\otimes_\ZZ \RR$. This can be done, but only by introducing singularities into the affine structure on $N_U$. The space $N_U$ thus obtained is an \emph{integral affine manifold with singularities} and is referred to as the \emph{tropicalisation} of $U$.

One can build $N_U$ directly, by choosing a compactification\footnote{Despite the potentially misleading notation, in dimension $d\geq3$ the construction of $N_U$ is actually dependent on the choice of a compactification $(X,D)$ of $U$. Different choices of compactification lead to affine structures on $N_U$ which differ up to integral piecewise-linear homeomorphism, cf.\ \cite[Definition~3.8]{hk}.} $(X,D)$ of $U$ and defining $N_U$ to be the cone over the dual intersection complex of the boundary divisor $D$ \cite{hk}. However, for log Calabi--Yau varieties with a toric model $\pi\colon (X,D)\to (T,B)$ then, by generalising the 2-dimensional case covered in \cite[\S1.2]{ghk}, there is a natural way to build the space $N_U$ by altering the affine structure on the cocharacter space $N_\RR$ of the torus $\TT_S=T\setminus B$. In particular, if $U$ has a toric model then, as a real manifold, $N_U$ is homeomorphic to $N_\RR\cong \RR^d$.

For simplicity (and because it applies in the case we study later on) we assume that $(X,D)$ and $(T,B)$ are smooth, and the centres $Z_i\subset T$ blown up by $\pi$ are smooth and disjoint. Let $\mathcal F$ be the fan in $N_\RR$ that defines $(T,B)$ and consider a pair of maximal smooth cones $\sigma_1,\sigma_2$ of $\mathcal F$ that meet along a codimension 2 face $\tau = \sigma_1 \cap \sigma_2$. We can write $\sigma_1 = \langle v_1,\ldots, v_d\rangle$, $\sigma_2 = \langle v_2,\ldots,v_{d+1}\rangle$ and $\tau = \langle v_2,\ldots, v_d\rangle$ for some choice of primitive vectors $v_1,\ldots,v_{d+1}\in N$. Now the cone $\tau$ corresponds to a torus invariant curve $\overline C_\tau\cong \PP^1\subset T$ and therefore also a curve $C_\tau=\pi^{-1}(\overline C_\tau)$ in the boundary of $(X,D)$. Similarly the vectors $v_i$ correspond to the set of boundary divisors $D_i=D_{v_i}\subset D$ meeting $C_\tau$. Consider the linear map $\psi \colon \sigma_1\cup \sigma_2 \to \RR^d$ defined by 
\[ \psi(v_i) = e_i \text{ for } i=1,\ldots,d, \quad \text{and} \quad \psi(v_{d+1}) = -e_1-\sum_{i=2}^d (D_i\cdot C_\tau)e_i, \]
where $e_1,\ldots, e_d$ are the standard basis vectors of $\RR^d$. This sends $\sigma_1$ onto the positive orthant $\sigma_1'\subset \RR^d$ and $\sigma_2$ onto an appropriately chosen cone $\sigma_2'\subset \RR^d$, which has been cooked up so that the toric variety defined by the fan with the two maximal cones $\sigma_1',\sigma_2'$ contains a projective curve, $C_{\tau'}$ for $\tau'=\sigma_1'\cap \sigma_2'$, which has identical intersection numbers with boundary divisors as the curve $C_\tau\subset X$.

To define the tropicalisation $N_U$ we let $N_U^{\text{sing}}$ be the union of cones of $\mathcal F$ of codimension~$\geq2$ and $N^0_U=N_U\setminus N_U^{\text{sing}}$. To define the affine structure on $N_U^0$, for any pair of maximal cones $\sigma_1,\sigma_2$ in $\mathcal F$ as above we consider the integral affine structure on $\operatorname{int} (\sigma_1\cup \sigma_2)$ given by pulling back the integral affine structure on $\RR^d$ by the map $\psi|_{\operatorname{int} (\sigma_1\cup \sigma_2)}$. In particular, we have the following condition, in terms of the intersection theory on $X$, to tell when a piecewise linear function is actually linear along some codimension $2$ cone of $\mathcal F$ \cite[\S1.3]{mandel}. Suppose that $\phi\colon \operatorname{int} (\sigma_1\cup \sigma_2)\to \RR$ is a piecewise-linear function which is linear on each of the cones $\sigma_1, \sigma_2$. This determines a Weil divisor $\Xi_\phi = \sum_{i=1}^{d+1} \phi(v_i)D_{v_i}$ on $X$. Then $\phi$ is linear along the interior of $\tau=\sigma_1\cap\sigma_2$ if $\Xi_\phi\cdot C_\tau=0$.

The sets of the form $\operatorname{int} (\sigma_1\cup \sigma_2)$ cover $N_U^0$, and these glue together to give an affine structure on $N_U^0$. It may or may not be possible to extend this over some, or all, of the cones of $N_U^{\text{sing}}$, so at this point it is customary to redefine $N_U^0$ to be the maximal subset of $N_U$ on which this affine structure extends and then set $N_U^{\text{sing}} = N_U\setminus N_U^0$. We call $N_U^{\text{sing}}$ the \emph{singular locus} of $N_U$. We note that the subset of integral tropical points $N_U(\ZZ)\subset N_U$ is identified with the cocharacter lattice $N\cong \ZZ^d$ of $N_\RR$ by this construction. 

\subsubsection{Scattering diagrams} \label{sec!scattering}

Given a log Calabi--Yau variety $U$, the approach pioneered in \cite{ghk} to proving Conjecture~\ref{conj!GHK} in the 2-dimensional setting is to construct the ring $\CC[V]$ directly from the tropicalisation $N_U$, by equipping $N_U$ with the structure of a \emph{consistent scattering diagram}. 

We begin by working in $N_\RR$, where $N=\ZZ^d$ is a lattice with dual lattice $M=\Hom(N,\ZZ)$. Let $\mathfrak m=(z_1,\ldots,z_d)\subset \CC[\![N]\!]$ denote the maximal ideal in the ring of formal power series. A \emph{wall} in $N_\RR$ is a rational polyhedral cone $\mathfrak d_i\subset N_\RR$ of codimension 1. Roughly speaking, a \emph{scattering diagram} is then a collection $\mathfrak D=\{(\mathfrak d_i, f_i) : i \in I\}$ of walls $\mathfrak d_i$ decorated with \emph{wall functions} $f_i$. If $u_i\in M$ is a primitive vector such that $\mathfrak d_i\subset u_i^\perp$, then the wall function $f_i\in \CC[\![u_i^\perp\cap N]\!]$ is a monic power series, i.e.\ $f_i \equiv 1 \mod \mathfrak m$. The collection of walls is usually not finite and may accumulate in certain regions of $N_\RR$, or even all of $N_\RR$. Moreover the wall functions are almost always infinite power series, rather than polynomials, in which case there is a further finiteness condition specifying that only finitely many $f_i\not\equiv 1 \mod \mathfrak m^k$ for each $k\geq1$. However, the scattering diagrams constructed for the examples discussed in this paper are always finite: the set of walls form a finite complete fan $\mathcal F$ in $N_\RR$ and the wall functions are all polynomials.

Given any $(\mathfrak d, f)\in \mathfrak D$, crossing the wall $\mathfrak d$ in the direction of the normal vector $u$ specifies a \emph{wall crossing automorphism}
\[ \theta_{(\mathfrak d, f)} \colon \CC[\![N]\!] \to \CC[\![N]\!] \qquad  \theta_{(\mathfrak d, f)}(z^n) = z^n f^{-\langle n, u \rangle}.\] 
The scattering diagram is then called \emph{consistent} if, for any loop $\gamma\colon[0,1]\to N_\RR$ that begins and ends in a chamber of $\mathfrak D$ and crosses all walls of $\mathfrak D$ transversely, the composition of all the wall crossing automorphisms is the identity.\footnote{This composition has to be defining inductively, working modulo successive powers of $\mathfrak m^k$, when $\gamma$ crosses infinitely many walls of $\mathfrak D$.}

\paragraph{Log Calabi--Yau varieties with a toric model.}
Suppose that $U$ is log Calabi--Yau variety with a toric model $\pi\colon (X,D)\to (T,B)$ determined by a seed $S = \{Z_i : i = 1,\ldots,k \}$, where each $Z_i$ is a general smooth hypersurface in a component of $B$. Let $N$ be the cocharacter lattice of the torus $T\setminus B$. Under these assumptions Arg\"uz \& Gross \cite{ag} give a general inductive method to construct a consistent scattering diagram $\mathfrak D_S$ in $N_\RR$. They define an initial scattering diagram $\mathfrak D_{S,\text{in}}$ which is supported on the walls of the fan of $T$ which are affected by the nontoric blowup $\pi$. Then they give an inductive procedure which can be used to compute a consistent scattering diagram $\mathfrak D_S$ from $\mathfrak D_{S,\text{in}}$ \cite[Theorem 1.1]{ag}.

\paragraph{Broken lines.}
In order to use $\mathfrak D=\mathfrak D_S$ to construct the coordinate ring $\CC[V]$ of the mirror $V=U^\star$, it is important to fix the integral affine structure on $N_\RR$ by reconsidering $\mathfrak D$ as being a scattering diagram in $N_U$. Now the basis of theta functions $\mathcal B_V=\{ \vartheta_n \in \CC[V] : n \in N_U(\ZZ) \}$ of Conjecture~\ref{conj!GHK} are computed by counting \emph{broken lines} in $N_U$, which are tropicalisations of certain rational curves in $U$. Roughly speaking, \emph{a broken line for $n\in N_U(\ZZ)$ which starts at $q\in N_U$} is a parameterised piecewise linear path $\ell\colon[0,\infty)\to N_U$ such that 
\begin{enumerate}
\item $\ell(0)=q$, 
\item $\ell$ is allowed to bend finitely many times as it crosses walls of $\mathfrak D$ and, 
\item after it makes its last bend, $\ell$ exits $N_U$ with tangent vector $\ell'(t) = n$. 
\end{enumerate}
If we let $t_0=0$, $t_{k+1}=\infty$ and $t_1,\ldots,t_k\in\RR_{>0}$ be the times at which $\ell$ bends, and $(\mathfrak d_i,f_i)$ be the corresponding walls of $\mathfrak D$. Then we can associate a monomial $m_i\in \CC[N]$ to each domain of linearity $(t_i,t_{i+1})$ of $\ell$. This is done inductively by labelling the last domain of linearity $(t_k,\infty)$ with $m_k := z^n$, and then labelling $(t_{i-1},t_i)$ with a monomial $m_{i-1}$ appearing in the expansion of $\theta_{(\mathfrak d_i,f_i)}(m_i)$ which is dictated to us by the bend that $\ell$ makes along the $i$th wall. 

\paragraph{Theta functions.}
Fix an initial point $q\in N_U$, which is assumed to be a suitably generic (i.e.\ irrational) point of $N_U$ to ensure that $\ell$ crosses all walls of $\mathfrak D$ transversely. The theta function $\vartheta_n$ can be expanded as a Laurent power series\footnote{In a general scattering diagram this expansion is only possible as a formal Laurent power series, corresponding to counts that involve infinitely many broken lines.} $\vartheta_n = \sum_{\ell} m_\ell \in \CC[\![N]\!]$ obtained by taking the sum over all broken lines for $n$ which start at $q$, where $m_\ell := m_0$ is the monomial attached to the first domain of linearity of $\ell$ obtained by the process described above. These theta functions then form an additive basis for a ring which is expected to be the coordinate ring of the mirror variety $V=U^\star$.

\subsubsection{Generalising toric geometry}

Once we have constructed the mirror log Calabi--Yau variety $V=U^\star$, we can treat the tropicalisations $N_U$ and $M_U := N_V$ as an analogue of the cocharacter space and character space for $U$ respectively. These integral affine manifolds with singularities are equipped with a dual intersection pairing and Mandel \cite{mandel} has used this to generalise many of the traditional techniques of toric geometry to this setting, particularly in the 2-dimensional case. 

\paragraph{The intersection pairing.}
The dual intersection pairing is given by
\[ \langle{\cdot},{\cdot}\rangle \colon N_U(\ZZ)\times M_U(\ZZ)\to \ZZ \qquad \langle n,m \rangle = \nu_{D_n}(\vartheta_m)\] 
which is given by evaluating the order of vanishing of a theta function $\vartheta_m\in \CC[U]$ along a boundary component $D_n$ appearing in a compactification of $U$. This can be extended to a pairing $\langle{\cdot},{\cdot}\rangle \colon N_U\times M_U\to \RR$ by first extending to the rational points of $N_U$ and $M_U$ by bilinearity, and then extending to the real valued points of $N_U$ and $M_U$ by continuity. At least in the 2-dimensional setting, the intersection pairing obtained this way is the same the intersection pairing given by switching the roles of $U$ and $V$ \cite[Theorem~1.5]{mandel}. 

\paragraph{Convexity in $N_U$.}
Given the intersection pairing, Mandel now defines the \emph{(strong) convex hull} of a subset $S\subset N_U$ to be
\[ \conv(S) := \left\{ n\in N_U : \langle n,s \rangle \geq \inf_{s\in S} \langle s, m\rangle, \: \forall m\in M_U \right\} \]
and similarly for subsets of $M_U$. Then a \emph{(strongly) convex} subset $S\subset N_U$ is a subset for which $S=\conv(S)$. A \emph{polytope} $P\subset N_U$ is the convex hull of a finite set $S$, and we call $P$ \emph{integral} if $P=\conv(P\cap N_U(\ZZ))$. Moreover we can define the \emph{Newton polytope} $\Newt(\vartheta)\subset M_U$ for a function $\vartheta\in \CC[U]$ to be
\[ \Newt(\vartheta) = \conv\left\{ m\in M_U(\ZZ) : \vartheta=\sum_{m\in M_U} \alpha_m\vartheta_m, \: \alpha_m\neq 0 \right\}. \]

For any $c\in \RR$ and any $m\in M_U$ we let $(m)^{\geq c}$ denote the `halfspace' of $N_U$ given by
\[ (m)^{\geq c} := \{ n \in N_U : \langle n, m \rangle \geq c \} \]
which, in contrast to the classical setting, may be a bounded subset of $N_U$, or even empty.
Then the \emph{polar polytope} of a set $S\subset M_U$ is defined to be $S^\circ := \bigcap_{s\in S}  (\vartheta_s)^{\geq -1}$, and these are precisely the convex polytopes of $N_U$ that contain the origin \cite[Corollary 5.9]{mandel}. Note that $(P^\circ)^\circ)=P$. Finally, as in the toric setting, we can define an integral polytope $P\subset N_U$ to \emph{reflexive} if $P^\circ\subset M_U$ is an integral polytope in $M_U$.

\subsection{Mirror symmetry}

\subsubsection{Mirror symmetry for Fano varieties}

Let $X$ be a smooth projective $d$-dimensional Fano variety over $\CC$. As mentioned in the introduction \S\ref{sect!LR2-terms}, mirror symmetry predicts the existence of a mirror Landau--Ginzburg model $(V,w)$ to $X$. Following \cite[\S2.1]{kkp}, to make the correspondence more precise we must decorate the two sides of the mirror with some extra data
\[ \big(X, \; s_X, \; \omega_X\big) \quad \xleftrightarrow{\:\text{mirror}\:} \quad \big( (V,w), \; \Omega_V, \; \omega_V\big) \]
where $\omega_X$ and $\omega_V$ are symplectic forms, $s_X\in H^0(X,{-K_X})$ is an anticanonical section and $\Omega_V\in H^0(V,{K_V})$ is a volume form. In particular, the section $s_X$ specifies a boundary divisor $D=\divv s_X\in |{-K_X}|$. In the case that $(X,D)$ is a Fano compactification of a positive log Calabi--Yau variety $U=X\setminus D$ with maximal boundary, the mirror Landau Ginzburg model to $X$ will be defined on the mirror to $U$, i.e.\ $V=U^\star$. In terms of the mirror correspondence above, deleting the boundary divisor of $X$ corresponds to forgetting the potential on $V$.

From the point of view of homological mirror symmetry, there are now various flavours of the Fukaya category and derived category that one can associate to either side of this correspondence which are conjectured to be equivalent. However we take a slightly more low-tech point of view championed by the Fanosearch program \cite{fano-mirror}, as we now describe.

\subsubsection{Landau--Ginzburg mirrors} \label{sect!period}

There is a test one can apply to check whether a given Landau--Ginzburg model $(V,w)$ is a possible mirror to a given Fano variety $X$, which is to compute the \emph{(classical) period} of~$w$, 
\[ \pi_{w}(t) = \int_{V}\frac{\Omega_V}{1-tw} \in \CC[\![t]\!]. \] 
This calculation is particularly simple in the case that $V$ has a toric model, corresponding to the inclusion of a cluster torus chart $j\colon \TT \hookrightarrow V$. After restricting $w$ to $\TT$, we obtain a Laurent polynomial $w|_\TT\in\CC[x_1^{\pm1},\ldots,x_d^{\pm1}]$ and it follows that 
\[ \pi_w(t) = \pi_{w|_\TT}(t) = \sum_{n=0}^\infty \operatorname{const}{\left(\left(w|_\TT\right)^n\right)}t^n, \]
which can be seen by expanding $\left(1-tw|_\TT\right)^{-1}$ as a power series in $t$ and repeatedly applying Cauchy's residue theorem. Under mirror symmetry, $\pi_{w}(t)$ is expected to equal the \emph{regularised quantum period} $\widehat{G}_X(t)$ of $X$, which is a power series whose coefficients encode certain Gromov--Witten invariants for $X$ (see \cite[{\S}A]{ccgk} for details). For all of the 105 smooth 3-dimensional Fano varieties $X$, the regularised quantum period $\widehat{G}_X(t)$ has been computed by Coates, Corti, Galkin \& Kasprzyk~\cite{ccgk}.

\subsubsection{A generalisation of Batyrev--Borisov duality} \label{sec!BB-duality}

Let $U$ and $V=U^\star$ be two mirror log Calabi--Yau varieties and let $\mathcal B_U=\{ \vartheta_m : m\in M_U \}$ and $\mathcal B_V=\{ \varphi_m : m\in N_U \}$ be the bases of theta functions on $U$ and $V$. Given a pair of dual reflexive polygons $P\subset N_U$ and $Q=P^\star\subset M_U$ we can now make the following constructions, which generalise Batyrev--Borisov mirror symmetry for toric Fano varieties.\footnote{Traditionally Batyrev duality (resp.\ Batyrev--Borisov duality) refers to the mirror duality between the resolution of a general Calabi--Yau hypersurface (resp.\ complete intersection) inside two dual toric Fano varieties.} 
\begin{enumerate}
\item We define a \emph{Landau--Ginzburg model} $(V,w_P)$, where the potential $w_{P} = \sum_{p\in P} a_p\varphi_p$ is obtained by summing the theta functions on $V$ that correspond to the integral points of $P$ with a specific choice of coefficients $a_p\in\ZZ_{\geq0}$ (see Remark~\ref{rmk!binomial-edge-coeffs}). 

\item The polytope $Q\subset M_U$ determines a grading on the ring $\CC[U]$, where $\deg \vartheta_{m}$ is the least integer $k$ such that $m\in kQ$, for each $m\in M_U(\ZZ)$. We let $R_Q$ be the homogenisation of $\CC[U]$ with respect to this grading, with homogenising variable $\vartheta_0$. Let $(X_Q,D_Q)$ be the pair $X_Q=\Proj R_Q$ with boundary divisor $D_Q=\VV(\vartheta_0)$. 
\end{enumerate}

Note that $X_Q$ is an anticanonically embedded, possibly degenerate (i.e.\ non-$\QQ$-factorial) Fano variety compactifying $U=X_Q\setminus D_Q$, where the sections of $|-K_{X_Q}|$ correspond to the integral points of $Q$. The expectation is that $(X_Q,D_Q)$ and $(V,w_P)$ are mirror in the following sense.

\begin{conj}\label{conj!my-conj}
If $(X_Q,D_Q)$ admits a $\QQ$-Gorenstein deformation to a pair $(X,D)$, where $X$ is a $\QQ$-factorial Fano variety, then there exists a choice of positive integral coefficients on the lattice points of $P$ such that the Landau--Ginzburg model $(V,w_P)$ is mirror to $(X,D)$.
\end{conj}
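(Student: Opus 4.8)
The plan is to treat this prediction as a reduction to two deep inputs, each known in low dimension and expected in general, rather than as a formal statement. The first input is an \emph{intrinsic mirror theorem}: with the canonical coefficients $a_p^0$ of Remark~\ref{rmk!binomial-edge-coeffs}, the Landau--Ginzburg model $(V,w_P)$ is a mirror of the possibly singular Fano $(X_Q,D_Q)$ in the operational sense of \S\ref{sect!period}, i.e.\ $\pi_{w_P}(t)=\widehat G_{X_Q}(t)$ (the right-hand side being defined via orbifold Gromov--Witten theory). The second input is deformation invariance of the regularised quantum period, so that $\widehat G_{X_Q}(t)=\widehat G_X(t)$ whenever $(X_Q,D_Q)$ admits a $\QQ$-Gorenstein deformation to $(X,D)$. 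Granting both, the period test of \S\ref{sect!period} holds for $(V,w_P)$ and $X$, which is exactly what ``mirror'' means in the Fanosearch sense used here --- modulo the extra point of showing the deformation is realised by an admissible (positive, integral) choice of coefficients on the lattice points of $P$.

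I would organise the argument in three steps. \textbf{Step 1 (the period of $w_P$).} Fix a cluster torus chart $j\colon\TT\hookrightarrow V$ and set $f:=(w_P)|_\TT\in\CC[x_1^{\pm1},\ldots,x_d^{\pm1}]$, so $\pi_{w_P}(t)=\pi_f(t)=\sum_{n\ge0}\operatorname{const}(f^n)\,t^n$ by \S\ref{sect!period}. On such a chart each theta function $\varphi_p$ has a distinguished leading monomial $z^p$ and the broken-line corrections only add further lattice points of the convex polytope $P$; hence $f$ is supported on $P\cap N_U(\ZZ)$ with prescribed leading coefficients at the vertices of $P$ and ``interior'' coefficients that are exactly the data the conjecture lets us choose. \textbf{Step 2 (the identity $\pi_{w_P}(t)=\widehat G_{X_Q}(t)$).} Here I would use the scattering diagram $\mathfrak D_S$ on $N_U$ of Arg\"uz--Gross~\cite{ag} together with the broken-line calculus of \S\ref{sec!scattering}: the coefficient of $t^n$ in $\pi_{w_P}(t)$ is the constant term of $(w_P)^n$ in the theta basis, which by the multiplication rule of Conjecture~\ref{conj!GHK} is a weighted count of rational curves in $U$ meeting $D_Q$ in a prescribed pattern --- precisely the genus-$0$ invariant of $X_Q$ entering $\widehat G_{X_Q}$. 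For the varieties of this paper $\mathfrak D_S$ is a finite fan with polynomial wall functions, so this reduces to a finite broken-line computation; in general it is the serious geometric input. \textbf{Step 3 (deformation and coefficients).} A $\QQ$-Gorenstein family with special fibre $(X_Q,D_Q)$ and general fibre $(X,D)$, with $X$ a $\QQ$-factorial Fano, should give $\widehat G_{X_Q}(t)=\widehat G_X(t)$ by invariance of the relevant genus-$0$ Gromov--Witten invariants (established for orbifold del Pezzo surfaces, expected in general); with Step 2 this yields the period test for $(V,w_P)$ and $X$. To see that the deformation is realised by an admissible coefficient choice one moves inside the family of Laurent polynomials obtained by varying the interior coefficients of $f$ on $P\cap N_U(\ZZ)$: members related by algebraic mutation have equal period (\S\ref{sect!period}), every $\QQ$-Gorenstein deformation of $X_Q$ ought to be visible as such a variation, and positivity and integrality are arranged by the same normalisation of edge coefficients as in Remark~\ref{rmk!binomial-edge-coeffs}.

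The decisive obstacle is Step 2: identifying the period of the theta-function potential with the quantum period of the compactification $X_Q$ is itself a form of mirror symmetry for (singular) Fano varieties, and a general proof requires the full strength of intrinsic (punctured) mirror symmetry together with a comparison of the resulting structure constants with genuine Gromov--Witten invariants of $X_Q$, which is available only in special cases. A secondary difficulty is that deformation invariance of the regularised quantum period across a $\QQ$-Gorenstein degeneration is delicate precisely because the central fibre is singular and is not known in the stated generality; and a third, more technical point is to show that the interior-coefficient freedom on $P$ suffices to realise the given deformation with positive \emph{integral} --- not merely rational --- coefficients. In practice, for the $V_{12}$ example of this paper all three points should be settled by the explicit computations of \S\ref{sect!OGr} and \S\ref{sec!LG-V12-V16}.
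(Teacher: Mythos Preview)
The statement you are attempting to prove is labelled \textbf{Conjecture}~\ref{conj!my-conj} in the paper, and the paper offers no proof of it whatsoever. It is presented purely as an expectation, motivated by analogy with Batyrev--Borisov duality and supported later only by explicit verifications in individual examples (Theorem~\ref{thm!dP5-refl}, \S\ref{sec!LG-V12-V16}, Table~\ref{tab!LPmirrors}). So there is no ``paper's own proof'' to compare your proposal against.

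To your credit, you essentially acknowledge this: you frame the argument as a reduction to two inputs that are ``expected in general'' and you identify Step~2 as a ``decisive obstacle'' that ``is itself a form of mirror symmetry''. But that means what you have written is not a proof --- it is a plausible heuristic roadmap whose key steps are themselves open problems of roughly the same depth as the conjecture. In particular: (i) the identification $\pi_{w_P}(t)=\widehat G_{X_Q}(t)$ for a singular, non-$\QQ$-factorial $X_Q$ requires a theory of quantum periods for such varieties and a comparison theorem with broken-line counts, neither of which the paper supplies or cites in the needed generality; (ii) deformation invariance of $\widehat G$ across a $\QQ$-Gorenstein degeneration with singular central fibre is, as you note, not established; and (iii) your Step~3 claim that ``every $\QQ$-Gorenstein deformation of $X_Q$ ought to be visible as such a variation'' of interior coefficients is precisely the content of the conjecture of \cite{cfp} alluded to in Remark~\ref{rmk!binomial-edge-coeffs}, not something you can assume. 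Your outline is a reasonable sketch of why one might \emph{believe} Conjecture~\ref{conj!my-conj}, but it does not constitute a proof, and the paper does not claim one.
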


Note that $P$ may admit more than one (or even no) such choice of coefficients if $X_P$ admits more than one (or no) deformation to a Fano variety $X$. In general $P$ is expected to support a Landau--Ginzburg potential corresponding to each deformation component of $X_Q$ (cf.\ Example~\ref{eg!last-one}).

\begin{rmk}\label{rmk!binomial-edge-coeffs}
We should explain how to choose the coefficients $a_p$ in the construction of the Landau--Ginzburg model $(V,w_P)$. In dimension $d=2$, the \emph{Minkowski ansatz} of the Fanosearch program \cite[\S6]{fano-mirror} suggests that the correct choice of coefficients on a reflexive polygon is to label the origin with coefficient $a_0=0$ and to label all of the other lattice points with \emph{binomial edge coefficients}, i.e.\ we label the $i$th lattice point on each edge of lattice length $k$ with the coefficient $\binom{k}{i}$. In higher dimensions we expect that the correct formulation will be given by a generalisation to this setting of the \emph{0-mutable polynomials}, introduced by Corti, Filip \& Petracci \cite{cfp}. In the toric case, the 0-mutable polynomials supported on a face $F\subset P$ are special labellings of $F$ with positive integral coefficients which are conjecturally in bijection with the smoothing components of the corresponding strata $D_F\subset X_Q$. The expectation is that the potential $w_P$ obtained by labelling the faces of $P$ with a compatible system of $0$-mutable polynomials will then specify the deformation of $X_Q$ described in Conjecture~\ref{conj!my-conj}.
\end{rmk}

\section{Dimension 2: the del Pezzo surface $\dP_5$} \label{sect!dP5}

As an illustration before we tackle the 3-dimensional case, we begin by briefly recalling the famous tale of the del Pezzo surface $\dP_5$. 

\subsection{The affine del Pezzo surface $U$}
Let $x_1,\ldots,x_5$ be the five terms of the 2-dimensional Lyness recurrence (LR$_2$) given in \S\ref{sect!LR2-terms}. Geometrically, $x_1,\ldots,x_5$ are regular functions on an affine surface 
\[ U = \Spec{\CC[x_1,\ldots,x_5]/\left( x_{i-1}x_{i+1} - x_i - 1 : i \in \ZZ/5\ZZ\right)}, \]
the vanishing locus of the five relations obtained from (LR$_2$). We can homogenise the equations with respect to a new variable $x_0$ to obtain the projective closure $X=\overline{U}\subset\PP^5_{x_0,x_1,\ldots,x_5}$ which is a smooth projective surface. The projection map $p_{12}\colon X\to \PP^2_{x_0,x_1,x_2}$ is birational, and by resolving the baselocus of $p_{12}^{-1}$ we find that $X$ is a blowup of $\PP^2$ in four points 
\[ e_1 = (0:1:-1), \: e_2 = (1:0:-1), \: d_3 = (1:0:0), \: d_5 = (0:1:0) \]
making $X$ a del Pezzo surface of degree~5. As is classically known, there are ten $(-1)$-curves on $X$. We call them $D_1,\ldots,D_5,E_1,\ldots, E_5$, where $E_1,E_2,D_3,D_5$ are the exceptional curves over the points $e_1,e_2,d_3,d_5$ and the remaining six curves are the strict transform of the lines passing through any two of these four points, labelled according to Figure~\ref{fig!dP5-blowup}. These ten curves have dual intersection diagram given by the Petersen graph. 
\begin{figure}[htbp]
\begin{center}
\begin{tikzpicture}[scale=1]
	\begin{scope}[scale=1.2,yshift=-0.3cm]
	\draw (-1.5,-1) -- (2,-1);
	\draw (-1,-1.5) -- (-1,2);
	\draw (2,-1.5) -- (-1.5,2);
	\node at (0.25,-1) [label={below:\small $e_2$}]{$\bullet$};
	\node at (1.5,-1) [label={below:\small $d_3$}]{$\bullet$};
	\node at (-1,1.5) [label={left:\small $d_5$}]{$\bullet$};
	\node at (-1,0.25) [label={left:\small $e_1$}]{$\bullet$};
	\node at (-1,2.3) {\small $D_1$};
	\node at (0.6,0.6) {\small $D_4$};
	\node at (2.3,-1) {\small $D_2$};
	\node at (0.6,-0.7) {\small $E_3$};
	\node at (-0.6,-0.6) {\small $E_4$};
	\node at (-0.7,0.6) {\small $E_5$};
	\draw (0.35,-1.2) -- (-1.2,1.9); 
	\draw (-1.2,0.35) -- (1.9,-1.2); 
	\draw (-1.2,0.45) -- (0.45,-1.2);
	\end{scope}
	
	\begin{scope}[xshift=8cm]
	\foreach \i in {1,...,5} {
		\node at ({2*sin(72*\i-72)},{2*cos(72*\i-72)}) [label={[label distance=-5pt]90-72*\i+72:\small $D_\i$}]{$\bullet$};
		\draw ({sin(\i*72)},{cos(\i*72)}) -- ({2*sin(\i*72)},{2*cos(\i*72)});
		\draw ({sin(\i*72)},{cos(\i*72)}) -- ({sin(\i*72+144)},{cos(\i*72+144)});
		\draw ({2*sin(\i*72)},{2*cos(\i*72)}) -- ({2*sin(\i*72+72)},{2*cos(\i*72+72)});
	}
		\node at ({sin(0)},{cos(0)}) [label={[label distance=-5pt]0:\small $E_1$}]{$\bullet$};
		\node at ({sin(72)},{cos(72)}) [label={[label distance=-5pt]-90:\small $E_2$}]{$\bullet$};
		\node at ({sin(144)},{cos(144)}) [label={[label distance=-10pt]-135:\small $E_3$}]{$\bullet$};
		\node at ({sin(216)},{cos(216)}) [label={[label distance=-5pt]-180:\small $E_4$}]{$\bullet$};
		\node at ({sin(288)},{cos(288)}) [label={[label distance=-5pt]90:\small $E_5$}]{$\bullet$};
		\end{scope}
\end{tikzpicture}
\caption{(a) A realisation of $X\subset \PP^5$ as a blowup of $\PP^2$. (b) The dual intersection graph of the ten $(-1)$-curves in $X$.}
\label{fig!dP5-blowup}
\end{center}
\end{figure}
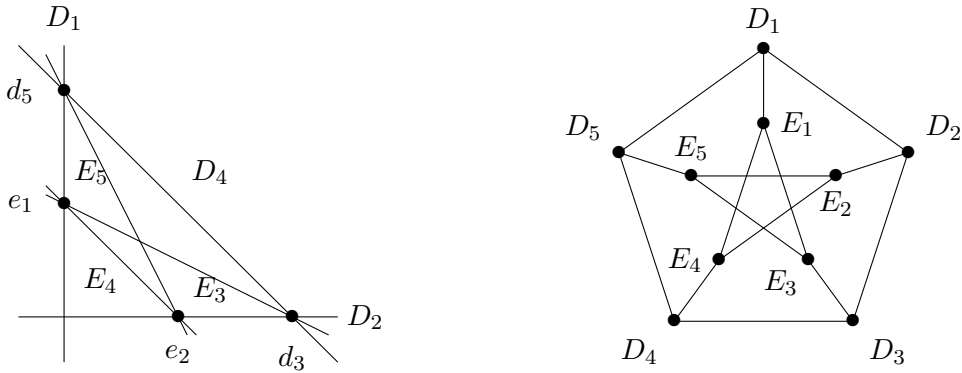

\paragraph{A log Calabi--Yau pair $(X,D)$.}
The complement to $U$ is the divisor $D := X\setminus U$, which is an anticanonical cycle $D = \bigcup_{i=1}^5D_i \in |{-K_X}|$ of five $(-1)$-curves, corresponding to the outside ring of the Petersen graph. In other words, $U$ is the interior of a log Calabi--Yau pair $(X,D)$. 

\paragraph{The interior $(-1)$-curves.}
The other five $(-1)$-curves $E = \bigcup_{i=1}^5E_i$ form a complementary anticanonical pentagram, and are called \emph{interior} $(-1)$-curves. They are given by the locus in $U$ where a cluster variable vanishes, i.e.\ $E_i|_U = \VV(x_i)\subset U$ for $i=1,\ldots,5$.

\subsection{The Grassmannian $\Gr(2,5)$}

The five equations defining the affine surface $U$ can be written as the five maximal Pfaffians of the following skewsymmetric $5\times 5$ matrix (where, for simplicity, we have omitted the diagonal of zeroes and the antisymmetry)
\[ \Pf_4\begin{pmatrix}
1 & x_1 & x_2 & 1 \\
 & 1 & x_3 & x_4 \\
 & & 1 & x_5 \\
 & & & 1
\end{pmatrix} = 0 \qquad \implies \qquad
x_{i-1}x_{i+1} = x_i + 1 \qquad \forall i\in\ZZ/5\ZZ.  \]
We can homogenise (LR$_2$) by introducing five parameters $y_1,\ldots,y_5$ in a particularly nice and symmetric way:
\begin{equation}\label{eq!hgs-LR2}\tag{$*$}
\Pf_4\begin{pmatrix}
y_5 & x_1 & x_2 & y_3 \\
 & y_2 & x_3 & x_4 \\
 & & y_4 & x_5 \\
 & & & y_1 
\end{pmatrix} = 0 \qquad \implies \qquad
x_{i-1}x_{i+1} = x_iy_i + y_{i-2}y_{i+2} \qquad \forall i\in\ZZ/5\ZZ. 
\end{equation}
After setting $x_i=-1$ for all $i$, we see that $\left(y_{2i} : i \in \ZZ/5\ZZ\right)$ is also a solution to (LR$_2$). 

The homogenised equations define a 7-dimensional variety $\sU\subset \Aa^{10}$, which is the affine cone over $\Gr(2,5)$ in the Pl\"ucker embedding. The projection $p \colon \sU \to \Aa^5_{y_1,\ldots,y_5}$ realises $\sU$ as a flat family of affine del Pezzo surfaces. For any point in the big open torus $y\in (\CC^\times)^5\subset \Aa^5$, the fibre $U_y = p^{-1}(y)$ is isomorphic to $U=X\setminus D$ after making the change of variables $z_i=\frac{x_iy_i}{y_{i+2}y_{i-2}}$ for all $i\in\ZZ/5\ZZ$. However, above the coordinate strata of $\Aa^5$ the fibres begin to degenerate. The most degenerate fibre, appearing over $0\in\Aa^5$, is the \emph{5-vertex} 
\[ p^{-1}(0) = \Aa^2_{x_1,x_2} \cup \Aa^2_{x_2,x_3} \cup \Aa^2_{x_3,x_4} \cup \Aa^2_{x_4,x_5} \cup \Aa^2_{x_5,x_1}, \]
a cycle of 5 coordinate planes glued together along their toric boundary strata. This fibration $p\colon \sU\to\Aa^5$ is (very nearly\footnote{The mirror family constructed in \cite{ghk} is fibred over the toric base scheme $\Spec\CC[\NEbar(X)]$. In terms of their construction, our parameters correspond to $y_i=z^{D_i}$, so our fibration is over $\Aa^5=\Spec\CC[z^{D_1},\ldots, z^{D_5}]$. In other words, it is obtained by restricting the coefficients to the subcone of $\NEbar(X)$ generated by the boundary components $D_1,\ldots,D_5$.}) the mirror family to $(X,D)$ constructed by Gross, Hacking \& Keel \cite{ghk}, in which the variables $x_1,\ldots,x_5$, the compactifying parameters $y_1,\ldots,y_5$ and the equations \eqref{eq!hgs-LR2} are interpreted in terms of the Gromov--Witten theory of $(X,D)$. In general, the fibres of the family $\mathcal U$ are mirror to the interior of the pair $(X,D)$, so the fact that $U$ appears in both roles here due to the fact that it is self-mirror.

\subsection{The tropicalisation of $U$}

\subsubsection{A toric model for $U$}

Consider the toric pair $(T,B)$ obtained by blowing up the two points $d_5,d_3\in\PP^2$ in Figure~\ref{fig!dP5-blowup}(a) above, so that $T$ is a smooth projective toric surface whose boundary divisor $B$ consists of a cycle of five rational curves, with self-intersection numbers $(0,0,-1,-1,-1)$. This gives a toric model $\pi\colon (X,D)\to (T,B)$ for $U$ which is a composition of two nontoric blowups of $T$ (the blowup of the image of the two points $e_1,e_2\in T$). In particular, the induced map $\pi|_D\colon D\to B$ on boundary divisors is an isomorphism and so we can label the components of $B=\bigcup_{i=1}^5B_i$ by $i\in\ZZ/5\ZZ$, where $B_i=\pi(D_i)$.

The seed for this toric model is the set of points $S=\{e_1, e_2\}$, so let us denote the corresponding inclusion of a cluster torus by $j_{12}\colon\TT_{12}=\Spec\CC[x_1^{\pm1},x_2^{\pm1}]\hookrightarrow U$. Now the Lyness map and its inverse are given by
\[ \sigma_2(x_1,x_2) = \left(x_2,\frac{1+x_2}{x_1}\right) = \left(x_2,x_3\right), \qquad \sigma_2^{-1}(x_1,x_2) = \left(\frac{1+x_1}{x_2},x_1\right) = \left(x_5,x_1\right)\] 
and these are (up to a permutation of the coordinates) the mutation at $e_1\in B_1$ and the mutation at $e_2\in B_2$ respectively. Consider the mutation at $e_1$. It is given by blowing up $e_1$ and contracting the strict transform of the curve $E_3$ to a point $e_3\in B_3$. This gives a toric model $\pi'\colon (X,D) \to (T',B')$ which is isomorphic to the original one, except that now all the labels have been shifted by one, $i\mapsto i+1\in\ZZ/5\ZZ$. In particular the dense open torus is $T'\setminus B'=\TT_{23}=\Spec\CC[x_2^{\pm1},x_3^{\pm1}]$. Thus the Lyness map can be interpreted as a mutation of toric models.
\begin{center}
  \begin{tikzpicture}[scale=1]
     \draw[-{>[length=2mm, width=2mm]}] (-1.3,-0.65) -- (-1.8,-1);
     \draw[-{>[length=2mm, width=2mm]}] (1.3,-0.65) -- (1.8,-1);
     \draw[dashed,-{>[length=2mm, width=2mm]}] (-0.6,-2) -- (0.6,-2);
     \node at (0,-2.4) {$\sigma_2$};
     
    \begin{scope}
     \foreach \i in {0,...,4} {
       \draw ({sin(72*\i-10)},{cos(72*\i-10)}) -- ({sin(72*(\i+1)+10)},{cos(72*(\i+1)+10)});
     }
     
     \node at ({0.7*sin(72*3)},{-0.7*cos(72*3)}) {$\bullet$};
     \node at ({1.2*sin(72*3)},{-1.2*cos(72*3)}) {\small $0$};
     \node at ({1.2*sin(72*4)},{-1.2*cos(72*4)}) {\small $-1$};
     \node at ({1.2*sin(72*5)},{-1.2*cos(72*5)}) {\small $-1$};
     \node at ({1.2*sin(72*1)},{-1.2*cos(72*1)}) {\small $-1$};
     \node at ({1.2*sin(72*2)},{-1.2*cos(72*2)}) {\small $-1$};
       
     \foreach \i in {4,2} {
       \draw ({0.9*sin(72*\i)},{-0.9*cos(72*\i)}) -- ({-0.2*sin(72*\i)},{0.2*cos(72*\i)});
     } 
     \end{scope}

    \begin{scope}[xshift=-3cm,yshift=-2cm]
     
     \draw ({0.7*sin(72*2)},{-0.7*cos(72*2)}) -- ({0.7*sin(72*4)},{-0.7*cos(72*4)});
     
     \node at ({1.2*sin(72*3)},{-1.2*cos(72*3)}) {\small $0$};
     \node at ({1.2*sin(72*4)},{-1.2*cos(72*4)}) {\small $-1$};
     \node at ({1.2*sin(72*5)},{-1.2*cos(72*5)}) {\small $-1$};
     \node at ({1.2*sin(72*1)},{-1.2*cos(72*1)}) {\small $-1$};
     \node at ({1.2*sin(72*2)},{-1.2*cos(72*2)}) {\small $0$};
     \foreach \i in {0,...,4} { \draw ({sin(72*\i-10)},{cos(72*\i-10)}) -- ({sin(72*(\i+1)+10)},{cos(72*(\i+1)+10)}); }
     \node at ({0.7*sin(72*3)},{-0.7*cos(72*3)}) {$\bullet$};
     \node at ({0.7*sin(72*2)},{-0.7*cos(72*2)}) {$\bullet$};
     \end{scope}
     
    \begin{scope}[xshift=3cm,yshift=-2cm]
    
     \draw ({0.7*sin(72*2)},{-0.7*cos(72*2)}) -- ({0.7*sin(72*4)},{-0.7*cos(72*4)});
     
     \node at ({1.2*sin(72*3)},{-1.2*cos(72*3)}) {\small $0$};
     \node at ({1.2*sin(72*4)},{-1.2*cos(72*4)}) {\small $0$};
     \node at ({1.2*sin(72*5)},{-1.2*cos(72*5)}) {\small $-1$};
     \node at ({1.2*sin(72*1)},{-1.2*cos(72*1)}) {\small $-1$};
     \node at ({1.2*sin(72*2)},{-1.2*cos(72*2)}) {\small $-1$};
     \foreach \i in {0,...,4} { \draw ({sin(72*\i-10)},{cos(72*\i-10)}) -- ({sin(72*(\i+1)+10)},{cos(72*(\i+1)+10)}); }
     \node at ({0.7*sin(72*3)},{-0.7*cos(72*3)}) {$\bullet$};
     \node at ({0.7*sin(72*4)},{-0.7*cos(72*4)}) {$\bullet$};
     \end{scope}
     
  \end{tikzpicture}
\end{center}
By the five-periodicity of the Lyness map, the surface $U$ contains five cluster torus charts $\TT_{i,i+1} :=\Spec\CC[x_i^{\pm1},x_{i+1}^{\pm1}]$ for $i\in \ZZ/5\ZZ$ and $U$ is given by the union of all five of them, identified according to the mutations.

\paragraph{The exchange graph of $U$.}
Each toric model for $U$ has two nontoric centres to blow up in, so the graph is 2-valent. The 5-periodicity implies that $G$ is a pentagon.

\subsubsection{The tropicalisation $N_U$}

Consider the complete fan $\mathcal F$ in $\RR^2$ determined by the five rays $\rho_i=\RR_{\geq0} v_i$ for $i\in \ZZ/5\ZZ$, where the $v_i$ are the five points
\[ v_1 = (-1,0), \quad v_2 = (0,-1), \quad v_3 = (1,0), \quad v_4 = (1,1), \quad v_5 = (0,1). \]
This is the fan of the toric pair $(T,B)$ which appears in our toric model. We can now construct $N_U$, the tropicalisation of $U$, by taking this fan $\mathcal F$ and altering the integral affine structure in $\RR^2$ as described in \S\ref{sec!trop}. This changes the affine structure along the rays $\rho_1$ and $\rho_2$, corresponding to the two components of $(T,B)$ along which a nontoric blowup occurs, and has the effect of bending lines that pass through $\rho_1$ and $\rho_2$ towards the origin. As a result it introduces a singularity at the origin $0\in N_U$. 

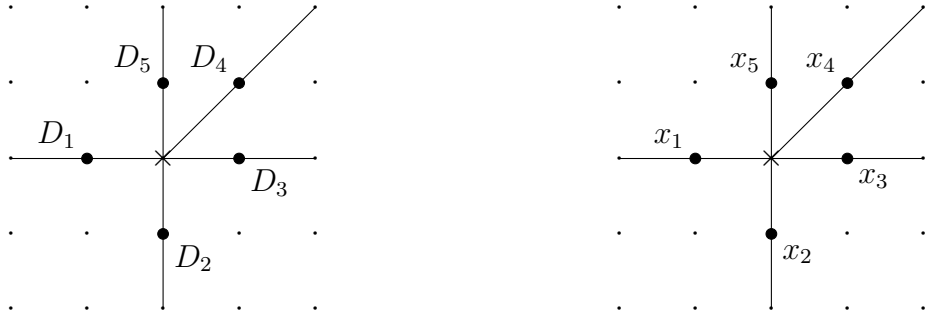
\begin{figure}[htbp]
\begin{center}
  \begin{tikzpicture}[scale=1]
  
	\foreach \x in {-2,...,2} {
	\foreach \y in {-2,...,2} {
    	\node at ({\x},{\y}) {$\cdot$};
	}}
	
    \draw (-2,0) -- (2,0) (0,-2) -- (0,2) (0,0) -- (2,2);
    \node at (0,0) {$\times$};

	\node[anchor=south east] at (-1,0) {$D_1$};
	\node[anchor=north west] at (0,-1) {$D_2$};
	\node[anchor=north west] at (1,0) {$D_3$};
	\node[anchor=south east] at (1,1) {$D_4$};
	\node[anchor=south east] at (0,1) {$D_5$}; 
	
	\draw[fill=black] (-1,0) circle (2pt);
	\draw[fill=black] (1,0) circle (2pt);
	\draw[fill=black] (0,1) circle (2pt);
	\draw[fill=black] (1,1) circle (2pt);
	\draw[fill=black] (0,-1) circle (2pt);

	\begin{scope}[xshift = 8cm]
  
	\foreach \x in {-2,...,2} {
	\foreach \y in {-2,...,2} {
    	\node at ({\x},{\y}) {$\cdot$};
	}}
	
    \draw (-2,0) -- (2,0) (0,-2) -- (0,2) (0,0) -- (2,2);
    \node at (0,0) {$\times$};

	\node[anchor=south east] at (-1,0) {$x_1$};
	\node[anchor=north west] at (0,-1) {$x_2$};
	\node[anchor=north west] at (1,0) {$x_3$};
	\node[anchor=south east] at (1,1) {$x_4$};
	\node[anchor=south east] at (0,1) {$x_5$}; 
	
	\draw[fill=black] (-1,0) circle (2pt);
	\draw[fill=black] (1,0) circle (2pt);
	\draw[fill=black] (0,1) circle (2pt);
	\draw[fill=black] (1,1) circle (2pt);
	\draw[fill=black] (0,-1) circle (2pt);
	\end{scope}
  \end{tikzpicture}
\caption{(a) The fan for $(X,D)$ in $N_U$. (b) The scattering diagram for $U$ in $M_U$.}
\label{fig!dP5-exchange-graph}
\end{center}
\end{figure}

\subsubsection{A scattering diagram for $U$}\label{sec!dp5-scattering}

The same fan $\mathcal F$ in $N_U$ also supports the structure of a scattering diagram $\mathfrak D$ which can be used to construct the coordinate ring of (the mirror to) $U$. The construction of $\mathfrak D$ and its relationship to $\CC[U]$ is described in \cite[Example 3.7]{ghk}. To obtain $\mathfrak D$, we attach the following five scattering functions $f_i\in\CC[z_1^{\pm1},z_2^{\pm1}]$ to the rays $\rho_i$ for $i\in\ZZ/5\ZZ$:
\[ f_1=1 + z_1, \quad f_2=1 + z_2, \quad f_3=1 + z_1, \quad f_4=1 + z_1z_2, \quad f_5=1 + z_2. \]
It is now straightforward to check that the collection of walls $\mathfrak D= \{ \left(\rho_i,f_i\right) : i \in \ZZ/5\ZZ\}$ defines a consistent scattering diagram, i.e.\ that starting from any point in the interior of a chamber of $\mathcal F$ and composing the five wall crossing automorphisms corresponding to a loop around $0\in N_U$ yields the identity. 

To get from the scattering diagram back to $\CC[U]$ we can consider the cluster monomials $x_i=\vartheta_{v_i}$ which are the theta functions corresponding to the points $v_i\in M_U$ for $i\in \ZZ/5\ZZ$. As described in \S\ref{sec!scattering} these can be expanded as Laurent polynomials $x_i=\sum_\ell c_\ell z^{m_\ell}$ where $\ell$ ranges over all of the broken lines for $v_i$ starting at some given point $q\in N_U$. In our case the coefficients of all the scattering functions are all equal to $1$ so all $c_\ell=1$. Suppose that $q$ is chosen near to the point $(-1+\varepsilon,1+\varepsilon)$ for some small irrational $\varepsilon>0$, which lies in the chamber $\langle v_5,v_1 \rangle$. Then the five expansions of the cluster variables are shown as in Figure~\ref{fig!dP5-broken-lines}, and these five terms generate the coordinate ring $\CC[U]$.
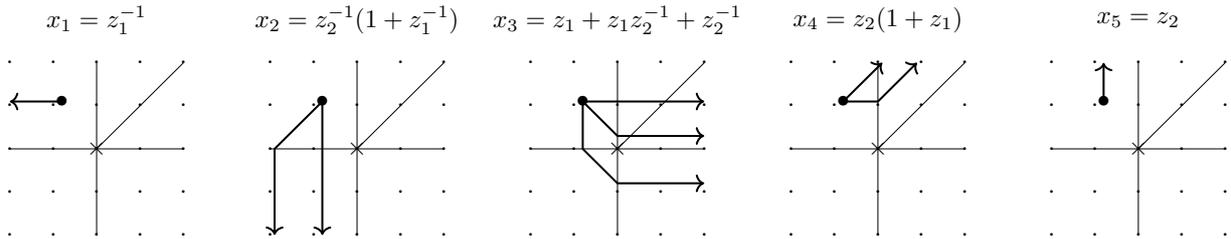
\begin{figure}[htbp]
\begin{center}
\resizebox{\textwidth}{!}{
\begin{tikzpicture}[scale = 0.6,font=\footnotesize]

   \begin{scope}[xshift = 0cm]
   \node at (0,3) {$x_1=z_1^{-1}$};
   \node at (0,0) {$\times$};
   \draw (-2,0) -- (2,0);
   \draw (0,-2) -- (0,2);
   \draw (0,0) -- (2,2);
	
	\foreach \x in {-2,...,2} {
	\foreach \y in {-2,...,2} {
    	\node at ({\x},{\y}) {$\cdot$};
	}}
	
	\node at (-0.8,1.1) {$\bullet$};
	\draw[thick,->] (-0.8,1.1) -- (-2,1.1);
	\end{scope}

   \begin{scope}[xshift = 6cm]
   \node at (0,3) {$x_2=z_2^{-1}(1+z_1^{-1})$};
   \node at (0,0) {$\times$};
   \draw (-2,0) -- (2,0);
   \draw (0,-2) -- (0,2);
   \draw (0,0) -- (2,2);
	
	\foreach \x in {-2,...,2} {
	\foreach \y in {-2,...,2} {
    	\node at ({\x},{\y}) {$\cdot$};
	}}
	
	\node at (-0.8,1.1) {$\bullet$};
	\draw[thick,->] (-0.8,1.1) -- (-0.8,-2);
	\draw[thick,->] (-0.8,1.1) -- (-1.9,0) -- (-1.9,-2);
	\end{scope}

   \begin{scope}[xshift = 12cm]
   \node at (0,3) {$x_3=z_1+z_1z_2^{-1} + z_2^{-1}$};
   \node at (0,0) {$\times$};
   \draw (-2,0) -- (2,0);
   \draw (0,-2) -- (0,2);
   \draw (0,0) -- (2,2);
	
	\foreach \x in {-2,...,2} {
	\foreach \y in {-2,...,2} {
    	\node at ({\x},{\y}) {$\cdot$};
	}}
	
	\node at (-0.8,1.1) {$\bullet$};
	\draw[thick,->] (-0.8,1.1) -- (2,1.1);
	\draw[thick,->] (-0.8,1.1) -- (0,0.3) -- (2,0.3);
	\draw[thick,->] (-0.8,1.1) -- (-0.8,0) -- (0,-0.8) -- (2,-0.8);
	\end{scope}

   \begin{scope}[xshift = 18cm]
   \node at (0,3) {$x_4=z_2(1+z_1)$};
   \node at (0,0) {$\times$};
   \draw (-2,0) -- (2,0);
   \draw (0,-2) -- (0,2);
   \draw (0,0) -- (2,2);
	
	\foreach \x in {-2,...,2} {
	\foreach \y in {-2,...,2} {
    	\node at ({\x},{\y}) {$\cdot$};
	}}
	
	\node at (-0.8,1.1) {$\bullet$};
	\draw[thick,->] (-0.8,1.1) -- (0.1,2);
	\draw[thick,->] (-0.8,1.1) -- (0,1.1) -- (0.9,2);
	\end{scope}

   \begin{scope}[xshift = 24cm]
   \node at (0,3) {$x_5=z_2$};
   \node at (0,0) {$\times$};
   \draw (-2,0) -- (2,0);
   \draw (0,-2) -- (0,2);
   \draw (0,0) -- (2,2);
	
	\foreach \x in {-2,...,2} {
	\foreach \y in {-2,...,2} {
    	\node at ({\x},{\y}) {$\cdot$};
	}}
	
	\node at (-0.8,1.1) {$\bullet$};
	\draw[thick,->] (-0.8,1.1) -- (-0.8,2);
	\end{scope}
\end{tikzpicture} }
\caption{The monomials $x_i$ obtained by counting broken lines.}
\label{fig!dP5-broken-lines}
\end{center}
\end{figure}

For any $a,b\in\ZZ_{\geq0}$ now consider the point $m=av_i + bv_{i+1}$ in the chamber $\langle v_i,v_{i+1} \rangle$. Since any broken line that leaves a chamber of $\mathfrak D$ can never return to it (cf.\ \cite[Example 3.7]{ghk}), it is easy to see that there is only one broken line for $m$ that starts at a point $q$ very near to $m$, namely the straight line that leaves $q$ in the direction of $m$. Thus if we choose to expand the theta function $\vartheta_m$ by counting broken lines starting at $q$ we obtain $\vartheta_m=\vartheta_i^a\vartheta_{i+1}^b$. This completely determines all of the theta functions associated to the points of $M_U(\ZZ)$.

\subsubsection{The intersection pairing}

Since the mirror of $U$ constructed from $\mathfrak D$ is isomorphic to $U$, it follows that the tropicalisation $N_U$ is self-dual, i.e.\ that $N_U\cong M_U$. The duality between the two affine manifolds $N_U$ and $M_U$ is given by extending the intersection pairing 
\[ \langle {\cdot},{\cdot} \rangle \colon N_U(\ZZ)\times M_U(\ZZ) \to \ZZ \qquad \langle n,m \rangle := \nu_{D_n}(\vartheta_m) \]
to an intersection pairing $\langle {\cdot},{\cdot} \rangle \colon N_U\times M_U \to \RR$ which can be calculated $\RR$-linearly in each cone of the fan $\mathcal F$. Given that, as a rational function on $X$, the divisor of $x_i$ is 
\[ \operatorname{div}x_i = E_i + D_i - D_{i-2} - D_{i+2}, \]
the pairing $N_U\times M_U\to \RR$ can be determined by Table~\ref{table!dP5}. 
\begin{table}[htp]
\caption{Table of intersection numbers $\langle D_i,x_j \rangle$ for $i,j\in\ZZ/5\ZZ$.}
\begin{center}
\renewcommand{\arraystretch}{1.2}
\begin{tabular}{|c|ccccc|} \hline
 & $D_1$ & $D_2$ & $D_3$ & $D_4$ & $D_5$ \\ \hline
$x_1$ & $ 1$ & $ 0$ & $-1$ & $-1$ & $ 0$ \\
$x_2$ & $ 0$ & $ 1$ & $ 0$ & $-1$ & $-1$ \\
$x_3$ & $-1$ & $ 0$ & $ 1$ & $ 0$ & $-1$ \\
$x_4$ & $-1$ & $-1$ & $ 0$ & $ 1$ & $ 0$ \\
$x_5$ & $ 0$ & $-1$ & $-1$ & $ 0$ & $ 1$ \\ \hline
\end{tabular}
\end{center}
\label{table!dP5}
\end{table}%

For example, we can now compute the five halfspaces $(x_i)^{\geq-1}\subset N_U$, which are shown in Figure~\ref{fig!halfspaces}. Equivalently, exactly the same diagrams show the five halfspaces $(D_i)^{\geq-1}\subset M_U$.
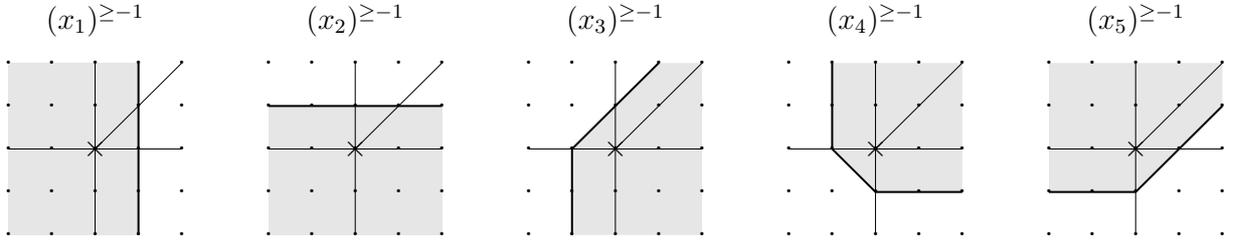
\begin{figure}[htbp]
\begin{center}
\resizebox{\textwidth}{!}{
\begin{tikzpicture}[scale = 0.6]

   \begin{scope}[xshift = 0cm]
   \fill[gray!20] (1,2) -- (1,-2) -- (-2,-2) -- (-2,2) -- cycle;
   \node at (0,3) {$(x_1)^{\geq-1}$};
   \node at (0,0) {$\times$};
   \draw (-2,0) -- (2,0);
   \draw (0,-2) -- (0,2);
   \draw (0,0) -- (2,2);
	
	\foreach \x in {-2,...,2} {
	\foreach \y in {-2,...,2} {
    	\node at ({\x},{\y}) {$\cdot$};
	}}
	
	\draw[thick] (1,-2) -- (1,2);
	\end{scope}

   \begin{scope}[xshift = 6cm]
   \fill[gray!20] (2,1) -- (-2,1) -- (-2,-2) -- (2,-2) -- cycle;
   \node at (0,3) {$(x_2)^{\geq-1}$};
   \node at (0,0) {$\times$};
   \draw (-2,0) -- (2,0);
   \draw (0,-2) -- (0,2);
   \draw (0,0) -- (2,2);
	
	\foreach \x in {-2,...,2} {
	\foreach \y in {-2,...,2} {
    	\node at ({\x},{\y}) {$\cdot$};
	}}
	
	\draw[thick] (-2,1) -- (2,1);
	\end{scope}

   \begin{scope}[xshift = 12cm]
   \fill[gray!20] (-1,-2) -- (-1,0) -- (1,2) -- (2,2) -- (2,-2) -- cycle;
   \node at (0,3) {$(x_3)^{\geq-1}$};
   \node at (0,0) {$\times$};
   \draw (-2,0) -- (2,0);
   \draw (0,-2) -- (0,2);
   \draw (0,0) -- (2,2);
	
	\foreach \x in {-2,...,2} {
	\foreach \y in {-2,...,2} {
    	\node at ({\x},{\y}) {$\cdot$};
	}}
	
	\draw[thick] (1,2) -- (-1,0) -- (-1,-2);
	\end{scope}

   \begin{scope}[xshift = 18cm]
   \fill[gray!20] (-1,2) -- (-1,0) -- (0,-1) -- (2,-1) -- (2,2) -- cycle;
   \node at (0,3) {$(x_4)^{\geq-1}$};
   \node at (0,0) {$\times$};
   \draw (-2,0) -- (2,0);
   \draw (0,-2) -- (0,2);
   \draw (0,0) -- (2,2);
	
	\foreach \x in {-2,...,2} {
	\foreach \y in {-2,...,2} {
    	\node at ({\x},{\y}) {$\cdot$};
	}}
	
	\draw[thick] (-1,2) -- (-1,0) -- (0,-1) -- (2,-1);
	\end{scope}

   \begin{scope}[xshift = 24cm]
   \fill[gray!20] (-2,-1) -- (0,-1) -- (2,1) -- (2,2) -- (-2,2) -- cycle;
   \node at (0,3) {$(x_5)^{\geq-1}$};
   \node at (0,0) {$\times$};
   \draw (-2,0) -- (2,0);
   \draw (0,-2) -- (0,2);
   \draw (0,0) -- (2,2);
	
	\foreach \x in {-2,...,2} {
	\foreach \y in {-2,...,2} {
    	\node at ({\x},{\y}) {$\cdot$};
	}}
	
	\draw[thick] (2,1) -- (0,-1) -- (-2,-1);
	\end{scope}
\end{tikzpicture} }
\caption{The halfspaces $(x_i)^{\geq-1}\subset N_U$ for $i\in \ZZ/5\ZZ$.}
\label{fig!halfspaces}
\end{center}
\end{figure}

\subsection{Applications to mirror symmetry} \label{sec!dP5-mirror-constructions}

\subsubsection{Reflexive polygons in $N_U$}

Now that we have built the spaces $N_U$ and $M_U$ as integral affine manifolds with singularitiesand their their dual intersection pairing $\langle{\cdot},{\cdot}\rangle \colon N_U\times M_U\to \RR$, we can use them to construct examples of del Pezzo pairs and their mirror Landau--Ginzburg models, as in \S\ref{sec!BB-duality}.

\begin{thm}\label{thm!dP5-refl}
Up to automorphism there are 23 reflexive polygons in $N_U$, which are displayed in Figure~\ref{fig!refl-dP5}. For each reflexive polytope $P$, the Landau--Ginzburg model $w_Q\colon U\to \CC$ obtained by labelling $Q=P^\star$ with binomial edge coefficients has the right period to be a mirror to the del Pezzo pair $(X_P,D_P)$ (where the del Pezzo surface of degree 8 represented by the unique polygon with eight vertices is $\mathbb F_1$).
\end{thm}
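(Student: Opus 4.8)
\emph{Overview.} The plan is to prove the two assertions separately. The count of $23$ reflexive polygons is a finite enumeration, once an \emph{a priori} bound on the size of a reflexive polygon in $N_U$ is in place; the mirror statement is then a case-by-case computation of classical periods, to be matched against the known del Pezzo quantum periods recorded in \cite{ccgk,fano-mirror}.

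\emph{Enumeration.} I would first observe that reflexivity is symmetric — a reflexive polygon $P\subset N_U$ has integral polar dual $Q=P^\star\subset M_U$, and $(P^\star)^\star=P$, so $Q$ is reflexive too — and that the self-duality $N_U\cong M_U$ then reduces everything to listing reflexive polygons in $N_U$ up to the automorphism group. That group is the dihedral group $\Dih_5$, generated by the $5$-periodic Lyness map $\sigma_2$ and a reflection and permuting the rays of the fan $\mathcal F$ of Figure~\ref{fig!dP5-exchange-graph}(a); duality then pairs the resulting list with itself. To see the list is finite, I would use that a reflexive polygon in $N_U$ has the origin as its only interior lattice point and hence — by an analogue of Scott's inequality valid in this setting — boundedly many lattice points, so boundedly many vertices. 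The cleanest route to the required bound is to develop the affine structure: cutting $N_U$ along the smooth ray $\rho_3$ immerses $N_U\setminus\rho_3$ into $\RR^2$, with affine monodromy about the origin equal to the product of the two shears picked up on crossing $\rho_1$ and $\rho_2$, so that a reflexive polygon lifts to an honest convex lattice polygon there. Having fixed a bound, I would then enumerate: run over all lattice polygons within it, keep those that are strongly convex in $N_U$ with integral polar dual (the dual computed $\RR$-linearly cone by cone from the intersection numbers of Table~\ref{table!dP5}), and quotient by $\Dih_5$, checking pairwise inequivalence; this should produce exactly the $23$ polygons of Figure~\ref{fig!refl-dP5}.

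\emph{Periods.} For the mirror statement, fix one of the $23$ polygons $P$ and set $Q=P^\star$. Labelling the lattice points of $Q$ with binomial edge coefficients (and the origin with $0$) and summing the corresponding theta functions gives the potential $w_Q=\sum_{q\in Q\cap M_U(\ZZ)}a_q\vartheta_q\in\CC[U]$. By \S\ref{sec!dp5-scattering} every $\vartheta_q$ is a monomial $x_i^ax_{i+1}^b$ in the cluster variables, so restricting to the chart $\TT_{12}=\Spec\CC[x_1^{\pm1},x_2^{\pm1}]$ (via $x_3=\tfrac{x_2+1}{x_1}$, $x_4=\tfrac{x_1+x_2+1}{x_1x_2}$, $x_5=\tfrac{x_1+1}{x_2}$) produces a genuine Laurent polynomial $w_Q|_{\TT_{12}}$, and by \S\ref{sect!period}
\[ \pi_{w_Q}(t)=\pi_{w_Q|_{\TT_{12}}}(t)=\sum_{n\geq0}\operatorname{const}\!\big((w_Q|_{\TT_{12}})^n\big)\,t^n. \]
I would compute this series to enough order to identify it, and compare it with the regularised quantum period $\widehat G_X(t)$ of the relevant smooth del Pezzo surface $X$ — the generic fibre of a $\QQ$-Gorenstein smoothing of $X_P=\Proj R_Q$ — where $X$, and in particular its degree, is determined by the combinatorics of $Q$. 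Matching against the tabulated del Pezzo quantum periods of \cite{ccgk,fano-mirror} then settles that case. The parenthetical claim is the eight-vertex polygon: it is the unique reflexive polygon with eight vertices, its $X_P$ is a degenerate del Pezzo surface of degree $8$, and one should find $\pi_{w_Q}(t)=\widehat G_{\FF_1}(t)\neq\widehat G_{\PP^1\times\PP^1}(t)$, consistent with $\FF_1$ and $\PP^1\times\PP^1$ lying in different deformation families and only $\FF_1$ being a smoothing of this $X_P$.

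\emph{Main obstacle.} The hard part is the finiteness bound for the enumeration: unlike in ordinary toric geometry the halfspaces $(\vartheta_m)^{\geq-1}\subset N_U$ can be bounded or even empty, so the classification of plane reflexive polygons cannot simply be imported, and one must work genuinely with the singular affine structure (via the developing map above) to bound the search space. Once that bound is secured, the enumeration and the $23$ period computations are routine; the one remaining delicacy is identifying the correct smoothing component of a degenerate $X_P$ — the degree-$8$ polygon being the representative instance — which is exactly what forces $\FF_1$ in place of $\PP^1\times\PP^1$.
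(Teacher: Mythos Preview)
Your proposal is broadly sound but takes a different route from the paper on both parts.

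\emph{Enumeration.} The paper does not attempt any \emph{a priori} size bound or Scott-type inequality. Instead it argues geometrically: starting from the self-dual pentagon of Example~\ref{eg!dP5-pentagon}, one adds or removes a vertex of $P$, which on the side of $(X_P,D_P)$ corresponds to blowing up or down a $(-1)$-curve in the boundary of the Looijenga pair. Walking through this graph of elementary modifications and discarding polygons that acquire a nonzero interior lattice point produces exactly the $23$ polygons of Figure~\ref{fig!refl-dP5}. This completely bypasses the obstacle you flag: no developing-map argument or bounded search is needed, and finiteness comes for free because the underlying del Pezzo surfaces have degree between $1$ and $8$. Your approach would also work in principle, but the lift along a cut ray does not give a closed convex polygon in $\RR^2$ (the two boundary edges meeting the cut are related by the monodromy, not equal), so the reduction to ordinary plane geometry is more delicate than you indicate, and the Scott-type bound would still need to be proved in this singular-affine setting.

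\emph{Periods.} Rather than computing $\pi_{w_Q}(t)$ to sufficient order and matching numerically against the tables in \cite{ccgk}, the paper observes that restricting $w_Q$ to $\TT_{12}$ gives a Laurent polynomial $w_0$ and then checks directly that $w_0$ is \emph{mutation equivalent} to a known Laurent polynomial mirror for the relevant del Pezzo surface, appealing to \cite[Figure~1]{many-authors}. This is a finite combinatorial check and avoids any discussion of how many coefficients suffice to pin down the period. Your method would also succeed (the del Pezzo quantum periods satisfy Picard--Fuchs equations of low order, so finitely many terms determine them), but the mutation-equivalence check is both cleaner and more in keeping with the Fanosearch philosophy.
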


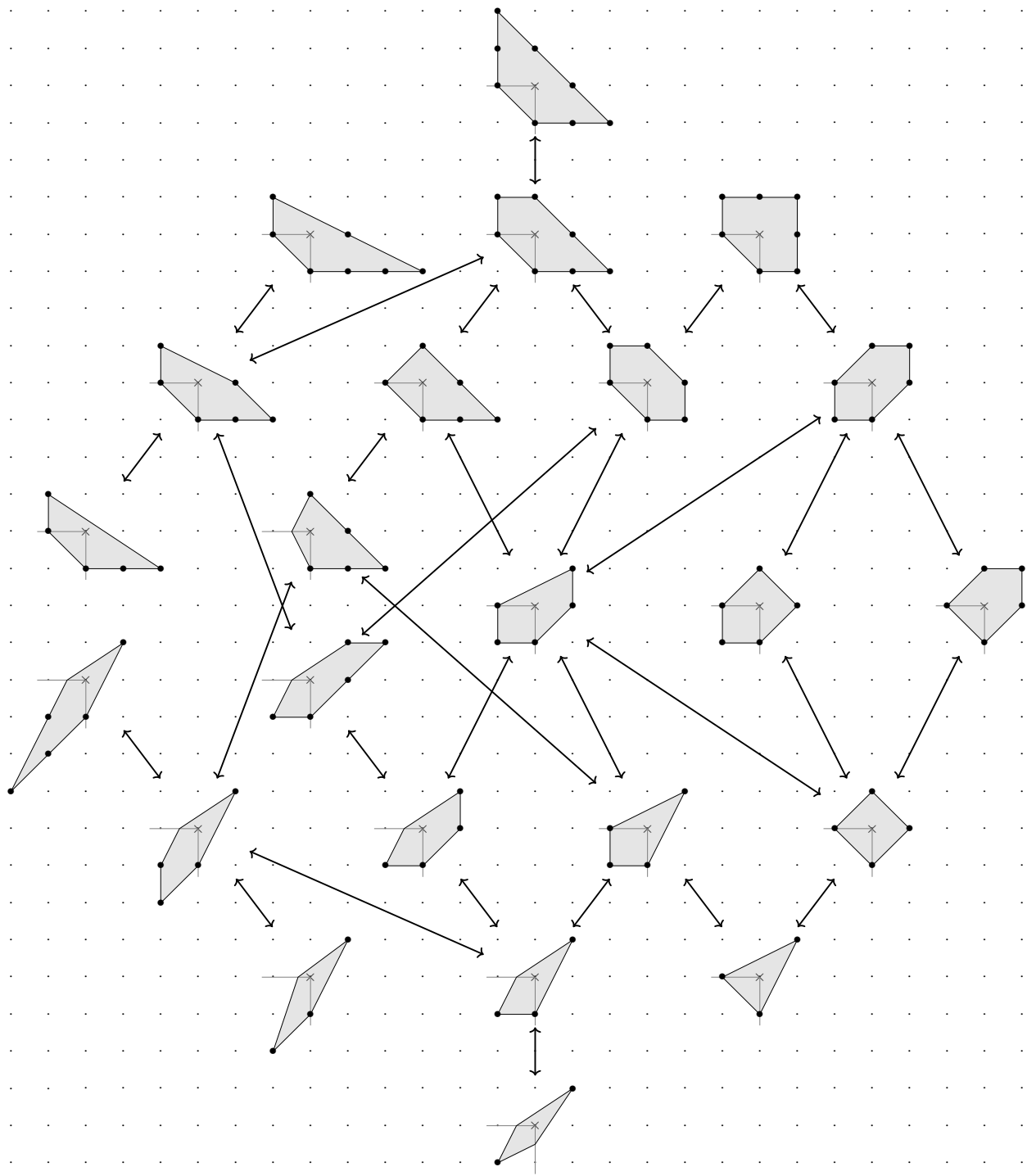
\begin{figure}[htbp]
\begin{center}
\resizebox{\textwidth}{!}{\begin{tikzpicture}

\newlength\myspace
\setlength{\myspace}{1.2cm}

   \foreach \i in {-14,...,13}
   \foreach \j in {-2,...,29}{
       \node at (\i,-\j) {$\cdot$};
   }
   \begin{scope}[xshift = -6cm, yshift = -24cm]
   \node[inner sep=\myspace] (a1) at (0,0) {$\times$};
   \draw[gray] (-1.3,0) -- (0,0) -- (0,-1.3);
   \draw[fill=gray,fill opacity=0.2] (1,1) -- (0,-1) -- (-1,-2) -- (-1/3,0) -- cycle;
   \node at (1,1) {$\bullet$};
   \node at (0,-1) {$\bullet$};
   \node at (-1,-2) {$\bullet$};
   \end{scope}
   
   \begin{scope}[xshift = -12cm, yshift = -16cm]
   \node[inner sep=\myspace] (a2) at (0,0) {$\times$};
   \draw[gray] (-1.3,0) -- (0,0) -- (0,-1.3);
   \draw[fill=gray,fill opacity=0.2] (1,1) -- (-1/2,0) -- (-2,-3) -- (0,-1) -- cycle;
   \node at (1,1) {$\bullet$};
   \node at (0,-1) {$\bullet$};
   \node at (-1,-2) {$\bullet$};
   \node at (-2,-3) {$\bullet$};
   \node at (-1,-1) {$\bullet$};
   \end{scope}
   
   \begin{scope}[xshift = -12cm, yshift = -12cm]
   \node[inner sep=\myspace] (a3) at (0,0) {$\times$};
   \draw[gray] (-1.3,0) -- (0,0) -- (0,-1.3);
   \draw[fill=gray,fill opacity=0.2] (2,-1) -- (-1,1) -- (-1,0) -- (0,-1) -- cycle;
   \node at (2,-1) {$\bullet$};
   \node at (1,-1) {$\bullet$};
   \node at (0,-1) {$\bullet$};
   \node at (-1,0) {$\bullet$};
   \node at (-1,1) {$\bullet$};
   \end{scope}
   
   \begin{scope}[xshift = -6cm, yshift = -4cm]
   \node[inner sep=\myspace] (a4) at (0,0) {$\times$};
   \draw[gray] (-1.3,0) -- (0,0) -- (0,-1.3);
   \draw[fill=gray,fill opacity=0.2] (-1,1) -- (3,-1) -- (0,-1) -- (-1,0) -- cycle;
   \node at (-1,1) {$\bullet$};
   \node at (-1,0) {$\bullet$};
   \node at (0,-1) {$\bullet$};
   \node at (1,-1) {$\bullet$};
   \node at (2,-1) {$\bullet$};
   \node at (3,-1) {$\bullet$};
   \node at (1,0) {$\bullet$};
   \end{scope}
   
   \begin{scope}[xshift = 0cm, yshift = -28cm]
   \node[inner sep=\myspace] (a5) at (0,0) {$\times$};
   \draw[gray] (-1.3,0) -- (0,0) -- (0,-1.3);
   \draw[fill=gray,fill opacity=0.2] (-1,-1) -- (-1/2,0) -- (1,1) -- (0,-1/2) -- cycle;
   \node at (1,1) {$\bullet$};
   \node at (-1,-1) {$\bullet$};
   \end{scope}
   
   \begin{scope}[xshift = -9cm, yshift = -20cm]
   \node[inner sep=\myspace] (a6) at (0,0) {$\times$};
   \draw[gray] (-1.3,0) -- (0,0) -- (0,-1.3);
   \draw[fill=gray,fill opacity=0.2] (-1,-2) -- (-1,-1) -- (-1/2,0) -- (1,1) -- (0,-1) -- cycle;
   \node at (1,1) {$\bullet$};
   \node at (-1,-1) {$\bullet$};
   \node at (0,-1) {$\bullet$};
   \node at (-1,-2) {$\bullet$};
   \end{scope}
   
   \begin{scope}[xshift = -9cm, yshift = -8cm]
   \node[inner sep=\myspace] (a7) at (0,0) {$\times$};
   \draw[gray] (-1.3,0) -- (0,0) -- (0,-1.3);
   \draw[fill=gray,fill opacity=0.2] (-1,1) -- (1,0) -- (2,-1) -- (0,-1) -- (-1,0) -- cycle;
   \node at (-1,1) {$\bullet$};
   \node at (1,0) {$\bullet$};
   \node at (2,-1) {$\bullet$};
   \node at (1,-1) {$\bullet$};
   \node at (0,-1) {$\bullet$};
   \node at (-1,0) {$\bullet$};
   \end{scope}
   
   \begin{scope}[xshift = 0cm, yshift = -0cm]
   \node[inner sep=\myspace] (a8) at (0,0) {$\times$};
   \draw[gray] (-1.3,0) -- (0,0) -- (0,-1.3);
   \draw[fill=gray,fill opacity=0.2] (-1,2) -- (-1,0) -- (0,-1) -- (2,-1) -- cycle;
   \node at (-1,2) {$\bullet$};
   \node at (-1,1) {$\bullet$};
   \node at (-1,0) {$\bullet$};
   \node at (0,-1) {$\bullet$};
   \node at (1,-1) {$\bullet$};
   \node at (2,-1) {$\bullet$};
   \node at (1,0) {$\bullet$};
   \node at (0,1) {$\bullet$};
   \end{scope}
   
   \begin{scope}[xshift = 0cm, yshift = -24cm]
   \node[inner sep=\myspace] (a9) at (0,0) {$\times$};
   \draw[gray] (-1.3,0) -- (0,0) -- (0,-1.3);
   \draw[fill=gray,fill opacity=0.2] (-1,-1) -- (0,-1) -- (1,1) -- (-1/2,0) -- cycle;
   \node at (-1,-1) {$\bullet$};
   \node at (0,-1) {$\bullet$};
   \node at (1,1) {$\bullet$};
   \end{scope}
   
   \begin{scope}[xshift = -3cm, yshift = -20cm]
   \node[inner sep=\myspace] (a10) at (0,0) {$\times$};
   \draw[gray] (-1.3,0) -- (0,0) -- (0,-1.3);
   \draw[fill=gray,fill opacity=0.2] (-1,-1) -- (0,-1) -- (1,0) -- (1,1) -- (-1/2,0) -- cycle;
   \node at (-1,-1) {$\bullet$};
   \node at (1,0) {$\bullet$};
   \node at (0,-1) {$\bullet$};
   \node at (1,1) {$\bullet$};
   \end{scope}
   
   \begin{scope}[xshift = -6cm, yshift = -16cm]
   \node[inner sep=\myspace] (a11) at (0,0) {$\times$};
   \draw[gray] (-1.3,0) -- (0,0) -- (0,-1.3);
   \draw[fill=gray,fill opacity=0.2] (-1,-1) -- (0,-1) -- (2,1) -- (1,1) -- (-1/2,0) -- cycle;
   \node at (-1,-1) {$\bullet$};
   \node at (1,0) {$\bullet$};
   \node at (0,-1) {$\bullet$};
   \node at (1,1) {$\bullet$};
   \node at (2,1) {$\bullet$};
   \end{scope}
   
   \begin{scope}[xshift = -6cm, yshift = -12cm]
   \node[inner sep=\myspace] (a12) at (0,0) {$\times$};
   \draw[gray] (-1.3,0) -- (0,0) -- (0,-1.3);
   \draw[fill=gray,fill opacity=0.2] (0,-1) -- (-1/2,0) -- (0,1) -- (2,-1) -- cycle;
   \node at (0,-1) {$\bullet$};
   \node at (0,1) {$\bullet$};
   \node at (1,0) {$\bullet$};
   \node at (1,-1) {$\bullet$};
   \node at (2,-1) {$\bullet$};
   \end{scope}
   
   \begin{scope}[xshift = -3cm, yshift = -8cm]
   \node[inner sep=\myspace] (a13) at (0,0) {$\times$};
   \draw[gray] (-1.3,0) -- (0,0) -- (0,-1.3);
   \draw[fill=gray,fill opacity=0.2] (0,-1) -- (-1,0) -- (0,1) -- (2,-1) -- cycle;
   \node at (-1,0) {$\bullet$};
   \node at (0,-1) {$\bullet$};
   \node at (0,1) {$\bullet$};
   \node at (1,0) {$\bullet$};
   \node at (1,-1) {$\bullet$};
   \node at (2,-1) {$\bullet$};
   \end{scope}
   
   \begin{scope}[xshift = 0cm, yshift = -4cm]
   \node[inner sep=\myspace] (a14) at (0,0) {$\times$};
   \draw[gray] (-1.3,0) -- (0,0) -- (0,-1.3);
   \draw[fill=gray,fill opacity=0.2] (-1,1) -- (-1,0) -- (0,-1) -- (2,-1) -- (0,1) -- cycle;
   \node at (-1,0) {$\bullet$};
   \node at (-1,1) {$\bullet$};
   \node at (0,-1) {$\bullet$};
   \node at (0,1) {$\bullet$};
   \node at (1,0) {$\bullet$};
   \node at (1,-1) {$\bullet$};
   \node at (2,-1) {$\bullet$};
   \end{scope}
   
   \begin{scope}[xshift = 3cm, yshift = -20cm]
   \node[inner sep=\myspace] (a15) at (0,0) {$\times$};
   \draw[gray] (-1.3,0) -- (0,0) -- (0,-1.3);
   \draw[fill=gray,fill opacity=0.2] (-1,-1) -- (-1,0) -- (1,1) -- (0,-1) -- cycle;
   \node at (-1,0) {$\bullet$};
   \node at (-1,-1) {$\bullet$};
   \node at (0,-1) {$\bullet$};
   \node at (1,1) {$\bullet$};
   \end{scope}
   
   \begin{scope}[xshift = 0cm, yshift = -14cm]
   \node[inner sep=\myspace] (a16) at (0,0) {$\times$};
   \draw[gray] (-1.3,0) -- (0,0) -- (0,-1.3);
   \draw[fill=gray,fill opacity=0.2] (-1,-1) -- (-1,0) -- (1,1) -- (1,0) -- (0,-1) -- cycle;
   \node at (-1,0) {$\bullet$};
   \node at (-1,-1) {$\bullet$};
   \node at (1,0) {$\bullet$};
   \node at (0,-1) {$\bullet$};
   \node at (1,1) {$\bullet$};
   \end{scope}
   
   \begin{scope}[xshift = 3cm, yshift = -8cm]
   \node[inner sep=\myspace] (a17) at (0,0) {$\times$};
   \draw[gray] (-1.3,0) -- (0,0) -- (0,-1.3);
   \draw[fill=gray,fill opacity=0.2] (-1,0) -- (-1,1) -- (0,1) -- (1,0) -- (1,-1) -- (0,-1) -- cycle;
   \node at (-1,0) {$\bullet$};
   \node at (-1,1) {$\bullet$};
   \node at (1,0) {$\bullet$};
   \node at (0,1) {$\bullet$};
   \node at (0,-1) {$\bullet$};
   \node at (1,-1) {$\bullet$};
   \end{scope}
   
   \begin{scope}[xshift = 6cm, yshift = -24cm]
   \node[inner sep=\myspace] (a18) at (0,0) {$\times$};
   \draw[gray] (-1.3,0) -- (0,0) -- (0,-1.3);
   \draw[fill=gray,fill opacity=0.2] (-1,0) -- (0,-1) -- (1,1) -- cycle;
   \node at (-1,0) {$\bullet$};
   \node at (0,-1) {$\bullet$};
   \node at (1,1) {$\bullet$};
   \end{scope}
   
   \begin{scope}[xshift = 9cm, yshift = -20cm]
   \node[inner sep=\myspace] (a19) at (0,0) {$\times$};
   \draw[gray] (-1.3,0) -- (0,0) -- (0,-1.3);
   \draw[fill=gray,fill opacity=0.2] (-1,0) -- (0,-1) -- (1,0) -- (0,1) -- cycle;
   \node at (-1,0) {$\bullet$};
   \node at (0,-1) {$\bullet$};
   \node at (0,1) {$\bullet$};
   \node at (1,0) {$\bullet$};
   \end{scope}
   
   \begin{scope}[xshift = 6cm, yshift = -14cm]
   \node[inner sep=\myspace] (a20) at (0,0) {$\times$};
   \draw[gray] (-1.3,0) -- (0,0) -- (0,-1.3);
   \draw[fill=gray,fill opacity=0.2] (-1,0) -- (-1,-1) -- (0,-1) -- (1,0) -- (0,1) -- cycle;
   \node at (-1,0) {$\bullet$};
   \node at (0,-1) {$\bullet$};
   \node at (0,1) {$\bullet$};
   \node at (1,0) {$\bullet$};
   \node at (-1,-1) {$\bullet$};
   \end{scope}
   
   \begin{scope}[xshift = 9cm, yshift = -8cm]
   \node[inner sep=\myspace] (a21) at (0,0) {$\times$};
   \draw[gray] (-1.3,0) -- (0,0) -- (0,-1.3);
   \draw[fill=gray,fill opacity=0.2] (-1,0) -- (-1,-1) -- (0,-1) -- (1,0) -- (1,1) -- (0,1) -- cycle;
   \node at (-1,0) {$\bullet$};
   \node at (0,-1) {$\bullet$};
   \node at (0,1) {$\bullet$};
   \node at (1,0) {$\bullet$};
   \node at (-1,-1) {$\bullet$};
   \node at (1,1) {$\bullet$};
   \end{scope}
   
   \begin{scope}[xshift = 6cm, yshift = -4cm]
   \node[inner sep=\myspace] (a22) at (0,0) {$\times$};
   \draw[gray] (-1.3,0) -- (0,0) -- (0,-1.3);
   \draw[fill=gray,fill opacity=0.2] (-1,0) -- (0,-1) -- (1,-1) -- (1,1) -- (-1,1) -- cycle;
   \node at (-1,0) {$\bullet$};
   \node at (0,-1) {$\bullet$};
   \node at (0,1) {$\bullet$};
   \node at (1,0) {$\bullet$};
   \node at (1,1) {$\bullet$};
   \node at (1,-1) {$\bullet$};
   \node at (-1,1) {$\bullet$};
   \end{scope}
   
   \begin{scope}[xshift = 12cm, yshift = -14cm]
   \node[inner sep=\myspace] (a23) at (0,0) {$\times$};
   \draw[gray] (-1.3,0) -- (0,0) -- (0,-1.3);
   \draw[fill=gray,fill opacity=0.2] (-1,0) -- (0,-1) -- (1,0) -- (1,1) -- (0,1) -- cycle;
   \node at (-1,0) {$\bullet$};
   \node at (0,-1) {$\bullet$};
   \node at (0,1) {$\bullet$};
   \node at (1,0) {$\bullet$};
   \node at (1,1) {$\bullet$};
   \end{scope}
   
   \draw[very thick, <->] (a19) -- (a23);
   \draw[very thick, <->] (a21) -- (a23);
   \draw[very thick, <->] (a18) -- (a19);
   \draw[very thick, <->] (a19) -- (a20);
   \draw[very thick, <->] (a20) -- (a21);
   \draw[very thick, <->] (a21) -- (a22);
   \draw[very thick, <->] (a18) -- (a15);
   \draw[very thick, <->] (a19) -- (a16);
   \draw[very thick, <->] (a21) -- (a16);
   \draw[very thick, <->] (a22) -- (a17);
   \draw[very thick, <->] (a15) -- (a16);
   \draw[very thick, <->] (a16) -- (a17);
   \draw[very thick, <->] (a15) -- (a12);
   \draw[very thick, <->] (a16) -- (a10);
   \draw[very thick, <->] (a16) -- (a13);
   \draw[very thick, <->] (a17) -- (a11);   
   \draw[very thick, <->] (a9) -- (a10);
   \draw[very thick, <->] (a10) -- (a11);
   \draw[very thick, <->] (a12) -- (a13);
   \draw[very thick, <->] (a13) -- (a14);   
   \draw[very thick, <->] (a9) -- (a5);
   \draw[very thick, <->] (a9) -- (a6);
   \draw[very thick, <->] (a11) -- (a7);
   \draw[very thick, <->] (a12) -- (a6);
   \draw[very thick, <->] (a14) -- (a7);
   \draw[very thick, <->] (a14) -- (a8);
   \draw[very thick, <->] (a1) -- (a6);
   \draw[very thick, <->] (a2) -- (a6);
   \draw[very thick, <->] (a3) -- (a7);
   \draw[very thick, <->] (a4) -- (a7);
   \draw[very thick, <->] (a9) -- (a15);
   \draw[very thick, <->] (a14) -- (a17);
\end{tikzpicture}}
\caption{Representatives for the 23 classes of reflexive polygons in $N_U$. The gray lines indicate the two rays along which we bend the affine structure of $N_U$ and the arrows between polygons denote the addition or removal of a vertex. Since $N_U$ is isomorphic to its dual space $M_U$, given a polygon $P\subset N_U$ we can identify the dual polygon $P^\star\subset M_U$ with a polygon in $N_U$. Duality between these reflexive polygons is then given by top-to-bottom reflection in the diagram. In particular there are three self-dual pentagons in the central row.}
\label{fig!refl-dP5}
\end{center}
\end{figure}

\begin{proof}
The classification of reflexive polygons in $N_U$ proceeds in an equivalent manner to the classification of reflexive polygons in the ordinary toric setting. Beginning from any one such polygon, such as the polygon $P$ of Example~\ref{eg!dP5-pentagon}, we can add or remove vertices corresponding to blowing up or blowing down $(-1)$-curves in the boundary of the associated Looijenga pair $(X_P,D_P)$. Disregarding polygons with nonzero interior lattice points produces the 23 examples of Figure~\ref{fig!refl-dP5}. 

To verify that the potential $w_Q$ has the right period to be mirror to $(X_P,D_P)$, we can restrict to the cluster torus $\TT_{12}\subset U$ to get a Laurent polynomial $w_0 := w_Q|_{\TT_{12}}\in\CC[x_1^{\pm1},x_2^{\pm1}]$. The only difference between the Landau--Ginzburg model $(U,w_Q)$ and the toric Landau--Ginzburg model $(\TT_{12},w_0)$ is that we have extended the domain of definition of $w_0$ along the two exceptional curves of $U\setminus \TT_{12}$. It is now just a case of verifying that $w_0$ is mutation equivalent to a known Laurent polynomial mirror to $X_Q$ (see \cite[Figure~1]{many-authors}).
\end{proof}

If $d(P)$ denotes the number of boundary points of a reflexive polygon $P\subset N_U$, then for a dual pair of reflexive polygons we have $d(P)+d(P^\star)=10$. This is in contrast to the usual formula $d(P)+d(P^\star)=12$ that holds in the ordinary toric setting, and corresponds to the fact that we have made two nontoric blowups in the boundary.

\subsubsection{Examples}

We illustrate Theorem~\ref{thm!dP5-refl} with a few examples.

\begin{eg} \label{eg!dP5-pentagon}
Suppose we take the standard projective compactification $(X,D)$ of $U$, which corresponds to the pair $(X_P,D_P)$ where $P=P^\star$ is the self-dual polygon
\begin{center}
\begin{tikzpicture}[scale = 0.6]

   \begin{scope}[xshift = 0cm]
   \fill[gray!20] (1,1) -- (1,0) -- (0,-1) -- (-1,0) --(0,1) -- cycle;
   \node at (0,0) {$\times$};
   \draw (-2,0) -- (2,0);
   \draw (0,-2) -- (0,2);
   \draw (0,0) -- (2,2);
	
	\foreach \x in {-2,...,2} {
	\foreach \y in {-2,...,2} {
    	\node at ({\x},{\y}) {$\cdot$};
	}}
	
	\draw[thick] (1,1) -- (1,0) -- (0,-1) -- (-1,0) --(0,1) -- cycle;
	\node at (-1,0) [label={[label distance=-5pt]135:\small $1$}] {$\bullet$};
	\node at (0,-1) [label={[label distance=-5pt]-45:\small $1$}] {$\bullet$};
	\node at (1,0) [label={[label distance=-5pt]-45:\small $1$}] {$\bullet$};
	\node at (0,1) [label={[label distance=-5pt]135:\small $1$}] {$\bullet$};
	\node at (1,1) [label={[label distance=-2pt]90:\small $1$}] {$\bullet$};
	\end{scope}

\end{tikzpicture} 
\end{center}
which, according to Remark~\ref{rmk!binomial-edge-coeffs}, has been labelled with binomial edge coefficients. Note that $P$ is cut out by the five halfspaces of Figure~\ref{fig!halfspaces}. Let $w = w_P$ be the potential on the mirror Landau--Ginzburg model $w\colon U\to \CC$, which is given by
\[ w = x_1 + x_2 + x_3 + x_4 + x_5. \]
This is a $\sigma_2$-invariant function, and we note that this has a nice alternative representation as $w+3 = x_1x_2x_3x_4x_5$, corresponding to the fact that the fibre $w^{-1}(-3)\subset U$ breaks up as the union of the five interior $(-1)$-curves $E = \bigcup_{i=1}^5 E_i$.

\paragraph{The period $\pi_{w}(t)$.}
By restricting $w$ to the cluster torus chart $\TT_{12}$ (i.e.\ expanding it as a Laurent polynomial in terms of $x_1,x_2$), we can compute the period $\pi_{w}(t)$ which we see to be equal to the regularised quantum period for $\dP_5$. The first few terms are given by
\[ \pi_{w}(t) = 1 + 10t^2 + 30t^3 + 270t^4 + 1560t^5 + 11350t^6 + 77700t^7 + \cdots \]
Alternatively, we can compute the period of the shifted potential $w+3$, which is given by the change of variables $\pi_{w+3}(t) = \frac{1}{1-3t}\pi_{w}\left(\frac{t}{1-3t}\right)$. The series is
\[ \pi_{w+3}(t) = 1 + 3t + 19t^2 + 147t^3 + 1251t^4 + 11253t^5 + 104959t^6 + \cdots \]
which can be recognised as one of the famous Ap\'ery series $\pi_{w+3}(t) = \sum_{n=0}^\infty\sum_{k=0}^n \binom{n}{k}^2 \binom{n+k}{k}t^n$ and is well-known as a period for the del Pezzo surface of degree 5. 

\paragraph{The elliptic fibration.}
After extending to a birational map $w\colon X\dashrightarrow \PP^1$, the fibres of $w$ belong to the anticanonical pencil $|D,E|\subset |{-K_X}|$ with baselocus given by the five points $D_i\cap E_i\in X$. Blowing up these five points $\phi\colon \widetilde{X}\to X$ resolves $w$ into an elliptic fibration $\widetilde{w}=w\circ\phi\colon \widetilde{X}\to \PP^1$, which appears in Beauville's classification of rational elliptic surfaces with four singular fibres \cite{Beauville}. There are two $I_5$ fibres over $\infty$ and $-3$, corresponding to $D$ and $E$ respectively, and two further $I_1$ fibres over the values $-\frac{5}{\varphi}$ and $5\varphi$, where $\varphi=\frac{1+\sqrt{5}}2$ is the golden ratio.
\begin{center}\begin{tikzpicture}
   
   \draw[thick] ({cos(0)},{sin(0)}) -- ({cos(72)},{sin(72)}) -- ({cos(144)},{sin(144)}) -- ({cos(216)},{sin(216)}) -- ({cos(288)},{sin(288)}) -- cycle;
   
   \draw[thick,scale=1,domain=0:1.3,smooth,variable=\x] plot ({3+\x^2-1},{\x^3-\x});
   \draw[thick,scale=1,domain=-1.3:0,smooth,variable=\x] plot ({3-1-\x^2},{\x^3-\x});
   
   \draw[thick,scale=1,domain=0:1.3,smooth,variable=\x] plot ({6+\x^2-1},{\x^3-\x});
   \draw[thick,scale=1,domain=-1.3:0,smooth,variable=\x] plot ({6-1-\x^2},{\x^3-\x});
   
   \draw[thick] ({9+cos(0)},{sin(0)}) -- ({9+cos(72)},{sin(72)}) -- ({9+cos(144)},{sin(144)}) -- ({9+cos(216)},{sin(216)}) -- ({9+cos(288)},{sin(288)}) -- cycle;
   
   \draw (-1,-1.5) -- (10,-1.5);
   \node at (3,-1.5) [label={below:$-\frac{5}\varphi$}]{$\times$};
   \node at (6,-1.5) [label={below:$5\varphi$}]{$\times$};
   \node at (0,-1.5) [label={below:$-3$}] {$\times$};
   \node at (9,-1.5) [label={below:$\infty$}] {$\times$};
\end{tikzpicture}\end{center}

The Landau--Ginzburg model $w\colon U\to \CC$ is obtained from this elliptic fibration $\widetilde w$ by deleting the fibre at $\infty$ and five horizontal sections given by the five $\phi$-exceptional curves. In terms of mirror symmetry, the significance of the five deleted sections is due to the fact that the anticanonical divisor $D\subset X$ has five corners \cite[Remark 2.1]{kkp}. Changing the choice of boundary divisor by smoothing a corner of $D$ corresponds to extending $w$ along one of the missing horizontal sections, so the Landau--Ginzburg model obtained by extending $w$ to $\widetilde{X}\setminus \widetilde{w}^{-1}(\infty)$ is mirror to a del Pezzo surface of degree 5 with a smooth anticanonical divisor, as expected \cite{ako}.

The fibration $\widetilde{w}$ has some very interesting arithmetic properties. The five $\phi$-exceptional curves of $\widetilde w$ are permuted by the $\ZZ/5\ZZ$-symmetry. For each smooth fibre ${\widetilde X}_t$ these sections give rise to the orbit of a rational 5-torsion point under translation. Moreover, the monodromy action on $H^1({\widetilde X}_t,\ZZ)\cong \ZZ^2$ around the four singular fibres generates the congruence subgroup $\Gamma_1(5)\subset \SL(2,\ZZ)$, as shown in \cite{beukers}. 
\end{eg}

\begin{eg} \label{eg!dP2}
To give a slightly more exotic example, consider the following polygon $P\subset N_U$ and its dual $Q=P^\star \subset M_U$, which have also been labelled with binomial edge coefficients.
\begin{center} 
\begin{tikzpicture}[scale = 0.35]

   \begin{scope}[xshift = 18cm]
   \fill[gray!20] (-2,-2) -- (-1,0) -- (2,2) -- (0,-1) -- cycle; 
   \draw (0,-5) -- (0,5) (5,0) -- (-5,0) (0,0) -- (5,5);
	
	\node at (-6,4) {$Q$};
	\foreach \x in {-2,...,2} {
	\foreach \y in {-2,...,2} {
    	\node at ({2*\x},{2*\y}) {$\cdot$};
	}}
	
    \draw[thick] (-2,-2) -- (-1,0) -- (2,2) -- (0,-1) -- cycle; 
	
	\draw[fill=black] (-2,-2) circle (4pt);
	\draw[fill=black] (2,2) circle (4pt);
	\node at (-2.8,-2.8) {\small $1$};
	\node at (2,2.8) {\small $1$};
	\end{scope}

	\node at (-6,4) {$P$};
	
    \fill[gray!20] (-2,4) -- (4,-2) -- (0,-2) -- (-2,0) -- cycle; 
   \draw (0,-5) -- (0,5) (5,0) -- (-5,0) (0,0) -- (5,5);
    
	\foreach \x in {-2,...,2} {
	\foreach \y in {-2,...,2} {
    	\node at ({2*\x},{2*\y}) {$\cdot$};
	}}
	
    \draw[thick] (-2,4) -- (4,-2) -- (0,-2) -- (-2,0) -- cycle; 
	
	\draw[fill=black] (-2,4) circle (4pt);
	\draw[fill=black] (-2,2) circle (4pt);
	\draw[fill=black] (-2,0) circle (4pt);
	\draw[fill=black] (0,2) circle (4pt);
	\draw[fill=black] (0,-2) circle (4pt);
	\draw[fill=black] (2,0) circle (4pt);
	\draw[fill=black] (2,-2) circle (4pt);
	\draw[fill=black] (4,-2) circle (4pt);
	
	\node at (0,0) {$\times$};
	
	\node at (-3.2,-0.8) {\small $10$};
	\node at (-1.2,-2.8) {\small $10$};
	\node at (2,-2.8) {\small $5$};
	\node at (4,-2.8) {\small $1$};
	\node at (3,0.8) {\small $3$};
	\node at (1,2.8) {\small $3$};
	\node at (-3.2,4) {\small $1$};
	\node at (-3.2,2) {\small $5$};
\end{tikzpicture} 
\end{center}
Note that both $P$ and $Q$ are both bigons; they each have precisely two vertices when considered in the affine structure of $N_U$ or $M_U$.

\paragraph{Graded ring calculation for $(X_Q,D_Q)$.}
Consider the theta functions $\vartheta_1,\ldots,\vartheta_4$ corresponding to the lattice points $(-1,-1),(1,1)\in Q$ and $(-1,0),(0,-1)\in 2Q$ respectively. In terms of the basis of theta functions of $\CC[U]$, these are
\[ \vartheta_1 = x_1x_2, \quad \vartheta_2 = x_4, \quad \vartheta_3 = x_1, \quad \vartheta_4 = x_2 \]
and, after homogenising, they can be used to give a presentation of the graded ring $R_Q = \CC[\vartheta_0,\vartheta_1,\vartheta_2,\vartheta_3,\vartheta_4]/I$ with generators in degrees $1,1,1,2,2$ respectively. The ideal of equations $I$ defining $R_Q$ is generated by two relations
\[ \vartheta_0^3\vartheta_1=\vartheta_3\vartheta_4 \quad \text{and} \quad \vartheta_1\vartheta_2 = \vartheta_0^2 + \vartheta_3 + \vartheta_4. \]
Using the second equation to eliminate $\vartheta_4$, we get a quartic hypersurface
\[ X_Q\cong \VV\left( \vartheta_1\vartheta_2\vartheta_3 - \vartheta_0^3\vartheta_1 - \vartheta_0^2\vartheta_3 - \vartheta_3^2 \right) \subset \PP(1,1,1,2)_{\vartheta_0,\vartheta_1,\vartheta_2,\vartheta_3}. \]
As may be expected from the spanning fan of $P\subset N_U$, this defines a singular del Pezzo surface of degree $2$, with an $A_2$ singularity at the coordinate point $P_1 = (0:1:0:0)$ and an $A_4$ singularity at the coordinate point $P_2 = (0:0:1:0)$. The boundary divisor $D_Q=\VV(\vartheta_0)$ has two components
\[ D_1 = \VV\left(\vartheta_0, \vartheta_3\right), \qquad D_2= \VV\left(\vartheta_0, \vartheta_1\vartheta_2-\vartheta_3\right) \]
which meet at the two singular points $P_1,P_2$. 

\paragraph{The Landau--Ginzburg model $w_P\colon U\to \CC$.} We form the Landau--Ginzburg potential 
\[ w_P = x_2x_3^2 + 3x_3 + 3x_5 + x_5^2x_1 + 5x_5x_1 + 10x_1 + 10x_2 + 5x_2x_3 \]
obtained by considering binomial edge coefficients on $P$. By restricting to the torus chart $\TT_{12}\subset U$ we obtain a Laurent polynomial, which satisfies the following identity
\[ \left(w_P+12\right)|_{\TT_{12}} = \frac{(1+x_1+x_2)^2(x_1+x_2)^3}{x_1^2x_2^2}. \] 
By \S\ref{sect!period}, the $n$th coefficient $\alpha_n$ of the period $\pi_{w+12}(t)=\sum_{n\geq0}\alpha_nt^n$ is equal to the coefficient of $x_1^{2n}x_2^{2n}$ in 
\[ (1+x_1+x_2)^{2n}(x_1+x_2)^{3n} = \sum_{k=0}^{2n} \binom{2n}{k}(x_1+x_2)^{3n+k}, \]
where the righthand side was obtained by expanding the first bracket by using the binomial formula, treating $1+x_1+x_2$ as the sum of $1$ and $x_1+x_2$. The term $x_1^{2n}x_2^{2n}$ can only appear in the righthand side if $k=n$, and then we easily see that
\[ \alpha_n = \binom{2n}{n}\binom{4n}{2n} \implies \pi_{w_P+12}(t) = \sum_{k\geq0} \frac{(4n)!}{n!n!(2n)!}t^n. \]
This function $\pi_{w_P+12}(t)$ is equal to the \emph{regularised $I$-function} $\widehat{I}_{X_4}(t)$ for a hypersurface $X_4\subset \PP(1,1,1,2)$, and it is known to be a shift of the regularised quantum period $\widehat G_{X_4}(t)$ of $X_4$ \cite[Proposition D.9]{ccgk}. In order to recover $\widehat{G}_{X_4}(t)$ from $\widehat{I}_{X_4}(t)$, the appropriate shift is by the unique constant term required to kill the coefficient $\alpha_1$ of $\widehat{I}_{X_4}(t)$. Since $\alpha_1=\binom21\binom42=12$ we see that $\pi_{w_P}(t) = \widehat{G}_{X_4}(t)$.

This shows that $w_P$ has the right period to be mirror to $X_Q$, but, more precisely, we expect that the particular Landau--Ginzburg model that we have constructed is actually a mirror to the (singular) del Pezzo surface $X_Q$ with its boundary divisor $D_Q$. (Note that $w_P$ has a reducible fibre $w_P^{-1}(-12)$ with two disjoint components; one component of multiplicity 2 and one of multiplicity $3$. After extending $w_P$ to the dual compactification $w_P\colon X_P\dashrightarrow \PP^1$ and resolving into an elliptic fibration, one can show that these two components correspond to the following solid black nodes in a reducible fibre of type $\widetilde E_7$.
\begin{center}\begin{tikzpicture}[scale=0.6]
   \draw (0,0) -- (6,0) (3,0) -- (3,1);
   \draw[fill=white] (0,0) circle (3pt);
   \draw[fill=white] (1,0) circle (3pt);
   \draw[fill=black] (2,0) circle (3pt);
   \draw[fill=white] (3,0) circle (3pt);
   \draw[fill=white] (4,0) circle (3pt);
   \draw[fill=white] (5,0) circle (3pt);
   \draw[fill=white] (6,0) circle (3pt);
   \draw[fill=black] (3,1) circle (3pt);
\end{tikzpicture}\end{center}
The two white chains of deleted $(-2)$-curves are highly suggestive of the $A_2$ and $A_4$ singularities on $X_Q$.)
\end{eg} 

\begin{eg}
One can do the computations of Example~\ref{eg!dP2} with the roles of $P$ and $Q$ reversed. The graded ring $R_P$ is the homogeneous coordinate ring of a smooth surface $X_P\subset \PP^8$, which is an anticanonically polarised $\mathbb F_1$. The boundary divisor $D_P=D_1\cup D_2$ is the union of two smooth rational curves of self-intersection $D_1^2=1$ and $D_2^2=3$. By restricting to $\TT_{12}\subset U$ we see that the Landau--Ginzburg potential satisfies $w_Q|_{\TT_{12}} = x_1x_2 + \frac{1}{x_1} + \frac{1}{x_2} + \frac{1}{x_1x_2}$ which is a mirror Laurent polynomial for $\mathbb{F}^1$. 
\end{eg}

\section{Dimension 3: the Fano 3-fold $V_{12}$} \label{sect!OGr}

We now describe a parallel story in the 3-dimensional setting which generalises
\begin{enumerate}
\item the cluster structure on $U$ the affine del Pezzo surface of degree 5,
\item the fibration on the affine cone over $\Gr(2,5)$ by such surfaces,
\item the self-dual integral affine manifold with singularities $N_U$ obtained by tropicalising~$U$,
\item the Borisov--Batyrev style mirror symmetry constructions of \S\ref{sec!dP5-mirror-constructions}.
\end{enumerate}
Indeed, the corresponding actors will be the orthogonal Grassmannian $\OGr(5,10)$ and the Fano 3-fold $V_{12}$. We will begin by generalising the second statement by giving a description of $\OGr(5,10)$ which leads to a homogenisation of the 3-dimensional Lyness recurrence (LR$_3$).

\subsection{The orthogonal Grassmannian $\OGr(5,10)$}  \label{sect!OGr-rep-thry}

\subsubsection{$\OGr(5,10)$ as a homogeneous variety}

The orthogonal Grassmannian $\OGr(5,10)$ is one of the two isomorphic irreducible components in the space of $5$-planes $\CC^5 \subset \CC^{10}$ which are isotropic with respect to a given quadratic form. It is the homogeneous variety for the group $\SO(10)$ of type $\pD_5$ with a `half-spinor embedding' $\OGr(5,10)\subset\PP(S^+)$, where $S^+$ is one half of the spin representation $S^+\oplus S^-$ of $\SO(10)$. Given the symmetry broken by choosing $S^+$ over $S^-$, it is perhaps better to think of the isomorphic Grassmannian $\OGr(4,9)\cong \OGr(5,10)$ instead. This is a homogeneous variety for the group $\SO(9)$ of type $\pB_4$, with spinor embedding $\OGr(4,9)\subset \PP(S)$ corresponding to the (irreducible) spin representation $S=\bigoplus_{i=0}^4\bigwedge^i\CC^4$ of $\SO(9)$.
\begin{center}\begin{tikzpicture}[scale=0.8]
	\draw (0,0) -- (2,0) -- (3,0.5) (2,0) -- (3,-0.5);
	\draw (7,0) -- (9,0) (9,0.05) -- (10,0.05) (9,-0.05) -- (10,-0.05);
	\draw (9.4,0.3) -- (9.6,0) -- (9.4,-0.3);
	
	\node at (-1/2,1/2) {$\pD_5$};
	\node at (13/2,1/2) {$\pB_4$};
	
	\draw[fill=white] (0,0) circle (3pt);
	\draw[fill=white] (1,0) circle (3pt);
	\draw[fill=white] (2,0) circle (3pt);
	\draw[fill=black] (3,0.5) circle (3pt) node[xshift = 0.5cm]{$S^+$};
	\draw[fill=white] (3,-0.5) circle (3pt) node[xshift = 0.5cm]{$S^-$};
	
	\draw[fill=white] (7,0) circle (3pt);
	\draw[fill=white] (8,0) circle (3pt);
	\draw[fill=white] (9,0) circle (3pt);
	\draw[fill=black] (10,0) circle (3pt) node[xshift = 0.5cm]{$S$};
\end{tikzpicture}\end{center}
The representation $S$ is 16-dimensional with weights $\tfrac12(\pm1,\pm1,\pm1,\pm1)$, which are the vertices of a 4-dimensional cube $C$ in the weight lattice for $\pB_4$. The Weyl group $W(\pB_4)$ acts as the full symmetry group of $C$ and a Coxeter element in $W(\pB_4)$ acts on $C$ as a rotation of order 8. Taking the orthogonal projection onto the Coxeter plane (shown in Figure~\ref{fig!B4-cube}(a)) we see this 8-fold rotational symmetry of $C$ which splits the vertices into two groups of size~8. 
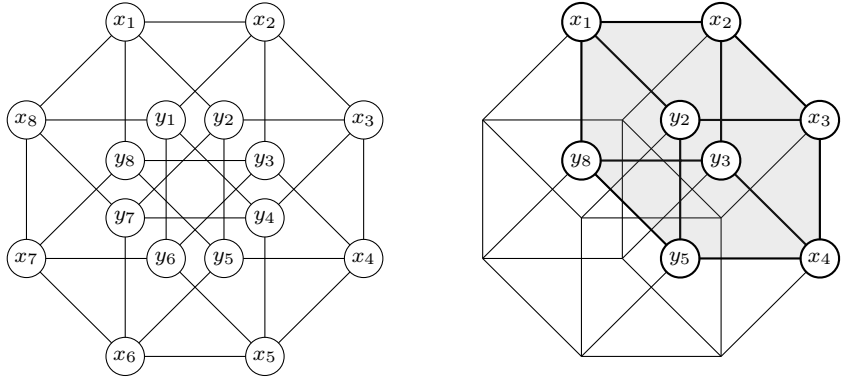
\begin{figure}[htbp]
\begin{center}
\begin{tikzpicture}[scale=1.2]
\begin{scope}
\foreach \i in {1,...,8}
{
   \draw ({0.8284*cos(45*\i+22.5)},{0.8284*sin(45*\i+22.5)}) -- ({2*cos(45*(\i-1)+22.5)},{2*sin(45*(\i-1)+22.5)});
   \draw ({0.8284*cos(45*\i+22.5)},{0.8284*sin(45*\i+22.5)}) -- ({2*cos(45*(\i+1)+22.5)},{2*sin(45*(\i+1)+22.5)});
   \draw ({2*cos(45*\i+22.5)},{2*sin(45*\i+22.5)}) -- ({2*cos(45*(\i+1)+22.5)},{2*sin(45*(\i+1)+22.5)});
   \draw ({0.8284*cos(45*\i+22.5)},{0.8284*sin(45*\i+22.5)}) -- ({0.8284*cos(45*(\i+3)+22.5)},{0.8284*sin(45*(\i+3)+22.5)});
}
	
\foreach \i in {1,...,8}
{
   \draw[fill=white] ({0.8284*cos(45*(3-\i)+22.5)},{0.8284*sin(45*(3-\i)+22.5)}) circle (6pt) node {\scriptsize $y_{\i}$};
   \draw[fill=white] ({2*cos(45*(3-\i)+22.5)},{2*sin(45*(3-\i)+22.5)}) circle (6pt) node {\scriptsize $x_{\i}$};
}
\end{scope}
\begin{scope}[xshift = 5cm]
\foreach \i in {1,...,8}
{
   \draw ({0.8284*cos(45*\i+22.5)},{0.8284*sin(45*\i+22.5)}) -- ({2*cos(45*(\i-1)+22.5)},{2*sin(45*(\i-1)+22.5)});
   \draw ({0.8284*cos(45*\i+22.5)},{0.8284*sin(45*\i+22.5)}) -- ({2*cos(45*(\i+1)+22.5)},{2*sin(45*(\i+1)+22.5)});
   \draw ({2*cos(45*\i+22.5)},{2*sin(45*\i+22.5)}) -- ({2*cos(45*(\i+1)+22.5)},{2*sin(45*(\i+1)+22.5)});
   \draw ({0.8284*cos(45*\i+22.5)},{0.8284*sin(45*\i+22.5)}) -- ({0.8284*cos(45*(\i+3)+22.5)},{0.8284*sin(45*(\i+3)+22.5)});
}
\end{scope}

\begin{scope}[xshift = 5cm, rotate=-22.5]   
   \draw[thick] ({2*cos(45*0)},{2*sin(45*0)}) -- ({0.8284*cos(45*1)},{0.8284*sin(45*1)}) -- ({2*cos(45*2)},{2*sin(45*2)})  ({0.8284*cos(45*1)},{0.8284*sin(45*1)}) -- ({0.8284*cos(45*4)},{0.8284*sin(45*4)})  ({2*cos(45*3)},{2*sin(45*3)}) --  ({0.8284*cos(45*2)},{0.8284*sin(45*2)}) -- ({2*cos(45*1)},{2*sin(45*1)})  ({0.8284*cos(45*2)},{0.8284*sin(45*2)}) -- ({0.8284*cos(45*7)},{0.8284*sin(45*7)});
   \draw[thick,fill=gray, fill opacity=0.15] ({2*cos(45*0)},{2*sin(45*0)}) -- ({2*cos(45*1)},{2*sin(45*1)}) -- ({2*cos(45*2)},{2*sin(45*2)}) -- ({2*cos(45*3)},{2*sin(45*3)}) -- ({0.8284*cos(45*4)},{0.8284*sin(45*4)}) -- ({0.8284*cos(45*7)},{0.8284*sin(45*7)}) -- cycle;
\draw[fill=white,thick] ({2*cos(45*(9-3*2))},{2*sin(45*(9-3*2))}) circle (6pt) node {\textcolor{black}{\scriptsize $x_1$}};
\draw[fill=white,thick] ({2*cos(45*(9-3*5))},{2*sin(45*(9-3*5))}) circle (6pt) node {\textcolor{black}{\scriptsize $x_2$}};
\draw[fill=white,thick] ({2*cos(45*(9-3*8))},{2*sin(45*(9-3*8))}) circle (6pt) node {\textcolor{black}{\scriptsize $x_3$}};
\draw[fill=white,thick] ({2*cos(45*(9-3*3))},{2*sin(45*(9-3*3))}) circle (6pt) node {\textcolor{black}{\scriptsize $x_4$}};
\draw[fill=white,thick] ({0.8284*cos(45*(5-3*3))},{0.8284*sin(45*(5-3*3))}) circle (6pt) node {\textcolor{black}{\scriptsize $y_8$}};
\draw[fill=white,thick] ({0.8284*cos(45*(5-3*4))},{0.8284*sin(45*(5-3*4))}) circle (6pt) node {\textcolor{black}{\scriptsize $y_3$}};
\draw[fill=white,thick] ({0.8284*cos(45*(5-3*1))},{0.8284*sin(45*(5-3*1))}) circle (6pt) node {\textcolor{black}{\scriptsize $y_2$}};
\draw[fill=white,thick] ({0.8284*cos(45*(5-3*2))},{0.8284*sin(45*(5-3*2))}) circle (6pt) node {\textcolor{black}{\scriptsize $y_5$}};
\end{scope}
\end{tikzpicture}
\caption{(a) The sixteen spinor variables labelling the 4-cube $C$. (b) The face of $C$ which corresponds to the equation $x_1x_4=x_2y_5+x_3y_8+y_2y_3$.}
\label{fig!B4-cube}
\end{center}
\end{figure}

\subsubsection{The equations of $\OGr(5,10)$} 
Let $R = \CC[S]$ and name the 16 spinor variables $x_1,\ldots,x_8,y_1,\ldots,y_8\in R$, indexed by $\ZZ/8\ZZ$, according to the labels in Figure~\ref{fig!B4-cube}(a). The corresponding vertices of $C$ are represented by the columns of the following matrix (where the minus signs in front of $y_3$ and $y_7$ are only chosen to make the equations displayed below more beautiful).
\[ \begin{array}{cccccccc|cccccccc}
x_1&x_2&x_3&x_4&x_5&x_6&x_7&x_8&y_1&y_2&-y_3&y_4&y_5&y_6&-y_7&y_8 \\ \hline
+&-&-&-&-&+&+&+& -&+&-&-&+&-&+&+\\
+&+&-&-&-&-&+&+& +&-&+&-&-&+&-&+\\
+&+&+&-&-&-&-&+& +&+&-&+&-&-&+&-\\
+&+&+&+&-&-&-&-& -&+&+&-&+&-&-&+
\end{array} \]
The ten quadratic equations defining $\OGr(5,10)$ are now obtained by swapping minus signs in columns that differ in three or more places. For example,
\[ \left(\begin{smallmatrix}
+ \\ + \\ + \\ + \end{smallmatrix}\right)\left(\begin{smallmatrix}
+ \\ - \\ - \\ - \end{smallmatrix}\right) = \left(\begin{smallmatrix}
+ \\ + \\ + \\ - \end{smallmatrix}\right)\left(\begin{smallmatrix}
+ \\ - \\ - \\ + \end{smallmatrix}\right) - \left(\begin{smallmatrix}
+ \\ - \\ + \\ - \end{smallmatrix}\right)\left(\begin{smallmatrix}
+ \\ + \\ - \\ + \end{smallmatrix}\right) + \left(\begin{smallmatrix}
+ \\ + \\ - \\ - \end{smallmatrix}\right)\left(\begin{smallmatrix}
+ \\ - \\ + \\ + \end{smallmatrix}\right) \]
corresponds to the equation $x_1x_6 = x_8y_5 + y_7y_8 + x_7y_2$. The full ideal $I$ defining $\OGr(5,10)$ is given by
\begin{align*}  
x_1x_4 &=  x_2y_5 + x_3y_8 + y_2y_3 &&& 
x_5x_8 &=  x_6y_1 + x_7y_4 + y_6y_7 &&& x_1x_5 - x_3x_7 &= y_1y_5 - y_3y_7 \\
x_2x_5 &=  x_3y_6 + x_4y_1 + y_3y_4 &&&
x_6x_1 &=  x_7y_2 + x_8y_5 + y_7y_8 &&& x_2x_6 - x_4x_8 &= y_2y_6 - y_4y_8 \\ 
x_3x_6 &=  x_4y_7 + x_5y_2 + y_4y_5 &&&
x_7x_2 &=  x_8y_3 + x_1y_6 + y_8y_1 \\
x_4x_7 &=  x_5y_8 + x_6y_3 + y_5y_6 &&& 
x_8x_3 &=  x_1y_4 + x_2y_7 + y_1y_2	
\end{align*}
where the first two columns give the eight periodic relations
\[ x_ix_{i+3} =  x_{i+1}y_{i+4} + x_{i+2}y_{i-1} + y_{i+1}y_{i+2} \qquad i\in\ZZ/8\ZZ \]
which are a homogenisation of (LR$_3$) by the variables $y_1,\ldots,y_8$. The binomials appearing in these eight equations correspond to the antipodal vertices in the eight 3-cube faces of $C$, as shown in Figure~\ref{fig!B4-cube}(b). The last two equations are implied from the first eight and correspond to the two orthoplexes (4-dimensional octahedra) inscribed on the two bipartite decompositions on the set of vertices of $C$.

\paragraph{The affine cone $\sU$.}
We let $\sU=\Spec(R/I)\subset \Aa^{16}$ be the affine variety defined by these equations, i.e.\ the affine cone over $\OGr(5,10)$. Note that there is a map $i\colon \sU\to \sU$ with $i(x_i)=-y_{3i}$ and $i(y_i)=x_{3i+4}$, which switches the role of the $x$ variables and the $y$ variables. Therefore, after setting $x_i=-1$ for all $i$ we see that $(y_{3i}\colon i\in\ZZ/8\ZZ)$ is also a solution to the Lyness recurrence (LR$_3$). 

\paragraph{Anticanonical class.}
The ideal $I$ is a Gorenstein ideal of codimension 5 and has minimal resolution
\[ R \leftarrow R(-2)^{\oplus 10} \leftarrow R(-3)^{\oplus 16} \leftarrow R(-5)^{\oplus 16} \leftarrow R(-6)^{\oplus 10} \leftarrow R(-8) \leftarrow 0.  \]
From the last module we can read off the adjunction number for $\sU\subset\Aa^{16}$, giving $-K_\sU=\sO_{\Aa^{16}}(16-8)|_\sU=\sO_\sU(8)$. In particular, if we let $\sD_i=\VV(y_i)$ and $\sE_i=\VV(x_i)$ for $i=1,\ldots,8$, then both $\sD=\bigcup_{i=1}^8\sD_i$ and $\sE=\bigcup_{i=1}^8\sE_i$ define anticanonical divisors in $\sU$.

\subsubsection{A fibration of $\sU$ by affine Fano 3-folds}\label{sect!U-lambda-mu}

Consider the projection $p\colon \sU \to \Aa^8_{y_1,\ldots,y_8}$. The fibres of $p$ are a flat family of affine Gorenstein 3-folds which spreads out the components of $\sD$ above the coordinate strata of $\Aa^8$. 

\paragraph{The central fibre $U_0$.}
The central fibre $U_0 := p^{-1}(0)$ is given by the common intersection of all components of $\sD\subset \sU$. It breaks up a reducible affine toric 3-fold with ten components:
\[ \bigcap_{i=1}^8\sD_i = \VV(y_1,\ldots,y_8) = \bigcup_{i=1}^8 \Aa^3_{x_{i-1},x_i,x_{i+1}} \cup Q_{1357} \cup Q_{2468} \]
where $Q_{1357} = \VV(x_1x_5-x_3x_7)\subset \Aa^4_{x_1,x_3,x_5,x_7}$ and $Q_{2468} = \VV(x_2x_6-x_4x_8)\subset \Aa^4_{x_2,x_4,x_6,x_8}$ are both isomorphic to the cone over the Segre embedding of $\PP^1\times \PP^1$. Therefore $U_0$ looks like the cone over a reducible toric surface 
\[ D_P := D_{123}\cup D_{234} \cup \ldots \cup D_{812} \cup D_{1357} \cup D_{2468}, \]
with ten components $D_{1357}=\PP(Q_{1357})$, $D_{2468} = \PP(Q_{2468})$ and $D_{123} = \PP^2_{x_1,x_2,x_3}$, etc. These components intersect along their toric boundary strata like the polytope $P$ of Figure~\ref{fig!polytopes}. Algebraically the coordinate ring of the affine variety $U_0$ is given by the Stanley--Reisner ring associated to the dual polytope $Q=P^\star$.
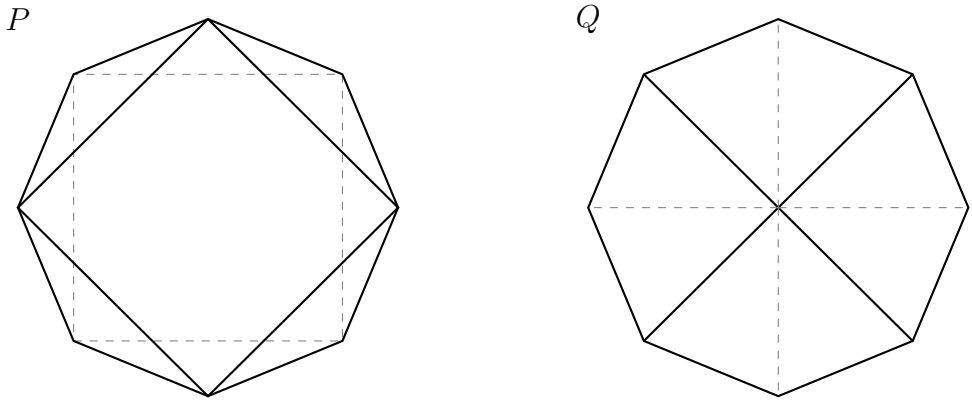
\begin{figure}[htbp]
\begin{center}
\begin{tikzpicture}[scale=2.5]
  
  \draw[gray,dashed] ({cos(45)},{sin(45)}) -- ({cos(135)},{sin(135)}) -- ({cos(225)},{sin(225)}) -- ({cos(315)},{sin(315)}) -- cycle;
  \draw[thick] ({cos(0)},{sin(0)}) -- ({cos(45)},{sin(45)}) -- ({cos(90)},{sin(90)}) -- ({cos(135)},{sin(135)}) -- ({cos(180)},{sin(180)}) -- ({cos(225)},{sin(225)}) -- ({cos(270)},{sin(270)}) -- ({cos(315)},{sin(315)}) -- cycle;
  \draw[thick] ({cos(0)},{sin(0)}) -- ({cos(90)},{sin(90)}) -- ({cos(180)},{sin(180)}) -- ({cos(270)},{sin(270)}) -- cycle;  
  \node at (-1,1) {$P$};
  
  \draw[thick] (3,0) -- ({cos(45)+3},{sin(45)})  (3,0) -- ({cos(135)+3},{sin(135)}) (3,0) -- ({cos(225)+3},{sin(225)}) (3,0) -- ({cos(315)+3},{sin(315)});
  \draw[thick] ({cos(0)+3},{sin(0)}) -- ({cos(45)+3},{sin(45)}) -- ({cos(90)+3},{sin(90)}) -- ({cos(135)+3},{sin(135)}) -- ({cos(180)+3},{sin(180)}) -- ({cos(225)+3},{sin(225)}) -- ({cos(270)+3},{sin(270)}) -- ({cos(315)+3},{sin(315)}) -- cycle;
  \draw[gray,dashed] (3,0) -- ({cos(0)+3},{sin(0)}) (3,0) -- ({cos(90)+3},{sin(90)}) (3,0) -- ({cos(180)+3},{sin(180)}) (3,0) -- ({cos(270)+3},{sin(270)});
  
  \node at (-1+3,1) {$Q$};
\end{tikzpicture}
\caption{The polytope $P$ and the dual polytope $Q=P^*$.}
\label{fig!polytopes}
\end{center}
\end{figure}

\paragraph{The general fibre $U_{\lambda,\mu}$.}
We now look at a general fibre of this fibration $U_y = p^{-1}(y)$ where $y=(y_1,\ldots,y_8)\in(\CC^\times)^8$. We can rescale the coordinates on $U_y$ by 
\[ (x_1,\ldots,x_8)\mapsto \left(\frac{y_1y_2}{y_4}x_1, \; \frac{y_1y_2}{y_7}x_2, \; \frac{y_2y_3}{y_8}x_3, \; \frac{y_4y_5}{y_7}x_4, \; \frac{y_4y_5}{y_2}x_5, \; \frac{y_4y_5y_8}{y_2y_3}x_6, \; \frac{y_7y_8}{y_2}x_7, \; \frac{y_8y_1}{y_3}x_8 \right) \] 
to find that $U_y \cong U_{\lambda,\mu}$, for an affine 3-fold $U_{\lambda,\mu}\subset\Aa^8_{x_1,\ldots,x_8}$ depending on two parameters $\lambda = \frac{y_3y_7}{y_1y_5}$ and $\mu = \frac{y_2y_6}{y_4y_8}$ with $\lambda,\mu\in\CC^\times$. The equations defining $U_{\lambda,\mu}$ become:
\begin{equation*}
\begin{aligned} 
x_1x_4 &=  x_2 + \lambda x_3 + \lambda &&& 
x_5x_8 &=  x_6 + \lambda x_7 + \lambda\mu &&& 
x_1x_5 - \lambda x_3x_7 &= 1 - \lambda \\
x_2x_5 &=  \lambda\mu x_3 + x_4 + \lambda &&& 
x_6x_1 &=  \lambda x_7 + x_8 + \lambda &&& 
x_2x_6 - x_4x_8 &= \lambda\mu - \lambda \\
x_3x_6 &=  x_4 + x_5 + 1 &&&
x_7x_2 &=  x_8 + \mu x_1 + 1 \\ 
x_4x_7 &=  x_5 + x_6 + \mu &&&
x_8x_3 &=  x_1 + x_2 + 1	
\end{aligned}
\end{equation*}
Thus the fibres of $p$ are isomorphic over the intersection of two quadrics in the base given by $\VV(\lambda y_1y_5- y_3y_7,\; y_2y_6- \mu y_4y_8)\cap(\CC^\times)^8$.

\begin{lem}\label{lem!sing}
The projective closure $X_{\lambda,\mu} := \overline{U}_{\lambda,\mu}\subset\PP^8$ has boundary divisor $D_{\lambda,\mu}$ isomorphic to $D_P$, and $X_{\lambda,\mu}$ has 16 ordinary (non-$\QQ$-factorial) nodal singularities along $D_{\lambda,\mu}$, given by the 8 coordinate points $P_{x_i}$ and the 8 points:
\[ (1:-1)\in\PP^1_{x_1,x_2}, \quad (\lambda:-1)\in\PP^1_{x_2,x_3}, \quad (1:-\lambda\mu)\in\PP^1_{x_3,x_4}, \quad (1:-1)\in\PP^1_{x_4,x_5}, \]
\[ (1:-1)\in\PP^1_{x_5,x_6}, \quad (\lambda:-1)\in\PP^1_{x_6,x_7}, \quad (1:-\lambda)\in\PP^1_{x_7,x_8}, \quad (\mu:-1)\in\PP^1_{x_8,x_1}. \]
Moreover, the interior $U_{\lambda,\mu}$
\begin{enumerate}
\item is smooth if both $\lambda,\mu\neq 1$, 
\item acquires one node if either $\lambda=1$ or $\mu=1$, 
\item acquires two nodes if both $\lambda=\mu=1$.
\end{enumerate}
\end{lem}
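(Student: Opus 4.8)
The plan is to run the Jacobian criterion throughout, using that $X_{\lambda,\mu}$ inherits from $\sU$ the structure of an arithmetically Gorenstein subvariety of codimension $5$ in $\PP^8$: it is a Cohen--Macaulay, equidimensional $3$-fold cut out by ten quadrics, so a point is a smooth point precisely when the Jacobian of those ten equations has rank $5$ there, and it is an \emph{ordinary node} precisely when, after using the equations with nonzero linear part to eliminate as many local coordinates as possible, the single surviving local equation is a hypersurface whose leading quadratic term is a rank-$4$ form. The isomorphism $D_{\lambda,\mu}=X_{\lambda,\mu}\cap\{x_0=0\}\cong D_P$ is immediate once one records that the top-degree parts of the homogenised defining equations are $x_ix_{i+3}=0$ (for $i\in\ZZ/8\ZZ$) together with $x_1x_5-\lambda x_3x_7=0$ and $x_2x_6-x_4x_8=0$, since these generate the ideal of the affine cone over $D_{\lambda,\mu}$, and rescaling $x_1\mapsto\lambda x_1$ identifies this cone with the central fibre $U_0$.

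For the sixteen boundary singularities, first observe that the cyclic relabelling $i\mapsto i+1$ carries $U_{\lambda,\mu}$ isomorphically onto $U_{\mu^{-1},\lambda}$ and permutes the eight coordinate points $P_{x_i}$, the eight edges $\PP^1_{x_i,x_{i+1}}$ of $D_P$, and the two quadric components; the subgroup generated by $i\mapsto i+4$ fixes $(\lambda,\mu)$, so it suffices to analyse one coordinate point and one point in the relative interior of an edge. At $P_{x_1}$, in the chart $x_1=1$, the relations $x_1x_4=x_2+\lambda x_3+\lambda$, $x_6x_1=\lambda x_7+x_8+\lambda$ and $x_1x_5-\lambda x_3x_7=1-\lambda$ have linear leading parts and eliminate $x_4,x_5,x_6$, while $x_8x_3=x_1+x_2+1$ eliminates the homogenising variable $x_0$; substituting into $x_7x_2=x_8+\mu x_1+1$ leaves the local hypersurface equation $x_2x_7-\mu x_3x_8+(\text{higher order})=0$ in $\Aa^4_{x_2,x_3,x_7,x_8}$, whose leading form is a rank-$4$ quadric because $\mu\neq 0$, so $P_{x_1}$ is an ordinary node. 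On an edge $\PP^1_{x_i,x_{i+1}}$ the analogous bookkeeping at a general point $(x_i:x_{i+1})=(1:c)$ produces one \emph{more} linear-leading equation (for $\PP^1_{x_1,x_2}$ this is again $x_8x_3=x_1+x_2+1$, now linear in $x_0$ because $1+c\neq 0$), which eliminates five coordinates and shows $X_{\lambda,\mu}$ is smooth there; at the one exceptional value of the ratio (here $c=-1$) one instead eliminates $x_4,x_5,x_6,x_7$ and obtains the rank-$4$ leading form $x_3x_8-x_0(x_2{+}1)-x_0^2$, hence an ordinary node. Tracking the rescaling automorphism in the cyclic symmetry pins the exceptional ratio on each edge to the eight points listed, and the same count of linear-leading equations shows $X_{\lambda,\mu}$ is smooth at every other point of $D_{\lambda,\mu}$ (interiors of the $2$-faces give five such equations), so these sixteen points are exactly its singularities.

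For the interior, write $\operatorname{Sing}(U_{\lambda,\mu})$ as the subscheme of $\Aa^8_{x_1,\dots,x_8}$ defined by the ten equations together with the $5\times 5$ minors of their Jacobian. The key step is to confine this locus to $\VV(x_1,x_3,x_5,x_7)\cup\VV(x_2,x_4,x_6,x_8)$; granting this, on $\VV(x_1,x_3,x_5,x_7)$ the periodic equations force $x_4=x_2=x_8=-1$ and $x_6=-\mu$, together with the compatibility $\lambda=1$, so this locus meets $U_{\lambda,\mu}$ in the single point $(0,-1,0,-1,0,-\mu,0,-1)$ when $\lambda=1$ and is empty otherwise, and symmetrically $\VV(x_2,x_4,x_6,x_8)$ contributes the point $(-1,0,-1,0,-1,0,-1,0)$ exactly when $\mu=1$. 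At the first of these points (with $\lambda=1$) the coordinates $x_1,x_3,x_5,x_7$ are local parameters, the other nine equations eliminate $x_2,x_4,x_6,x_8$, and the now-degenerate relation $x_1x_5-x_3x_7=0$ becomes the local hypersurface equation with rank-$4$ leading form, so it is an ordinary node; likewise for the second. Recording which of $\lambda=1$, $\mu=1$ hold yields cases (1)--(3).

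The main obstacle is precisely the first step of the interior analysis: showing that the singular locus of $U_{\lambda,\mu}$ cannot escape the two alternating loci $\VV(x_1,x_3,x_5,x_7)$ and $\VV(x_2,x_4,x_6,x_8)$. One can do this by a direct, if lengthy, Gröbner-basis elimination on the Jacobian minors. A cleaner route exploits that the affine cone $\sU$ over the smooth variety $\OGr(5,10)$ is smooth away from its vertex, so that $\operatorname{Sing}(U_{\lambda,\mu})$ equals the intersection of $U_{\lambda,\mu}$ with the critical locus of the projection $p\colon \sU\setminus\{0\}\to\Aa^8$ — a single, $(\lambda,\mu)$-independent subvariety, whose discriminant image one identifies once and for all as $\VV(y_3y_7-y_1y_5)\cup\VV(y_2y_6-y_4y_8)$; alternatively, covering $U_{\lambda,\mu}$ by its cluster torus charts (on whose union it is automatically smooth) reduces the claim to checking that the complement of this atlas is exactly the union of the two alternating loci.
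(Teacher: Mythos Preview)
Your argument is considerably more explicit than the paper's, which identifies the two interior nodes by the same local elimination you give and then defers the entire remaining statement---the sixteen boundary nodes, their precise location, and smoothness everywhere else---to a Macaulay2 check. Your reduction of the boundary analysis via the cyclic symmetry $i\mapsto i+1$ (which, after rescaling, sends $U_{\lambda,\mu}$ to $U_{\mu^{-1},\lambda}$) and the subsequent hand elimination at $P_{x_1}$ and along $\PP^1_{x_1,x_2}$ are correct and a genuine gain in exposition. Incidentally, your interior node at $\lambda=1$ is right and the paper's is not: the point $(0,-1,0,-1,0,-1,0,-1)$ written there fails the equation $x_4x_7=x_5+x_6+\mu$ unless $\mu=1$; your $(0,-1,0,-1,0,-\mu,0,-1)$ is the correct location.

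The genuine gap is the confinement of $\operatorname{Sing}(U_{\lambda,\mu})$ to the two alternating loci, and two of your three suggested routes do not settle it as written. Approach~2 is structurally sound, but knowing only that the discriminant of $p$ is $\VV(y_3y_7-y_1y_5)\cup\VV(y_2y_6-y_4y_8)$ tells you \emph{which} fibres are singular, not \emph{where}; you would still need to compute the critical locus of $p$ inside $\sU$ and intersect with the fibre, a calculation of the same order as the direct Jacobian check. Approach~3 is more seriously off: the general theory (e.g.\ \cite[Lemma~3.5]{bir-geom-cluster}, invoked later in the paper) only guarantees that the complement of the cluster atlas has codimension $\geq 2$, so in a $3$-fold it may well be a curve; there is no a priori reason it should collapse to the (at most two) points of $U\cap\big(\VV(x_1,x_3,x_5,x_7)\cup\VV(x_2,x_4,x_6,x_8)\big)$, and verifying this would itself be a computation comparable to the direct one. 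So Approach~1 (Gr\"obner elimination) is what actually closes the argument, which is precisely the machine check the paper performs. In sum, your proof and the paper's stand on equal footing for rigor at the crucial step, with yours more illuminating for the boundary.
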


\begin{proof}
If $\mu=1$ then the point $(-1,0,-1,0,-1,0,-1,0)$ belongs to $U_{\lambda,1}$ and we can eliminate the variables $x_1,x_3,x_5,x_7$ in a neighbourhood of this point to find an ordinary nodal singularity with local equation $(x_2x_6-x_4x_8=0) \subset \Aa^4_{x_2,x_4,x_6,x_8}$. Similarly if $\lambda=1$ the point $(0,-1,0,-1,0,-1,0,-1)$ becomes a node of $U_{1,\mu}$. If both $\lambda=\mu=1$ then this gives nodes at two distinct points of $U_{1,1}$. Smoothness elsewhere and the singular locus along $D_{\lambda,\mu}$ can be checked by  computer algebra, e.g.\ Macaulay2. 
\end{proof}

\subsection{The affine Fano-3-fold $U_{\lambda,\mu}$}

We now study the affine Fano 3-fold $U_{\lambda,\mu} = X_{\lambda,\mu}\setminus D_{\lambda,\mu}$, which has a (generalised) cluster structure and is the 3-dimensional analogue of the affine del Pezzo surface considered in \S\ref{sect!dP5}.

\subsubsection{Blowup description of $U_{\lambda,\mu}$}
One way that the affine del Pezzo surface can be obtained is by blowing up one point on each of the coordinate axis in $\Aa^2$ and deleting the strict transform of the boundary. We now give a similar description for the 3-folds $U_{\lambda,\mu}$. 

\begin{prop}\label{prop!blowup}
Let $H=\VV(x_1x_2x_3)\subset \Aa^3$ denote the union of the coordinate hyperplanes. There is a locus $Z\subset H\subset \Aa^3$ such that $U_{\lambda,\mu} \cong Bl_Z(\Aa^3) \setminus \widetilde H$ -- i.e.\ the blowup of $Z$ minus the strict transform of $H$. According to the cases of Lemma~\ref{lem!sing}:
\begin{enumerate}
\item if $\lambda,\mu\neq 1$ then $Z$ consists of a conic and two lines
\[ Z = \VV(x_1, \lambda + x_2 + \lambda x_3) \cup \VV(x_2, 1 + x_1 + x_3 + \mu x_1x_3 ) \cup \VV(x_3, 1 + x_1 + x_2), \]  
\item if $\lambda\neq1,\mu=1$ then $Z$ consists of four lines (two lines and a reducible conic)
\[ Z = \VV(x_1, \lambda + x_2 + \lambda x_3) \cup \VV(x_2, 1 + x_1 ) \cup \VV(x_2, 1 + x_3 ) \cup \VV(x_3, 1 + x_1 + x_2), \]
\item if $\lambda=1,\mu\neq1$ then $Z$ consists of a conic and two lines meeting at an embedded point
\[ Z = \VV(x_1, 1 + x_2 + x_3) \cup \VV(x_2, 1 + x_1 + x_3 + \mu x_1x_3 ) \cup \VV(x_3, 1 + x_1 + x_2) \cup \VV(x_1, 1 + x_2, x_3), \] 
\item if $\lambda=\mu=1$ then $Z$ consists of a reducible conic and two lines meeting at an embedded point
\[ Z = \VV(x_1, 1 + x_2 + x_3) \cup \VV(x_2, 1 + x_1 ) \cup \VV(x_2, 1 + x_3 ) \cup \VV(x_3, 1 + x_1 + x_2) \cup \VV(x_1, 1 + x_2, x_3). \]
\end{enumerate}
\begin{center}\begin{tikzpicture}[scale=1.2]
	\begin{scope}
		\node at (-1,1) {1.};
		\node at (0,-1.2) {$\lambda,\mu\neq1$};
		\draw[gray,dashed] ({sin(120)},{cos(120)}) -- (0,0) -- ({sin(240)},{cos(240)}) (0,0) -- (0,1);
		\draw[thick] (0,1/3) -- ({sin(120)/2},{cos(120)/2});
		\draw[thick,domain=120:240, samples=30] plot ({sin(\x)/2}, {cos(\x)/2} );
		\draw[thick] (0,2/3) -- ({sin(240)/2},{cos(240)/2});
	\end{scope}
	\begin{scope}[xshift=3 cm]
		\node at (-1,1) {2.};
		\node at (0,-1.2) {$\lambda\neq1,\mu=1$};
		\draw[gray,dashed] ({sin(120)},{cos(120)}) -- (0,0) -- ({sin(240)},{cos(240)}) (0,0) -- (0,1);
		\draw[thick] (0,1/3) -- ({sin(120)/2},{cos(120)/2}) -- ({sin(120)/2+sin(240)},{cos(120)/2+cos(240)});
		\draw[thick] (0,2/3) -- ({sin(240)/2},{cos(240)/2}) -- ({sin(240)/2+sin(120)},{cos(240)/2+cos(120)});
	\end{scope}
	\begin{scope}[xshift=6 cm]
		\node at (-1,1) {3.};
		\node at (0,-1.2) {$\lambda=1,\mu\neq1$};
		\draw[gray,dashed] ({sin(120)},{cos(120)}) -- (0,0) -- ({sin(240)},{cos(240)}) (0,0) -- (0,1);
		\draw[thick] (0,1/2) -- ({sin(120)/2},{cos(120)/2});
		\draw[thick,domain=120:240, samples=30] plot ({sin(\x)/2}, {cos(\x)/2} );
		\draw[thick] (0,1/2) -- ({sin(240)/2},{cos(240)/2});
		\node at (0,1/2) {$\bullet$};
	\end{scope}
	\begin{scope}[xshift=9 cm]
		\node at (-1,1) {4.};
		\node at (0,-1.2) {$\lambda=\mu=1$};
		\draw[gray,dashed] ({sin(120)},{cos(120)}) -- (0,0) -- ({sin(240)},{cos(240)}) (0,0) -- (0,1);
		\draw[thick] (0,1/2) -- ({sin(120)/2},{cos(120)/2}) -- ({sin(120)/2+sin(240)},{cos(120)/2+cos(240)});
		\draw[thick] (0,1/2) -- ({sin(240)/2},{cos(240)/2}) -- ({sin(240)/2+sin(120)},{cos(240)/2+cos(120)});
		\node at (0,1/2) {$\bullet$};
	\end{scope}
\end{tikzpicture}\end{center}
\end{prop}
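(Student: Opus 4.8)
The plan is to exhibit $U_{\lambda,\mu}$ as a (generalised) cluster variety in the sense of Definition~\ref{defn!cluster-variety}(3) by inverting the coordinate projection to the first three variables, and then to identify $Bl_Z\Aa^3$ with the resolution of that inverse rational map. First I would invert $p_{123}\colon U_{\lambda,\mu}\to\Aa^3_{x_1,x_2,x_3}$. Five of the ten defining equations solve sequentially for the remaining variables: $x_1x_4 = x_2+\lambda x_3+\lambda$ gives $x_4$, then $x_2x_5 = \lambda\mu x_3+x_4+\lambda$ gives $x_5$, then $x_3x_6 = x_4+x_5+1$ gives $x_6$, while $x_3x_8 = x_1+x_2+1$ gives $x_8$ and $x_2x_7 = x_8+\mu x_1+1$ gives $x_7$; these are the coefficient-deformed versions of the formulas of \S\ref{sect!LR3-terms}. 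One then checks that the other five equations (the two relations $x_1x_5-\lambda x_3x_7 = 1-\lambda$ and $x_2x_6-x_4x_8 = \lambda\mu-\lambda$, together with $x_4x_7 = x_5+x_6+\mu$, $x_5x_8 = x_6+\lambda x_7+\lambda\mu$ and $x_6x_1 = \lambda x_7+x_8+\lambda$) hold identically in $\CC(x_1,x_2,x_3)$; this is the coefficient-periodicity of (LR$_3$) and is a routine, if lengthy, computation. It follows that $p_{123}$ is birational, with inverse the rational map $\phi = (x_1,x_2,x_3,x_4,\ldots,x_8)\colon\Aa^3\dashrightarrow\Aa^8$ just written down.

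Next I would compute the indeterminacy locus $Z$ of $\phi$. Every denominator occurring is a monomial in $x_1,x_2,x_3$, so $\phi$ is defined on $\Aa^3\setminus H$ and hence $Z\subseteq H=\VV(x_1x_2x_3)$. Restricting to each coordinate plane away from the deeper strata, $\phi$ extends exactly where the relevant numerator vanishes, which cuts out the line $\VV(x_1,\lambda+x_2+\lambda x_3)$ in $\VV(x_1)$, the conic $\VV(x_2,1+x_1+x_3+\mu x_1x_3)$ in $\VV(x_2)$ (which factors into two lines when $\mu=1$, since $1+x_1+x_3+x_1x_3=(1+x_1)(1+x_3)$), and the line $\VV(x_3,1+x_1+x_2)$ in $\VV(x_3)$. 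Thus $Z$ is the union $Z_1\cup Z_2\cup Z_3$ of these three loci, one centre in each coordinate hyperplane — precisely the Laurent phenomenon algebra picture of a toric model obtained by blowing up one centre per coordinate hyperplane.

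To finish, one checks that blowing up the ideal of $Z$ resolves $\phi$ into a morphism $\widetilde\phi\colon Bl_Z\Aa^3\to\Aa^8$ with image $\overline U_{\lambda,\mu}$, restricting to an isomorphism $Bl_Z\Aa^3\setminus\widetilde H\xrightarrow{\sim}U_{\lambda,\mu}$, where $\widetilde H$ is the strict transform of $H$. This follows from the general cluster-variety formalism: $Bl_Z\Aa^3\setminus\widetilde H$ is covered by the affine charts of the blowup over the three components of $Z$, each of the form described in Example~\ref{eg!one-mutation}, and these match the cluster torus charts of $U_{\lambda,\mu}$ glued by the (coefficient-deformed) Lyness mutations; since $U_{\lambda,\mu}$ is affine it is recovered from any such chart cover of the complement of a codimension-$2$ set.

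The step I expect to be the main obstacle is the scheme-theoretic analysis of $Z$ in the degenerate cases, needed to pin down the four descriptions exactly. When $\lambda=1$ the centres $Z_1$ and $Z_3$ meet at $(0,-1,0)$ and the indeterminacy ideal of $\phi$ acquires an embedded point there, namely $\VV(x_1,1+x_2,x_3)$; proving that the ideal is exactly the one listed in each of the four cases — no larger and no smaller — is a local computation at the double points of $H$, which I would verify with computer algebra as in the proof of Lemma~\ref{lem!sing}. As a sanity check, this matches Lemma~\ref{lem!sing}: under $p_{123}$ the node of $U_{1,\mu}$ projects to the embedded point $(0,-1,0)$, and the node of $U_{\lambda,1}$ projects to the singular point $(-1,0,-1)$ of the reducible conic $Z_2$.
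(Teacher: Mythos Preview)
Your approach is essentially the paper's own: project $U_{\lambda,\mu}$ to $\Aa^3_{x_1,x_2,x_3}$, show this is birational by solving for $x_4,\ldots,x_8$, identify the locus $Z\subset H$ over which the inverse is indeterminate, and recover $U_{\lambda,\mu}$ as $Bl_Z\Aa^3\setminus\widetilde H$. The only difference is in the justification of the last step: the paper computes the image of the projection directly as the constructible set $(\CC^\times)^3\cup Z^0$ and checks that the fibres over $Z$ are $\Aa^1$ (or $\Aa^2$ over the embedded point when $\lambda=1$), then invokes Remark~\ref{rmk!blowup}, whereas you appeal to the torus-chart formalism of Example~\ref{eg!one-mutation} --- which strictly requires disjoint centres, so the intersections of $Z_1,Z_2,Z_3$ along the coordinate axes still need the local analysis you already flag for computer algebra.
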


\begin{rmk}\label{rmk!blowup}
Before proving the proposition, we briefly describe the effect of the blowups we make and explain how $U_{\lambda,\mu}$ ends up with the number of nodes expected from Lemma~\ref{lem!sing}.
\begin{enumerate}
\item The blowup $Bl_Z(\Aa^3)$ obtains an ordinary node in the fibre over any point $P\in Z$ contained in the intersection of two curves. If $P$ is contained in a 1-stratum of the boundary then after the blowup the node is contained in $\widetilde{H}$ and hence does not appear in $U_{\lambda,\mu}$. However, if $P$ is contained in the interior of a 2-stratum (as happens in cases 2.\ and 4.) then the node is not contained in $\widetilde{H}$ and therefore must appear in $U_{\lambda,\mu}$.
\item Blowing up an embedded point in the intersection of two lines also produces a node in the interior $U_{\lambda,\mu}$ (as happens in cases 3.\ and 4.). To see how, we can first blow up the embedded point with exceptional divisor $E$. Let $L\subset E$ be the line that passes through the strict transform of the two lines on either side. Blowing up the lines turns $L$ into a contractible $\sO(-1,-1)$ curve, which is contracted to an ordinary node. 
\begin{center}\begin{tikzpicture}[scale=1.2]
	\begin{scope}
		\draw[gray,dashed] (0,0) -- (0,1);
		\draw[thick] (-1,1/3) -- (0,2/3) -- (1,1/3);
		\node at (0,2/3) {$\bullet$};
	\end{scope}
	\begin{scope}[xshift=3cm]
		\draw[gray,dashed] (0,0) -- (0,1);
		\draw[thick, fill = gray!20] (-1/2,1/2) -- (0,3/4) -- (1/2,1/2) -- cycle;
		\node at (-0.2,0.3) {$L$};
		\draw[thick] (-1/2,1/2) -- (-1,1/3) (1/2,1/2) -- (1,1/3);
	\end{scope}
	\begin{scope}[xshift=6cm]
		\draw[gray,dashed] (0,0) -- (0,1);
		\draw[thick, fill = gray!20] (-1/2+1/4,1/2-1/4) -- (-1/2,1/2) -- (0,3/4) -- (1/2,1/2) -- (1/2-1/4,1/2-1/4) -- cycle;
		\draw[thick, fill = gray!20] (-1/2,1/2) -- (-1,1/3) -- (-1+1/4,1/3-1/4) -- (-1/2+1/4,1/2-1/4) -- cycle;
		\draw[thick, fill = gray!20] (1/2,1/2) -- (1,1/3) -- (1-1/4,1/3-1/4) -- (1/2-1/4,1/2-1/4) -- cycle;
	\end{scope}
	\begin{scope}[xshift=9cm]
		\draw[gray,dashed] (0,0) -- (0,1);
		\draw[thick, fill = gray!20] (0,1/4) -- (-1/2,1/2) -- (0,3/4) -- (1/2,1/2) -- cycle;
		\draw[thick, fill = gray!20] (-1/2,1/2) -- (-1,1/3) -- (-1+1/4,1/3-1/4) -- (0,1/4) -- cycle;
		\draw[thick, fill = gray!20] (1/2,1/2) -- (1,1/3) -- (1-1/4,1/3-1/4) -- (0,1/4) -- cycle;
		\node at (0,1/4) {$\bullet$};
	\end{scope}
	\node at (1.5,0.9) {\small blow up point};
	\node at (1.5,0.5) {$\longleftarrow$};
	\node at (4.5,0.9) {\small blow up lines};
	\node at (4.5,0.5) {$\longleftarrow$};
	\node at (7.5,0.9) {\small contract $L$};
	\node at (7.5,0.5) {$\longrightarrow$};
\end{tikzpicture}\end{center}
\item It is interesting that cases 2.\ and 3.\ give two different constructions of varieties that should be isomorphic by exchanging the role of $\lambda$ and $\mu$. Indeed, we can send the configuration 3.\ to the configuration 2.\ by the mutating one of the two line components, e.g.\ we blow up the line $\VV(x_3,1+x_1+x_2)$ and then contract the strict transform of the divisor $\VV(1+x_1+x_2)$ to the plane at infinity.
\begin{center}\begin{tikzpicture}[scale=1.2]
	\begin{scope}
		\draw[gray,dashed] ({1+sin(120)},{cos(120)}) -- (1,0) -- (0,0) -- ({sin(240)},{cos(240)}) (0,0) -- (0,1) (1,0) -- (1,1);
		\draw[thick] (0,1/2) -- (1/2,0);
		\draw[thick,domain=90:240, samples=30] plot ({sin(\x)/2}, {cos(\x)/2} );
		\draw[thick] (0,1/2) -- ({sin(240)/2},{cos(240)/2});
		\draw[gray,fill=gray, fill opacity = 0.1] (1,1/2) -- ({1+sin(120)/2},{cos(120)/2}) -- ({sin(240)/2},{cos(240)/2}) -- (0,1/2) -- cycle;
		\node at (0,1/2) {$\bullet$};
	\end{scope}
	
	\node at (2.5,3/4) {\small blow up};
	\node at (2.5,1/4) {$\longleftarrow$};
	
	\begin{scope}[xshift=4cm]
		\draw[gray,dashed] ({1+sin(120)},{cos(120)}) -- (1,0) -- (0,0) -- ({sin(240)},{cos(240)}) (0,0) -- (0,1) (1,0) -- (1,1);
		\draw[thick] (2/5,1) -- (2/5,0);
		\draw[thick,domain=120:270, samples=30] plot ({1+3*sin(\x)/5}, {3*cos(\x)/5} );
		\draw[gray,fill=gray, fill opacity = 0.2] (0,1/2) -- ({sin(240)/3},{cos(240)/3}) -- (3/5,{cos(240)/3}) -- (3/5,1/2) -- cycle;
		\draw[gray,fill=gray, fill opacity = 0.1] (1,1/2) -- ({1+sin(120)/3},{cos(120)/3}) -- (3/5,{cos(240)/3}) -- (3/5,1/2) -- cycle;
		\draw[very thick] (0,1/2) -- (3/5,1/2);
	\end{scope}
	
	\node at (6.5,3/4) {\small contract};
	\node at (6.5,1/4) {$\longrightarrow$};
	
	\begin{scope}[xshift=8cm]
		\draw[gray,dashed] ({1+sin(120)},{cos(120)}) -- (1,0) -- (0,0) -- ({sin(240)},{cos(240)}) (0,0) -- (0,1) (1,0) -- (1,1);
		\draw[thick] (1/2,1) -- (1/2,0) -- ({1+2*sin(120)/3},{2*cos(120)/3});
		\draw[gray,fill=gray, fill opacity = 0.2] (1,1/2) -- ({1+sin(120)/3},{cos(120)/3}) -- ({sin(240)/3},{cos(240)/3}) -- (0,1/2) -- cycle;
		\draw[thick] (0,1/2) -- (1,1/2) -- ({1+sin(120)/3},{cos(120)/3});
	\end{scope}
\end{tikzpicture}\end{center}
\end{enumerate}
\end{rmk}

\begin{proof}
Consider $g \colon U_{\lambda,\mu}\subset \Aa^8 \to \Aa^3_{x_1,x_2,x_3}$, which is a birational projection of $U_{\lambda,\mu}$ onto its image. This image is the constructible set 
\[ g(U_{\lambda,\mu}) = (\CC^\times)^3_{x_1,x_2,x_3} \cup Z^0, \qquad \text{where } Z^0=\begin{cases}
Z\setminus\VV(x_1,x_3) & \text{if }\lambda\neq1 \\
Z & \text{if }\lambda = 1 
\end{cases} \]
and $Z$ is as in the statement of the proposition. (In other words, if $\lambda\neq1$ the image of $U_{\lambda,\mu}$ misses the two points of $Z$ that lie on the $x_2$-axis.) Now $g$ restricts to an isomorphism on the open torus $(\CC^\times)^3_{x_1,x_2,x_3}\subset U_{\lambda,\mu}$ and we can check that the fibres over $z\in Z$ are all affine lines $g^{-1}(z)\cong \Aa^1$, unless $\lambda=1$ and $z=(0,-1,0)$, in which case $g^{-1}(z) \cong \Aa^2$. We can resolve the inverse map $g^{-1}\colon g(U_{\lambda,\mu}) \dashrightarrow U_{\lambda,\mu}$ by blowing up $Z\subset \Aa^3$, as described in Remark~\ref{rmk!blowup}. 
\end{proof}

\subsubsection{The exchange graph of $U_{\lambda,\mu}$} \label{sec!U-exchange-graph}

Recall our fibration $p\colon \mathcal U\to \Aa^8_{y_1,\ldots,y_8}$, which we can think of as a family of affine 3-folds over the coefficient ring $R_y=\CC[y_1,\ldots,y_8]$. We let $\Aa_y:=\Aa^8_{y_1,\ldots,y_8}$ denote the base of this fibration. In the traditional language of cluster algebras, we think of the $x$ variables as \emph{cluster variables} on $\sU$ and the $y$ variables as \emph{frozen variables}. Moreover, we recall the two quadratic terms $q_1,q_2$ which we introduced in \S\ref{sect!LR3-terms}. These are homogenised as follows:
\[ q_1 := x_1x_5 - y_1y_5 = x_3x_7 - y_3y_7, \qquad q_2 := x_2x_6 - y_2y_6 = x_4x_8 - y_4y_8. \]
For convenience of notation, we let $q_1=q_3=q_5=q_7$ and $q_2=q_4=q_6=q_8$. For reasons that will shortly become clear (see Remark~\ref{rmk!why-q1-q2}), we want to add $q_1$ and $q_2$ to our list of cluster variables.

\begin{lem}\label{lem!lp}
For any $i=1,\ldots,8$, we have
\[ x_1,\ldots,x_8,q_1,q_2 \in R_y[x_{i-1}^{\pm1},x_i^{\pm1},x_{i+1}^{\pm1}] \quad \text{and} \quad x_1,\ldots,x_8,q_1,q_2 \in R_y[x_{i}^{\pm1},q_{i}^{\pm1},x_{i+2}^{\pm1}]. \]
\end{lem}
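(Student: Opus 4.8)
The plan is to prove the two Laurent‑membership statements by exhibiting, for each fixed $i\in\ZZ/8\ZZ$, an explicit sequence of substitutions using the defining relations of $U_{\lambda,\mu}$ (equivalently, the homogenised relations on $\sU$), solving for each variable in turn and checking that the denominators that appear are always monomials in the allowed variables. By the $8$‑periodicity of the recurrence (which permutes the index set $\ZZ/8\ZZ$ cyclically and is realised by the map $i$ defined in \S\ref{sect!U-lambda-mu}, together with the obvious cyclic relabelling of the frozen variables $y_j$), it suffices to treat a single value of $i$, say $i=2$, for each of the two families; all other cases follow by applying the appropriate power of the cyclic symmetry.

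First I would establish the first family, $x_1,\ldots,x_8,q_1,q_2\in R_y[x_{i-1}^{\pm1},x_i^{\pm1},x_{i+1}^{\pm1}]$ for $i=2$, i.e.\ localisation at $x_1,x_2,x_3$. The eight homogenised periodic relations $x_jx_{j+3}=x_{j+1}y_{j+4}+x_{j+2}y_{j-1}+y_{j+1}y_{j+2}$ let one solve successively: from $j=1$ one gets $x_4=(x_2y_5+x_3y_8+y_2y_3)/x_1$; from $j=2$ one gets $x_5=(x_3y_6+x_4y_1+y_3y_4)/x_2$, now a Laurent polynomial in $x_1,x_2,x_3$ since $x_4$ already is; from $j=8$ (or $j=3$) one gets $x_6$ with denominator $x_3$ (or $x_2x_3$), and so on around the cycle, so that $x_7$ needs denominator dividing $x_2x_3$ and $x_8$ denominator dividing $x_3$. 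One then verifies that the two remaining relations $x_1x_5-\lambda x_3x_7=1-\lambda$ etc.\ are automatically satisfied — equivalently, that $q_1=x_1x_5-y_1y_5$ and $q_2=x_2x_6-y_2y_6$ are the Laurent polynomials produced. Since each intermediate expression is a quotient of an element of $R_y[x_1^{\pm1},x_2^{\pm1},x_3^{\pm1}]$ by a monomial in $x_1,x_2,x_3$, and the ring $R_y[x_1^{\pm1},x_2^{\pm1},x_3^{\pm1}]$ is a (localisation of a) UFD, the only thing to check is that after clearing the denominator predicted at each step the numerator is genuinely divisible by that monomial — this is a finite check that can either be done by hand using the relations or, as the authors do for the singularity statement in Lemma~\ref{lem!sing}, confirmed by computer algebra.

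For the second family, $x_1,\ldots,x_8,q_1,q_2\in R_y[x_i^{\pm1},q_i^{\pm1},x_{i+2}^{\pm1}]$, the strategy is the same but now the "chart" is spanned by one odd‑ and one even‑type $q$: for $i=2$ this is $x_2,q_2,x_4$, and since $q_2=x_2x_6-y_2y_6=x_4x_8-y_4y_8$ one immediately recovers $x_6=(q_2+y_2y_6)/x_2$ and $x_8=(q_2+y_4y_8)/x_4$ as Laurent polynomials in $x_2,q_2,x_4$. The remaining six cluster variables and $q_1$ are then obtained by feeding $x_2,x_4,x_6,x_8$ into the periodic relations (e.g.\ the $j=3$ relation reads $x_3x_6=x_4+x_5+1$ in the specialised coordinates, giving $x_5$ in terms of $x_3,x_4,x_6$ — but one first needs $x_3$; instead use $x_2x_5=\cdots$ and the relation expressing $x_3$ via $x_2,x_4$, namely $x_1x_4=x_2+\lambda x_3+\lambda$ rearranged, so $x_3=(x_1x_4-x_2-\lambda)/\lambda$ — hence one also needs $x_1$). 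The cleanest bookkeeping is probably to note that $x_1,x_3$ satisfy $x_1x_5-\lambda x_3x_7=1-\lambda$ together with the "$q_1$" identity $q_1=x_1x_5-y_1y_5$, and to observe that the sub‑chart $\{x_2,x_4\}$ together with $\{x_6,x_8\}$ already determines everything on the open torus via the relations, with the index‑shifting symmetry reducing the work. The main obstacle is purely organisational: keeping track, around the full $8$‑cycle of relations, of exactly which monomial denominators are needed and verifying the divisibilities; there is no conceptual difficulty, and the $i$‑symmetry cuts the casework down to essentially one computation per family, which I would present in the form of a short table of the eight (resp.\ ten) Laurent expansions together with a remark that the divisibilities follow from the defining relations (and can be double‑checked in Macaulay2).
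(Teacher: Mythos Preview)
Your approach is essentially the same as the paper's: the proof there reads in full ``This is straightforward to check by simply expanding all of the cluster variables in the corresponding Laurent ring.'' You supply more detail than the paper does, and your treatment of the first family (the charts $\TT_{i-1,i,i+1}$) is correct.

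One remark on the second family. Your attempt to recover the odd-indexed $x_j$ from the chart $(x_2,q_2,x_4)$ using only the eight periodic relations $x_jx_{j+3}=\cdots$ and the definitions of $q_1,q_2$ becomes tangled precisely because those relations always couple an odd and an even variable on the left, so knowing only even $x$'s and $q_2$ does not immediately yield an odd $x$. The clean way through --- which the paper lists, immediately after the lemma, as the third type of exchange relation --- is
\[
x_i\,q_{i-1} \;=\; x_{i-1}x_{i+1}y_{i+4} + x_{i-1}y_{i+1}y_{i+2} + x_{i+1}y_{i-1}y_{i-2} + y_{i-1}y_iy_{i+1}.
\]
Taking $i=3$ gives $x_3$ as an $R_y$-polynomial in $x_2,x_4$ divided by $q_2$, and the analogous instances give $x_1,x_5,x_7$ with denominator $q_2$; then $q_1=x_1x_5-y_1y_5$ follows. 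With this relation in hand your ``purely organisational'' obstacle disappears and the second family is no harder than the first.
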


\begin{proof}
This is straightforward to check by simply expanding all of the cluster variables in the corresponding Laurent ring.
\end{proof}

We let $\TT_{i-1,i,i+1} := \Spec R_y[x_{i-1}^{\pm1},x_i^{\pm1},x_{i+1}^{\pm1}]$ and $\TT_{i,q,i+2} = \Spec R_y[x_{i}^{\pm1},q_{i}^{\pm1},x_{i+2}^{\pm1}]$ for $i\in\ZZ/8\ZZ$. Then Lemma~\ref{lem!lp} shows that these give a system of 16 open affine charts in $\sU$ of the form $\mathbb{T}=(\CC^\times)^3\times\Aa_y$, which we can represent at the vertices of the exchange graph $G$ shown in Figure~\ref{fig-OGR510}.
\begin{figure}[htbp]
\begin{center}
\begin{tikzpicture}[scale=2.5]
   \draw[gray,dashed] ({cos(45)/2},{sin(45)/2}) -- ({cos(135)/2},{sin(135)/2}) -- ({cos(225)/2},{sin(225)/2}) -- ({cos(315)/2},{sin(315)/2}) -- cycle;
   \draw[gray,dashed] ({cos(45)/2},{sin(45)/2}) -- ({cos(45)},{sin(45)}) ({cos(135)/2},{sin(135)/2}) -- ({cos(135)},{sin(135)}) ({cos(225)/2},{sin(225)/2}) -- ({cos(225)},{sin(225)}) ({cos(315)/2},{sin(315)/2}) -- ({cos(315)},{sin(315)});
   \draw[thick] (0.5,0) -- (0,0.5) -- (-0.5,0) -- (0,-0.5) -- cycle;
   \draw[thick] ({cos(0)},{sin(0)}) -- ({cos(45)},{sin(45)}) -- ({cos(90)},{sin(90)}) -- ({cos(135)},{sin(135)}) -- ({cos(180)},{sin(180)}) -- ({cos(225)},{sin(225)}) -- ({cos(270)},{sin(270)}) -- ({cos(315)},{sin(315)}) -- cycle;
   \draw[thick] (0.5,0) -- ({cos(0)},{sin(0)}) (0,0.5) -- ({cos(90)},{sin(90)}) (-0.5,0) -- ({cos(180)},{sin(180)}) (0,-0.5) -- ({cos(270)},{sin(270)});
   \node at (0,0) {\small $q_1$};
   \node[fill=white] at (0.5,0.5) {\small $x_3$};
   \node[fill=white] at (0.5,-0.5) {\small $x_5$};
   \node[fill=white] at (-0.5,-0.5) {\small $x_7$};
   \node[fill=white] at (-0.5,0.5) {\small $x_1$};
   \node at (0.15,0.15) {\small \textcolor{gray}{$q_2$}};
   \node at (0.15,0.7) {\small \textcolor{gray}{$x_2$}};
   \node at (0.7,-0.15) {\small \textcolor{gray}{$x_4$}};
   \node at (-0.15,-0.7) {\small \textcolor{gray}{$x_6$}};
   \node at (-0.7,0.15) {\small \textcolor{gray}{$x_8$}};
\end{tikzpicture}
\caption{The exchange graph $G$ for the affine charts covering $\sU$.}
\label{fig-OGR510}
\end{center}
\end{figure}
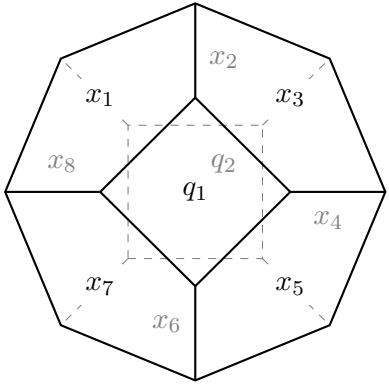
The graph is the 1-skeleton of a 3-dimensional polytope and we can label the faces of this polytope so that the three faces around each vertex give the coordinates on the torus factor of each chart. The edges of $G$ correspond to the three different types of exchange relation:
\begin{align*}
x_{i-1}x_{i+2} &= x_iy_{i+3} + x_{i+1}y_{i-2} + y_iy_{i+1}, \\
x_{i-2}x_{i+2} &= q_i + y_{i-2}y_{i+2}, \\
x_iq_{i-1} &= x_{i-1}x_{i+1}y_{i+4} + x_{i-1}y_{i+1}y_{i+2} + x_{i+1}y_{i-1}y_{i-2} + y_{i-1}y_iy_{i+1}. 
\end{align*}

\begin{rmk}\label{rmk!why-q1-q2}
We can also now see the geometrical reason as to why we are led to include $q_1$ and $q_2$ in our list of cluster variables. In the coordinates on the fibre $U_{\lambda,\mu}$ of $\mathcal U$ described above, and in the case $\lambda,\mu\neq1$, from the blowup up description $U_{\lambda,\mu}=\operatorname{Bl}_Z(\Aa^3_{x_1,x_2,x_3})\setminus \widetilde H$ of Proposition~\ref{prop!blowup} we see that the exchange map 
\[ \left(x_1,x_2,x_3\right)\mapsto \left(x_2,x_3,\frac{\lambda x_2+x_3 + \lambda }{x_1}\right), \]
is a mutation along one of the two line components of $Z$. Similarly, the exchange map 
\[ \left( x_1, x_2, x_3 \right) \mapsto \left( x_1, \frac{\mu x_1x_3 + x_1 + x_3 + 1}{x_2}, x_3 \right), \]
is a mutation along the conic component of $Z$. This transforms the centres along which we obtain our blowup description as follows: now $U_{\lambda,\mu}=\operatorname{Bl}_{Z'}(\Aa^3_{x_1,q_1,x_3})\setminus {\widetilde H}'$ where $H'=\VV(x_1q_1x_3)$ and $Z'\subset H'$ is the union of the following three components.
\begin{center}\begin{tikzpicture}[scale=1.2]
	\begin{scope}
		\draw[gray,dashed] ({sin(120)},{cos(120)}) -- (0,0) -- ({sin(240)},{cos(240)}) (0,0) -- (0,1);
		\draw[thick] (0,1/3) -- ({sin(120)},{1/3+cos(120)});
		\draw[thick,domain=120:240, samples=30] plot ({sin(\x)/2}, {cos(\x)/2} );
		\draw[thick] (0,2/3) -- ({sin(240)},{2/3+cos(240)});
	\end{scope}
	
	\node at (4.5,0.2) {$Z' = \bigcup \begin{cases} \VV(x_1, \lambda q_1 + 1) \\ \VV(q_1, \mu x_1x_3 + x_1 + x_3 + 1 ) \\ \VV(x_3, q_1 + 1) \end{cases}$};
\end{tikzpicture}\end{center}
From this we see that the final type of exchange map
\[ \left( x_1, q_1, x_3 \right) \mapsto \left( x_3, q_1, \frac{\lambda q_1 + 1}{x_1} \right), \]
is a mutation along one of the two line components of $Z'$. The nice surprise is that this system of torus charts and mutations forms a closed system (i.e.\ composing mutations around closed cycles in $G$ gives the identity map on the corresponding torus chart).
\end{rmk}

\begin{rmk}
In the cases where one or both of $\lambda,\mu=1$ the number of centres to blow up in the locus $Z$ increases from three to either four or five respectively. Therefore the valency of the mutation graph also increases. After a fairly involved computation explicitly tracking the centres of $Z$ (which includes having to deal with unexpected jumps similar to those described in \S\ref{sec!mutation-examples}), it can be shown that the exchange graph of $U_{1,\mu}$ (or equivalently $U_{\lambda,1}$) is 4-valent with 28 vertices. Similarly the exchange graph of $U_{1,1}$ is 5-valent with 48 vertices. In particular, these exchange graphs are also finite. The exchange graph for the rank 5 case is a bit too complicated to draw elegantly, however the rank 4 case gives the following graph
\begin{center}
\begin{tikzpicture}[scale=1.5]
   \begin{scope}[xshift = 2.5cm]
   \draw[thick] ({cos(45)/2},{sin(45)/2}) -- ({cos(135)/2},{sin(135)/2}) -- ({cos(225)/2},{sin(225)/2}) -- ({cos(315)/2},{sin(315)/2}) -- cycle;
   \draw[thick] (0.5,0) -- (0,0.5) -- (-0.5,0) -- (0,-0.5) -- cycle;
   \draw[thick] ({cos(0)},{sin(0)}) -- ({cos(45)},{sin(45)}) -- ({cos(90)},{sin(90)}) -- ({cos(135)},{sin(135)}) -- ({cos(180)},{sin(180)}) -- ({cos(225)},{sin(225)}) -- ({cos(270)},{sin(270)}) -- ({cos(315)},{sin(315)}) -- cycle;
   
  \foreach \i in {0,...,7} {
  	\node at ({cos(45*\i)},{sin(45*\i)}) {$\bullet$};
  	\node at ({cos(45*\i)/2},{sin(45*\i)/2}) {$\bullet$};
  };
  \foreach \i in {0,...,11} {
  	\node at ({0.75*cos(30*\i)},{0.75*sin(30*\i)}) {$\bullet$};
	\draw[thick] ({0.75*cos(30*\i)},{0.75*sin(30*\i)}) -- ({0.75*cos(30*\i+60)},{0.75*sin(30*\i+60)});
  };
  \foreach \i in {0,...,3} {
  	\draw[thick] ({cos(90*\i+45)/2},{sin(90*\i+45)/2}) -- ({0.75*cos(90*\i+30)},{0.75*sin(90*\i+30)}) -- ({cos(90*\i+45)},{sin(90*\i+45)}) -- ({0.75*cos(90*\i+60)},{0.75*sin(90*\i+60)}) -- cycle;
  };
   \end{scope}
\end{tikzpicture}
\end{center}
which is the 1-skeleton of a 4-dimensional polytope. Visually, we see that it can be `collapsed' onto the exchange graph $G$ of Figure~\ref{fig-OGR510}.

\end{rmk}

\subsubsection{The interior divisors inside $U_{\lambda,\mu}$}

We consider the following ten \emph{interior divisors} of $U_{\lambda,\mu}$ 
\[ E_i=\VV(x_i) \text{ for } i=1,\ldots,8 \qquad \text{and} \qquad F_i=\VV(q_i) \text{ for } i=1,2. \]
These are all the divisors that appear in the complement of one of the 16 cluster tori and are the analogue of the five interior $(-1)$-curves of the del Pezzo surface in \S\ref{sect!dP5}. Because of the jump in the rank of the class group $\Cl(U_{\lambda,\mu})$, something special happens to these divisors when either or both of $\lambda,\mu=1$. 

\begin{prop} \label{prop!div} \leavevmode
\begin{enumerate}
\item If $\lambda,\mu\neq1$, then the divisors $E_i$ and $F_i$ are all irreducible.
\item If $\mu=1$, the four `odd' divisors $E_1,E_3,E_5,E_7$ break up into four overlapping irreducible components $E_{13},E_{35},E_{57},E_{71}$, where $E_{13} = \VV(x_1,x_3)$ etc. Moreover $F_2$ breaks into two components as $F_2=F_{26}\cup F_{48}$, where $F_{26} = \VV(1+x_2,1+x_6)$ and $F_{48} = \VV(1+x_4,1+x_8)$.
\item If $\lambda=1$, the four `even' divisors $E_2,E_4,E_6,E_8$ and $F_1$ all break into two components, with the analogous description to (2).
\end{enumerate}
In particular, if $\lambda=\mu=1$ then all ten divisors break up into a total of twelve components.
\end{prop}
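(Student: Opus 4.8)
The plan is to read off each of the ten interior divisors directly from the equations of $U_{\lambda,\mu}$, exploiting that $U_{\lambda,\mu}$ is an integral Gorenstein $3$-fold: it is a Gorenstein fibre of the flat family $p$ (see \S\ref{sect!U-lambda-mu}), and it is irreducible and reduced, being birational to $\Aa^3$ by Proposition~\ref{prop!blowup}. The ten interior divisors are the zero loci of the ten functions $x_1,\dots,x_8,q_1,q_2$, and from the two binomial relations $x_1x_5-\lambda x_3x_7=1-\lambda$ and $x_2x_6-x_4x_8=\lambda\mu-\lambda$ one has, up to nonzero constant units, the identities $q_1=x_1x_5-1=\lambda(x_3x_7-1)$ and $q_2=x_2x_6-\lambda\mu=x_4x_8-\lambda$, so that $F_i=\VV(q_i)$ coincides with $\VV(x_1x_5-1)=\VV(x_3x_7-1)$, respectively $\VV(x_2x_6-\lambda\mu)=\VV(x_4x_8-\lambda)$. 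Each of the ten functions is a nonzerodivisor on the domain $\CC[U_{\lambda,\mu}]$, so each interior divisor is a Cartier divisor whose coordinate ring is Cohen--Macaulay; hence it is of pure dimension $2$ with \emph{no embedded components}. The upshot is that, for each of the ten divisors, it suffices to compute the support set-theoretically and to check reducedness at a generic point of each component: no spurious or embedded pieces can occur, and whatever set-theoretic decomposition we find is automatically the primary decomposition.

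For part~(1) (the case $\lambda,\mu\ne1$), restricting the relevant binomial relation to $E_i=\VV(x_i)$ -- the first relation above for $i$ odd, the second for $i$ even -- forces $x_{i-2}x_{i+2}$ to equal a nonzero constant on $E_i$, so that $E_i$ fibres over a smooth (hence irreducible) conic with $\Aa^1$-fibres; together with reducedness this shows $E_i$ is irreducible. A cleaner way to see irreducibility of all ten divisors at once is to note, via the analysis of Proposition~\ref{prop!blowup} and Remark~\ref{rmk!why-q1-q2} carried out in each of the sixteen cluster charts, that every interior divisor is the exceptional divisor of the blowup of one of the \emph{irreducible} centres of the relevant locus $Z$ (a line or a smooth conic), hence a $\PP^1$-bundle over an irreducible rational curve. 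Since all components of $Z$ are irreducible when $\lambda,\mu\ne1$, part~(1) follows.

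For parts~(2) and~(3), suppose one of $\lambda,\mu$ equals $1$ (the one singled out for the given parity in the statement). Two mechanisms account for the reducibility. First, by Lemma~\ref{lem!sing} the interior acquires an ordinary node, which -- reading off the point exhibited in that lemma's proof -- lies on exactly the four $E_i$ of the relevant parity; in suitable local analytic coordinates the node is $\VV(uv-st)$ and a divisor through it of the form $\VV(u)$ equals $\VV(u,st)=\VV(u,s)\cup\VV(u,t)$, which is reduced, and matching $u,s,t$ with the $x_j$ identifies the two global components of $E_i$ as $\VV(x_i,x_{i+2})$ and $\VV(x_i,x_{i-2})$, i.e.\ the overlapping family $E_{i,i+2}$. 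Second, in the blowup picture the pertinent conic component of $Z$ degenerates to a reducible conic -- for instance $\VV\big(\mu x_1x_3+x_1+x_3+1\big)=\VV\big((1+x_1)(1+x_3)\big)$ when $\mu=1$, and similarly for the conic occurring in the chart of Remark~\ref{rmk!why-q1-q2} -- so the exceptional divisor over it, which is one of $F_1,F_2$, splits into two, and reading off the two line components gives the stated shape $\VV(1+x_a,1+x_b)\cup\VV(1+x_c,1+x_d)$. In each case the two pieces are again exceptional divisors of blowups of irreducible centres, hence irreducible, and reducedness of the ambient divisor was noted above.

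Finally, when $\lambda=\mu=1$ all four odd $E_i$, all four even $E_i$, and both of $F_1,F_2$ are reducible; the eight divisors $E_1,\dots,E_8$ then contribute the eight distinct surfaces $E_{i,i+2}$ with $i\in\ZZ/8$, and $F_1,F_2$ contribute two apiece, for a total of $8+4=12$ components. I expect the main difficulty to be bookkeeping rather than any single idea: one must run through all ten divisors and up to three degenerate parameter values, correctly match the local nodal coordinates -- and the degenerating conics -- to the global functions $x_j$ and $q_j$ (in particular keeping straight which parity is affected by $\lambda=1$ versus $\mu=1$), and confirm case by case that the decomposition found is complete. In practice I would corroborate the conceptual node/blowup argument above with explicit primary-decomposition computations of the ideals $I_{U_{\lambda,\mu}}+(x_i)$ and $I_{U_{\lambda,\mu}}+(q_i)$ in a computer algebra system, exactly as was done for Lemma~\ref{lem!sing}.
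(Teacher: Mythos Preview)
Your proof is correct and rests on the same underlying geometry as the paper's (the blowup description of Proposition~\ref{prop!blowup} and Remark~\ref{rmk!why-q1-q2}), but the execution is genuinely different. The paper simply projects everything to $\Aa^3_{x_1,x_2,x_3}$ via the map $g$ and writes down, for each of the ten divisors, an explicit irreducible equation in $\Aa^3$ whose strict transform (or exceptional divisor) it is; the splitting in parts~(2) and~(3) is then read off by watching these explicit equations factor when $\lambda$ or $\mu$ specialises to~$1$. Your argument instead front-loads a clean structural observation---that each $x_i,q_j$ is a nonzerodivisor on a Cohen--Macaulay ring, so the interior divisors have no embedded components and a set-theoretic decomposition suffices---and then explains the reducibility via two conceptual mechanisms: the local nodal picture from Lemma~\ref{lem!sing} for the $E_i$, and the degeneration of the conic centre for the $F_i$. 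These two mechanisms are really the same phenomenon seen from different charts (the interior node of Remark~\ref{rmk!blowup}(1) is exactly what the reducible conic produces), and the paper does not separate them this way.

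What each approach buys: the paper's explicit equations make the identification of the components (e.g.\ $E_{13}=\VV(x_1,x_3)$, $F_{26}=\VV(1+x_2,1+x_6)$) immediate and self-contained, with no appeal to local analytic coordinates or to matching charts. Your approach is more portable---the Cohen--Macaulay reduction would work for any such family---but the step ``matching $u,s,t$ with the $x_j$ identifies the two global components'' and the claim that the conic's exceptional divisor is one of the $F_i$ both require choosing the right cluster chart, which you acknowledge as bookkeeping and propose to confirm by computer. That is entirely reasonable; the paper's proof is in the end also a tabulation, just done by hand in a single fixed chart rather than across several.
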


\begin{proof}
We consider the image of these divisors under the projection $g\colon U_{\lambda,\mu}\to \Aa^3_{x_1,x_2,x_3}$ with the discriminant locus $Z\subset \Aa^3$ as in Proposition~\ref{prop!blowup}. For $\lambda,\mu\neq1$ we find that $E_1,E_2,E_3$ are the exceptional divisors over the three components of $Z$ and the remaining divisors are given by the strict transform under $g^{-1}$ of the following (irreducible) divisors in $\Aa^3$.
\begin{center}
$\begin{array}{lll}
E_1: x_1 = \lambda + x_2 + \lambda x_3 = 0 && E_6: (1+x_1+x_2)(\lambda+x_2+\lambda x_3) = \lambda(1-\mu)x_1x_3 \\
E_2: x_2 = 1 + x_1 + x_3 + \mu x_1x_3 = 0 && E_7: 1 + x_1 + x_2 + x_3 + \mu x_1x_3 = 0 \\
E_3: x_3 = 1 + x_1 + x_2 = 0 && E_8: 1 + x_1 + x_2 = 0 \\
E_4: \lambda + x_2 + \lambda x_3 = 0 && F_1: 1 + x_1 + x_3 + \mu x_1x_3 = 0  \\
E_5: \lambda + \lambda x_1 + x_2 + \lambda x_3 + \lambda\mu x_3 = 0 && F_2: (\lambda + x_2 + \lambda x_3)(1 + x_1 + x_2) = \lambda x_1x_3 \end{array}$
\end{center}
\begin{center}\begin{tikzpicture}[scale=0.9]
	\begin{scope}
		\node at (-0.8,0.8) {$E_1$};
		\draw[gray,dashed] ({sin(120)},{cos(120)}) -- (0,0) -- ({sin(240)},{cos(240)}) (0,0) -- (0,1);
		\draw (0,1/3) -- ({sin(120)/2},{cos(120)/2});
		\draw[domain=120:240, samples=30] plot ({sin(\x)/2}, {cos(\x)/2} );
		\draw[very thick] (0,2/3) -- ({sin(240)/2},{cos(240)/2});
	\end{scope}
	\begin{scope}[xshift=3cm]
		\node at (-0.8,0.8) {$E_2$};
		\draw[gray,dashed] ({sin(120)},{cos(120)}) -- (0,0) -- ({sin(240)},{cos(240)}) (0,0) -- (0,1);
		\draw (0,1/3) -- ({sin(120)/2},{cos(120)/2});
		\draw (0,2/3) -- ({sin(240)/2},{cos(240)/2});
		\draw[very thick, domain=120:240, samples=30] plot ({sin(\x)/2}, {cos(\x)/2} );
	\end{scope}
	\begin{scope}[xshift=6cm]
		\node at (-0.8,0.8) {$E_3$};
		\draw[gray,dashed] ({sin(120)},{cos(120)}) -- (0,0) -- ({sin(240)},{cos(240)}) (0,0) -- (0,1);
		\draw (0,2/3) -- ({sin(240)/2},{cos(240)/2});
		\draw[domain=120:240, samples=30] plot ({sin(\x)/2}, {cos(\x)/2} );
		\draw[very thick] (0,1/3) -- ({sin(120)/2},{cos(120)/2});
	\end{scope}
	\begin{scope}[xshift=9cm]
		\node at (-0.8,0.8) {$E_4$};
		\draw[gray,dashed] ({sin(120)},{cos(120)}) -- (0,0) -- ({sin(240)},{cos(240)}) (0,0) -- (0,1);
		\draw (0,1/3) -- ({sin(120)/2},{cos(120)/2});
		\draw[domain=120:240, samples=30] plot ({sin(\x)/2}, {cos(\x)/2} );
		\draw[very thick, fill = gray, fill opacity = 0.2] ({sin(120)},{2/3+cos(120)}) -- (0,2/3) -- ({sin(240)/2},{cos(240)/2}) -- ({sin(120)+sin(240)/2},{cos(120)+cos(240)/2});
	\end{scope}
	
	\begin{scope}[xshift=12cm]
		\node at (-0.8,0.8) {$E_5$};
		\draw[gray,dashed] ({sin(120)},{cos(120)}) -- (0,0) -- ({sin(240)},{cos(240)}) (0,0) -- (0,1);
		\draw (0,1/3) -- ({sin(120)/2},{cos(120)/2});
		\draw[very thick, domain=120:240, samples=30, fill = gray, fill opacity = 0.2] plot ({sin(\x)/2}, {cos(\x)/2} );
		\fill[gray, opacity = 0.2] ({sin(240)/2},{cos(240)/2}) -- (0,2/3) -- ({sin(120)/2},{cos(120)/2}) -- cycle;
		\draw[very thick] ({sin(120)/2},{cos(120)/2}) -- (0,2/3) -- ({sin(240)/2},{cos(240)/2});
	\end{scope}
	\begin{scope}[xshift=0cm,yshift=-2.5cm]
		\node at (-0.8,0.8) {$E_6$};
		\draw[gray,dashed] ({sin(120)},{cos(120)}) -- (0,0) -- ({sin(240)},{cos(240)}) (0,0) -- (0,1);
		\draw[very thick, domain=120:240, samples=30, fill = gray, fill opacity = 0.2] plot ({sin(\x)/2}, {cos(\x)/2} );
		\fill[gray, opacity = 0.2] ({sin(120)},{2/3+cos(120)}) -- (0,2/3) -- ({sin(240)/2},{cos(240)/2}) -- cycle;
		\fill[gray, opacity = 0.2] ({sin(240)},{1/3+cos(240)}) -- (0,1/3) -- ({sin(120)/2},{cos(120)/2}) -- ({sin(240)/2},{cos(240)/2}) -- cycle;
		\draw[very thick] ({sin(120)},{2/3+cos(120)}) -- (0,2/3) -- ({sin(240)/2},{cos(240)/2})  ({sin(120)/2},{cos(120)/2}) -- (0,1/3) -- ({sin(240)},{1/3+cos(240)});
	\end{scope}
	\begin{scope}[xshift=3cm,yshift=-2.5cm]
		\node at (-0.8,0.8) {$E_7$};
		\draw[gray,dashed] ({sin(120)},{cos(120)}) -- (0,0) -- ({sin(240)},{cos(240)}) (0,0) -- (0,1);
		\draw (0,2/3) -- ({sin(240)/2},{cos(240)/2});
		\draw[very thick, domain=120:240, samples=30, fill = gray, fill opacity = 0.2] plot ({sin(\x)/2}, {cos(\x)/2} );
		\fill[gray, opacity = 0.2] ({sin(240)/2},{cos(240)/2}) -- (0,1/3) -- ({sin(120)/2},{cos(120)/2}) -- cycle;
		\draw[very thick] ({sin(120)/2},{cos(120)/2}) -- (0,1/3) -- ({sin(240)/2},{cos(240)/2});
	\end{scope}
	\begin{scope}[xshift=6cm,yshift=-2.5cm]
		\node at (-0.8,0.8) {$E_8$};
		\draw[gray,dashed] ({sin(120)},{cos(120)}) -- (0,0) -- ({sin(240)},{cos(240)}) (0,0) -- (0,1);
		\draw (0,2/3) -- ({sin(240)/2},{cos(240)/2});
		\draw[domain=120:240, samples=30] plot ({sin(\x)/2}, {cos(\x)/2} );
		\draw[very thick, fill = gray, fill opacity = 0.2] ({sin(240)},{1/3+cos(240)}) -- (0,1/3) -- ({sin(120)/2},{cos(120)/2}) -- ({sin(240)+sin(120)/2},{cos(240)+cos(120)/2});
	\end{scope}

	\begin{scope}[xshift=9cm,yshift=-2.5cm]
		\node at (-0.8,0.8) {$F_1$};
		\draw[gray,dashed] ({sin(120)},{cos(120)}) -- (0,0) -- ({sin(240)},{cos(240)}) (0,0) -- (0,1);
		\draw (0,2/3) -- ({sin(240)/2},{cos(240)/2});
		\draw (0,1/3) -- ({sin(120)/2},{cos(120)/2});
		\draw[very thick, domain=120:240, samples=30, fill = gray, fill opacity = 0.2] plot ({sin(\x)/2}, {cos(\x)/2} );
		\fill[gray, opacity = 0.2] ({sin(120)/2},{cos(120)/2+1}) -- ({sin(120)/2},{cos(120)/2}) -- ({sin(240)/2},{cos(240)/2}) -- ({sin(240)/2},{cos(240)/2+1}) -- cycle;
		\draw[very thick] ({sin(120)/2},{cos(120)/2+1}) -- ({sin(120)/2},{cos(120)/2})  ({sin(240)/2},{cos(240)/2}) -- ({sin(240)/2},{cos(240)/2+1});
	\end{scope}
	\begin{scope}[xshift=12cm,yshift=-2.5cm]
		\node at (-0.8,0.8) {$F_2$};
		\draw[gray,dashed] ({sin(120)},{cos(120)}) -- (0,0) -- ({sin(240)},{cos(240)}) (0,0) -- (0,1);
		\draw[domain=120:240, samples=30] plot ({sin(\x)/2}, {cos(\x)/2} );
		\fill[gray, opacity = 0.2] ({sin(120)},{2/3+cos(120)}) -- (0,2/3) -- ({sin(240)/2},{cos(240)/2}) -- cycle;
		\fill[gray, opacity = 0.2] ({sin(240)},{1/3+cos(240)}) -- (0,1/3) -- ({sin(120)/2},{cos(120)/2}) -- ({sin(240)/2},{cos(240)/2}) -- cycle;
		\draw[very thick] ({sin(120)},{2/3+cos(120)}) -- (0,2/3) -- ({sin(240)/2},{cos(240)/2}) -- ({sin(120)/2},{cos(120)/2}) -- (0,1/3) -- ({sin(240)},{1/3+cos(240)});
	\end{scope}
\end{tikzpicture}\end{center}

If $\mu=1$ the conic component of $Z$ breaks into two pieces. This has the effect of breaking all the divisors $E_2,E_4,E_6,E_8$ and $F_1$ in two. If $\lambda=1$ then the two line components of $Z$ meet at an embedded point. This has the effect of breaking all the divisors $E_1,E_3,E_5,E_7$ and $F_2$ in two. In the most extreme case when $\lambda=\mu=1$, the ten divisors break up into twelve components as follows.
\begin{center}
\resizebox{\textwidth}{!}{
$\begin{array}{lllll} 
E_{71}: x_1 = 1 + x_2 + x_3 = 0 && E_{35}: x_3 = 1 + x_1 + x_2 = 0 &&  F_{15}: 1 + x_1 = 0 \\
E_{82}: x_2 = 1 + x_1 = 0 && E_{46}: 1 + x_2 + x_3 = 0 && F_{26}: 1 + x_2 = 0  \\
E_{13}: x_1 = 1 + x_2 = x_3 = 0 && E_{57}: 1 +x_1+x_2+x_3+x_1x_3 = 0 && F_{37}: 1+x_3 = 0 \\
E_{24}: x_2 = 1 + x_3 = 0 && E_{68}: 1 + x_1 + x_2 = 0 && F_{48}: 1+x_1+x_2+x_3 = 0 
\end{array}$}
\end{center}
\begin{center}\begin{tikzpicture}[scale=0.9]

	\begin{scope}
		\node at (-0.8,0.8) {$E_{71}$};
		\draw[gray,dashed] ({sin(120)},{cos(120)}) -- (0,0) -- ({sin(240)},{cos(240)}) (0,0) -- (0,1);
		\draw (0,1/2) -- ({sin(120)/2},{cos(120)/2});
		\draw ({sin(120)/2},{cos(120)/2}) -- ({sin(120)/2+sin(240)},{cos(120)/2+cos(240)});
		\draw ({sin(240)/2},{cos(240)/2}) -- ({sin(120)+sin(240)/2},{cos(120)+cos(240)/2});
		\node[gray] at (0,1/2) {$\bullet$};
		\draw[very thick] (0,1/2) -- ({sin(240)/2},{cos(240)/2});
	\end{scope}
	\begin{scope}[xshift=3cm]
		\node at (-0.8,0.8) {$E_{82}$};
		\draw[gray,dashed] ({sin(120)},{cos(120)}) -- (0,0) -- ({sin(240)},{cos(240)}) (0,0) -- (0,1);
		\draw (0,1/2) -- ({sin(120)/2},{cos(120)/2});
		\draw ({sin(240)/2},{cos(240)/2}) -- ({sin(120)+sin(240)/2},{cos(120)+cos(240)/2});
		\draw (0,1/2) -- ({sin(240)/2},{cos(240)/2});
		\draw[very thick] ({sin(120)/2},{cos(120)/2}) -- ({sin(120)/2+sin(240)},{cos(120)/2+cos(240)});
		\node[gray] at (0,1/2) {$\bullet$};
	\end{scope}
	\begin{scope}[xshift=6cm]
		\node at (-0.8,0.8) {$E_{13}$};
		\draw[gray,dashed] ({sin(120)},{cos(120)}) -- (0,0) -- ({sin(240)},{cos(240)}) (0,0) -- (0,1);
		\draw (0,1/2) -- ({sin(120)/2},{cos(120)/2});
		\draw ({sin(120)/2},{cos(120)/2}) -- ({sin(120)/2+sin(240)},{cos(120)/2+cos(240)});
		\draw ({sin(240)/2},{cos(240)/2}) -- ({sin(120)+sin(240)/2},{cos(120)+cos(240)/2});
		\draw (0,1/2) -- ({sin(240)/2},{cos(240)/2});
		\node at (0,1/2) {$\bullet$};
	\end{scope}
	\begin{scope}[xshift=9cm]
		\node at (-0.8,0.8) {$E_{24}$};
		\draw[gray,dashed] ({sin(120)},{cos(120)}) -- (0,0) -- ({sin(240)},{cos(240)}) (0,0) -- (0,1);
		\draw (0,1/2) -- ({sin(120)/2},{cos(120)/2});
		\draw ({sin(120)/2},{cos(120)/2}) -- ({sin(120)/2+sin(240)},{cos(120)/2+cos(240)});
		\draw (0,1/2) -- ({sin(240)/2},{cos(240)/2});
		\draw[very thick] ({sin(240)/2},{cos(240)/2}) -- ({sin(120)+sin(240)/2},{cos(120)+cos(240)/2});
		\node[gray] at (0,1/2) {$\bullet$};
	\end{scope}
	\begin{scope}[xshift=12cm]
		\node at (-0.8,0.8) {$E_{35}$};
		\draw[gray,dashed] ({sin(120)},{cos(120)}) -- (0,0) -- ({sin(240)},{cos(240)}) (0,0) -- (0,1);
		\draw (0,1/2) -- ({sin(240)/2},{cos(240)/2});
		\draw ({sin(120)/2},{cos(120)/2}) -- ({sin(120)/2+sin(240)},{cos(120)/2+cos(240)});
		\draw ({sin(240)/2},{cos(240)/2}) -- ({sin(120)+sin(240)/2},{cos(120)+cos(240)/2});
		\node[gray] at (0,1/2) {$\bullet$};
		\draw[very thick] (0,1/2) -- ({sin(120)/2},{cos(120)/2});
	\end{scope}
	\begin{scope}[xshift=15cm]
		\node at (-0.8,0.8) {$E_{46}$};
		\draw[gray,dashed] ({sin(120)},{cos(120)}) -- (0,0) -- ({sin(240)},{cos(240)}) (0,0) -- (0,1);
		\draw (0,1/2) -- ({sin(120)/2},{cos(120)/2});
		\draw ({sin(120)/2},{cos(120)/2}) -- ({sin(120)/2+sin(240)},{cos(120)/2+cos(240)});
		\node[gray] at (0,1/2) {$\bullet$};
		\draw[very thick, fill = gray, fill opacity = 0.2] ({sin(120)},{cos(120)+1/2}) -- (0,1/2) -- ({sin(240)/2},{cos(240)/2}) -- ({sin(120)+sin(240)/2},{cos(120)+cos(240)/2});
	\end{scope}

	\begin{scope}[yshift=-2.5cm]
		\node at (-0.8,0.8) {$E_{57}$};
		\draw[gray,dashed] ({sin(120)},{cos(120)}) -- (0,0) -- ({sin(240)},{cos(240)}) (0,0) -- (0,1);
		\node[gray] at (0,1/2) {$\bullet$};
		\draw[very thick, fill = gray, fill opacity = 0.2] ({sin(120)+sin(240)/2},{cos(120)+cos(240)/2}) -- ({sin(240)/2},{cos(240)/2}) -- (0,1/2) -- ({sin(120)/2},{cos(120)/2}) -- ({sin(120)/2+sin(240)},{cos(120)/2+cos(240)});
	\end{scope}
	\begin{scope}[xshift=3cm,yshift=-2.5cm]
		\node at (-0.8,0.8) {$E_{68}$};
		\draw[gray,dashed] ({sin(120)},{cos(120)}) -- (0,0) -- ({sin(240)},{cos(240)}) (0,0) -- (0,1);
		\draw ({sin(240)/2},{cos(240)/2}) -- ({sin(120)+sin(240)/2},{cos(120)+cos(240)/2});
		\draw (0,1/2) -- ({sin(240)/2},{cos(240)/2});
		\node[gray] at (0,1/2) {$\bullet$};
		\draw[very thick, fill = gray, fill opacity = 0.2] ({sin(240)},{1/2+cos(240)}) -- (0,1/2) -- ({sin(120)/2},{cos(120)/2}) -- ({sin(120)/2+sin(240)},{cos(120)/2+cos(240)});
	\end{scope}
	\begin{scope}[xshift=6cm,yshift=-2.5cm]
		\node at (-0.8,0.8) {$F_{15}$};
		\draw[gray,dashed] ({sin(120)},{cos(120)}) -- (0,0) -- ({sin(240)},{cos(240)}) (0,0) -- (0,1);
		\draw (0,1/2) -- ({sin(120)/2},{cos(120)/2});
		\draw ({sin(240)/2},{cos(240)/2}) -- ({sin(120)+sin(240)/2},{cos(120)+cos(240)/2});
		\draw (0,1/2) -- ({sin(240)/2},{cos(240)/2});
		\fill[gray, opacity = 0.2] ({sin(120)/2},{1+cos(120)/2}) -- ({sin(120)/2},{cos(120)/2}) -- ({sin(120)/2+sin(240)},{cos(120)/2+cos(240)}) -- ({sin(120)/2+sin(240)},{1+cos(120)/2+cos(240)}) -- cycle;
		\draw[very thick] ({sin(120)/2},{1+cos(120)/2}) -- ({sin(120)/2},{cos(120)/2}) -- ({sin(120)/2+sin(240)},{cos(120)/2+cos(240)});
		\node[gray] at (0,1/2) {$\bullet$};
	\end{scope}
	\begin{scope}[xshift=9cm,yshift=-2.5cm]
		\node at (-0.8,0.8) {$F_{26}$};
		\draw[gray,dashed] ({sin(120)},{cos(120)}) -- (0,0) -- ({sin(240)},{cos(240)}) (0,0) -- (0,1);
		\draw (0,1/2) -- ({sin(120)/2},{cos(120)/2});
		\draw ({sin(120)/2},{cos(120)/2}) -- ({sin(120)/2+sin(240)},{cos(120)/2+cos(240)});
		\draw ({sin(240)/2},{cos(240)/2}) -- ({sin(120)+sin(240)/2},{cos(120)+cos(240)/2});
		\draw (0,1/2) -- ({sin(240)/2},{cos(240)/2});
		\fill[gray, opacity = 0.2] ({sin(240)},{cos(240)+1/2}) -- (0,1/2) -- ({sin(120)},{cos(120)+1/2}) -- (0,-1/2) -- cycle;
		\node[gray] at (0,1/2) {$\bullet$};
		\draw[very thick] ({sin(240)},{cos(240)+1/2}) -- (0,1/2) -- ({sin(120)},{cos(120)+1/2});
	\end{scope}
	\begin{scope}[xshift=12cm,yshift=-2.5cm]
		\node at (-0.8,0.8) {$F_{37}$};
		\draw[gray,dashed] ({sin(120)},{cos(120)}) -- (0,0) -- ({sin(240)},{cos(240)}) (0,0) -- (0,1);
		\draw (0,1/2) -- ({sin(120)/2},{cos(120)/2});
		\draw ({sin(120)/2},{cos(120)/2}) -- ({sin(120)/2+sin(240)},{cos(120)/2+cos(240)});
		\draw (0,1/2) -- ({sin(240)/2},{cos(240)/2});
		\fill[gray, opacity = 0.2] ({sin(240)/2},{1+cos(240)/2}) -- ({sin(240)/2},{cos(240)/2}) -- ({sin(240)/2+sin(120)},{cos(240)/2+cos(120)}) -- ({sin(240)/2+sin(120)},{1+cos(240)/2+cos(120)}) -- cycle;
		\draw[very thick] ({sin(240)/2},{1+cos(240)/2}) -- ({sin(240)/2},{cos(240)/2}) -- ({sin(120)+sin(240)/2},{cos(120)+cos(240)/2});
		\node[gray] at (0,1/2) {$\bullet$};
	\end{scope}
	\begin{scope}[xshift=15cm,yshift=-2.5cm]
		\node at (-0.8,0.8) {$F_{48}$};
		\draw[gray,dashed] ({sin(120)},{cos(120)}) -- (0,0) -- ({sin(240)},{cos(240)}) (0,0) -- (0,1);
		\draw ({sin(120)/2},{cos(120)/2}) -- ({sin(120)/2+sin(240)},{cos(120)/2+cos(240)});
		\draw ({sin(240)/2},{cos(240)/2}) -- ({sin(120)+sin(240)/2},{cos(120)+cos(240)/2});
		\node[gray] at (0,1/2) {$\bullet$};
		\draw[very thick, fill = gray, fill opacity = 0.2] (0,1/2) -- ({sin(240)/2},{cos(240)/2}) -- ({sin(120)/2},{cos(120)/2}) -- cycle;
	\end{scope}
\end{tikzpicture}\end{center}
\end{proof}

\subsection{The tropicalisation of $U_{\lambda,\mu}$}

To simplify the notation in this section we let $U=U_{\lambda,\mu}$ so that the dependence on $\lambda,\mu$ is now left implicit. We now want to construct the tropicalisation $N_U$ of $U$ (as in \S\ref{sec!trop}), the dual space $M_U$ and the dual intersection pairing $\langle {\cdot},{\cdot}\rangle\colon N_U\times M_U\to \RR$.

\subsubsection{The intersection pairing}

Recall that we have a set of ten cluster variables $\{x_1,\ldots,x_8,q_1,q_2\}$ and a set of ten boundary components $\{D_{123},\ldots,D_{812},D_{1357},D_{2468}\}$ in the boundary divisor $D$ of our projective compactification $(X,D)$ of $U$. For any cluster monomial $\vartheta_m\in \CC[U]$ and any boundary component $D_n$ we let $\langle D_n,\vartheta_m \rangle := \ord_{D_n}(\vartheta_m)$ denote the order of vanishing of $\vartheta_m$ along $D_n$. We start by computing the analogous matrix to Table~\ref{table!dP5} that we will use to define the intersection pairing $\langle {\cdot},{\cdot} \rangle\colon N_U\times M_U\to \RR$. Since this matrix is symmetric, this realises a one-to-one correspondence between the ten cluster variables $x_1,\ldots,x_8,q_1,q_2$ in $\CC[U]$ and the ten boundary divisors $D_{123},\ldots, D_{812},D_{1357},D_{2468}$ of $(X,D)$.

\begin{prop}
Considered as rational functions on $X$, the cluster variables $x_1$ and $q_1$ have divisor
\begin{align*}
\divv x_1 &= E_1 + D_{456} - D_{781} - D_{812} - D_{123} - D_{1357} \\
\divv q_1 &= F_1 + D_{2468} - D_{123} - D_{345} - D_{567} - D_{781} - 2D_{1357} 
\end{align*}
and similarly for the other cluster variables, up to the $\Dih_8$ action. In particular, each cluster variable vanishes along a unique boundary component of $(X,D)$, giving a one-to-one correspondence between the ten cluster variables of $U$ and the ten components of $D$. With respect to this ordering, the pairing between these cluster variables and boundary components is represented by the symmetric $10\times 10$ matrix given in Table~\ref{table!V12}. 
\end{prop}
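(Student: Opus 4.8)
The plan is to compute the two divisors $\divv_X x_1$ and $\divv_X q_1$ by treating the ``interior'' part (supported on the divisors $E_i,F_i\subset U$) and the ``boundary'' part (supported on the ten components of $D$) separately, and then to invoke the $\Dih_8$-symmetry of the construction --- exactly as in the proposition statement --- to recover the remaining divisors and hence all of Table~\ref{table!V12}. Throughout I may assume $\lambda,\mu\neq1$, so that the $E_i,F_i$ are irreducible (Proposition~\ref{prop!div}(1)); the pairing matrix only records divisorial valuations and is unchanged in the degenerate cases.

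For the interior part: the functions $x_1,\dots,x_8,q_1,q_2$ are regular on the affine variety $U$, so have no poles there, and the blowup presentation $U_{\lambda,\mu}\cong Bl_Z(\Aa^3)\setminus\widetilde H$ of Proposition~\ref{prop!blowup}, together with the explicit list of interior divisors in the proof of Proposition~\ref{prop!div}, shows that $x_1$ is a coordinate on the cluster chart $\TT_{1,2,3}=(\CC^\times)^3_{x_1,x_2,x_3}$ with $\VV(x_1)\cap U=E_1$ of multiplicity $1$; likewise, writing $q_1=x_3x_7-1$ (equivalently $\lambda^{-1}(x_1x_5-1)$) as a regular function on $U$, it is a coordinate on the chart $\TT_{1,q,3}$ (cf.\ Remark~\ref{rmk!why-q1-q2}) with $\VV(q_1)\cap U=F_1$ of multiplicity $1$. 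Hence $\overline{E_1}$ and $\overline{F_1}$ appear with coefficient $1$ in $\divv_X x_1$ and $\divv_X q_1$ respectively, and no other interior divisor appears.

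For the boundary part: on $X_{\lambda,\mu}\subset\PP^8$ one has $\divv_X x_1=\VV(x_1)|_X-\VV(x_0)|_X$ and $\divv_X q_1=\VV(x_3x_7-x_0^2)|_X-2\VV(x_0)|_X$, where $\VV(x_0)|_X=D=\sum_\bullet D_\bullet$ is the reduced boundary $D_P$ (Lemma~\ref{lem!sing}), while $\VV(x_1)|_X=\overline{E_1}+\sum_\bullet\ord_{D_\bullet}(x_1)\,D_\bullet$ and $\VV(x_3x_7-x_0^2)|_X=\overline{F_1}+\sum_\bullet\ord_{D_\bullet}(x_3x_7-x_0^2)\,D_\bullet$. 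By Lemma~\ref{lem!sing} the sixteen nodes of $X_{\lambda,\mu}$ all lie on lower-dimensional strata, so each $D_\bullet$ is a smooth (hence Cartier) divisor of $X$ at its generic point and $\ord_{D_\bullet}$ is an honest valuation, which one evaluates by restricting to a general point of $D_\bullet$ and using the defining equations of $X_{\lambda,\mu}$. Concretely, on a $\PP^2$-component $D_{abc}$ the coordinate $x_\ell$ is a unit at a general point precisely when $\ell\in\{a,b,c\}$, and one solves for $x_1$ (or for $x_3,x_7$) from whichever homogenised relation has a unit coefficient --- e.g.\ $x_1x_4=x_0(x_2+\lambda x_3+\lambda x_0)$ when $x_4$ is a unit --- keeping track of which remaining coordinates still vanish. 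This produces
\begin{align*}
\divv_X x_1 &= E_1 + D_{456} - D_{781} - D_{812} - D_{123} - D_{1357},\\
\divv_X q_1 &= F_1 + D_{2468} - D_{123} - D_{345} - D_{567} - D_{781} - 2D_{1357}.
\end{align*}

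Finally, applying the $\Dih_8$-symmetry of $\sU$ (generated by the order-$8$ rotation underlying the Lyness periodicity, which cyclically permutes $x_1,\dots,x_8$ and the components $D_{i,i+1,i+2}$, exchanges $q_1\leftrightarrow q_2$ and swaps $D_{1357}\leftrightarrow D_{2468}$, together with the reflection coming from the $x\leftrightarrow y$ involution) gives $\divv_X x_i$ for all $i$, and $\divv_X q_2$ from $\divv_X q_1$; in particular each cluster variable vanishes with positive coefficient along a single component of $D$ --- $x_i$ along $D_{i+3,i+4,i+5}$, and $q_1,q_2$ along $D_{2468},D_{1357}$ --- which gives the asserted bijection, and tabulating $\langle D_n,\vartheta_m\rangle=\ord_{D_n}(\vartheta_m)$ in that order reproduces Table~\ref{table!V12}; symmetry of the matrix is then read off directly. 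The main obstacle is the boundary bookkeeping for the two $\PP^1\times\PP^1$ components $D_{1357}$ and $D_{2468}$: these are precisely the non-$\QQ$-Cartier strata of $X_{\lambda,\mu}$, and the anomalous coefficients there (the $-2D_{1357}$, and the $+1$ at $D_{2468}$, which is forced because $x_3x_7/x_0^2\to x_4x_8/(x_2x_6)=1$ on $D_{2468}$ by the ``extra'' relation $x_2x_6-x_4x_8=(\lambda\mu-\lambda)x_0^2$) have to be extracted using the quadratic relations cutting out $Q_{1357}$ and $Q_{2468}$, not just the Lyness relations --- a feature a $\QQ$-factorial model would not see.
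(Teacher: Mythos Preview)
Your proof is correct and follows essentially the same approach as the paper: both compute $\divv(x_0^{-1}x_1)$ and $\divv(x_0^{-2}q_1)$ by writing them as (divisor of zeros of a homogeneous section) minus (the appropriate multiple of $\VV(x_0)|_X=D$), and then determine the multiplicities along each boundary component from the defining equations of $X$. The paper's proof is terse, deferring everything to ``an explicit calculation with the equations of $X$''; you have filled in more of that calculation, including the harder bookkeeping at the two $\PP^1\times\PP^1$ components, and made the role of the $\Dih_8$-symmetry explicit.
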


\begin{proof}
In terms of the homogeneous equations that define $X$ we are required to compute $\divv(x_0^{-1}x_1)$ and $\divv(x_0^{-2}q_1)$, where $x_0\in|\sO_X(1)|$ is the homogenising variable. Since $\divv x_0 = D = D_{123} + \ldots + D_{812} + D_{1357} + D_{2468}$, the first formula follows from showing that, considered as a section $x_1\in|\sO_X(1)|$, we have
\[ \divv x_1 = E_1 + D_{234} + D_{345} + 2D_{456}  + D_{567}  + D_{678} + D_{2468} \]
and the second formula follows similarly for $q_1\in |\sO_X(2)|$. This now follows from an explicit calculation with the equations of $X$.
\end{proof}

\begin{table}[htp]
\caption{The intersection numbers $\langle D_i,x_j \rangle$ for $U$.}
\begin{center}
\renewcommand{\arraystretch}{1.2}
\begin{tabular}{|c|cccccccccc|} \hline
 & $D_{456}$ & $D_{567}$ & $D_{678}$ & $D_{781}$ & $D_{812}$ & $D_{123}$ & $D_{234}$ & $D_{345}$ & $D_{2468}$ & $D_{1357}$ \\ \hline
$x_1$ & $ 1$ & $ 0$ & $ 0$ & $-1$ & $-1$ & $-1$ & $ 0$ & $ 0$ & $ 0$ & $-1$ \\
$x_2$ & $ 0$ & $ 1$ & $ 0$ & $ 0$ & $-1$ & $-1$ & $-1$ & $ 0$ & $-1$ & $ 0$ \\
$x_3$ & $ 0$ & $ 0$ & $ 1$ & $ 0$ & $ 0$ & $-1$ & $-1$ & $-1$ & $ 0$ & $-1$ \\
$x_4$ & $-1$ & $ 0$ & $ 0$ & $ 1$ & $ 0$ & $ 0$ & $-1$ & $-1$ & $-1$ & $ 0$ \\
$x_5$ & $-1$ & $-1$ & $ 0$ & $ 0$ & $ 1$ & $ 0$ & $ 0$ & $-1$ & $ 0$ & $-1$ \\
$x_6$ & $-1$ & $-1$ & $-1$ & $ 0$ & $ 0$ & $ 1$ & $ 0$ & $ 0$ & $-1$ & $ 0$ \\
$x_7$ & $ 0$ & $-1$ & $-1$ & $-1$ & $ 0$ & $ 0$ & $ 1$ & $ 0$ & $ 0$ & $-1$ \\
$x_8$ & $ 0$ & $ 0$ & $-1$ & $-1$ & $-1$ & $ 0$ & $ 0$ & $ 1$ & $-1$ & $ 0$ \\
$q_1$ & $ 0$ & $-1$ & $ 0$ & $-1$ & $ 0$ & $-1$ & $ 0$ & $-1$ & $ 1$ & $-2$ \\
$q_2$ & $-1$ & $ 0$ & $-1$ & $ 0$ & $-1$ & $ 0$ & $-1$ & $ 0$ & $-2$ & $ 1$ \\ \hline
\end{tabular}
\end{center}
\label{table!V12}
\end{table}%

Given the correspondence in Table~\ref{table!V12} it is now convenient to rename the boundary divisors as $D_1,\ldots, D_8,D_{q_1},D_{q_2}$ so that $D_i = D_{i+3,i+4,i+5}$ corresponds to $x_i$ for $i\in \ZZ/8\ZZ$ and $D_{q_1}=D_{2468}$ and $D_{q_2}=D_{1357}$ correspond to $q_1$ and $q_2$. We will us Table~\ref{table!V12} to define a complete fan $\mathcal F$ in $\RR^3$ which will give us both a toric model and a scattering diagram for $U$.

\subsubsection{A toric model for $U$}

\paragraph{The fan $\mathcal F$.}
In order to tropicalise $U$, we must first start by choosing a cluster torus chart for $U$. We consider the cluster torus $j\colon \TT_{1q3}\hookrightarrow U$ with coordinates $x_1,q_1,x_3$, determined by the seed $Z\subset \Aa^3$ described in Remark~\ref{rmk!why-q1-q2}.\footnote{We could have chosen to consider the `standard' torus chart $\TT_{123}$ given to us by the Lyness map, but we prefer $\TT_{1q3}$ since (unlike the case in Proposition~\ref{prop!blowup}) the centres $Z\subset \Aa^3$ that we are required to blow up are already disjoint, without having to first blow up any of the 1-dimensional strata of $\Aa^3$.} Now consider the ten primitive integral vectors $v_1,\ldots,v_8,w_1,w_2\in\ZZ^3$ corresponding to the following columns 
\[ \begin{array}{cccccccccc}
v_1 & v_2 & v_3 & v_4 & v_5 & v_6 & v_7 & v_8 & w_1 & w_2 \\ \hline 
-1 & 0 & 0 & 1 & 1 & 1 & 0 & 0 & 0 & 1 \\
 0 & 1 & 0 & 1 & 0 & 1 & 0 & 1 &-1 & 2 \\
 0 & 0 &-1 & 0 & 0 & 1 & 1 & 1 & 0 & 1
\end{array} \]
read off as the negative of the rows corresponding to $x_1,q_1,x_3$ in Table~\ref{table!V12}. We can define a complete fan $\mathcal F$ in $\RR^3$ which has rays generated by these ten vectors and whose cones are dual to the exchange graph in Figure~\ref{fig-OGR510}. Thus $\mathcal F$ has 16 3-dimensional cones corresponding to the 16 cluster torus charts of $U$ and 24 2-dimensional cones corresponding to the 24 possible mutations between cluster torus charts. The fan $\mathcal F$ is displayed in Figure~\ref{fig!3d-fan}.
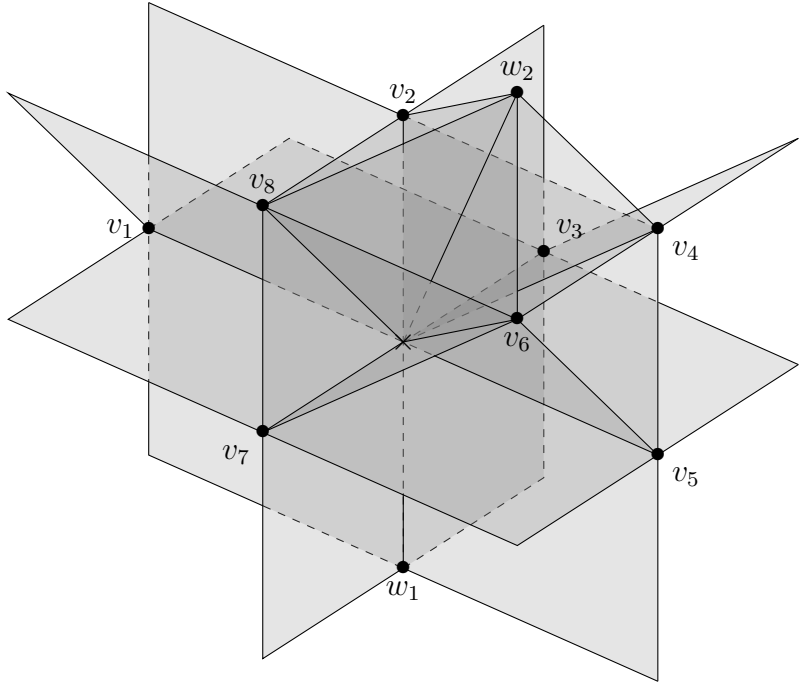
\begin{figure}[htbp]
\begin{center}
\begin{tikzpicture}[scale=3]
   \node[inner sep = -2pt] (o) at (0,0) {};
   \node (x) at ({cos(330)+0.25},{sin(330)}) {};
   \node (y) at ({cos(90)},{sin(90)}) {};
   \node (z) at ({cos(210)+0.25},{sin(210)+0.1}) {};
   \draw[dashed] ($(o)-(y)-({0.55*(cos(330)+0.25)},{0.55*sin(330)})$) -- ($(o)-(y)$);
   \draw[dashed] ($(o)-(y)$) -- ($(o)$) -- ($(o)-(z)$) -- ($({0.32*(cos(330)+0.25)},{0.32*(sin(330)+1)})-(z)$);
   \draw[dashed] ($(o)-(x)$) -- ($(o)-(x)-(z)$) -- ($({0.45*(cos(330)+0.25)},{0.45*(sin(330))})-(z)$);
   \draw[dashed] (0,0.87) -- (o);
   \draw[dashed] ({-0.55*(cos(330)+0.25)},{-0.55*sin(330)}) -- ({0.12*(cos(330)+0.25)},{0.12*sin(330)});
   \draw[dashed] ($(0,0.32)-(x)$) -- ($(o)-(0,0.68)-(x)$);
   \draw[dashed] ($(o)$) -- ({0.49*(cos(330)/2+cos(210)/2+0.25)},{0.49*(sin(330)/2+sin(210)/2+1.05)});
   \draw[dashed] ($(y)$) -- ($(x)+(y)$) ($(o)$) -- ($({0.45*(cos(330)+0.25)},{0.45*(sin(330)+1)})$) ($(o)-(y)$) -- ($(o)-(y)-(z)$) -- ({-cos(210)-0.25},{-sin(210+0.1)+0.22});
   \fill[gray,opacity=0.2] ($(o)-(x)-(z)$) -- ($(o)-(x)+(z)$) -- ($(o)+(x)+(z)$) -- ($(o)+(x)-(z)$) -- cycle;
   \fill[gray,opacity=0.2] ($(o)+(x)$) -- ($(x)+(y)+(z)$) -- ($(o)-(x)+(y)+(z)$) -- ($(o)-(x)$) -- cycle;
   \fill[gray,opacity=0.2] ($(o)+(z)$) -- ($(x)+(y)+(z)$) -- ($(o)-(z)+(y)+(x)$) -- ($(o)-(z)$) -- cycle;
   \fill[gray,opacity=0.2] ($(o)-(y)+(z)$) -- ($(o)+(y)+(z)$) -- ($(o)+(y)-(z)$) -- ($(o)-(y)-(z)$) -- cycle;
   \fill[gray,opacity=0.2] ($(o)-(y)+(x)$) -- ($(o)+(y)+(x)$) -- ($(o)+(y)-(x)$) -- ($(o)-(y)-(x)$) -- cycle;
   \fill[gray,opacity=0.2] ($(o)$) -- ($(x)+(y)$) -- ($(x)+(y)+(y)+(z)$) -- ($(y)+(z)$) -- cycle;
   \fill[gray,opacity=0.2] ($(o)$) -- ($(x)+(y)+(z)$) -- ($(x)+(y)+(y)+(z)$) -- ($(y)$) -- cycle;
   \draw ($(o)+(x)+(y)-(z)$) -- ($(o)+(x)+(y)+(z)$) -- ($(o)-(x)+(y)+(z)$) -- ($(o)-(x)$) -- ($(o)-(x)+(z)$) -- ($(o)+(x)+(z)$) -- ($(o)+(x)-(z)$);
   \draw ($(o)-(x)+(y)$) -- ($(y)$) ($(o)+(x)+(y)$) -- ($(o)+(x)-(y)$) ($(o)-(z)+(y)$) -- ($(o)+(z)+(y)$) -- ($(o)+(z)-(y)$) -- ($(o)-(y)$) ($(o)+(x)+(z)+(y)$) -- ($(o)+(z)$) -- ($(o)$) -- ($(o)+(x)+(y)+(z)$) -- ($(x)$);
   \draw ($(o)+(x)+(y)$) -- ($(o)+(x)+(y)+(y)+(z)$) -- ($(o)+(z)+(y)$) -- ($(o)$);
   \draw ($(o)+(x)-(y)$) -- ($(o)-(y)$) -- ({0},{-0.67});
   \draw ($(o)+(x)$) -- ({0.12*(cos(330)+0.25)},{0.12*sin(330)});
   \draw ($(o)+(y)$) -- (0,0.87);
   \draw ($(o)+(y)-(x)$) -- ($(0,0.32)-(x)$);
   \draw ($(o)+(y)-(z)$) -- ({-cos(210)-0.25},{-sin(210+0.1)+0.22});
   \draw ($(o)+(x)+(y)-(z)$) -- ($({0.32*(cos(330)+0.25)},{0.32*(sin(330)+1)})-(z)$);
   \draw ($(o)+(x)-(z)$) -- ($({0.45*(cos(330)+0.25)},{0.45*(sin(330))})-(z)$);
   \draw ($(o)+(x)+(y)$) -- ($({0.45*(cos(330)+0.25)},{0.45*(sin(330)+1)})$);
   \draw ($(o)-(x)$) -- ({-0.55*(cos(330)+0.25)},{-0.55*sin(330)});
   \draw ($(o)-(0,0.68)-(x)$) -- ($(o)-(y)-(x)$) -- ($(o)-(y)-({0.55*(cos(330)+0.25)},{0.55*sin(330)})$);
   \draw ($(y)$) -- ($(x)+(y)+(y)+(z)$) -- ($(x)+(y)+(z)$);
   \draw ({0.49*(cos(330)/2+cos(210)/2+0.25)},{0.49*(sin(330)/2+sin(210)/2+1.05)}) -- ($(x)+(y)+(y)+(z)$);

   \node[inner sep = -2pt] (x1) at ($(o)-(x)$) [label={[label distance=0pt]180:$v_1$}] {$\bullet$};
   \node[inner sep = -2pt] (x2) at ($(o)-(y)$) [label={[label distance=0pt]270:$w_1$}] {$\bullet$};
   \node[inner sep = -2pt] (x3) at ($(o)-(z)$) [label={[label distance=0pt]60:$v_3$}] {$\bullet$};
   \node[inner sep = -2pt] (x4) at ($(x)$) [label={[label distance=0pt]-30:$v_5$}] {$\bullet$};
   \node[inner sep = -2pt] (x5) at ($(x)+(y)$) [label={[label distance=0pt]-30:$v_4$}] {$\bullet$};
   \node[inner sep = -2pt] (x6) at ($(x)+(y)+(z)$) [label={[label distance=0pt]-90:$v_6$}] {$\bullet$};
   \node[inner sep = -2pt] (x7) at ($(y)+(z)$) [label={[label distance=0pt]90:$v_8$}] {$\bullet$};
   \node[inner sep = -2pt] (x8) at ($(z)$) [label={[label distance=0pt]235:$v_7$}] {$\bullet$};
   \node[inner sep = -2pt] (q1) at ($(y)$) [label={[label distance=0pt]90:$v_2$}] {$\bullet$};
   \node[inner sep = -2pt] (q2) at ($(x)+(y)+(y)+(z)$) [label={[label distance=0pt]90:$w_2$}] {$\bullet$};
   
   \node at ($(o)$) {$\times$};
\end{tikzpicture} 
\caption{The fan $\mathcal F$ giving both a toric model of $U$, and the scattering diagram for $\CC[U]$.}
\label{fig!3d-fan}
\end{center}
\end{figure}

\paragraph{The toric model for $U$.}
We consider the toric variety $(T,B)$ defined by the fan $\mathcal F$, and the locus $Z\subset T$ determined by the seed 
\[ S = \left\{ \left( (-1,0,0), \; \lambda q_1 + 1 \right), \; \left( (0,-1,0), \; \mu x_1x_3 + x_1 + x_3 + 1 \right), \; \left( (0,0,-1), \; q_1 + 1\right) \right\} \]
described in Remark~\ref{rmk!why-q1-q2}.

\begin{prop}
The pair $\pi\colon (\widetilde X,\widetilde D) \to (T,B)$ obtained by blowing up $Z\subset T$ is a toric model for $U$. This compactification $(\widetilde X,\widetilde D)$ is a small resolution of the projective compactification $(X,D)$ of $U$, and identifies the boundary components $D_1,\ldots, D_8,D_{q_1},D_{q_2}$ of $D$ with the ten components of $\widetilde D$ corresponding to the vectors $v_1,\ldots,v_8,w_1,w_2$ respectively.
\end{prop}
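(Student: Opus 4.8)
The plan is to treat the generic case $\lambda,\mu\neq 1$, in which $U=U_{\lambda,\mu}$ is smooth and the seed $Z$ and fan $\mathcal F$ above are tailored; by Remark~\ref{rmk!why-q1-q2} the seed is then the disjoint union of the two lines $\VV(x_1,\lambda q_1+1)$, $\VV(x_3,q_1+1)$ and the smooth conic $\VV(q_1,\mu x_1x_3+x_1+x_3+1)$ in $\Aa^3_{x_1,q_1,x_3}$, with $U\cong\operatorname{Bl}_Z(\Aa^3_{x_1,q_1,x_3})\setminus\widetilde H'$ and $H'=\VV(x_1q_1x_3)$. (When $\lambda$ or $\mu$ equals $1$ the interior acquires nodes, by Lemma~\ref{lem!sing}, and both $Z$ and $\mathcal F$ must be enlarged; that lies outside the present statement.)

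\textbf{Step 1: $(\widetilde X,\widetilde D)$ is a toric model.} By the very way $\mathcal F$ was set up — its rays being the negatives of the $x_1$-, $q_1$-, $x_3$-rows of Table~\ref{table!V12} — the cluster coordinates $x_1,q_1,x_3$ are the characters of the dense torus $\TT_{1q3}=T\setminus B$ dual to $v_1,w_1,v_3$, so that $\Aa^3_{x_1,q_1,x_3}$ is exactly the affine toric chart $U_\sigma$ of the maximal cone $\sigma=\langle v_1,w_1,v_3\rangle$ of $\mathcal F$, its coordinate hyperplanes $\VV(x_1),\VV(q_1),\VV(x_3)$ are the toric divisors $B_{v_1},B_{w_1},B_{v_3}$, and $Z\subset\Aa^3$ is precisely the seed $S\subset T$ of the statement. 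Since the three centres are disjoint, $\pi$ is the composition of three blowups of $T$, each along a smooth curve contained in a single boundary component and meeting the others transversely, hence of three nontoric blowups in the sense of Definition~\ref{defn!cluster-variety}; in particular $\widetilde X=\operatorname{Bl}_Z T$ is smooth and projective, and $(\widetilde X,\widetilde D)$ is a log Calabi--Yau pair with $\widetilde D=-K_{\widetilde X}$. Finally $T\setminus U_\sigma\subset B$, so $\pi$ has no exceptional locus there, and with the locality of blowing up this gives $\widetilde X\setminus\widetilde D=\operatorname{Bl}_Z(U_\sigma)\setminus\widetilde{(B\cap U_\sigma)}=\operatorname{Bl}_Z(\Aa^3_{x_1,q_1,x_3})\setminus\widetilde H'=U$, so $(\widetilde X,\widetilde D)$ is a toric model for $U$.

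\textbf{Step 2: the resolution morphism (the main obstacle).} Let $s\in H^0(\widetilde X,-K_{\widetilde X})$ cut out $\widetilde D$. By Lemma~\ref{lem!lp} the $x_1,\dots,x_8$ are Laurent polynomials in $x_1,q_1,x_3$, and by the divisor formulae of the preceding Proposition $\divv x_i+\widetilde D\geq 0$ for $i=1,\dots,8$ (note only these eight degree-one variables, not $q_1,q_2$, are used, and for them no boundary coefficient drops below $-1$). Hence $s_0:=s$ and $s_i:=x_i\,s$ are nine sections of $-K_{\widetilde X}=\mathcal O_{\widetilde X}(\widetilde D)$ which on $U$ restrict to the nine homogeneous coordinates of $X\subset\PP^8$; they therefore define a rational map $\widetilde X\dashrightarrow\PP^8$, equal to the identity on $U$, with image $\overline U=X$. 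The crux — which I expect to be the main difficulty — is to show this linear system is base-point free, equivalently that passing from the toric model $\widetilde X$ to its anticanonical model $X$ contracts no divisor. I would verify this by a direct computation on the finitely many affine charts of the toric blowup $\operatorname{Bl}_Z T$ (or by computer algebra, as used elsewhere in the paper), thereby producing a morphism $f\colon\widetilde X\to X$ that is an isomorphism over $U$ and satisfies $f(\widetilde D)=D$.

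\textbf{Step 3: smallness, the nodes, and the labelling.} The boundary $\widetilde D$ has exactly ten irreducible components $\widetilde B_{v_1},\dots,\widetilde B_{v_8},\widetilde B_{w_1},\widetilde B_{w_2}$, while $D$ has exactly ten components $D_1,\dots,D_8,D_{q_1},D_{q_2}$; if $f$ contracted some $\widetilde B_n$ to a subvariety of dimension $\leq 1$, the nine remaining components of $\widetilde D$ could not surject onto the ten-component surface $D=f(\widetilde D)$, so $f$ contracts no divisor — i.e.\ $f$ is small — and induces a bijection on boundary components. Since $\widetilde X$ is smooth, $X$ is normal, and (Lemma~\ref{lem!sing}) $X$ has exactly sixteen ordinary nodes as its only singularities, smallness forces $f$ to be an isomorphism over $X\setminus\{16\text{ nodes}\}$, with each node having a connected one-dimensional fibre, necessarily a $\PP^1$ (the small resolution of a $3$-fold node); thus $f\colon(\widetilde X,\widetilde D)\to(X,D)$ is a small resolution. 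For the labelling, $f$ is a local isomorphism at the generic point of each component of $D$, so the divisorial valuation of that component on $\CC(U)$ agrees with that of the $\widetilde B_n$ lying over it; as $\widetilde B_n$ is the strict transform of the toric divisor $B_n$ this valuation is the monomial valuation $\langle n,\cdot\rangle$, and evaluating it on the basis $x_1,q_1,x_3$ of characters of $\TT_{1q3}$ returns the column of Table~\ref{table!V12} that by definition produced the ray $n$. This forces $f(\widetilde B_{v_i})=D_i$ and $f(\widetilde B_{w_j})=D_{q_j}$, which is the asserted identification.
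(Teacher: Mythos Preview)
Your argument is essentially sound and reaches the same conclusion as the paper, but by a different route, and with one misstated equivalence.

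The paper's proof is more explicitly geometric: it extends the blowup calculation of Proposition~\ref{prop!blowup} from $\Aa^3_{x_1,q_1,x_3}$ to the full toric variety $T$ chart by chart, then \emph{directly identifies} the sixteen $\mathcal O(-1,-1)$-curves in $\widetilde D$ that get contracted to the sixteen nodes of Lemma~\ref{lem!sing}. Eight of these are the boundary curves $C_{\langle v_i,w_i\rangle}$, and the other eight are interior $(-1)$-curves on the four components $\widetilde D_1,\widetilde D_3,\widetilde D_5,\widetilde D_7$ (which become del Pezzo surfaces of degree~5 after the nontoric blowup). This gives concrete geometric content: one sees exactly which curves collapse and to which nodes, and the small resolution is exhibited rather than deduced.

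Your approach instead produces the morphism $f\colon\widetilde X\to X$ via the anticanonical linear system $|s_0,\ldots,s_8|$, then argues smallness abstractly by counting irreducible boundary components, and reads off the labelling from monomial valuations. This is cleaner conceptually, and the component-counting argument for smallness in Step~3 is nice and correct (since $f$ is birational onto a normal variety, distinct components of $\widetilde D$ cannot merge). The valuation argument for the labelling is also correct and arguably tidier than the paper's chart-by-chart check.

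One genuine slip: in Step~2 you write that base-point freeness of $|s_0,\ldots,s_8|$ is ``equivalently that passing from the toric model $\widetilde X$ to its anticanonical model $X$ contracts no divisor.'' These are not equivalent---base-point freeness gives you a \emph{morphism}, while not contracting a divisor is the separate condition of \emph{smallness}, which you in fact establish independently in Step~3. The logic of your proof is unaffected (you need both, and you argue both), but the sentence should be rephrased. Both your approach and the paper's ultimately defer the heart of the computation (base-point freeness for you; the chart-by-chart extension for the paper) to a direct check.
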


\begin{proof}
The proof that $\pi \colon (X,D)\to (T,B)$ is a toric model for $U$ follows from extending a similar calculation to the proof of Proposition~\ref{prop!blowup} from the toric variety $\Aa^3$ to the toric variety $T$. To see that the boundary components are identified as claimed, we can check that the identifications hold for the affine chart $\Aa^3_\sigma = \Spec\CC[\sigma^\vee\cap M]\subset T\dashrightarrow X$ for each maximal cone $\sigma\subset \mathcal F$. 

After blowing up the locus $Z\subset T$ we see that the boundary divisor $\widetilde D$ is isomorphic to $B$, except for the fact that each of $\widetilde D_1,\widetilde D_3,\widetilde D_5$ and $\widetilde D_7$ are blown-up in two boundary points, turning these four boundary components into del Pezzo surfaces of degree 5 with an anticanonical pentagon of $(-1)$-curves (as in Example~\ref{eg!dP5-pentagon}). The model $(X,D)$ is now obtained from $(\widetilde X,\widetilde D)$ by contracting sixteen $\mathcal{O}(-1,-1)$-curves in the boundary (giving the sixteen nodes of Lemma~\ref{lem!sing}). These are the eight boundary strata corresponding to the cones $\langle x_i,q_i \rangle$ for $i\in\ZZ/8\ZZ$ and eight more curves, four in each of the boundary components $\widetilde D_1$, $\widetilde D_3$, $\widetilde D_5$, $\widetilde D_7$, giving the eight nodes along the lines~$\PP^1_{x_i,x_{i+1}}$. These sixteen curves are indicated by the dashed lines in Figure~\ref{fig!toric-model}.
\end{proof}

\begin{figure}[htbp]
\begin{center}
\begin{tikzpicture}[scale=0.8]
   \begin{scope}
   \draw (0,4) -- (2,3) (0,4) -- (-2,3) (-4,-2) -- (0,0) (4,-2) -- (0,0);
   \draw[dashed] (4,1) -- (4,-2) (-4,1) -- (-4,-2) (0,0) -- (0,4);
   \draw[very thick] (0,2) -- (-4,0) (-1.05,-0.525) -- (-0.5,-0.75) -- (0.5,-0.75) -- (1.05,-0.525) (0,1) -- (4,-1) (-0.5,-0.75) -- (-3.5,-2.25) (0.5,-0.75) -- (3.5,-2.25);
   \draw (-2,3) -- (-4,1) (2,3) -- (4,1)  (-4,-2) -- (0,-4) -- (4,-2);
   \node at (-2.5,0) {\scriptsize $(-1,0,0)$};
   \node at (0,-5/2) {\scriptsize $(0,-1,0)$};
   \node at (2.5,0.5) {\scriptsize $(0,0,-1)$};
   \draw[dashed] (-1.1,3.45) -- (-1.1,-0.55) (1.1,3.45) -- (1.1,-0.55);
   \draw[dashed] (-1,-0.5) -- (-1,0.5) (0,1) -- (-1,0.5) -- (-2.75,2.25);
   \draw[dashed] (1,-0.5) -- (1,1.5) (0,2) -- (1,1.5) -- (2.25,2.75);
   \end{scope}
   
   \begin{scope}[xshift=10cm]
   \draw (4,1) -- (2,3) -- (0,4) -- (-2,3) -- (-4,1) (-4,-2) -- (0,-4) -- (4,-2);
   \draw[dashed] (4,-2) -- (4,1) (-4,-2) -- (-4,1) (-2,3) -- (-1,5/2) (1,5/2) -- (2,3) (-2,0) -- (-1,1) (1,1) -- (2,0) (0,-2) -- (0,-4);
   \draw (-1,5/2) -- (-1,1) -- (1,1) -- (1,5/2) -- cycle;
   \draw (-4,1) -- (-2,0) (2,0) -- (4,1) (-2,0) -- (0,-2) -- (2,0);
   \node at (0,3.25) {\scriptsize $(0,1,0)$};
   \node at (0,0) {\scriptsize $(1,1,1)$};
   \node at (0,1.75) {\scriptsize $(1,2,1)$};
   \node at (-2,1.5) {\scriptsize $(0,1,1)$};
   \node at (2,1.5) {\scriptsize $(1,1,0)$};
   \node at (-1.5,-2) {\scriptsize $(0,0,1)$};
   \node at (1.5,-2) {\scriptsize $(1,0,0)$};
   \draw[dashed] (3.05,-2.475) -- (3.05,0.525);
   \draw[dashed] (-3.05,-2.475) -- (-3.05,0.525);
   \draw[dashed] (-4,-1) -- (-2.95,-1.525) (-2.95,-2.525) -- (-2.95,-1.525) -- (-1.5,-0.5);
   \draw[dashed] (4,-1) -- (2.95,-1.525) (2.95,-2.525) -- (2.95,-1.525) -- (1.5,-0.5);
   \end{scope}
\end{tikzpicture} 
\caption{A toric model $\pi\colon (\widetilde X,\widetilde D)\to (T,B)$ for $U$ is constructed by blowing up the locus $Z\subset B$ which is a union of three rational curves (drawn tropically with thick lines). The model $(X,D)$ is obtained from $(\widetilde X,\widetilde D)$ as a small contraction of the sixteen $\mathcal O(-1,-1)$-curves (drawn tropically with dashed lines) to ordinary nodes.}
\label{fig!toric-model}
\end{center}
\end{figure}
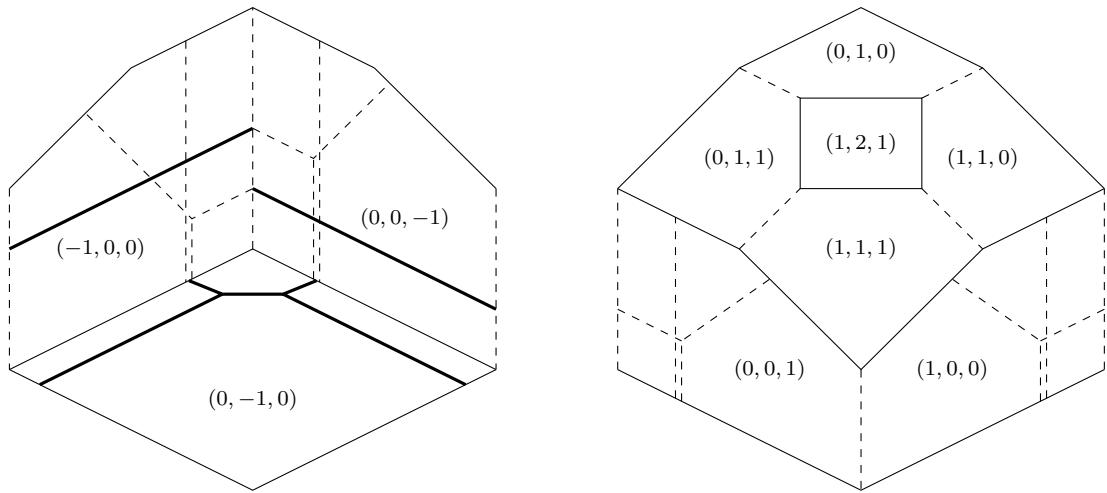

\paragraph{The tropicalisation $N_U$.}
We can now construct $N_U$, the tropicalisation of $U$ with respect to this seed, by altering the integral affine structure along the codimension 1 cones of $\mathcal F$ by following the process described in \S\ref{sec!trop}. We arrive at the following description.

\begin{lem} \label{lem!bending}
The integral affine structure on $N_U$ is obtained by bending any line that passes through one of the following seven walls of $\mathcal F$: 
\[ \langle v_7,v_1\rangle, \: \langle v_1,v_3\rangle, \: \langle v_3,v_5\rangle, \: \langle v_1,w_1\rangle, \: \langle v_3,w_1\rangle, \: \langle v_5,w_1\rangle, \: \langle v_7,w_1\rangle. \] 
If $M_{i,i+2}$ represents the bend from passing from the cone $\langle v_i,w_i,v_{i+2}\rangle$ to the cone $\langle v_i,v_{i+1},v_{i+2}\rangle$ through the wall $\langle v_i,v_{i+2}\rangle$ for $i=7,1,3$, then 
\[  
M_{71} = \begin{pmatrix}
1 & 1 & 0 \\
0 & 1 & 0 \\
0 & 0 & 1
\end{pmatrix}, \quad M_{13} = \begin{pmatrix}
1 & 1 & 0 \\
0 & 1 & 0 \\
0 & 1 & 1
\end{pmatrix}, \quad M_{35} = \begin{pmatrix}
1 & 0 & 0 \\
0 & 1 & 0 \\
0 & 1 & 1
\end{pmatrix}, \]
and if $M_{iq}$ represents the bend from passing from the cone $\langle v_{i-2},v_i,w_i\rangle$ to the cone $\langle v_i,v_{i+2},w_i\rangle$ through the wall $\langle v_i,w_i\rangle$ for $i=1,3,5,7$, then
\[ M_{1q} = \begin{pmatrix}
 1 & 0 & 0 \\
-1 & 1 & 0 \\
 0 & 0 & 1
\end{pmatrix}, \quad M_{3q} = \begin{pmatrix}
1 & 0 & 0 \\
0 & 1 & 1 \\
0 & 0 & 1
\end{pmatrix}, \quad M_{5q} = M_{1q}^{-1}, \quad M_{7q} = M_{3q}^{-1}. \]
The singular locus of $N_U$ is given by the four rays $\RR_{\geq0}v_i$ for $i=1,3,5,7$, corresponding to the four non-toric boundary divisors of $(\widetilde X,\widetilde D)$. 
\end{lem}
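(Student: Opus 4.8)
The plan is to run the construction of \S\ref{sec!trop} on the toric model $\pi\colon(\widetilde X,\widetilde D)\to(T,B)$ of the preceding proposition, whose fan is $\mathcal F$ (Figure~\ref{fig!3d-fan}), and to push the bookkeeping through by hand. For a two-dimensional wall $\tau=\langle a,b\rangle$ of $\mathcal F$ lying between the maximal cones $\langle c,a,b\rangle$ and $\langle a,b,d\rangle$, the recipe pulls back the standard integral affine structure along the map $\psi$ with $\psi(c)=e_1$, $\psi(a)=e_2$, $\psi(b)=e_3$ and $\psi(d)=-e_1-(D_a\cdot C_\tau)e_2-(D_b\cdot C_\tau)e_3$, where $C_\tau\subset\widetilde X$ is the curve attached to $\tau$. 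Hence the affine structure is unbent across $\tau$ exactly when $C_\tau$ has the same intersection numbers with $\widetilde D$ as the torus-invariant curve $\overline C_\tau\subset T$, and since $\widetilde X=Bl_Z T$ this happens precisely when $\overline C_\tau$ avoids the blown-up locus $Z=Z_a\cup Z_b\cup Z_c$ with $Z_a\subset B_{v_1}$, $Z_b\subset B_{w_1}$, $Z_c\subset B_{v_3}$.

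First I would isolate the affected walls. Only walls whose curve $\overline C_\tau$ meets $B_{v_1}$, $B_{w_1}$ or $B_{v_3}$ — i.e.\ only walls incident to one of these three rays — can contribute, and for each such $\tau$ one must check whether the point(s) of $\overline C_\tau$ on the relevant boundary divisor actually lie on the centre. Carrying this out with the explicit blowup picture of Proposition~\ref{prop!blowup} and Remark~\ref{rmk!why-q1-q2} (noting $-v_1=v_5$, $-v_3=v_7$, $-w_1=v_2$, and that $Z_b$ is a conic, so it can meet a torus-invariant curve in two points) leaves exactly the seven walls of the statement. For each of them the modified intersection numbers $D_a\cdot C_\tau$, $D_b\cdot C_\tau$ can be read off from the strict transforms of the boundary divisors under $Bl_Z$, and feeding these into $\psi$ produces the matrices $M_{71},M_{13},M_{35},M_{1q},M_{3q}$. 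It is cleaner, however, to observe that the four divisors $\widetilde D_{v_1},\widetilde D_{v_3},\widetilde D_{v_5},\widetilde D_{v_7}$ are precisely the degree-$5$ del Pezzo surfaces produced in the preceding proof, so that the bend across each affected wall is the elementary-transformation bend of a single nontoric blowup (Example~\ref{eg!one-mutation}) and one can transport the two-dimensional answer of \S\ref{sect!dP5}; the sign in each $M$ is forced by the requirement that lines bend towards the cones $\RR_{\geq0}v_i$, $i$ odd.

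Finally I would compute $N_U^{\text{sing}}$ by testing each of the ten rays $\rho$ in turn: the affine structure extends over $\rho$ iff the composite of the bending transformations of the walls containing $\rho$, taken in cyclic order around $\rho$, is the identity (and over the origin iff it extends over every ray through it). For $v_2,v_4,v_6,v_8$ every incident wall is unbent, so extension is automatic. For $w_1$ the four incident walls $\langle v_i,w_1\rangle$, $i$ odd, contribute $M_{1q}$, $M_{3q}$, $M_{5q}=M_{1q}^{-1}$, $M_{7q}=M_{3q}^{-1}$ in cyclic order, and since $M_{1q}$ and $M_{3q}$ commute this product collapses to the identity; the same holds for $w_2$. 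For each odd $i$, however, $\rho_{v_i}$ lies on three bending walls and one checks directly that the resulting product (e.g.\ of signed copies of $M_{71},M_{13},M_{1q}$ around $\rho_{v_1}$) is a nontrivial shear, so $\rho_{v_i}\subset N_U^{\text{sing}}$; equivalently, the link of $\rho_{v_i}$ carries the affine structure of the tropicalisation of the del Pezzo surface $\widetilde D_{v_i}$, which is singular. Since the $\psi$-construction already defines the affine structure away from the rays and the origin, $N_U^{\text{sing}}$ is a union of closed rays of $\mathcal F$, and by the above it is exactly $\bigcup_{i=1,3,5,7}\RR_{\geq0}v_i$.

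The main obstacle is the middle step: deciding exactly which torus-invariant curves of $T$ meet which of the three centres, and along which boundary strata, so as to get every modified intersection number right. This is delicate because $Z_b$ is a genuine conic (forcing some two-point intersections) and because the three centres interact, so one must watch for unexpected jumps of the kind discussed in \S\ref{sec!mutation-examples}; the cleanest way around it is to localise each affected wall onto the del Pezzo boundary surface $\widetilde D_{v_i}$ and quote the already-known two-dimensional computation.
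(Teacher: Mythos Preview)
The paper states this lemma without proof; it is presented as a direct consequence of running the construction of \S\ref{sec!trop} on the toric model just described, and no argument beyond that is given. Your plan is therefore exactly the computation the paper is implicitly invoking, and your outline is correct: identify which torus-invariant curves $\overline C_\tau$ meet the three centres, compute the modified intersection numbers, read off the bending matrices, and check monodromy around each ray.

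One small point: your worry about ``unexpected jumps'' is unnecessary here. The paper deliberately chose the chart $\TT_{1q3}$ over $\TT_{123}$ precisely because the three centres are already disjoint (see the footnote attached to the seed $S$), so no interaction of the kind in \S\ref{sec!mutation-examples} occurs and the bookkeeping is clean. Your alternative shortcut --- restricting to the del Pezzo boundary surfaces $\widetilde D_{v_i}$ for $i$ odd and quoting the two-dimensional bending from \S\ref{sect!dP5} --- is a genuinely nice observation that the paper does not make explicit, and it gives a conceptual reason both for the form of the matrices and for why the singular locus is exactly the four odd rays. Also note that for the ray $w_2$ there is nothing to check at all: none of the seven bending walls contain $w_2$, so the affine structure there is the unmodified toric one and extends automatically (your phrase ``the same holds for $w_2$'' suggests a parallel computation, but in fact it is trivial).
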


\subsubsection{A scattering diagram for $U$} \label{sec!V12-scattering}

The finite cluster structure on $U$ is equivalent to the existence of a consistent scattering diagram with a finite number of chambers. As described in \S\ref{sec!scattering}, by \cite{ag} this scattering diagram can be computed inductively from some initial scattering diagram for any log Calabi--Yau variety with a suitably nice toric model. Consider the fan $\mathcal F$ defined above. Let $\mathfrak d_{ij}$ be the wall spanned by $x_i$ and $x_j$, and let $\mathfrak d_{iq}$ be the wall spanned by $x_i$ and $q_i$. In what follows, when we refer to a wall $\mathfrak d_{ij}$ it is convenient to implicitly allow $j=q$.

\begin{prop}
Decorating the walls of the fan $\mathcal F$ with the wall functions given in Table~\ref{table!scattering} defines a consistent scattering diagram $\mathfrak D_{\lambda,\mu}$ in $N_U$.
\end{prop}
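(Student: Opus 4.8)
The plan is to reduce consistency of $\mathfrak D_{\lambda,\mu}$ to a finite collection of \emph{local} checks, one around each joint, and then to dispatch those checks using the del Pezzo scattering diagram of \S\ref{sec!dp5-scattering} together with a few elementary identities. Since every wall of $\mathfrak D_{\lambda,\mu}$ is a cone of $\mathcal F$, it is standard that consistency is equivalent to consistency around every codimension-$2$ joint: any loop in $N_U^0$ transverse to the walls decomposes into small loops each encircling a single joint. As $\mathcal F$ is a finite complete fan in $\RR^3$ whose cones are dual to the exchange graph $G$ of Figure~\ref{fig-OGR510}, the joints are exactly the ten rays $\RR_{\geq0}v_1,\dots,\RR_{\geq0}v_8,\RR_{\geq0}w_1,\RR_{\geq0}w_2$; the ray $\RR_{\geq0}v_i$ is encircled by the five walls of the pentagonal face of $G$ labelled $x_i$, and $\RR_{\geq0}w_j$ by the four walls of the square face labelled $q_j$. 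So the statement reduces to ten two-dimensional scattering problems: quotient $N_U$ by the line through the joint, obtain a planar scattering diagram with wall functions read off from Table~\ref{table!scattering}, and check that the ordered composition of wall-crossing automorphisms around the origin equals the linear monodromy of the affine structure of $N_U$ about that joint (which is trivial precisely when the joint is smooth).

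First I would handle the six \emph{smooth} joints $\RR_{\geq0}v_2,\RR_{\geq0}v_4,\RR_{\geq0}v_6,\RR_{\geq0}v_8$ and $\RR_{\geq0}w_1,\RR_{\geq0}w_2$, which by Lemma~\ref{lem!bending} lie away from $N_U^{\mathrm{sing}}$, so the planar diagram is in the standard integral affine structure and the target automorphism is the identity. For the two square joints this is an $A_1\times A_1$-type picture — two commuting elementary transformations — and is consistent essentially for free. For the four pentagonal smooth joints the ordered product of the five wall-crossings is checked to be the identity exactly as in the classical $A_2$ pentagon / $\dP_5$ relation; this is the step where the $\lambda,\mu$-dependence of the wall functions (coming ultimately from the trinomial $\mu x_1x_3+x_1+x_3+1$) must be tracked, but it remains a finite polynomial identity.

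For the four \emph{singular} joints $\RR_{\geq0}v_i$, $i=1,3,5,7$, I would use the observation from the proof of the preceding (toric model) Proposition: after blowing up $Z$, the boundary component $\widetilde D_i$ becomes a degree-$5$ del Pezzo surface with an anticanonical pentagon of $(-1)$-curves, as in Example~\ref{eg!dP5-pentagon}. Consequently the circle of five walls around $\RR_{\geq0}v_i$, together with the local monodromy of $N_U$ (the appropriate product of the bending matrices $M_{71},M_{13},M_{1q}$ and their $\Dih_8$-translates from Lemma~\ref{lem!bending}), reproduces, after an integral change of coordinates in the quotient plane, the $\dP_5$ scattering diagram of \S\ref{sec!dp5-scattering}, whose consistency is \cite[Example 3.7]{ghk}. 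An alternative packaging of the whole argument is to appeal directly to Arg\"uz--Gross: by the preceding Proposition, $\pi\colon(\widetilde X,\widetilde D)\to(T,B)$ is a toric model of $U$ with smooth disjoint centres, so by \cite[Theorem 1.1]{ag} the initial scattering diagram determined by the seed $S$ scatters to a \emph{unique} consistent diagram; since the cluster structure on $U$ is finite the Arg\"uz--Gross algorithm terminates after finitely many steps, and running it — again the only genuine scattering occurs around the rays, as above — recovers exactly the walls of $\mathcal F$ with the functions of Table~\ref{table!scattering}.

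The hard part, as indicated, will be the local analysis around the pentagonal joints in the presence of the genuine trinomial $\mu x_1x_3+x_1+x_3+1$ when $\mu\neq1$ (and its mirror when $\lambda\neq1$): unlike a product of binomials such a function scatters nontrivially against its neighbouring walls, generating the order-$2$ contributions recorded by the $-2$ entries in the $q$-rows of Table~\ref{table!V12} (the term $2D_{1357}$ in $\divv q_1$), and one must verify that this scattering closes up after finitely many walls for \emph{every} $\lambda,\mu\in\CC^\times$. I expect this to be checked by a direct, if slightly tedious, computation — assisted where convenient by computer algebra, as in Lemma~\ref{lem!sing} — with everything else reducing either to the $\dP_5$ pentagon relation or to a pair of commuting elementary transformations.
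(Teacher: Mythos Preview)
Your proposal is considerably more elaborate than the paper's own argument, which is a one-line appeal to computer algebra: the paper simply states that one checks, for an oriented loop around each codimension-$2$ cone of $\mathcal F$, that the composition of the corresponding wall-crossing automorphisms is the identity, and that this verification is straightforward by machine. No reduction to the $\dP_5$ pentagon relation, no invocation of Arg\"uz--Gross, and no separate structural analysis of smooth versus singular joints is offered.

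Your strategy of localising to the ten rays and treating each as a two-dimensional problem is sound and is implicitly what the computer check does. The conceptual reductions you propose --- identifying the square joints with an $A_1\times A_1$ picture, the smooth pentagonal joints with an $A_2$ pentagon, and the singular pentagonal joints with the $\dP_5$ scattering diagram --- are attractive and would, if they go through, give a more illuminating proof than the paper's. However, one point deserves caution: in the paper's framework (\S\ref{sec!scattering}) the scattering diagram lives in $N_\RR$ with its standard affine structure, and consistency means the composition of wall-crossings around any loop is literally the identity on $\CC[\![N]\!]$, \emph{not} that it equals the linear monodromy of the singular affine structure on $N_U$. The affine structure on $N_U$ enters only later, when computing theta functions via broken lines. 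So your formulation ``composition equals monodromy, which is nontrivial at the singular rays'' conflates two different setups; in the paper's conventions the target is the identity at every joint, singular or not. Relatedly, the wall functions around the singular rays (e.g.\ the quadrinomial $1+z_1+z_3+\mu z_1z_3$ on $\mathfrak d_{13}$ and $\mathfrak d_{71}$) are not binomials, so the reduction to the $\dP_5$ diagram of \S\ref{sec!dp5-scattering} is not immediate and would itself require a nontrivial argument or computation --- at which point one is essentially back to the paper's direct check.
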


\begin{table}[htp]
\caption{Wall functions for the scattering diagram $\mathfrak D_{\lambda,\mu}$.}
\begin{center} \renewcommand{\arraystretch}{1.2}
\resizebox{\textwidth}{!}{\begin{tabular}{|cl|cl|cl|}\hline 
Wall & Function & Wall & Function & Wall & Function \\ \hline 
  $\mathfrak d_{12}$ & $1 + z_2 + z_1z_2$			
& $\mathfrak d_{13}$ & $1 + z_1 + z_3 + \mu z_1z_3$ 
& $\mathfrak d_{1q}$ & $1 + z_2$  \\
  $\mathfrak d_{23}$ & $1 + \lambda z_2 + \lambda z_2z_3$	
& $\mathfrak d_{24}$ & $1 + z_2 + z_1z_2 + \lambda z_1z_2^2$ 
& $\mathfrak d_{2q}$ & $1 + \lambda z_1z_2^2z_3$  \\
  $\mathfrak d_{34}$ & $1 + \lambda z_2z_3 + \lambda\mu z_1z_2z_3$				
& $\mathfrak d_{35}$ & $1 + z_1 + z_3 + \mu z_1z_3$ 
& $\mathfrak d_{3q}$ & $1 + \lambda z_2$  \\
  $\mathfrak d_{45}$ & $1 + z_2 + z_1z_2$	
& $\mathfrak d_{46}$ & $1 + \lambda z_1z_2 + \lambda\mu z_1z_2z_3 + \lambda\mu z_1^2z_2^2z_3$ 
& $\mathfrak d_{4q}$ & $1 + \lambda\mu z_1z_2^2z_3$  \\
  $\mathfrak d_{56}$ & $1 + z_2z_3 + \mu z_1z_2z_3$ 		
& $\mathfrak d_{57}$ & $1 + z_1 + z_3 + \mu z_1z_3$ 
& $\mathfrak d_{5q}$ & $1 + z_2$  \\
  $\mathfrak d_{67}$ & $1 + \lambda z_2z_3 + \lambda\mu z_1z_2z_3$	
& $\mathfrak d_{68}$ & $1 + z_2z_3 + \mu z_1z_2z_3 + \lambda\mu z_1z_2^2z_3^2$ 
& $\mathfrak d_{6q}$ & $1 + \lambda z_1z_2^2z_3$  \\
  $\mathfrak d_{78}$ & $1 + \lambda z_2 + \lambda z_2z_3$	
& $\mathfrak d_{71}$ & $1 + z_1 + z_3 + \mu z_1z_3$ 
& $\mathfrak d_{7q}$ & $1 + \lambda z_2$  \\
  $\mathfrak d_{81}$ & $1 + z_2z_3 + \mu z_1z_2z_3$		 	
& $\mathfrak d_{82}$ & $1 + \lambda z_2 + \lambda z_2z_3 + \lambda z_2^2z_3$ 
& $\mathfrak d_{8q}$ & $1 + \lambda\mu z_1z_2^2z_3$  \\\hline 
\end{tabular}}
\end{center}
\label{table!scattering}
\end{table}%

\begin{proof}
It is straightforward to check (via computer algebra) that composing the wall-crossing automorphisms corresponding to an oriented loop around any joint (i.e.\ a codimension 2 cone of $\mathcal F$) in $\mathfrak D_{\lambda,\mu}$ gives the identity.
\end{proof}

\begin{rmk}
The wall functions can be easily read off from the mutation relations. First note that, when expanded as a Laurent polynomial in $x_1,q_1,x_3$, the cluster variables all have a monic leading monomial\footnote{This is not a happy accident. Indeed the rescaling that happens in \S\ref{sect!U-lambda-mu} whilst deriving the equations of $U$ was specially chosen in order to achieve this very fact.}, i.e.\ the constant term of their numerator is equal to $1$. Now the scattering function on the wall $\mathfrak d_{ij}$ is obtained by substituting the leading monomial of each cluster variable (written in terms of $z_1,z_2,z_3$) into the righthand side of the corresponding mutation relation and clearing the denominators. For example crossing the wall $\mathfrak{d}_{68}$ corresponds to the mutation $x_7q_2 = \lambda\mu + x_6 + \mu x_8 + x_6x_8$. If we substitute $x_6\mapsto (z_1z_2z_3)^{-1}$ and $x_8\mapsto (z_2z_3)^{-1}$ into the righthand side and clear the denominator we get the scattering function $f_{68}$ in Table~\ref{table!scattering}. Of course in general the scattering diagram is usually introduced in order to derive the mutation relations, not the other way round.
\end{rmk}

Since the mutation relations generating the cluster exchange graph for $U$ are now easily read off as the wall-crossing automorphisms in $\mathfrak D_{\lambda,\mu}$, this shows that the cluster variables $x_1,\ldots,x_8,q_1,q_2$ can now be recovered from $\mathfrak D_{\lambda,\mu}$ as the theta functions attached to the integral points $v_1,\ldots,v_8,v_{q_1},v_{q_2}\in N_U(\ZZ)$ respectively. Moreover, these could be computed by counting broken lines in $\mathfrak D_{\lambda,\mu}$.

Since the tropicalisation $N_U$ also has the nice property that once a straight line has left a cone of $\mathcal F$ it can never return, which can been seen by an explicit computation with affine structure of $N_U$ as described in Lemma~\ref{lem!bending}. Therefore, by a similar argument to the dimension 2 case \S\ref{sec!dp5-scattering}, for all $a,b,c\geq0$ the integral point $av_1+bv_2+cv_3\in N_U(\ZZ)$ belonging to the cone $\langle v_1,v_2,v_3 \rangle$ of $\mathcal F$ must parameterise the theta function $\vartheta_{av_1+bv_2+cv_3} = \vartheta_{v_1}^a\vartheta_{v_2}^b\vartheta_{v_3}^c$, and similarly for all of the other cones of $\mathcal F$. This gives a complete description of the theta functions on~$U$.

\subsection{Applications to mirror symmetry}

In this section, and until the end of the paper, we restrict to looking at the special fibre $U:=U_{1,1}$ of our family $\mathcal U$ of cluster varieties, corresponding to the values $\lambda=\mu=1$. This is the most degenerate fibre, since it gains the two extra interior nodal singularities, and it is precisely the affine variety defined by the original recurrence relation (LR$_3$). The reason for this specialisation is that the expansions of the cluster variables as Laurent polynomials on a given cluster torus $\TT_{123}$ all now satisfy the binomial edge coefficient condition of Remark~\ref{rmk!binomial-edge-coeffs}, which we expect for our mirror Landau--Ginzburg potentials. We continue to let $(X,D)$ be the pair obtained by taking the projective closure $X = \overline{U}\subset \PP^8$ with boundary divisor $D$.

\subsubsection{Landau--Ginzburg models mirror to $V_{12}$ and $V_{16}$} \label{sec!LG-V12-V16}

We now describe how to construct explicit Landau--Ginzburg models which are mirror to the Fano 3-fold pairs considered in \S\ref{sect!LR3-terms}.

\begin{prop}
The polytopes $P = \conv \left( x_1,\ldots,x_8 \right)$ and $Q = \conv \left( x_1,\ldots,x_8, q_1, q_2 \right)$ are a pair of dual polytopes in $M_U$. Moreover they are, combinatorially, a realisation of the two polytopes of Figure~\ref{fig!polytopes} when considered in the affine structure of $M_U$.
\end{prop}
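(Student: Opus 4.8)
The plan is to verify the claim by direct computation in the character space $M_U$, using the intersection pairing of Table~\ref{table!V12} as the bridge between the lattice $N_U(\ZZ)$ (where the boundary divisors $D_1,\ldots,D_8,D_{q_1},D_{q_2}$ live) and the lattice $M_U(\ZZ)$ (where the theta functions $x_1,\ldots,x_8,q_1,q_2$ live). First I would recall that by the scattering diagram analysis of \S\ref{sec!V12-scattering}, each cluster variable $x_i$ (resp.\ $q_i$) is the theta function $\vartheta_{v_i}$ (resp.\ $\vartheta_{v_{q_i}}$) attached to the integral point $v_i$ (resp.\ $w_i$) of $N_U(\ZZ)$, but — and this is the key point of notation — when we regard these theta functions as points of $M_U(\ZZ)=N_V(\ZZ)$ via the self-mirror identification, their coordinates are exactly the columns labelled $v_1,\ldots,v_8,w_1,w_2$ displayed just before Figure~\ref{fig!3d-fan} (these are the rows of Table~\ref{table!V12} corresponding to the chosen cluster chart $\TT_{1q3}$, read with a sign). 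So the ten points $x_1,\ldots,x_8,q_1,q_2 \in M_U$ are ten explicit integral vectors in $\ZZ^3$, and $P=\conv(x_1,\ldots,x_8)$, $Q=\conv(x_1,\ldots,x_8,q_1,q_2)$ are ordinary lattice polytopes in $\ZZ^3$ that one can just write down.

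Next I would check combinatorial duality in the sense of Mandel's construction of \S\ref{sec!BB-duality}, i.e.\ that $Q=P^\circ$ (polar polytope) using the intersection pairing $\langle\cdot,\cdot\rangle\colon N_U\times M_U\to\RR$, which is linear on each cone of $\mathcal F$ and whose values on the generators are given by Table~\ref{table!V12}. Concretely, the polar $P^\circ=\bigcap_{p\in P}(\vartheta_p)^{\geq -1}$ is cut out by the inequalities $\langle n, x_i\rangle\geq -1$ for $i=1,\ldots,8$, where $n$ ranges over $N_U$; since the pairing is piecewise-linear with respect to $\mathcal F$ it suffices to test these inequalities cone-by-cone on $\mathcal F$, and one reads off from the columns of Table~\ref{table!V12} that the facets of $P^\circ$ are exactly the supporting hyperplanes meeting $N_U(\ZZ)$ at the ten points $v_1,\ldots,v_8,w_1,w_2$ — i.e.\ at the ten boundary divisors $D_1,\ldots,D_8,D_{q_1},D_{q_2}$ — which under the self-mirror identification are precisely the ten points $x_1,\ldots,x_8,q_1,q_2$ of $M_U$. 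Dually, that $Q^\circ$ has vertices $x_1,\ldots,x_8$ follows the same way, so $Q=P^\circ$ and $P=Q^\circ$; the relation $(P^\circ)^\circ=P$ quoted after the definition of polar polytope then closes the loop. Since Table~\ref{table!V12} is symmetric, this whole computation is manifestly consistent with swapping the roles of $U$ and its mirror.

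Finally I would match the combinatorial type with Figure~\ref{fig!polytopes}. Here I would observe that the $\Dih_8$-symmetry (the dihedral group of order $16$ acting by the Lyness symmetry $\sigma_3$ and the reflection $i$ of \S\ref{sect!OGr-rep-thry}) permutes the $x_i$ cyclically and interchanges $q_1\leftrightarrow q_2$; tracing which $v_i,w_j$ span the $16$ maximal cones and $24$ codimension-$1$ cones of $\mathcal F$ (already recorded in Figure~\ref{fig!3d-fan} and the exchange graph of Figure~\ref{fig-OGR510}), one sees that the face lattice of $Q$ is the cone over an octagon with two additional `apex' vertices $q_1,q_2$ — exactly the polytope $Q$ of Figure~\ref{fig!polytopes}, which is the dual intersection complex of $D_P$ — while $P$ is its combinatorial dual, matching $P$ of Figure~\ref{fig!polytopes} (equivalently, $P$ is the dual intersection complex of $D_Q$, the polytope $Q$ already identified with the central fibre's boundary in \S\ref{sect!U-lambda-mu}).

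The main obstacle I expect is not any single hard idea but the bookkeeping of the piecewise-linear pairing: one must be careful that the inequalities $\langle n,x_i\rangle\geq -1$ really do cut out a convex lattice polytope once the affine structure of $N_U$ is bent along the seven walls of Lemma~\ref{lem!bending}, and that no spurious extra facets or non-integral vertices appear. Fortunately Mandel's \cite[Corollary 5.9]{mandel} guarantees a priori that $P^\circ$ is a convex polytope of $N_U$ containing the origin, so the issue reduces to checking that its vertex set is exactly the ten claimed lattice points — which is a finite check against the ten columns of Table~\ref{table!V12}, and is forced by the symmetric structure of that table together with the cone structure of $\mathcal F$.
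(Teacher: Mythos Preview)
Your approach via direct computation of the polar from Table~\ref{table!V12} is valid, but the paper takes a different and cleaner route. Rather than intersecting the halfspaces $(x_i)^{\geq-1}$ cone-by-cone across the bent walls of Lemma~\ref{lem!bending}, the paper reduces the combinatorial claim to a flatness check: the polytope $P$ has the face structure of Figure~\ref{fig!polytopes} in the affine structure precisely when its support function $\phi_P\colon N_U\to\RR$ (determined by $\phi_P(v_i)=1$, $\phi_P(w_j)=2$) is genuinely linear, not merely piecewise linear, across the walls separating faces that ought to merge. By the criterion of \S\ref{sec!trop}, linearity of $\phi_P$ across a codimension-2 cone $\tau$ is equivalent to the vanishing of the intersection number $\Xi_P\cdot C_\tau$ on $\widetilde X$, where $\Xi_P = D_1+\cdots+D_8+2D_{q_1}+2D_{q_2}$ is the associated Weil divisor; and similarly for $Q$ with $\Xi_Q = D_1+\cdots+D_8+D_{q_1}+D_{q_2}$. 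This converts the whole proposition into a short intersection-theory computation on the threefold, bypassing the piecewise bookkeeping you anticipate as the main obstacle. Your approach has the virtue of being purely combinatorial (everything comes from the table) and of making the duality $P^\circ=Q$ explicit, which the paper leaves implicit. One correction to your final paragraph: your description of the face lattice of $Q$ as ``the cone over an octagon with two additional apex vertices'' is wrong --- that would be an octagonal bipyramid with sixteen triangular faces, whereas $Q$ has ten vertices, sixteen edges and eight quadrilateral faces (dual to the eight vertices of $P$); compare Figure~\ref{fig!polytopes}.
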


\begin{proof}
When drawn with respect to the fan $\mathcal F$ in $\RR^3$, the polytopes $P$ and $Q$ are pictured as follows
\begin{center}
\begin{tikzpicture}[scale=2]
   \begin{scope}
   \node at (-1.5,1) {$P$};
   \node[inner sep = -2pt] (o) at (0,0) {};
   \node (x) at ({cos(330)+0.25},{sin(330)}) {};
   \node (y) at ({cos(90)},{sin(90)}) {};
   \node (z) at ({cos(210)+0.25},{sin(210)+0.1}) {};

    \fill[gray,opacity = 0.2] ($(y)$) -- ($(x)+(y)$) -- ($(x)$) -- (0,-1/2) -- ($(z)$) -- ($(o)-(x)$) -- cycle;
    \draw ($(y)$) -- ($(x)+(y)$) -- ($(x)+(y)+(z)$) -- ($(z)+(y)$) -- cycle;
    \draw ($(o)-(x)$) -- ($(o)-(z)$) -- ($(x)$) -- ($(z)$) -- cycle;
    \draw ($(o)-(x)$) -- ($(y)$) -- ($(o)-(z)$) -- ($(x)+(y)$) -- ($(x)$) -- ($(x)+(y)+(z)$) -- ($(z)$) -- ($(z)+(y)$) -- cycle;
    \draw[dashed] ($(o)-(x)$) -- (0,-1/2) -- ($(x)$) ($(o)-(z)$) -- (0,-1/2) -- ($(z)$);
   \node at (o) {$\times$};
   \node[inner sep = -2pt] (x1) at ($(o)-(x)$) [label={[label distance=0pt]180:$x_1$}] {$\bullet$};
   \node[inner sep = -2pt] (x2) at ($(y)$) [label={[label distance=0pt]90:$x_2$}] {$\bullet$};
   \node[inner sep = -2pt] (x3) at ($(o)-(z)$) [label={[label distance=2pt]90:$x_3$}] {$\bullet$};
   \node[inner sep = -2pt] (x4) at ($(x)+(y)$) [label={[label distance=0pt]0:$x_4$}] {$\bullet$};
   \node[inner sep = -2pt] (x5) at ($(x)$) [label={[label distance=0pt]0:$x_5$}] {$\bullet$};
   \node[inner sep = -2pt] (x6) at ($(x)+(y)+(z)$) [label={[label distance=0pt]270:$x_6$}] {$\bullet$};
   \node[inner sep = -2pt] (x7) at ($(z)$) [label={[label distance=0pt]180:$x_7$}] {$\bullet$};
   \node[inner sep = -2pt] (x8) at ($(y)+(z)$) [label={[label distance=2pt]0:$x_8$}] {$\bullet$};
   \end{scope}
   
   \begin{scope}[xshift=4cm]
   \node at (-1.5,1) {$Q$};
   \node[inner sep = -2pt] (o) at (0,0) {};
   \node (x) at ({cos(330)+0.25},{sin(330)}) {};
   \node (y) at ({cos(90)},{sin(90)}) {};
   \node (z) at ({cos(210)+0.25},{sin(210)+0.1}) {};

    \fill[gray,opacity = 0.2] ($(y)$) -- ($(x)+(y)+(y)+(z)$) -- ($(x)+(y)$) -- ($(x)$) -- ($(o)-(y)$) -- ($(z)$) -- ($(o)-(x)$) -- cycle;
    
    \draw ($(x)+(y)$) -- ($(x)+(y)+(y)+(z)$) -- ($(x)+(y)+(z)$);
    \draw ($(z)+(y)$) -- ($(x)+(y)+(y)+(z)$) -- ($(y)$);
    \draw ($(o)-(x)$) -- ($(o)-(y)$) -- ($(x)$);
    \draw ($(o)-(z)$) -- ($(o)-(y)$) -- ($(z)$);
    \draw ($(o)-(x)$) -- ($(y)$) -- ($(o)-(z)$) -- ($(x)+(y)$) -- ($(x)$) -- ($(x)+(y)+(z)$) -- ($(z)$) -- ($(z)+(y)$) -- cycle; 
    \draw[dashed] ($(z)$) -- ($(o)-(x)$) -- ($(o)-(z)$) -- ($(x)$);
   
   \node at (o) {$\times$};
   \node[inner sep = -2pt] (x1) at ($(o)-(x)$) [label={[label distance=0pt]180:$x_1$}] {$\bullet$};
   \node[inner sep = -2pt] (x2) at ($(y)$) [label={[label distance=0pt]90:$x_2$}] {$\bullet$};
   \node[inner sep = -2pt] (x3) at ($(o)-(z)$) [label={[label distance=2pt]90:$x_3$}] {$\bullet$};
   \node[inner sep = -2pt] (x4) at ($(x)+(y)$) [label={[label distance=0pt]0:$x_4$}] {$\bullet$};
   \node[inner sep = -2pt] (x5) at ($(x)$) [label={[label distance=0pt]0:$x_5$}] {$\bullet$};
   \node[inner sep = -2pt] (x6) at ($(x)+(y)+(z)$) [label={[label distance=0pt]270:$x_6$}] {$\bullet$};
   \node[inner sep = -2pt] (x7) at ($(z)$) [label={[label distance=0pt]180:$x_7$}] {$\bullet$};
   \node[inner sep = -2pt] (x8) at ($(y)+(z)$) [label={[label distance=2pt]0:$x_8$}] {$\bullet$};
   \node[inner sep = -2pt] (q1) at ($(o)-(y)$) [label={[label distance=0pt]-90:$q_1$}] {$\bullet$};
   \node[inner sep = -2pt] (q2) at ($(x)+(y)+(y)+(z)$) [label={[label distance=0pt]90:$q_2$}] {$\bullet$};
   \end{scope}
\end{tikzpicture} 
\end{center}
and so we are required to show that the dashed edges in the diagram are really flat when considered with respect to the affine structure on $M_U$.

The polytope $P$ gives an integral piecewise-linear function $\phi_P\colon N_U\to \RR$ given by $\phi_P(n) = \min\{ c\in \RR : n\in cP \}$, which is linear in each cone of our fan $\mathcal F$. Since $\phi_P(v_i) = 1$ for all $i\in\ZZ/8\ZZ$ and $\phi_P(w_1)=\phi_P(w_2)=2$, this determines a Weil divisor $\Xi_P=D_1 + \cdots + D_8 + 2D_{q_1} + 2D_{q_2}$ on $\widetilde X$. Now any codimension 2 cone $\tau$ of $\mathcal F$, corresponds to a curve $C_\tau$ in the boundary of $\widetilde X$. Using the criterion described in \S\ref{sec!trop}, the map $\phi_P$ is linear with respect to the affine structure on $N_U$ if $\Xi_P\cdot C_\tau=0$. Now we easily check that $\Xi_P$ is only nonzero on the curve classes corresponding to the cones $\langle x_i,x_{i+1} \rangle$ and $\langle x_i,q_i \rangle$. 

The computation is similar for the polytope $Q$, which corresponds to the Weil divisor $\Xi_Q = D_1 +\cdots + D_8 + D_{q_1} + D_{q_2}$ on $\widetilde X$.
\end{proof}

We now consider the two compactifications $(X_P,D_P)$ and $(X_Q,D_Q)$ of $U$ which were discussed in \S\ref{sect!LR3-terms}. Note that $(X_P,D_P)=(X,D)$ is the standard projective compactification of $U$ that we have been considering throughout this section.

\paragraph{An unprojection.}

Before describing the mirror Landau-Ginzburg models to $(X_P,D_P)$ and $(X_Q,D_Q)$, we explain how to go from one model to the other by making a birational modification $\psi\colon X_P\dashrightarrow X_Q$, called an \emph{unprojection}. Geometrically, this contracts the two $\PP^1\times\PP^1$ divisors $D_{1357},D_{2468}\subset D$. This is done by adjoining $q_1 = x_0^{-1}(x_1x_5 - x_0^2)$ and $q_2 = x_0^{-1}(x_2x_6 - x_0^2)$, which are rational sections of $\sO_X(1)$, as generators to the homogeneous coordinate ring $\CC[X_P]$. 

\begin{prop}
Let $X_Q = \overline{\left\{ (x_0:x_1:\ldots:x_8:q_1:q_2) \in \PP^{10} : (x_0:x_1:\ldots:x_8)\in X_P \right\}}$ be the graph of the two rational sections $q_1,q_2$ on $X_P$. Then $X_Q\subset\PP^{10}$ is a projectively Gorenstein 3-fold of codimension 7 determined by the 21 equations
\begin{align}
x_ix_{i+3} &= x_0(x_0 + x_{i+1} + x_{i+2}) \tag{$\times8$} \\
x_ix_{i+4} &= x_0(q_i + x_0) \tag{$\times4$} \\
x_iq_{i+1} &= (x_0+x_{i+1})(x_0+x_{i-1})  \tag{$\times8$} \\
q_1q_2 &= x_0(4x_0+x_1+x_2+x_3+x_4+x_5+x_6+x_7+x_8) \tag{$\times1$}
\end{align}
and has graded resolution with Betti numbers $(1,21,64,70,70,64,21,1)$.  
\end{prop}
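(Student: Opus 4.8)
The plan is to realise $X_Q$ concretely as the closure of the graph of the rational map $(q_1,q_2)$ and then verify the claimed presentation in two stages: first produce the $21$ equations and check that they cut out exactly $X_Q$ set-theoretically (hence scheme-theoretically, by a dimension/reducedness count), and then extract the Betti numbers from a minimal free resolution. For the first stage, I would work on the cluster torus chart $\TT_{123}\subset U$, where all ten cluster variables $x_1,\dots,x_8,q_1,q_2$ are explicit Laurent polynomials in $x_1,x_2,x_3$ (the formulas of \S\ref{sect!LR3-terms} together with the definitions $q_1=x_1x_5-1$, $q_2=x_2x_6-1$ at $\lambda=\mu=1$). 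Homogenising with respect to $x_0$, one checks directly that the $21$ displayed relations hold identically: the eight $x_ix_{i+3}=x_0(x_0+x_{i+1}+x_{i+2})$ are the homogenised Lyness relations (LR$_3$) already appearing in the equations of $X_P$; the four $x_ix_{i+4}=x_0(q_i+x_0)$ are precisely the defining relations $q_i=x_ix_{i+4}-x_0^2$ of the graph; the eight $x_iq_{i+1}=(x_0+x_{i+1})(x_0+x_{i-1})$ follow from the mutation relations $x_iq_{i-1}=x_{i-1}x_{i+1}+x_{i-1}+x_{i+1}+1$ of \S\ref{sec!U-exchange-graph} specialised to $\lambda=\mu=1$; and the single quadric $q_1q_2=x_0(4x_0+x_1+\cdots+x_8)$ is a consequence of the others (it is the analogue of the "last two equations" of $\OGr(5,10)$, obtained by multiplying out $q_1q_2=(x_1x_5-x_0^2)(x_2x_6-x_0^2)$ and repeatedly substituting the $x_ix_{i+3}$ relations). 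So all $21$ vanish on $X_Q$.

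Next I would argue the converse inclusion: the scheme $Y\subset\PP^{10}$ defined by these $21$ equations equals $X_Q$. The projection $\PP^{10}\dashrightarrow\PP^8$ forgetting $q_1,q_2$ sends $Y$ into $X_P$ (the first eight relations are the defining relations of $X_P$), and over the open locus $x_0\neq0$ the relations $q_i=x_0^{-1}(x_ix_{i+4}-x_0^2)$ show that $Y\cap\{x_0\neq0\}$ is exactly the graph, hence isomorphic to $U$ and irreducible of dimension $3$. It then remains to see that $Y$ contains no extra components in the hyperplane $\{x_0=0\}$: when $x_0=0$ the relations force $x_ix_{i+3}=x_ix_{i+4}=0$ and $x_iq_{i+1}=x_{i-1}x_{i+1}$ and $q_1q_2=0$, and one checks by the same combinatorial bookkeeping as for the central fibre $U_0$ in \S\ref{sect!U-lambda-mu} that the resulting scheme has dimension $\leq 2$ and is the boundary $D_Q$ (eight $\PP^1\times\PP^1$'s glued along the polytope $Q$). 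Thus $Y=\overline{U}=X_Q$ with reduced structure, and $Y$ is irreducible of dimension $3$. Alternatively — and this is the cleanest route — one invokes that $\CC[X_Q]$ is by construction the subring of $\CC[U][x_0]$ generated by $x_0,x_1,\dots,x_8,q_1,q_2$, so one only needs that the $21$ relations generate the ideal of relations among these generators; this can be settled by a Gröbner basis / Hilbert series comparison, noting that $X_Q$ and the complete intersection-type bound both have the Hilbert series of a codimension-$7$ arithmetically Gorenstein $3$-fold of the predicted degree.

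For the "projectively Gorenstein of codimension $7$" and "Betti numbers $(1,21,64,70,70,64,21,1)$" claims, I would compute the minimal graded free resolution of $\CC[X_Q]$ over $\CC[x_0,\dots,x_8,q_1,q_2]$ directly by computer algebra (Macaulay2), exactly as the paper does elsewhere (e.g.\ the resolution of the ideal of $\OGr(5,10)$ in \S\ref{sect!OGr-rep-thry}). The resolution is forced to have length $7$ since $\dim X_Q=3$ and the ambient $\PP^{10}$ has coordinate ring of dimension $11$; the symmetry $(1,21,64,70,70,64,21,1)$ of the Betti table, together with the fact that the last free module has rank $1$, is precisely the statement that $X_Q$ is arithmetically Gorenstein, and the self-duality can be read off from the $\Dih_8$-symmetry of the equations combined with the $x\leftrightarrow y$ (here $x\leftrightarrow q$) involution $i$ of \S\ref{sect!OGr-rep-thry}. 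The main obstacle is the second stage — proving that the listed $21$ equations really generate the whole ideal and not just a subideal with the right radical; this is where one must either run an explicit Gröbner basis computation or pin down the Hilbert function of $\CC[X_Q]$ independently (for instance from the known Hilbert series of $V_{16}$, a linear section of $\OGr(5,10)$, since $X_Q$ is a degeneration of such a $V_{16}$) and then match degrees to conclude the $21$ generators and the resolution are as stated.
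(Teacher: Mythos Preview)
Your approach is a valid direct route, but it differs substantially from the paper's, which invokes the Kustin--Miller unprojection machinery of Papadakis--Reid. The paper observes that $D_{q_1}=\VV(x_0,x_2,x_4,x_6,x_8,x_1x_5-x_3x_7)\subset X_P$ is a codimension~$6$ complete intersection (hence projectively Gorenstein) sitting inside the codimension~$5$ projectively Gorenstein $X_P$, with the adjunction discrepancy $K_{X_P}|_{D_{q_1}}-K_{D_{q_1}}=\sO(1)$ positive; then \cite[Theorem~1.5]{pap-reid} produces the unprojection variable $q_1$ and guarantees \emph{a priori} that the resulting $X_P'$ is again projectively Gorenstein. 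Repeating for $D_{q_2}$ yields $X_Q$. The virtue of this route is that Gorenstein-ness and the existence of a self-dual resolution are structural consequences of the theorem rather than outputs of a computation, and the new equations involving $q_1,q_2$ are manufactured by the unprojection procedure rather than guessed and verified.

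Your direct argument would also succeed, but the part you flag as the ``main obstacle'' --- showing the $21$ quadrics generate the full ideal and not merely its radical --- is exactly what unprojection sidesteps. Your suggestion to match Hilbert series against that of a linear section of $\OGr(5,10)$ would work, but it is doing by hand what Papadakis--Reid does once and for all. One small correction: the self-duality of the Betti numbers is not a consequence of the $\Dih_8$ symmetry or of the involution $i$ (which swaps $x$ and $y$ variables on the cone over $\OGr(5,10)$, not $x$ and $q$ here); it is the standard consequence of Gorenstein-ness via local duality, and in the paper's argument comes for free from the unprojection theorem.
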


\begin{proof}
This is an application of the theory of unprojection developed by Papadakis--Reid~\cite{pap-reid}. The divisor $D_{q_1} = \VV(x_0,x_2,x_4,x_6,x_8,x_1x_5-x_3x_7)\subset \PP^8$ is a complete intersection of codimension 6 (hence projectively Gorenstein) and contained in $X_P\subset \PP^8$ which is projectively Gorenstein of codimension 5. By adjunction we have $K_{X_P}=\sO_{X_P}(-1)$ and $K_{D_{q_1}}=\sO_{D_{q_1}}(-2)$, and since $(-1)-(-2)=1>0$ we can apply \cite[Theorem~1.5]{pap-reid} to obtain an unprojection variable $q_1$ as a rational section of $\sO_{X_P}(1)$. We compute that $q_1=x_0^{-1}(x_1x_5-x_0^2)$ works as an unprojection variable, and satisfies the equations
\[ x_0q_1 = x_1x_5 - x_0^2 = x_3x_7 - x_0^2, \qquad x_{i}q_1 = (x_0+x_{i-1})(x_0+x_{i+1}) \quad \text{for } i=2,4,6,8 \]
and thus we obtain an unprojection $(D_{q_1}\subset X_P)\dashrightarrow(P_{q_1}\in X_P')$ which contracts $D_{q_1}$ to an ordinary node at the coordinate point $P_{q_1}\in X_P'$. Now we repeat the procedure to the strict transform of $D_{q_1}$ in $X_P'$ to get $q_2=x_0^{-1}(x_2x_6-x_0^2)$ and an unprojection $(D_{q_2}\subset X_P')\dashrightarrow (P_{q_2}\in X_Q)$.
\end{proof}

\paragraph{Geometry of the unprojection.}
The unprojection factors as a blowup of the (non-Cartier) Weil divisor $D_{q_1}$, which resolves four nodes of $X_P$, followed by the contraction of the strict transform to an ordinary node. The result of this operation is depicted below. To construct $X_Q$ we do this unprojection to both $D_{q_1}$ and $D_{q_2}\subset X_P$ (i.e.\ to both the front and back squares of the diagram). 
\begin{center}
\begin{tikzpicture}[scale=1.8]
  \draw[gray,dashed] ({cos(45)},{sin(45)}) -- ({cos(135)},{sin(135)}) -- ({cos(225)},{sin(225)}) -- ({cos(315)},{sin(315)}) -- cycle;
  \draw[thick] ({cos(0)},{sin(0)}) -- ({cos(90)},{sin(90)}) -- ({cos(180)},{sin(180)}) -- ({cos(270)},{sin(270)}) -- cycle;  
  
  \foreach \i in {0,...,7} {
  	\draw[thick] ({cos(45*\i)},{sin(45*\i)}) -- ({cos(45*\i+45)},{sin(45*\i+45)});
  	\node at ({cos(45*\i)},{sin(45*\i)}) {$\bullet$};
  	\node at ({0.92*cos(45*\i+22.5)},{0.92*sin(45*\i+22.5)}) {$\bullet$};
  };
  
  \begin{scope}[xshift=3cm,yshift=1cm]
  \draw[gray,dashed] ({cos(45)},{sin(45)}) -- ({cos(135)},{sin(135)}) -- ({cos(225)},{sin(225)}) -- ({cos(315)},{sin(315)}) -- cycle;
  \draw[thick] ({0.5*cos(0)},{0.5*sin(0)}) -- ({0.5*cos(90)},{0.5*sin(90)}) -- ({0.5*cos(180)},{0.5*sin(180)}) -- ({0.5*cos(270)},{0.5*sin(270)}) -- cycle;  
  
  \foreach \i in {0,...,7} {
  	\draw[thick] ({cos(45*\i)},{sin(45*\i)}) -- ({cos(45*\i+45)},{sin(45*\i+45)});
  	\node at ({0.92*cos(45*\i+22.5)},{0.92*sin(45*\i+22.5)}) {$\bullet$};
  };
  \foreach \i in {0,...,3} {
  	\node at ({cos(90*\i + 45)},{sin(90*\i + 45)}) {$\bullet$};
	\draw ({cos(90*\i)},{sin(90*\i)}) -- ({0.5*cos(90*\i)},{0.5*sin(90*\i)});
  };
  \draw[->] (-1.25,-0.25) to node[yshift=0.65cm] {\scriptsize blowup} (-1.75,-0.5);
  \draw[->] (1.25,-0.25) to node[yshift=0.65cm] {\scriptsize contract} (1.75,-0.5);
  \end{scope}
  
  \begin{scope}[xshift=6cm]
  \draw[gray,dashed] ({cos(45)},{sin(45)}) -- ({cos(135)},{sin(135)}) -- ({cos(225)},{sin(225)}) -- ({cos(315)},{sin(315)}) -- cycle;
  \node at (0,0) {$\bullet$};
  
  \foreach \i in {0,...,7} {
  	\draw[thick] ({cos(45*\i)},{sin(45*\i)}) -- ({cos(45*\i+45)},{sin(45*\i+45)});
  	\node at ({0.92*cos(45*\i+22.5)},{0.92*sin(45*\i+22.5)}) {$\bullet$};
  };
  \foreach \i in {0,...,3} {
  	\node at ({cos(90*\i + 45)},{sin(90*\i + 45)}) {$\bullet$};
	\draw ({cos(90*\i)},{sin(90*\i)}) -- (0,0);
  };
  \end{scope}
\end{tikzpicture}
\end{center}

Note that the unprojection $\psi\colon X_P\dashrightarrow X_Q$ restricts to an isomorphism on the open set $U=X_P\setminus D_P$, since $q_1$ and $q_2$ are regular functions on $U$. Therefore if $D_Q$ is the strict transform of the boundary divisor $D_P$ then $(X_Q,D_Q)$ is also a log Calabi--Yau compactification of $U$.

\paragraph{The potential $w_Q$.}

According to Remark~\ref{rmk!binomial-edge-coeffs}, since the ten nonzero integral points of $Q$ are all vertices, the Landau--Ginzburg potential supported on $Q$ that we consider is uniquely determined by summing the corresponding ten theta functions with coefficient~1,~i.e.\ 
\[ w_Q = x_1 + x_2 + x_3 + x_4 + x_5 + x_6 + x_7 + x_8 + q_1 + q_2. \]
We note that this is $\sigma_3$-invariant function and, indeed, restricting to the cluster torus chart $\TT_{123}\subset U$ gives
\[ (w_Q + 5)|_{\TT_{123}} = \frac{(1 + x_1 + x_2)(1 + x_2 + x_3)(1 + x_1 + x_2 + x_3 + x_1x_3)}{x_1x_2x_3} \]
which is well-known to be an invariant function for Lyness map $\sigma_3$. 

\paragraph{The period $\pi_{w_Q}(t)$.}
Consider the period 
\[ \pi_{w_Q}(t) = 1  + 48t^2 + 600t^3 + 13176t^4 + 276480t^5 + 6259800t^6 + 146064240t^7 + \cdots \]
which agrees with the regularised quantum period $\widehat{G}_X(t)$ for the Fano 3-fold $X$ of type $V_{12}$, as listed in \cite[\S13]{ccgk}. Alternatively, the shifted potential $w_Q+5$ has period
\[ \pi_{w_Q+5}(t) = 1 + 5t + 73t^2 + 1445t^3 + 33001t^4 + 819005t^5 + 21460825t^6+ \cdots \]
which can be recognised as the Ap\'ery series $\pi_{w+5}(t) = \sum_{n=0}^\infty\sum_{k=0}^n \binom{n}{k}^2 \binom{n+k}{k}^2 t^n$ and is also well-known as a mirror period sequence for the Fano 3-fold $V_{12}$, cf.\ \cite{gz}. 

\paragraph{A K3 fibration.}
In order to appreciate some beautiful geometry hiding here, it is convenient to consider extending the potential $w_Q$ to the compactification $X_Q$. In analogy to the critical value $-3$ for the Landau--Ginzburg potential on the del Pezzo surface, the factorisation of $(w_Q + 5)|_{\TT_{123}}$ corresponds to the fact that the interior divisor $E_Q := w_Q^{-1}(-5) \subset U$ is reducible.  
\begin{figure}[b!]
\begin{center}
\begin{tikzpicture}[scale=2.5]
  
	\node at (-1,1) {$D_P\cap E_P$};
  \draw[thick] ({cos(0)},{sin(0)}) -- ({cos(90)},{sin(90)}) -- ({cos(180)},{sin(180)}) -- ({cos(270)},{sin(270)}) -- cycle;
  \foreach \i in {0,...,7} {
  	\draw[thick] ({cos(45*\i)},{sin(45*\i)}) -- ({cos(45*\i+45)},{sin(45*\i+45)});
  };  
  \foreach \i in {0,...,7} {
  	\node at ({cos(45*\i)},{sin(45*\i)}) {$\bullet$};
  	\node at ({0.92*cos(45*\i+22.5)},{0.92*sin(45*\i+22.5)}) {$\bullet$};
  };  
  \foreach \i in {0,...,3} {
  	\draw[very thick] ({0.92*cos(90*\i+22.5)},{0.92*sin(90*\i+22.5)}) -- ({0.8*cos(90*\i+45)},{0.8*sin(90*\i+45)}) -- ({0.92*cos(90*\i+45+22.5)},{0.92*sin(90*\i+45+22.5)});
  };
  \draw[very thick] ({0.8*cos(45)},{0.8*sin(45)}) -- ({0.8*cos(180+45)},{0.8*sin(180+45)});
  \draw[very thick] ({0.8*cos(135)},{0.8*sin(135)}) -- ({0.8*cos(270+45)},{0.8*sin(270+45)});
  
  \begin{scope}[rotate = 45, opacity=0.5, dashed]
  \draw[thick] ({cos(0)},{sin(0)}) -- ({cos(90)},{sin(90)}) -- ({cos(180)},{sin(180)}) -- ({cos(270)},{sin(270)}) -- cycle;
  \foreach \i in {0,...,7} {
  	\draw[thick] ({cos(45*\i)},{sin(45*\i)}) -- ({cos(45*\i+45)},{sin(45*\i+45)});
  };  
  \foreach \i in {0,...,7} {
  	\node at ({cos(45*\i)},{sin(45*\i)}) {$\bullet$};
  	\node at ({0.92*cos(45*\i+22.5)},{0.92*sin(45*\i+22.5)}) {$\bullet$};
  };  
  \foreach \i in {0,...,3} {
  	\draw[very thick] ({0.92*cos(90*\i+22.5)},{0.92*sin(90*\i+22.5)}) -- ({0.8*cos(90*\i+45)},{0.8*sin(90*\i+45)}) -- ({0.92*cos(90*\i+45+22.5)},{0.92*sin(90*\i+45+22.5)});
  };
  \draw[very thick] ({0.8*cos(45)},{0.8*sin(45)}) -- ({0.8*cos(180+45)},{0.8*sin(180+45)});
  \draw[very thick] ({0.8*cos(135)},{0.8*sin(135)}) -- ({0.8*cos(270+45)},{0.8*sin(270+45)});
  \end{scope}
  
  \begin{scope}[xshift = -3cm]
	\node at (-1,1) {$D_Q\cap E_Q$};
  \draw[thick] ({cos(0)},{sin(0)}) -- ({cos(180)},{sin(180)});
  \draw[thick] ({cos(90)},{sin(90)}) -- ({cos(270)},{sin(270)});
  \foreach \i in {0,...,7} {
  	\draw[thick] ({cos(45*\i)},{sin(45*\i)}) -- ({cos(45*\i+45)},{sin(45*\i+45)});
  	\node at ({0.92*cos(45*\i+22.5)},{0.92*sin(45*\i+22.5)}) {$\bullet$};
  };
  \foreach \i in {0,...,3} {
  	\draw[very thick] ({0.92*cos(90*\i+22.5)},{0.92*sin(90*\i+22.5)}) -- ({0.5*cos(90*\i+90)},{0.5*sin(90*\i+90)});
  	\draw[very thick] ({0.92*cos(90*\i-22.5)},{0.92*sin(90*\i-22.5)}) -- ({0.5*cos(90*\i-90)},{0.5*sin(90*\i-90)});
  };
  \end{scope}
  
  \begin{scope}[xshift = -3cm, rotate = 45, opacity=0.5, dashed]
  \draw[thick] ({cos(0)},{sin(0)}) -- ({cos(180)},{sin(180)});
  \draw[thick] ({cos(90)},{sin(90)}) -- ({cos(270)},{sin(270)});
  \foreach \i in {0,...,7} {
  	\draw[thick] ({cos(45*\i)},{sin(45*\i)}) -- ({cos(45*\i+45)},{sin(45*\i+45)});
  	\node at ({0.92*cos(45*\i+22.5)},{0.92*sin(45*\i+22.5)}) {$\bullet$};
  };
  \foreach \i in {0,...,3} {
  	\draw[very thick] ({0.92*cos(90*\i+22.5)},{0.92*sin(90*\i+22.5)}) -- ({0.5*cos(90*\i+90)},{0.5*sin(90*\i+90)});
  	\draw[very thick] ({0.92*cos(90*\i-22.5)},{0.92*sin(90*\i-22.5)}) -- ({0.5*cos(90*\i-90)},{0.5*sin(90*\i-90)});
  };
  \end{scope}
\end{tikzpicture}
\caption{The sixteen lines of intersection for the boundary divisor $D_Q$ and the degenerate interior divisor $E_Q$ represented inside $D_Q$. (b) Similarly for the twelve lines of intersection of $D_P$ and $E_P$.}
\label{fig!D-and-E}
\end{center}
\end{figure}
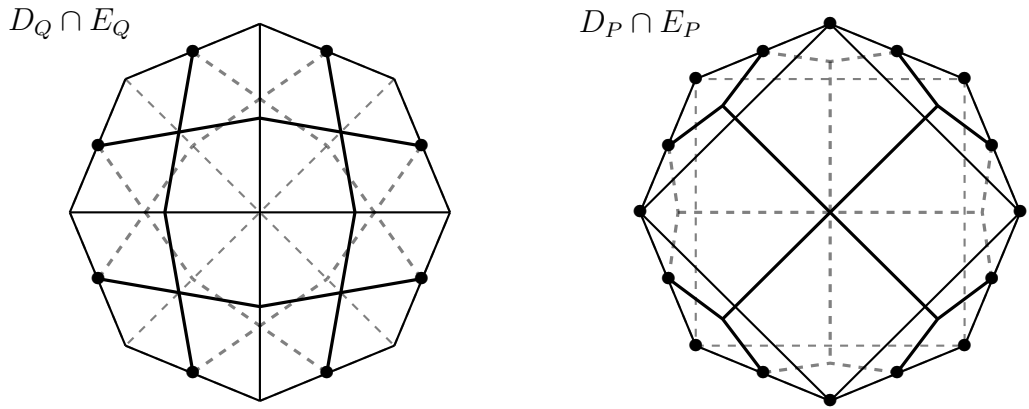
Then, in the notation of Proposition~\ref{prop!div}, the reducible fibre is $E_Q = \cup_{i=1}^8 E_{i,i+2}$ which is the union of eight copies of $\PP^1\times \PP^1$ and happens to be isomorphic to $D_Q$ (i.e.\ these eight components also meet like the faces of the polytope $Q$). The intersection of the divisors $D_Q\cap E_Q$ is a set of sixteen lines, given by the pair of lines on each face of $D_Q$ which pass through the eight nodes $(1:-1)\in \PP^1_{x_i,x_{i+1}}$ in the boundary of $X_Q$. These lines are shown in Figure~\ref{fig!D-and-E}.

Now the fibres of $w_Q\colon X_Q\dashrightarrow \PP^1$ are given by the anticanonical pencil $|D_Q,E_Q|$ with baselocus the sixteen lines described above. The general such member of this pencil is a K3 surface $S_t := w_Q^{-1}(t)$ which is smooth, apart from eight ordinary double points which lie at the eight nodes of $X$ that are contained in $D_Q\cap E_Q$. Therefore, after taking its minimal resolution, the K3 surface $S_t$ contains a configuration $\Sigma=\bigcup_{i=1}^{24}\Sigma_i$ of $24$ $(-2)$-curves with the dual intersection diagram displayed in Figure~\ref{fig!minus-2-curves}(a), where the black nodes correspond to the sixteen lines of $D_Q\cap E_Q$ and the white nodes correspond to the eight ordinary double points. Since the 24 $(-2)$-curves $\Sigma$ span a lattice of rank $19$ and $S_t$ deforms in a nontrivial family, it follows that $S_t$ has Picard rank 19.
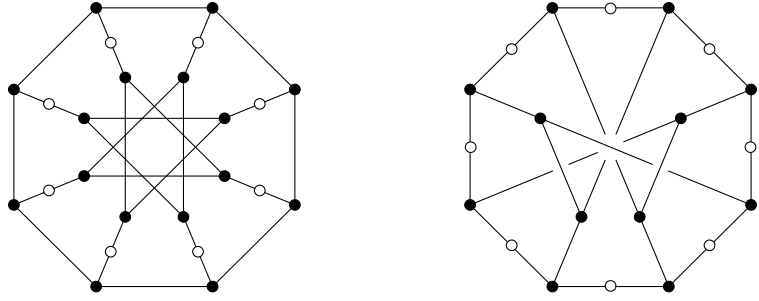
\begin{figure}[htbp]
\begin{center}\begin{tikzpicture}
\foreach \i in {1,...,8}
{
   \draw ({cos(45*\i+22.5)},{sin(45*\i+22.5)}) -- ({2*cos(45*\i+22.5)},{2*sin(45*\i+22.5)});
   \draw ({2*cos(45*\i+22.5)},{2*sin(45*\i+22.5)}) -- ({2*cos(45*(\i+1)+22.5)},{2*sin(45*(\i+1)+22.5)});
   \draw ({cos(45*\i+22.5)},{sin(45*\i+22.5)}) -- ({cos(45*(\i+3)+22.5)},{sin(45*(\i+3)+22.5)});
}
   \draw[fill=black] ({cos(0+22.5)},{sin(0+22.5)}) circle (2pt);
   \draw[fill=black] ({cos(45+22.5)},{sin(45+22.5)}) circle (2pt);
   \draw[fill=black] ({cos(90+22.5)},{sin(90+22.5)}) circle (2pt);
   \draw[fill=black] ({cos(135+22.5)},{sin(135+22.5)}) circle (2pt);
   \draw[fill=black] ({cos(180+22.5)},{sin(180+22.5)}) circle (2pt);
   \draw[fill=black] ({cos(225+22.5)},{sin(225+22.5)}) circle (2pt);
   \draw[fill=black] ({cos(270+22.5)},{sin(270+22.5)}) circle (2pt);
   \draw[fill=black] ({cos(315+22.5)},{sin(315+22.5)}) circle (2pt);
   \draw[fill=white] ({3/2*cos(0+22.5)},{3/2*sin(0+22.5)}) circle (2pt);
   \draw[fill=white] ({3/2*cos(45+22.5)},{3/2*sin(45+22.5)}) circle (2pt);
   \draw[fill=white] ({3/2*cos(90+22.5)},{3/2*sin(90+22.5)}) circle (2pt);
   \draw[fill=white] ({3/2*cos(135+22.5)},{3/2*sin(135+22.5)}) circle (2pt);
   \draw[fill=white] ({3/2*cos(180+22.5)},{3/2*sin(180+22.5)}) circle (2pt);
   \draw[fill=white] ({3/2*cos(225+22.5)},{3/2*sin(225+22.5)}) circle (2pt);
   \draw[fill=white] ({3/2*cos(270+22.5)},{3/2*sin(270+22.5)}) circle (2pt);
   \draw[fill=white] ({3/2*cos(315+22.5)},{3/2*sin(315+22.5)}) circle (2pt);
   \draw[fill=black] ({2*cos(0+22.5)},{2*sin(0+22.5)}) circle (2pt);
   \draw[fill=black] ({2*cos(45+22.5)},{2*sin(45+22.5)}) circle (2pt);
   \draw[fill=black] ({2*cos(90+22.5)},{2*sin(90+22.5)}) circle (2pt);
   \draw[fill=black] ({2*cos(135+22.5)},{2*sin(135+22.5)}) circle (2pt);
   \draw[fill=black] ({2*cos(180+22.5)},{2*sin(180+22.5)}) circle (2pt);
   \draw[fill=black] ({2*cos(225+22.5)},{2*sin(225+22.5)}) circle (2pt);
   \draw[fill=black] ({2*cos(270+22.5)},{2*sin(270+22.5)}) circle (2pt);
   \draw[fill=black] ({2*cos(315+22.5)},{2*sin(315+22.5)}) circle (2pt);
   
   \begin{scope}[xshift = 6cm]
\foreach \i in {1,...,8}
{
   \draw ({2*cos(45*\i+22.5)},{2*sin(45*\i+22.5)}) -- ({2*cos(45*(\i+1)+22.5)},{2*sin(45*(\i+1)+22.5)});
}
   \draw ({2*cos(45*0+22.5)},{2*sin(45*0+22.5)}) -- ({2*cos(45*4+22.5)},{2*sin(45*4+22.5)});
   \draw ({2*cos(45*1+22.5)},{2*sin(45*1+22.5)}) -- ({2*cos(45*5+22.5)},{2*sin(45*5+22.5)});
   \draw ({2*cos(45*2+22.5)},{2*sin(45*2+22.5)}) -- ({2*cos(45*6+22.5)},{2*sin(45*6+22.5)});
   \draw[white, fill=white] (0,0) circle (5pt);
   \draw ({2*cos(45*3+22.5)},{2*sin(45*3+22.5)}) -- ({2*cos(45*7+22.5)},{2*sin(45*7+22.5)});
   
   \draw[white,fill=white] ({0.71*cos(315+22.5)},{0.71*sin(315+22.5)}) circle (3pt);
   \draw[white,fill=white] ({0.71*cos(180+22.5)},{0.71*sin(180+22.5)}) circle (3pt);
   
   \draw ({cos(0+22.5)},{sin(0+22.5)}) -- ({cos(270+22.5)},{sin(270+22.5)});
   \draw ({cos(135+22.5)},{sin(135+22.5)}) -- ({cos(225+22.5)},{sin(225+22.5)});
   
   \draw[fill=black] ({cos(0+22.5)},{sin(0+22.5)}) circle (2pt);
   \draw[fill=black] ({cos(135+22.5)},{sin(135+22.5)}) circle (2pt);
   \draw[fill=black] ({cos(225+22.5)},{sin(225+22.5)}) circle (2pt);
   \draw[fill=black] ({cos(270+22.5)},{sin(270+22.5)}) circle (2pt);
   \draw[fill=white] ({1.84*cos(0)},{1.84*sin(0)}) circle (2pt);
   \draw[fill=white] ({1.84*cos(45)},{1.84*sin(45)}) circle (2pt);
   \draw[fill=white] ({1.84*cos(90)},{1.84*sin(90)}) circle (2pt);
   \draw[fill=white] ({1.84*cos(135)},{1.84*sin(135)}) circle (2pt);
   \draw[fill=white] ({1.84*cos(180)},{1.84*sin(180)}) circle (2pt);
   \draw[fill=white] ({1.84*cos(225)},{1.84*sin(225)}) circle (2pt);
   \draw[fill=white] ({1.84*cos(270)},{1.84*sin(270)}) circle (2pt);
   \draw[fill=white] ({1.84*cos(315)},{1.84*sin(315)}) circle (2pt);
   \draw[fill=black] ({2*cos(0+22.5)},{2*sin(0+22.5)}) circle (2pt);
   \draw[fill=black] ({2*cos(45+22.5)},{2*sin(45+22.5)}) circle (2pt);
   \draw[fill=black] ({2*cos(90+22.5)},{2*sin(90+22.5)}) circle (2pt);
   \draw[fill=black] ({2*cos(135+22.5)},{2*sin(135+22.5)}) circle (2pt);
   \draw[fill=black] ({2*cos(180+22.5)},{2*sin(180+22.5)}) circle (2pt);
   \draw[fill=black] ({2*cos(225+22.5)},{2*sin(225+22.5)}) circle (2pt);
   \draw[fill=black] ({2*cos(270+22.5)},{2*sin(270+22.5)}) circle (2pt);
   \draw[fill=black] ({2*cos(315+22.5)},{2*sin(315+22.5)}) circle (2pt);
   
   \end{scope}
\end{tikzpicture}
\caption{(a) The configuration $\Sigma$ of 24 $(-2)$-curves in the boundary of $w_Q^{-1}(t)$. (b) The configuration $\Sigma$ of 20 $(-2)$-curves in the boundary of $w_P^{-1}(t)$. }
\label{fig!minus-2-curves}
\end{center}
\end{figure}

Returning to our original Landau--Ginzburg model $w_Q\colon U\to C$, we see that the fibres are a family of affine K3 surfaces $w_Q^{-1}(t) = S_t\setminus \Sigma$, given by the complement of this configuration of lines $\Sigma$ in $S_t$. These can be represented by as affine quartics
\[ (t + 5)x_1x_2x_3 = (1 + x_1 + x_2)(1 + x_2 + x_3)\left(1 + x_1 + x_2 + x_3 + x_1x_3\right) \]
and there are precisely four degenerate fibres. Two reducible fibres corresponding to the values $E_Q=w_Q^{-1}(-5)$ and $D_Q=w_Q^{-1}(\infty)$, and two values $\lambda_\pm = 24(1 \pm \sqrt{2})$ for which the Picard rank jumps to 20. We get a picture as in the 2-dimensional case.
\begin{center}\begin{tikzpicture}
   \begin{scope}
   \draw[gray,dashed] ({cos(45)},{2+sin(45)}) -- ({cos(225)},{2+sin(225)})  ({cos(135)},{2+sin(135)}) -- ({cos(-45)},{2+sin(-45)});
   \draw[thick] ({cos(0)},{2+sin(0)}) -- ({cos(45)},{2+sin(45)}) -- ({cos(90)},{2+sin(90)}) -- ({cos(135)},{2+sin(135)}) -- ({cos(180)},{2+sin(180)}) -- ({cos(225)},{2+sin(225)}) -- ({cos(270)},{2+sin(270)}) -- ({cos(315)},{2+sin(315)}) -- cycle;
   \draw[thick] ({cos(0)},{2+sin(0)}) -- ({cos(180)},{2+sin(180)}) ({cos(90)},{2+sin(90)}) -- ({cos(270)},{2+sin(270)});
   \end{scope}
   
   \begin{scope}[xshift=-1cm]
   \draw[thick] (4,2) circle (1cm);
   \draw[dashed] (3,2) to [in = 150, out = 30] (5,2);
   \draw (3,2) to [in = -150, out = -30] (5,2);
   \end{scope}
   
   \begin{scope}[xshift=2cm]
   \draw[thick] (4,2) circle (1cm);
   \draw[dashed] (3,2) to [in = 150, out = 30] (5,2);
   \draw (3,2) to [in = -150, out = -30] (5,2);
   \end{scope}
   
   \begin{scope}[xshift=9cm]
   \draw[gray,dashed] ({cos(45)},{2+sin(45)}) -- ({cos(225)},{2+sin(225)})  ({cos(135)},{2+sin(135)}) -- ({cos(-45)},{2+sin(-45)});
   \draw[thick] ({cos(0)},{2+sin(0)}) -- ({cos(45)},{2+sin(45)}) -- ({cos(90)},{2+sin(90)}) -- ({cos(135)},{2+sin(135)}) -- ({cos(180)},{2+sin(180)}) -- ({cos(225)},{2+sin(225)}) -- ({cos(270)},{2+sin(270)}) -- ({cos(315)},{2+sin(315)}) -- cycle;
   \draw[thick] ({cos(0)},{2+sin(0)}) -- ({cos(180)},{2+sin(180)}) ({cos(90)},{2+sin(90)}) -- ({cos(270)},{2+sin(270)});
   \end{scope}

   \draw (-1,0.5) -- (10,0.5);
   \node at (0,0.5) [label={below:$-5$}] {$\times$};
   \node at (3,0.5) [label={below:$\lambda_-$}] {$\times$};
   \node at (6,0.5) [label={below:$\lambda_+$}] {$\times$};
   \node at (9,0.5) [label={below:$\infty$}] {$\times$};
   \node at (3,1.5) {\scriptsize $\rho=20$};
   \node at (6,1.5) {\scriptsize $\rho=20$};
\end{tikzpicture}\end{center}

\paragraph{The potential $w_P$.} 
We can rerun this entire analysis with the potential $w_P=x_1+\ldots+x_8$ in place of $w_Q$. In summary, the period of $w_P$ is
\[ \pi_{w_P}(t) = 1  + 24t^2 + 192t^3 + 2904t^4 + 40320t^5 + 611520t^6 + 9515520t^7 + \cdots \]
which agrees with the regularised quantum period $\widehat{G}_X(t)$ for the Fano 3-fold $X$ of type $V_{16}$, as listed in \cite[\S15]{ccgk}. There is a factorisation
\[ (w_P + 4)|_{\TT_{123}} = \frac{(1 + x_1)(1 + x_2)(1 + x_3)(1 + x_1 + x_2 + x_3)}{x_1x_2x_3} \]
which is another $\sigma_3$-invariant Laurent polynomial, and corresponds to a reducible fibre given by $E_P := w_P^{-1}(-4) = \bigcup_{i=1}^4F_{i,i+4}$ in the notation of Proposition~\ref{prop!div}. When extended to the compactification $X_P$, the boundary divisor $D_P$ meets $E_P$ in a collection of 12 lines, given by the $\Dih_8$-orbits of $\VV(x_1+x_2+x_3)\subset D_{123}$ and $\VV(x_1+x_3)\subset D_{1357}$. These are displayed (tropically) in Figure~\ref{fig!D-and-E}(b) and pass through the same eight nodes in the boundary of $X_P$ as the previous case. The general member in the anticanonical pencil $|D_P,E_P|$, corresponding to the fibres of $w_Q$, is a K3 surface $S_t = w_Q^{-1}(t)$ which passes through the twelve lines and is smooth apart from eight ordinary double points at the eight nodes of $X_P$. The minimal resolution of $S_t$ is a K3 surface containing a configuration $\Sigma$ of 20 $(-2)$-curves generating a lattice of Picard rank 19, and which correspond to the dual intersection diagram of Figure~\ref{fig!minus-2-curves}(b). The fibres of the original Landau--Ginzburg model $w_P\colon U\to \CC$ are affine K3 surfaces of the form $S_t\setminus \Sigma$. There are exactly four degenerate fibres: the two reducible fibres $D_P=w_Q^{-1}(\infty)$ and $E_P=w_Q^{-1}(-4)$, and a further two values where the Picard rank jumps to 20.

\subsubsection{Other Fano 3-folds}

We do not try to attempt to classify all reflexive polytopes in $N_U$, although it would be interesting to know how many there are in comparison to the 4319 reflexive polytopes in $\RR^3$. Nevertheless, we can consider all of the polytopes obtained from removing vertices from the polytope $Q$ from \S\ref{sec!LG-V12-V16}.

\begin{thm}
Consider the Laurent polynomial $w_0=w|_{\TT_{123}}\in\CC[x_1^{\pm1},x_2^{\pm1},x_3^{\pm1}]$ obtained by restricting the Landau--Ginzburg potential $w = \varepsilon_1x_1 + \ldots + \varepsilon_8x_8+\varepsilon_9q_1+\varepsilon_{10}q_2$ with coefficients $\varepsilon_i\in\{0,1\}$ for $i=1,\ldots,10$.
\begin{enumerate}
\item Of the 1024 possibilities for $w_0$, 705 have 3-dimensional Fano Newton polytopes, i.e.\ $\Newt(w_0)$ has primitive vertices and $0\in \Int(\Newt(w_0))$.
\item These 705 Laurent polynomials give rise to 46 distinct periods.
\item Of these 46 periods, 20 are equal to the regularised quantum period of a smooth Fano 3-fold, as described in Table~\ref{tab!LPmirrors}.
\end{enumerate} 
\end{thm}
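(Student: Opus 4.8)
The proof is a finite computation; the plan is to carry it out in three stages.

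\textbf{Step 1: the Newton polytopes.} In the degenerate case $\lambda=\mu=1$ each of the ten cluster variables has an explicit expansion in $\CC[x_1^{\pm1},x_2^{\pm1},x_3^{\pm1}]$ — namely $x_1,x_2,x_3$ themselves, the six expressions of \S\ref{sect!LR3-terms}, and the two quadratic terms $q_1=(x_1+1)(x_3+1)/x_2$ and $q_2=(x_1+x_2+x_3+1)(x_2+1)/(x_1x_3)$ — and crucially \emph{every one of these expansions has strictly positive coefficients}. Hence for any subset $S\subseteq\{1,\dots,10\}$ there is no cancellation among extremal monomials of $w_0=\sum_{i\in S}\vartheta_i$, so $\Newt(w_0)=\conv\bigl(\bigcup_{i\in S}\Newt(\vartheta_i)\bigr)$. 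Having computed the ten polytopes $\Newt(\vartheta_i)$ once, it is then purely combinatorial to loop over the $2^{10}$ subsets and record those for which $\Newt(w_0)$ is $3$-dimensional, has primitive vertices, and contains the origin in its interior; this yields the count $705$ of part~(1). The computation is shortened by the dihedral symmetry of order $16$ acting on $\{x_1,\dots,x_8,q_1,q_2\}$ (the $\Dih_8$-action rotating and reflecting the octagon of $x_i$'s and swapping $q_1\leftrightarrow q_2$): subsets in one orbit give Laurent polynomials that differ by a monomial change of variables, hence share Newton polytope up to $\GL_3(\ZZ)$ and share period.

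\textbf{Step 2: the periods.} For each admissible $w_0$ compute the period $\pi_{w_0}(t)=\sum_{n\ge0}\operatorname{const}\bigl(w_0^n\bigr)t^n$ directly from the constant-term formula of \S\ref{sect!period}, truncating at a fixed order $N$ ($N=20$ is ample). Two truncated series that disagree certainly come from distinct periods, so partitioning the $705$ coefficient vectors $(c_0,\dots,c_N)$ into equality classes gives, together with an explicit list of representatives, an upper bound for the number of distinct periods; the computation returns exactly $46$ classes. This proves part~(2), the pairwise distinctness of the $46$ representatives being witnessed by a disagreement already among the first $N$ coefficients.

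\textbf{Step 3: matching smooth Fano $3$-folds.} Compare the leading coefficients of each of the $46$ periods with the regularised quantum periods $\widehat{G}_X(t)$ of the $105$ deformation families of smooth Fano $3$-folds tabulated in \cite{ccgk}. Exactly $20$ of the $46$ agree with some $\widehat{G}_X$ to order $N$ — this produces Table~\ref{tab!LPmirrors} — and the other $26$ match none of the $105$. To promote the numerical coincidence to an identity of power series one uses that each quantum period of a Fano $3$-fold solves an explicit Picard--Fuchs-type ODE of bounded order and degree, so that it is pinned down by a bounded number of its Taylor coefficients within the class of such series (cf.\ \cite{ccgk}); agreement to order $N$ then forces equality. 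Alternatively, and more concretely, for each entry of Table~\ref{tab!LPmirrors} one exhibits a finite chain of algebraic mutations (in the spirit of \S\ref{sec!mutation-examples}) carrying $w_0$ to a Minkowski Laurent polynomial already known to be a mirror of the corresponding Fano $3$-fold \cite{many-authors,ccgk}, and invokes mutation-invariance of the period; this also confirms that $(U,w)$ itself, not just its period, is a mirror of that Fano $3$-fold.

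\textbf{Main obstacle.} Everything but the last point is bookkeeping well within reach of computer algebra (e.g.\ Macaulay2 as in Lemma~\ref{lem!sing}); the genuinely delicate issue is the passage from \emph{finitely many matching coefficients} to \emph{equality of periods}, needed both for the exact count of $46$ distinct periods and for the identification of $20$ of them with quantum periods of Fano $3$-folds. This is resolved by the rigidity of Fano $3$-fold quantum periods recalled in \S\ref{sect!period}, or — for a self-contained argument — by the explicit mutation sequences of Step~3.
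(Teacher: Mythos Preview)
The paper gives no proof of this theorem at all: it is stated as a bare computational fact, followed immediately by Table~\ref{tab!LPmirrors} and a remark that several potentials can share a period even beyond the $\Dih_8$ symmetry. Your proposal is therefore not being compared against a paper argument so much as supplying one, and the three-step plan you outline is exactly how such a computation is carried out in practice.

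That said, there is a genuine logical slip in Step~2. You write that partitioning the $705$ truncated coefficient vectors into equality classes gives an \emph{upper} bound for the number of distinct periods; in fact it gives a \emph{lower} bound. Disagreement of truncations certifies distinct periods, so $46$ classes of truncations implies \emph{at least} $46$ distinct periods. What is missing for ``exactly $46$'' is the opposite direction: you must show that any two of the $705$ polynomials whose truncated periods agree to order $N$ really do have \emph{equal} periods. Dihedral symmetry alone does not suffice here --- as the paper itself remarks, several $\Dih_8$-inequivalent potentials share a period (e.g.\ six essentially different potentials all give the $V_{22}$ period). So within each of the $46$ truncation classes you need an explicit identification, typically by exhibiting a mutation or birational change of torus chart taking one $w_0$ to another; your ``alternative'' in Step~3 (explicit mutation chains) is really the content required already in Step~2. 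Once you frame it that way, the Picard--Fuchs rigidity argument of Step~3 becomes unnecessary for part~(2), and for part~(3) it remains the cleanest way to upgrade finite-order agreement with the tabulated $\widehat G_X(t)$ to genuine equality.
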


\begin{table}[ht]
\caption{Mirror Landau--Ginzburg potentials on $U$ for 20 smooth Fano 3-folds.}
\begin{center}
\renewcommand{\arraystretch}{1.2}
\begin{tabular}{|c|l|} \hline
Fano 3-fold & Mirror Landau--Ginzburg potential $w$ \\ \hline
$V_{12}$ & $x_1+x_2+x_3+x_4+x_5+x_6+x_7+x_8+q_1+q_2$ \\
$V_{14}$ & $x_1+x_2+x_3+x_4+x_5+x_6+x_7+x_8+q_1$ \\ 
$V_{16}$ & $x_1+x_2+x_3+x_4+x_5+x_6+x_7+x_8$ \\
$V_{18}$ & $x_1+x_2+x_3+x_4+x_5+x_6+x_7$ \\
$V_{22}$ & $x_1+x_2+x_3+x_4+x_5+x_6$ \\ \hline
$\MM_{2,9}$ & $x_1 + x_2 + x_3 + x_6 + q_1 + q_2$ \\
$\MM_{2,12}$ & $x_1 + x_2 + x_3 + x_5 + x_6 + x_7$ \\
$\MM_{2,13}$ & $x_1 + x_2 + x_3 + x_4 + x_6 + x_7$ \\
$\MM_{2,14}$ & $x_1 + x_2 + x_3 + x_4 + x_5 + x_7 + q_1$ \\
$\MM_{2,16}$ & $x_1 + x_2 + x_3 + x_6 + q_1$ \\
$\MM_{2,17}$ & $x_1 + x_2 + x_3 + x_5 + x_6$ \\
$\MM_{2,20}$ & $x_1 + x_2 + x_3 + x_4 + x_7$ \\
$\MM_{2,21}$ & $x_1 + x_2 + x_3 + x_5 + x_7$ \\
$\MM_{2,22}$ & $x_1 + x_2 + x_3 + x_6$ \\ \hline
$\MM_{3,7}$ & $x_1 + x_2 + x_4 + x_6 + x_7$ \\
$\MM_{3,10}$ & $x_1 + x_2 + x_3 + x_5 + x_7 + q_1$ \\
$\MM_{3,12}$ & $x_1 + x_3 + x_6 + x_7 + q_1$ \\
$\MM_{3,13}$ & $x_1 + x_5 + q_2$ \\
$\MM_{3,15}$ & $x_1 + x_4 + x_5 + x_7$ \\ 
$\MM_{3,20}$ & $x_1 + x_4 + x_6$ \\ \hline
\end{tabular}
\end{center}
\label{tab!LPmirrors}
\end{table}%

Table~\ref{tab!LPmirrors} only contains one representative Landau--Ginzburg potential in each case and, even after taking the $\Dih_8$ symmetry into account, the same period sequence can be obtained from several different potentials. For example, in addition to the entry in the Table~\ref{tab!LPmirrors}, the following five potentials also have the right period to be mirror to $V_{22}$:
\[ x_1+x_2+x_5+x_6+q_2, \quad x_1+x_2+x_3+x_4+x_5+q_2, \quad x_1+x_2+q_1+q_2, \]
\[ x_1+x_2+x_3+x_4+x_5+x_7 \quad \text{and} \quad x_1+x_2+x_3+x_4+x_6+q_2. \]
According to Conjecture~\ref{conj!my-conj}, the different choices of potential correspond to different degenerations of a Fano 3-fold of type $V_{22}$ to a pair $(X_Q,D_Q)$ where $Q$ is the dual polytope to $P=\Newt(w)\subset M_U$. It would be interesting to know whether the remaining 26 period sequences are all period sequences of Fano 3-folds with terminal singularities.

\subsubsection{A final example}

We end with a final example of mirror duality for a pair of exotic polytopes in $N_U$, which cannot be realised combinatorially as polytopes in Euclidean space. The intersection of the two halfspaces $(q_1)^{\geq-1}\cap (q_2)^{\geq-1}$ is a closed polytope $Q\subset N_U$ with eight vertices $x_ix_{i+1}$ for $i\in \ZZ/8\ZZ$. Therefore the dual polytope $P$ is given by $P = \bigcap_{i\in\ZZ/8\ZZ} (x_ix_{i+1})^{\geq-1}$. This gives a pair of dual polytopes in $N_U$ 
\[ P=\conv(q_1, q_2) \qquad \text{and} \qquad Q=P^\star=\conv(x_1x_2, x_2x_3, \ldots, x_8x_1). \]
We draw these two polytopes below, where the solid lines denote edges of the polytope and dashed lines are due to the bend in the affine structure of $N_U$. We note that $P$ has two vertices and eight faces (so topologically it looks like a beachball with eight stripes), whereas $Q$ has eight vertices and two faces. 

\begin{center}
\begin{tikzpicture}[scale=2]
   \begin{scope}[xshift=0cm, yshift=0.5cm]
   \node[inner sep = -2pt] (o) at (0,0) {};
   \node (x) at ({cos(330)+0.25},{sin(330)}) {};
   \node (x2) at ({(cos(330)+0.25)/2},{sin(330)/2}) {};
   \node (x3) at ({(cos(330)+0.25)/3},{sin(330)/3}) {};
   \node (y) at ({cos(90)},{sin(90)}) {};
   \node (z) at ({cos(210)+0.25},{sin(210)+0.1}) {};
   \node (z2) at ({(cos(210)+0.25)/2},{(sin(210)+0.1)/2}) {};
   \node (z3) at ({(cos(210)+0.25)/3},{(sin(210)+0.1)/3}) {};
   
   \fill[gray, opacity=0.2] ($(x)+(y)+(y)+(z)$) -- ($(x3)-(z3)$) -- ($(x2)$) -- ($(o)-(y)$) -- ($(z3)-(x3)$) -- ($(o)-(x2)$) -- cycle;
   
   \draw ($(x)+(y)+(y)+(z)$) -- ($(o)-(y)$)
         ($(x)+(y)+(y)+(z)$) -- ($(x2)$) -- ($(o)-(y)$)
         ($(x)+(y)+(y)+(z)$) -- ($(x3)-(z3)$) -- ($(o)-(y)$)
         ($(x)+(y)+(y)+(z)$) -- ($(o)-(z2)$) -- ($(o)-(y)$)
         ($(x)+(y)+(y)+(z)$) -- ($(o)-(x3)-(z3)$) -- ($(o)-(y)$)
         ($(x)+(y)+(y)+(z)$) -- ($(o)-(x2)$) -- ($(o)-(y)$)
         ($(x)+(y)+(y)+(z)$) -- ($(z3)-(x3)$) -- ($(o)-(y)$)
         ($(x)+(y)+(y)+(z)$) -- ($(z2)$) -- ($(o)-(y)$);
   \draw[dashed] ($(x2)$) -- ($(x3)-(z3)$) -- ($(o)-(z2)$) -- ($(o)-(x3)-(z3)$) -- ($(o)-(x2)$) -- ($(z3)-(x3)$) -- ($(z2)$);
   
   \node at ($(o)$) {$\times$};
   \node at ($(o)-(y)$) [label={below:$q_1$}] {$\bullet$};
   \node at ($(x)+(y)+(y)+(z)$) [label={above:$q_2$}] {$\bullet$};
   \end{scope}
   
   \begin{scope}[xshift=4cm]
   \node[inner sep = -2pt] (o) at (0,0) {};
   \node (x) at ({cos(330)+0.25},{sin(330)}) {};
   \node (y) at ({cos(90)},{sin(90)}) {};
   \node (y2) at ({cos(90)/2},{sin(90)/2}) {};
   \node (z) at ({cos(210)+0.25},{sin(210)+0.1}) {};
   
   \fill[gray, opacity=0.2] ($(z)$) -- ($(o)-(y2)$) -- ($(x)$) -- ($(x)+(x)+(y)+(z)$) -- ($(x)+(x)+(y)$) -- ($(x)+(y)-(z)$) -- ($(y)-(z)$) -- ($(y)-(x)$) -- ($(y)+(z)-(x)$) -- ($(y)+(z)+(z)$) -- cycle;
   
   \draw ($(y)-(x)$) -- ($(y)+(z)-(x)$) -- ($(y)+(z)+(z)$) -- ($(x)+(y)+(z)+(z)$) -- ($(x)+(x)+(y)+(z)$) -- ($(x)+(x)+(y)$) -- ($(x)+(y)-(z)$) -- ($(y)-(z)$) -- cycle;
   \draw[dashed] ($(y)-(x)$) -- ($(o)-(x)$) -- ($(z)$) -- ($(x)+(y)+(z)+(z)$)
                 ($(y)-(z)$) -- ($(o)-(z)$) -- ($(x)$) -- ($(x)+(x)+(y)+(z)$)
                 ($(y)+(z)-(x)$) -- ($(o)-(x)$) -- ($(o)-(z)$) -- ($(x)+(y)-(z)$)
                 ($(y)+(z)+(z)$) -- ($(z)$) -- ($(o)-(y2)$) -- ($(o)-(z)$)
                 ($(y)+(x)+(x)$) -- ($(x)$) -- ($(o)-(y2)$) -- ($(o)-(x)$);
   
   \node at ($(o)-(x)$) {$\bullet$};
   \node at ($(o)-(z)$) {$\bullet$};
   \node at ($(x)$) {$\bullet$};
   \node at ($(z)$) {$\bullet$};
   \node at ($(y)+(z)$) {$\bullet$};
   \node at ($(x)+(y)$) {$\bullet$};
   \node at ($(x)+(y)+(z)$) {$\bullet$};
   \node at ($(y)$) {$\bullet$};
   
   \node at ($(o)$) {$\times$};
   
   \node at ($(y)-(x)$) [label={above:$x_1x_2$}] {$\bullet$};
   \node at ($(y)+(z)-(x)$) [label={left:$x_8x_1$}] {$\bullet$};
   \node at ($(y)+(z)+(z)$) [label={left:$x_7x_8$}] {$\bullet$};
   \node at ($(x)+(y)+(z)+(z)$) [label={below:$x_6x_7$}] {$\bullet$};
   \node at ($(x)+(x)+(y)+(z)$) [label={below:$x_5x_6$}] {$\bullet$};
   \node at ($(x)+(x)+(y)$) [label={right:$x_4x_5$}] {$\bullet$};
   \node at ($(x)+(y)-(z)$) [label={right:$x_3x_4$}] {$\bullet$};
   \node at ($(y)-(z)$) [label={above:$x_2x_3$}]{$\bullet$};
   \end{scope}
\end{tikzpicture} 
\end{center}
We now describe how to set up the mirror correspondence between the (degenerate) Fano 3-fold $(X_P,D_P)$ (resp.\ $(X_Q,D_Q)$) and the Landau--Ginzburg model $(U,w_Q)$ (resp.\ $(U,w_P)$).

\begin{eg} \label{eg!last-one}
We start by describing the mirror correspondence for $(X_Q,D_Q)$ and $(U,w_P)$.

\paragraph{The Landau--Ginzburg model $(U,w_P)$.}
Using $P$ as the polytope to define a Landau--Ginzburg model $(U,w_P)$ is straightforward, since $P$ contains only two nonzero lattice points which are both vertices. By Remark~\ref{rmk!binomial-edge-coeffs} they both receive coefficient 1 and this uniquely determines the potential $w_P=q_1+q_2$. By restricting to the torus chart $\TT_{123}$ and computing the period sequence of the resulting Laurent polynomial, we find that
\[ \pi_{w_P}(t) = 1 + 8t^2 + 24t^3 + 240t^4 + 1440t^5 + 11960t^6 + 89040t^7 + \cdots \]
which is the regularised quantum period of the smooth Fano 3-fold $\MM_{2,21}$ \cite[\S38]{ccgk}.

\paragraph{The Fano 3-fold $(X_Q,D_Q)$.}
When considered in the integral affine structure of $M_U$, the two faces of $Q$ are two flat lattice octagons labelled with the following theta functions.
\begin{center}
\begin{tikzpicture}[scale=1.5]

    \draw[thick,fill=gray,fill opacity=0.2] (1,0) -- (2,0) -- (3,1) -- (3,2) -- (2,3) -- (1,3) -- (0,2) -- (0,1) -- cycle;
    \draw[thick,dashed] (0,2) -- (3,2) (1,0) -- (1,3) (2,0) -- (2,3) (0,1)  -- (1,1) -- (2,2) (1,2) -- (2,1) -- (3,1);
    \node at (1,0) [label={[label distance=-5pt]-90:$x_6x_7$}]{$\bullet$}; 
    \node at (2,0) [label={[label distance=-5pt]-90:$x_5x_6$}]{$\bullet$}; 
    \node at (3,1) [label={[label distance=-5pt]0:  $x_4x_5$}]{$\bullet$}; 
    \node at (3,2) [label={[label distance=-5pt]0:  $x_3x_4$}]{$\bullet$}; 
    \node at (2,3) [label={[label distance=-5pt]90: $x_2x_3$}]{$\bullet$}; 
    \node at (1,3) [label={[label distance=-5pt]90: $x_1x_2$}]{$\bullet$}; 
    \node at (0,2) [label={[label distance=-5pt]180:$x_8x_1$}]{$\bullet$}; 
    \node at (0,1) [label={[label distance=-5pt]180:$x_7x_8$}]{$\bullet$}; 
    
    \node at (1,1) [label={[label distance=-7pt]-135:$x_7$}]{$\bullet$}; 
    \node at (2,1) [label={[label distance=-7pt]-45: $x_5$}]{$\bullet$}; 
    \node at (2,2) [label={[label distance=-7pt]45:  $x_3$}]{$\bullet$}; 
    \node at (1,2) [label={[label distance=-7pt]135: $x_1$}]{$\bullet$};

    \begin{scope}[xshift = 7cm, yshift=-0.66cm, rotate=45]
    \draw[thick,fill=gray,fill opacity=0.2] (1,0) -- (2,0) -- (3,1) -- (3,2) -- (2,3) -- (1,3) -- (0,2) -- (0,1) -- cycle;
    
    \node at (1,0) [label={[label distance=-5pt]-90:$x_5x_6$}]{$\bullet$}; 
    \node at (2,0) [label={[label distance=-5pt]0:$x_4x_5$}]{$\bullet$}; 
    \node at (3,1) [label={[label distance=-5pt]0:  $x_3x_4$}]{$\bullet$}; 
    \node at (3,2) [label={[label distance=-5pt]90:  $x_2x_3$}]{$\bullet$}; 
    \node at (2,3) [label={[label distance=-5pt]90: $x_1x_2$}]{$\bullet$}; 
    \node at (1,3) [label={[label distance=-5pt]180: $x_8x_1$}]{$\bullet$}; 
    \node at (0,2) [label={[label distance=-5pt]180:$x_7x_8$}]{$\bullet$}; 
    \node at (0,1) [label={[label distance=-5pt]-90:$x_6x_7$}]{$\bullet$}; 
    
    \node at (1,1) [label={[label distance=-5pt]-90:$x_6$}]{$\bullet$}; 
    \node at (2,1) [label={[label distance=-5pt]0: $x_4$}]{$\bullet$}; 
    \node at (2,2) [label={[label distance=-5pt]90:  $x_2$}]{$\bullet$}; 
    \node at (1,2) [label={[label distance=-5pt]180: $x_8$}]{$\bullet$}; 
    \end{scope}
\end{tikzpicture} 
\end{center}
Thus to construct $(X_Q,D_Q)$ we consider the graded ring $R_Q$ which is generated in degree 1 by seventeen generators 
\[ x_0, \: x_1, \; \ldots,\; x_8,\; x_1x_2,\; \ldots,\; x_8x_1 \]
where $x_0$ is the homogenising variable. According to computer algebra $X_Q=\Proj R_Q\subset \PP^{16}$ is a Gorenstein Fano variety of Fano index 1 and degree $\deg X_Q=\vol Q=28$. This at least agrees with $X_Q$ being a (degeneration of a) Fano 3-fold of type $\MM_{2,21}$, as predicted by the period sequence of $w_P$. By Conjecture~\ref{conj!my-conj} we expect that $X_Q$ admits a $\QQ$-Gorenstein smoothing to a smooth Fano 3-fold of type $\MM_{2,21}$.

Moreover, the boundary divisor $D_Q=\VV(x_0)$ consists of two components $D=D_1\cup D_2$ which are toric surfaces (defined by the two octagons above) which glued together other along an octagon of rational curves. This realises $Q$ as the intersection complex of $D_Q$.
\end{eg}

\begin{eg}
The more interesting (and currently less clear) direction is to understand how to set up the mirror correspondence with the roles of $P$ and $Q$ reversed.

\paragraph{The Fano 3-fold $(X_P,D_P)$.}
The polytope $P$ gives degrees $2,2,\ldots,2,1,1$ to the variables $x_1,x_2,\ldots,x_8,q_1,q_2$ respectively. Taking the projective closure of $U$ with respect to this grading we get a Fano 3-fold $X_P=\Proj R_P\subset \PP(1^3,2^8)$ in weighted projective space. One of the equations defining $X_P$ is
\[ q_1q_2 = x_1 + x_2 + x_3 + x_4 + x_5 + x_6 + x_7 + x_8 + 4x_0^2 \]
which we could use to eliminate $x_1$ say, and reduce to a model $X_P\subset \PP(1^3,2^7)$. The Hilbert series of $X_P$ now agrees with \cite[\#14885]{grdb}, showing that $X_P$ has degree $\deg(X_P)=\vol(P)=4$, Fano index~1 and basket of quotient singularities $\{8\times\tfrac12(1,1,1)\}$. The anticanonical boundary divisor $D_P=\VV(x_0)$ has eight components:
\[ D_{12} = \VV(x_0,x_3,x_4,x_5,x_6,x_7,x_8,q_1q_2 - x_1 - x_2) \qquad \text{etc.} \]
which are all copies of $\PP(1,1,2)$ that are joined to their neighbours along conics meeting at the two coordinate points where $q_1\neq0$ and $q_2\neq0$. Thus $P$ is realised as the intersection complex for the boundary divisor $D_P$. One of the eight $\tfrac12(1,1,1)$ points of $X_P$ lies in the relative interior of $D_{12}$ at the point where $x_1+x_2\neq 0$ and all of the other coordinates vanish. Similarly for the other seven $\tfrac12(1,1,1)$ points.

We note that if $Y$ is a Fano 3-fold with Hilbert series \cite[\#14885]{grdb}, the $h^0(Y,{-K_Y})=3$ and hence $-K_Y$ is ample but not very ample, so there is no \emph{Gorenstein} toric degeneration of~$Y$. Therefore there is no reflexive polytope we can write down which will give us a mirror Laurent polynomial for $Y$.

\paragraph{The Landau--Ginzburg model $(U,w_Q)$.}
To cook up a Landau--Ginzburg potential $w_Q$ which is mirror to $(X_P,D_P)$, we need to label the lattice points of $Q$ with appropriate integer coefficients, as described in Remark~\ref{rmk!binomial-edge-coeffs}. Since we label all vertices of $Q$ with the coefficient 1, the only decisions we need to make are over the lattice points belonging to the relative interior of each face. Following \cite{cfp}, we expect that the right thing to do is label the lattice points of each face $F\subset Q$ with the coefficients of a \emph{0-mutable polynomial} supported on $F$. In our case this is one of the following three choices of coefficients, according to the three possible maximal Minkowski decompositions of the octagon. 
\begin{center}
\resizebox{\textwidth}{!}{\begin{tikzpicture} 
   \node at (0,0.25) {$\begin{matrix}&1&1&\\1&2&2&1\\1&2&2&1\\&1&1&\end{matrix}$};
   \node at (6,0.25) {$\begin{matrix}&1&1&\\1&3&2&1\\1&2&3&1\\&1&1&\end{matrix}$};
   \node at (12,0.25) {$\begin{matrix}&1&1&\\1&2&3&1\\1&3&2&1\\&1&1&\end{matrix}$}; 
   
   \draw (-2,2) -- (-2,2.5);
   \draw (-1,2.25) -- (-0.5,2.25);
   \draw (0.5,2) -- (1,2.5);
   \draw (2.5,2) -- (2,2.5);
   
   \node at (-2,2) {\scriptsize $\bullet$};
   \node at (-2,2.5) {\scriptsize $\bullet$};
   \node at (-1,2.25) {\scriptsize $\bullet$};
   \node at (-0.5,2.25) {\scriptsize $\bullet$};
   \node at (0.5,2) {\scriptsize $\bullet$};
   \node at (1,2.5) {\scriptsize $\bullet$};
   \node at (2.5,2) {\scriptsize $\bullet$};
   \node at (2,2.5) {\scriptsize $\bullet$};
   
   \node at (-1.5,2.25) {$+$};
   \node at (0,2.25) {$+$};
   \node at (1.5,2.25) {$+$};
   
   \begin{scope}[xshift = 6cm]
   \draw (-1.75,2) -- (-1.75,2.5) -- (-1.25,2.5) -- cycle;
   \draw (-0.25,2) -- (0.25,2) -- (0.25,2.5) -- cycle;
   \draw (1.75,2) -- (1.25,2.5) -- cycle;
   
   \node at (-1.75,2) {\scriptsize $\bullet$};
   \node at (-1.75,2.5) {\scriptsize $\bullet$};
   \node at (-1.25,2.5) {\scriptsize $\bullet$};
   \node at (-0.25,2) {\scriptsize $\bullet$};
   \node at ( 0.25,2) {\scriptsize $\bullet$};
   \node at ( 0.25,2.5) {\scriptsize $\bullet$};
   \node at ( 1.75,2) {\scriptsize $\bullet$};
   \node at ( 1.25,2.5) {\scriptsize $\bullet$};
   
   \node at (-0.75,2.25) {$+$};
   \node at (0.75,2.25) {$+$};
   \end{scope}
   
   \begin{scope}[xshift = 12cm]
   \draw (-1.75,2) -- (-1.75,2.5) -- (-1.25,2) -- cycle;
   \draw (-0.25,2.5) -- (0.25,2) -- (0.25,2.5) -- cycle;
   \draw (1.25,2) -- (1.75,2.5) -- cycle;
   
   \node at (-1.75,2) {\scriptsize $\bullet$};
   \node at (-1.75,2.5) {\scriptsize $\bullet$};
   \node at (-1.25,2) {\scriptsize $\bullet$};
   \node at (-0.25,2.5) {\scriptsize $\bullet$};
   \node at ( 0.25,2) {\scriptsize $\bullet$};
   \node at ( 0.25,2.5) {\scriptsize $\bullet$};
   \node at ( 1.25,2) {\scriptsize $\bullet$};
   \node at ( 1.75,2.5) {\scriptsize $\bullet$};
   
   \node at (-0.75,2.25) {$+$};
   \node at (0.75,2.25) {$+$};
   \end{scope}
\end{tikzpicture}}\end{center}
After taking into account the $\Dih_8$ symmetry, there are just three ways to decorate $Q$ with coefficients, yielding three Landau--Ginzburg potentials with distinct periods. As described in Remark~\ref{rmk!binomial-edge-coeffs}, according to Conjecture~\ref{conj!my-conj} these are conjecturally in one-to-one correspondence with three possible deformations of $X_P$. None of these three periods appear in the classification of quantum periods for smooth Fano 3-folds~\cite{ccgk}, which is unsurprising since $X_P$ has isolated terminal quotient singularities (which cannot be deformed away by any smoothing). However it suggests that $X_P$ may lie at the intersection of three different components of Fano 3-folds in its Hilbert scheme.
\end{eg}

\end{document}